\newcommand{\DF}[2]{{\displaystyle\frac{#1}{#2}}}
\newcommand{\INT}{\mathbb{Z}}
\newcommand{\INTP}{\mathbb{Z}_{(p)}}
\newcommand{\COMP}{\mathbb{Z}_p}
\newcommand{\RAT}{\mathbb{Q}}
\newcommand{\CPLX}{\mathbb{C}}
\newcommand{\A}{\mathcal {A}}
\newcommand{\D}{\mathcal {D}}
\newcommand{\fP}{\mathcal {P}}
\newcommand{\V}{\mathcal {V}}
\newcommand{\ES}{\mathscr{S}}
\newcommand{\dr}{\mathrm{dR}}
\newcommand{\cris}{\mathrm{cris}}
\newcommand{\sdr}{s_{\dr}}
\newcommand{\intG}{G_{\mathbb{Z}_p}}
\newcommand{\inttG}{G_{\mathbb{Z}_{(p)}}}
\newcommand{\IIsom}{\mathbf{Isom}}
\newcommand{\Isom}{\mathrm{Isom}}
\newcommand{\AAut}{\mathbf{Aut}}
\newcommand{\uAut}{\underline{\mathrm{Aut}}}
\newcommand{\Sh}{\mathrm{Sh}}
\newcommand{\Hdr}{\mathrm{H}^1_{\mathrm{dR}}}
\newcommand{\lin}{\mathrm{lin}}
\newcommand{\GL}{\mathrm{GL}}
\newcommand{\adj}{\mathrm{ad}}
\DeclareMathOperator{\Spec}{\mathrm{Spec}}
\DeclareMathOperator{\gr}{\mathrm{gr}}
\def\st{\stackrel}
\def\Q{\mathbb{Q}}
\def\ra{\rightarrow}
\begin{document}
\newtheorem{theorem}[subsubsection]{Theorem}
\newtheorem{lemma}[subsubsection]{Lemma}
\newtheorem{proposition}[subsubsection]{Proposition}
\newtheorem{corollary}[subsubsection]{Corollary}
\theoremstyle{definition}
\newtheorem{definition}[subsubsection]{Definition}
\theoremstyle{definition}
\newtheorem{construction}[subsubsection]{Construction}
\theoremstyle{definition}
\newtheorem{notations}[subsubsection]{Notations}
\theoremstyle{definition}
\newtheorem{asp}[subsubsection]{Assumption}
\theoremstyle{definition}
\newtheorem{set}[subsubsection]{Setting}
\theoremstyle{remark}
\newtheorem{remark}[subsubsection]{Remark}
\theoremstyle{remark}
\newtheorem{example}[subsubsection]{Example}
\newtheorem{examples}[subsubsection]{Examples}
\newtheorem{conjecture}[subsubsection]{Conjecture}
\theoremstyle{plain}
\newtheorem{introth}{Theorem}
\renewcommand{\theintroth}{\Alph{introth}}

\makeatletter
\newenvironment{subeqn}{\refstepcounter{subsubsection}
$$}{\leqno{\rm(\thesubsubsection)}$$\global\@ignoretrue}
\makeatother
\author{Xu Shen and Chao Zhang}

\subjclass[2010]{Primary: 14G35; Secondary: 11G18}
\address{Morningside Center of Mathematics, Academy of Mathematics and Systems Science, Chinese Academy of Sciences, No. 55, Zhongguancun East Road, Beijing 100190, China}
\address{University of Chinese Academy of Sciences, Beijing 100049, China}

\email{shen@math.ac.cn}

\address{Shing-Tung Yau Center of Southeast University, Yifu Architecture Building, No. 2,
Sipailou, Nanjing 210096, China}

\email{zhangchao1217@gmail.com}

\title[Stratifications in Shimura varieties of abelian type]{Stratifications in good reductions of Shimura varieties of abelian type}

\maketitle 
\setcounter{tocdepth}{1}
\begin{abstract}
In this paper we study the geometry of good reductions of Shimura varieties of abelian type. More precisely, we construct the Newton stratification, Ekedahl-Oort stratification, and central leaves on the special fiber of a Shimura variety of abelian type at a good prime. We establish several basic properties of these stratifications, including the non-emptiness, closure relation and dimension formula, generalizing those previously known in the PEL and Hodge type cases. We also study the relations between these stratifications, both in general and in some special cases, such as those of fully Hodge-Newton decomposable type. We investigate the examples of quaternionic and orthogonal Shimura varieties in details.
\end{abstract}
\tableofcontents

\newpage

\section*[Introduction]{Introduction}
Understanding the geometric properties of Shimura varieties in mixed characteristic has been a central problem in arithmetic algebraic geometry and Langlands program. In this paper we study the geometry of good reductions of Shimura varieties of abelian type, based on the works of Kisin \cite{CIMK}, Kim-Madapusi Pera \cite{2-adic int} and Vasiu \cite{INTV} where smooth integral canonical models for these Shimura varieties were already available, and following the genereal guideline proposed by He-Rapoport in \cite{He-Rap} (see also \cite{guide to red mod p}) where basic axioms were postulated to study various stratifications on the special fibers of certain integral models of Shimura varieties.\\
 
 Let $(G,X)$ be a Shimura datum with reflex field $E$.  For any open compact (neat) group $K\subseteq G(\mathbb{A}_f)$, by the works of Shimura, Deligne, Milne and Borovoi, we have the attached Shimura variety $\Sh_K(G,X)$ over $E$.
The datum $(G,X)$ is said to have good reduction at a prime $p$, if $G_{\RAT_p}$ extends to a reductive group $\intG$ over $\COMP$. We will fix a place $v$ of $E$ over $p$, and write $O_{E,(v)}$ for the ring of $v$-integers. For $K_p=G_{\COMP}(\COMP)$, Langlands and Milne conjectured (cf. \cite{milne} section 2) that the pro-variety $$\Sh_{K_p}(G,X):=\varprojlim_{K^p}\Sh_{K_pK^p}(G,X),$$ where $K^p$ runs through compact open subgroups of $G(\mathbb{A}_f^p)$, has an integral canonical model \[\ES_{K_p}(G,X)\] over $O_{E,(v)}$. The prime to $p$ Hecke action of $G(\mathbb{A}_f^p)$ on $\Sh_{K_p}(G,X)$ should extend to $\ES_{K_p}(G,X)$, and when $K^p$ varies the inverse system of \[\ES_{K_pK^p}(G,X):=\ES_{K_p}(G,X)/K^p\] should be a system of smooth models of $\Sh_{K_pK^p}(G,X)$ with \'{e}tale transition morphisms. Thanks to the works of Kisin \cite{CIMK}, Vasiu \cite{INTV} and Kim-Madapusi Pera \cite{2-adic int} (for $p=2$), smooth integral canonical models are known to exist if the Shimura datum $(G, X)$ is of abelian type. Thus it is natural to investigate geometry of the (geometric) special fibers \[\ES_{K_pK^p,0}(G,X)\] over\footnote{Here in the introduction we work uniformly over $\overline{\kappa}$ for simplicity. We remind the reader that in the body part of this paper, we denote by $\ES_{K_pK^p,0}(G,X)$ the special fiber over $\kappa$ and by $\ES_{K_pK^p,\overline{\kappa}}(G,X)$ the geometric special fiber over $\overline{\kappa}$.} $\overline{\kappa}$ of these models, where $\kappa$ is the residue field of $O_{E,(v)}$. In the following, $(G,X)$ will always be a Shimura datum of abelian type with good reduction at $p$.

It turns out that the geometry of Shimura varieties in characteristic $p$  is much finer than that in characteristic 0, in the sense that there are several invariants in characteristic $p$, which are stable under the prime to $p$ Hecke action, leading to various natural stratifications of the special fiber $\ES_{K_pK^p,0}(G,X)$. Following Oort (in the Siegel case, see \cite{foliation-Oort} for example), Viehmann-Wedhorn (in the PEL type case, cf. \cite{VW}) and many others (see the references of \cite{VW, Vieh} for example), we mainly concentrate on the \emph{Newton stratification},  the \emph{Ekedahl-Oort stratification}, and the \emph{central leaves} in this paper. In fact in this paper we will only be concerned with some basic properties of these stratifications, and the relations between these strata. Our study here can be put\footnote{In fact the main part of \cite{He-Rap} is to work with all parahoric levels at $p$. Here we restrict to the hyperspecial levels, as a first step toward the verification of the axioms in \cite{He-Rap} in the abelian type case.} in the general framework proposed by He-Rapoport in \cite{He-Rap}, where more group theoretic aspects are emphasized (compare also \cite{guide to red mod p, coxeter type, fully H-N decomp}).\\

 If $(G,X)$ is of PEL type, then we can use the explicit moduli interpretation to treat the geometry of the special fibers. In the more general Hodge type case, at the current stage we do not know whether there exists moduli interpretation in mixed characteristic. However, there still exists an abelian scheme together with certain tensors over the special fiber of a Hodge type Shimura variety, and we can make use of it to study the geometry modulo $p$, cf. \cite{Foliation-Hamacher,muordinary,EOZ,level m} for example. If now $(G,X)$ is a general abelian type Shimura datum, which is the case we want to treat in this paper, then there is no abelian schemes nor $p$-divisible groups over the associated Shimura varieties at all. Nevertheless, we can study them by choosing some related Hodge type Shimura varieties. This usually requires the study of some finer geometric structures on these Hodge type Shimura varieties. Along the way, we will also see some close relations between the strata of different Shimura varieties. 
 
 To a certain extent, many of our following main results were previously known in the PEL type and Hodge type cases. Our modest goal here is to extend them to the abelian type case and hence in the full generality when integral canonical models exist, and to provide a useful documentary literature with a point of view toward possible applications to Langlands program. On the other hand, there are many natural examples of Shimura varieties of abelian type but not of Hodge type: for example, the Shimura varieties associated to a general (not totally indefinite) quaternion algebra or the special orthogonal group SO. We discuss our constructions for these  Shimura varieties in details, which we hope to find interesting applications. For example, the orthogonal Shimura varieties play very important roles in Kudla's program (\cite{Kud}) and the arithmetic Gan-Gross-Prasad conjecture (\cite{GGP}). We expect that our results will be found useful to these fields.\\
 
Now we state our main results.
Let $\{\mu\}$ be the Hodge cocharacter attached to the Shimura datum $(G, X)$. The parametrizing set of the Newton stratification is the finite Kottwitz set $B(G,\mu)$ (cf. \cite{isocys with addi 2} section 6), which may be viewed as the set of isomorphism classes of $F$-isocrystals with $G$-structure associated to (geometric) points in $\ES_0:=\ES_{K_pK^p,0}(G,X)$. Recall that there is a partial order $\leq$ on $B(G,\mu)$, cf. \ref{group theo settings for NP}. In the classical Siegel case, one can realize $B(G,\mu)$ as the set of Newton polygons of the polarized $p$-divisible groups attached to points on the special fiber. The basic properties of the Newton stratification are as follows\footnote{In fact the Newton stratification is defined over $\kappa$, and these properties are also true over $\kappa$, see subsections 2.2 and 2.3.} (cf. Theorem \ref{Newton for abelian type}).
\begin{introth}
Each Newton stratum $\ES_0^b$ is non-empty, and it is an equi-dimensional locally closed subscheme of $\ES_0$ of dimension \[\langle\rho,\mu+\nu_G(b)\rangle-\frac{1}{2}\mathrm{def}_G(b).\] Here $\rho$ is the half-sum of positive roots of $G$, $\nu_G(b)$ is the Newton point associated to $[b]\in B(G,\mu)$, and $\mathrm{def}_G(b)$ is the number defined in Definition \ref{D:def}. Moreover, $\overline{\ES_0^b}$, the closure of $\ES_0^b$, is the union of strata $\ES_0^{b'}$ with $[b']\leq [b]$, and $\overline{\ES_0^b}-\ES_0^b$ is either empty or pure of codimension 1 in $\overline{\ES_0^b}$.
\end{introth}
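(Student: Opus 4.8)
The plan is to reduce the abelian-type statement to the Hodge-type case, where each assertion is already known (non-emptiness by Kisin--Madapusi Pera / Lee, equidimensionality and the dimension formula by Hamacher and Zhang, and the closure relation together with the purity of codimension~$1$ of $\overline{\ES_0^b}-\ES_0^b$ by Hamacher). So the first step is to recall the construction of the Newton stratification on $\ES_0$ in the abelian-type setting: one chooses an auxiliary Hodge-type Shimura datum $(G_1,X_1)$ together with the usual diagram relating $\ES_{K_p}(G,X)$ and $\ES_{K_{1,p}}(G_1,X_1)$ through a connected component and a finite group action, so that there is a common ``central'' geometric object (the $F$-isocrystal with $G^{\adj}$-structure, equivalently the map to $B(G^{\adj},\mu^{\adj})$) whose fibers define the strata on both sides, compatibly. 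The key points to nail down are: (i) the identification of the relevant Newton sets, $B(G,\mu)\to B(G^{\adj},\mu^{\adj})$ and $B(G_1,\mu_1)\to B(G_1^{\adj},\mu_1^{\adj})=B(G^{\adj},\mu^{\adj})$, as order-preserving maps with the right image (this uses the description of $B(G,\mu)$ via $B(G^{\adj},\mu^{\adj})$ and the Kottwitz point recalled in~\ref{group theo settings for NP}); and (ii) that the stratification on $\ES_0$ is, étale-locally and up to the finite group action and passage to connected components, isomorphic to the corresponding one on $\ES_{1,0}$.

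With that in hand, each property descends. Non-emptiness: a stratum $\ES_0^b$ is non-empty iff the corresponding stratum on a connected component of $\ES_{1,0}$ is non-empty; since the Hodge-type Newton strata are non-empty and the map on connected components is surjective in the relevant sense (here one invokes that the stratification only depends on $G^{\adj}$ and that the adjoint Newton stratum meets every connected component, a consequence of the almost-product structure / Hecke-equivariance), non-emptiness follows. Local closedness, equidimensionality and the dimension formula $\langle\rho,\mu+\nu_G(b)\rangle-\tfrac12\mathrm{def}_G(b)$: these are étale-local, hence transported from the Hodge-type case; one must check that the numerical invariants match, i.e. that $\langle\rho,\mu+\nu_G(b)\rangle$ and $\mathrm{def}_G(b)$ are computed on $G^{\adj}$ (resp. agree with the $G_1$-quantities after the identification of Newton points), which is where Definition~\ref{D:def} and the compatibility of $\rho$, $\mu$, $\nu$ under $G\to G^{\adj}$ enter. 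Closure relation: since $\overline{\ES_0^b}=\bigcup_{[b']\le[b]}\ES_0^{b'}$ holds on the Hodge-type side and the transition morphisms and the finite covering are finite (in particular closed and open), the closure relation passes to $\ES_0$ once we know the order on $B(G,\mu)$ is the one pulled back from $B(G^{\adj},\mu^{\adj})$. Purity: $\overline{\ES_0^b}-\ES_0^b$ being empty or pure of codimension~$1$ is again étale-local on $\ES_0$ and so inherited from the Hodge-type statement.

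The main obstacle I expect is step~(ii): making precise, in the abelian-type formalism of Kisin, that the Newton stratification is genuinely ``the same'' on $\ES_0$ and on the auxiliary $\ES_{1,0}$ after the manipulations with connected components and the action of the finite group $\Delta=\mathcal{A}(G)/...$ (or whatever the relevant quotient is in~\cite{CIMK}). Concretely, one needs: (a) that the $F$-isocrystal-with-$G^{\adj}$-structure is well-defined on all of $\ES_0$ (not just on a Hodge-type model), compatible with the Hecke action and with the covering map, so that the strata are honestly defined and Hecke-stable; and (b) that passing from a connected component back up to $\ES_0$, and quotienting by the finite group, neither creates nor destroys strata and preserves all the local geometric properties (dimension, equidimensionality, purity) — this is where one has to be careful that the finite group acts ``compatibly'' with the stratification, which should follow because its action is via $G(\mathbb{A}_f^p)$-Hecke operators and the covering. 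Granting the analogous care already exercised for the Ekedahl--Oort stratification and central leaves elsewhere in the paper, this is routine but technical; everything else is a formal transfer from the Hodge-type results cited above. \qed
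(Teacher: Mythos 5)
Your high-level strategy matches the paper's: reduce to the Hodge-type case by passing through the adjoint Shimura datum and a well-chosen auxiliary Hodge-type datum $(G_1,X_1)$, identify $B(G,\mu)\cong B(G^{\adj},\mu)\cong B(G_1,\mu_1)$ as ordered sets, and then transfer non-emptiness, local closedness, the dimension formula, the closure relation, and purity from the Hodge-type results of Lee/Kisin--Madapusi Pera/Yu and Hamacher/Zhang. You also correctly locate the crux of the argument in step~(ii): showing that the Newton stratification on the connected component $\ES_{K_{1,p},\overline\kappa}(G_1,X_1)^+$ is stable under the group $\mathscr{A}(G_{1,\mathbb{Z}_{(p)}})^\circ$ (and hence under $\Delta$) so that it descends cleanly.

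However, there is a genuine gap precisely at that crux, and the explanation you offer for why it holds is incorrect. You write that the compatibility of the group action with the stratification ``should follow because its action is via $G(\mathbb{A}_f^p)$-Hecke operators and the covering.'' This is not so: the group $\mathscr{A}(G_{1,\mathbb{Z}_{(p)}})^\circ$ is an amalgam containing a copy of the adjoint group $G_1^{\adj}(\mathbb{Z}_{(p)})^+$, whose action on the integral model is \emph{not} a Hecke operator. It is realized via the Kisin--Pappas construction of twisting the universal abelian scheme (and its $p$-divisible group) by a torsor under the center $Z_{G_1}$, and a priori this twist can change the Dieudonn\'e module with its tensors, hence the Newton point. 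The actual mechanism in the paper has two ingredients you are missing. First, by Kisin's lemma (Lemma \ref{Kisin's lemma}) one may and must choose $(G_1,X_1)$ so that $Z_{G_1}$ is a torus. Second, one needs the twisting result for $p$-divisible groups with tensors (Corollary \ref{iso of kisin twist}): because $Z_{G_1}$ is smooth with connected fibers, the torsor $\mathcal{P}$ arising from an element of $G_1^{\adj}(\mathbb{Z}_{(p)})^+$ is trivial over $\mathbb{Z}_p$, so the twisted $p$-divisible group $\D^{\mathcal P}$ is isomorphic to $\D$ compatibly with the crystalline tensors. Only with this in hand does Proposition \ref{adjoint Newton-result} --- the $\mathscr{A}(G_{1,\mathbb{Z}_{(p)}})^\circ$-stability of the induced Newton stratification --- hold, after which the formal descent and transfer you describe go through. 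Without this argument, the claim that ``quotienting by the finite group neither creates nor destroys strata'' is unsubstantiated, and your appeal to the analogous E-O/central-leaves arguments elsewhere in the paper is circular, since those arguments rest on exactly the same twisting lemma.
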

We remark that the non-emptiness was conjectured by Rapoport (cf. \cite{guide to red mod p} Conjecture 7.1) and by Fargues (cf. \cite{corr local L and coho R-Z} page 55), and it has been proved by Viehmann-Wedhorn in the PEL type case (\cite{VW}), and Dong-Uk Lee, Kisin-Madapusi Pera and Chia-Fu Yu respectively in the Hodge type case, see \cite{Newton non-epty, Yu} for example. The other statements are due to Hamacher in the PEL type and Hodge type cases, cf. \cite{Foliation-Hamacher,geo of newt PEL}. The dimension formula in the Hodge type case was proved independently by the second author in \cite{level m}.\\
 
Let $W=W_G$ be the (absolute) Weyl group of $G$, and we have a certain subset ${}^JW\subset W$ defined by  $\{\mu\}$ equipped with a partial order $\preceq$, cf. \ref{subsection group EO}. The parametrizing set of the Ekedahl-Oort stratification is the set ${}^JW$, which classifies isomorphism classes of $G$-zips (or ``$F$-zips with $G$-structure'') associated to (geometric) points in $\ES_0=\ES_{K_pK^p,0}(G,X)$. In the classical Siegel case, ${}^JW$ classifies the $p$-torsions of the polarized abelian varieties attached to points on the special fiber. The basic properties of the Ekedahl-Oort stratification are as follows (cf. Theorem \ref{Th EO abelian type}).
\begin{introth}
\begin{enumerate}
\item Each Ekedahl-Oort stratum $\ES_0^w$ is an equi-dimensional locally closed subscheme of $\ES_{0}$. Moreover, $\overline{\ES_0^w}$, the closure of $\ES_0^w$,  is the union of strata $\ES_0^{w'}$ with $w'\preceq w$.
\item  For $w\in {}^JW$,  $\ES_0^w$ is of dimension of $l(w)$, the length of $w$, if non-empty. Moreover, each $\ES_0^w$ is non-empty if $p>2$.
\item Each stratum $\ES_0^w$ is smooth and quasi-affine.
\end{enumerate}
\end{introth}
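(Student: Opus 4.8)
The strategy is to reduce the statement to the Hodge type case, where it is known by the cited works \cite{EOZ,level m} together with the Pink--Wedhorn--Ziegler theory of the stack of $G$-zips, using Kisin's presentation \cite{CIMK} of abelian type integral canonical models in terms of Hodge type ones. \emph{Step 1: transfer of the $G$-zip morphism.} Choose an auxiliary Hodge type datum $(G_1,X_1)$ with $(G_1^{\adj},X_1^{\adj})=(G^{\adj},X^{\adj})$, a central isogeny $G_1^{\der}\to G^{\der}$, and matching Hodge cocharacters, as in the definition of abelian type and in \cite{CIMK}. On the Hodge type special fiber $\ES_0(G_1,X_1)$ one has the morphism $\zeta_1$ to $G_1\text{-}\mathrm{Zip}^{\mu_1}$ built from the de Rham realization of the universal abelian scheme with its crystalline tensors, and this morphism is smooth by \cite{EOZ,level m}. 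Composing with the natural morphism $G_1\text{-}\mathrm{Zip}^{\mu_1}\to G^{\adj}\text{-}\mathrm{Zip}^{\mu^{\adj}}$ (whose underlying stratification agrees, via the central isogeny, with the one on $G\text{-}\mathrm{Zip}^{\mu}$ indexed by ${}^JW$) one obtains, over a geometric connected component $\ES_0^{+}(G_1,X_1)$, a smooth morphism to $G\text{-}\mathrm{Zip}^{\mu}$. Kisin's construction presents $\ES_0=\ES_{K_pK^p,0}(G,X)$, up to the action of a group $\mathscr{A}$ (prime-to-$p$ Hecke together with an action through $G^{\adj}(\Q)^{+}$), in terms of connected components of $\ES_0(G_1,X_1)$ linked by étale correspondences; since the $G$-zip attached to a point is an intrinsic invariant, the morphism above is $\mathscr{A}$-equivariant and independent of auxiliary choices, hence descends to a smooth morphism $\zeta\colon\ES_0\to G\text{-}\mathrm{Zip}^{\mu}$, smoothness being checked on the covering by connected components of the Hodge type model.

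\emph{Step 2: formal consequences.} Set $\ES_0^{w}:=\zeta^{-1}(G\text{-}\mathrm{Zip}^{\mu}_{w})$ for $w\in{}^JW$, where the $G\text{-}\mathrm{Zip}^{\mu}_{w}$ are the Pink--Wedhorn--Ziegler strata: they are locally closed, smooth, with closure $\{w'\preceq w\}$ and $\dim G\text{-}\mathrm{Zip}^{\mu}_{w}=l(w)-l(w_{\max})$, where $w_{\max}$ is the longest element (the open stratum). Smoothness of $\zeta$ then gives at once that each $\ES_0^{w}$ is locally closed and smooth, and equidimensional with $\dim\ES_0^{w}=l(w)-l(w_{\max})+\dim\ES_0=l(w)$, using the standard combinatorial identity $l(w_{\max})=\dim\ES_0$. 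Moreover an open morphism pulls back closures to closures, so once $\zeta$ is surjective one gets $\overline{\ES_0^{w}}=\zeta^{-1}(\overline{G\text{-}\mathrm{Zip}^{\mu}_{w}})=\bigsqcup_{w'\preceq w}\ES_0^{w'}$. This yields (1), (2), and the smoothness assertion in (3), modulo non-emptiness.

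\emph{Step 3: non-emptiness and quasi-affineness.} The remaining input is surjectivity of $\zeta$, i.e. non-emptiness of every $\ES_0^{w}$, and this is where I expect the real work to be. I would deduce it from the Hodge type case: on $\ES_0(G_1,X_1)$ every EO stratum is non-empty, and since $\mathscr{A}$ acts transitively on the set of geometric connected components and preserves EO strata (again by intrinsicness of the $G$-zip), each EO stratum on the Hodge type model meets every connected component; transferring through $\ES_0^{+}(G_1,X_1)$ then shows every $\ES_0^{w}$ is non-empty, the index sets ${}^JW$ for $G_1$ and $G$ being identified because $W$ and $J$ depend only on the root datum up to isogeny and on $\mu$. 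The delicate point is precisely this connectedness-versus-group-action bookkeeping: one must control the behaviour of the Hodge type EO stratification under enough Hecke-type and $G^{\adj}(\Q)^{+}$-operations to pin it down on a single connected component. Finally, for quasi-affineness of $\ES_0^{w}$, each EO stratum on the Hodge type model is quasi-affine via ampleness of a suitable automorphic line bundle on the stratum (in the style of Goldring--Koskivirta); its intersection with a connected component is open and closed in the stratum, hence quasi-affine, and quasi-affineness is preserved under the finite quotient constructions producing $\ES_0^{w}$; alternatively one transports the ample automorphic bundle directly to $\ES_0$ and restricts it to $\ES_0^{w}$.
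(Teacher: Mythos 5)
Your overall strategy coincides with the paper's: construct the $G$-zip morphism on the Hodge type model, show it descends along Kisin's presentation of the abelian type integral model, and read off the geometric properties from smoothness of $\zeta$ together with the Pink--Wedhorn--Ziegler theory plus the Hodge type input. However, there is a genuine gap at the heart of Step~1.

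You assert that \textquotedblleft since the $G$-zip attached to a point is an intrinsic invariant, the morphism above is $\mathscr{A}$-equivariant and independent of auxiliary choices, hence descends.\textquotedblright\ This is precisely the nontrivial claim, and you give no argument for it. The group $\mathscr{A}(G_{1,\INTP})^\circ$ acts through $G^{\adj}(\INTP)^+$, and in Kisin--Pappas this action is implemented by \emph{twisting} the universal abelian scheme $\A$ by a torsor $\mathcal{P}$ under the center $Z_{G_1}$: a point $x$ is sent to a point whose abelian scheme is $\A_x^{\mathcal{P}}$, not $\A_x$. The $G_1$-zip is built out of $\A_x[p]$, so a priori it changes under this action. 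The paper's technical contribution here is a theory of twisting $p$-divisible groups (their Section 1.3, culminating in Corollary \ref{iso of kisin twist}), which shows $\A_x^{\mathcal{P}}[p^\infty]\cong\A_x[p^\infty]$ \emph{compatibly with tensors} precisely because $Z_{G_1}$ is a torus (smooth with connected fibres), so the relevant torsors are trivial. This in turn forces the paper to invoke Kisin's Lemma \ref{Kisin's lemma}: one must first pass to $(G^{\adj},X^{\adj})$ and then choose the auxiliary Hodge type datum $(G_1,X_1)$ so that $Z_{G_1}$ is a torus with matching local reflex field. You do not impose this condition on $(G_1,X_1)$, and without it the twisting argument breaks. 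Your \textquotedblleft intrinsicness\textquotedblright\ clause also cannot be rescued by appeal to the crystalline canonical model of Lovering, since you never invoke it; and even that route (the paper's Section 5) requires nontrivial work to identify the intrinsic $G^c$-zip with the one built from the abelian scheme, and only afterwards to compare with the passing-to-adjoint construction.

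The downstream parts of your argument then inherit the same gap: Step 3's transitivity-of-$\mathscr{A}$ argument for non-emptiness and your quasi-affineness transfer both rely on the $G$-zip being $\mathscr{A}$-invariant, which is the unproven point. Once that is supplied (exactly as the paper does via twisting of $p$-divisible groups plus Kisin's Lemma), Steps 2 and 3 work essentially as you describe, and your dimension bookkeeping on the zip stack is correct.
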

We remark that the non-emptiness is due to Viehmann-Wedhorn in the PEL type case (\cite{VW}), and Chia-Fu Yu in the Hodge type case (\cite{Yu}). In the projective Hodge type case, Koskivirta proved the non-emptiness independently, cf. \cite{Kosk}. We also reamrk that the non-emptiness here (as well as in Theorem C) relies on \cite{LRKisin} Proposition 1.4.4, where $p>2$ has to be assumed. The other statements in the PEL type case are due to Viehmann-Wedhorn (\cite{VW}). In the Hodge type case, the quasi-affiness is due to Goldring-Koskivirta (\cite{Hasse invariants}), and the closure relation and dimension formula are due to the second author (\cite{EOZ}).\\
 
Attached to the Shimura datum $(G, X)$ we have a set\footnote{Here the more natural set should be $C(G,\mu)$; however, there will be no difference if the center $Z_G$ of $G$ is connected, cf. Lemma \ref{iden C(G,mu)}.} $C(G^\adj,\mu)$, which may be viewed as the set (often infinite) of isomorphism classes of $F$-crystals with $G^\adj$-structure associated to points in $\ES_{K_pK^p,0}(G,X)$. Here $G^\adj$ is the adjoint group associated to $G$. We have surjections $C(G^\adj,\mu)\twoheadrightarrow B(G^\adj,\mu)\simeq B(G,\mu)$ and $C(G^\adj,\mu)\twoheadrightarrow {}^JW_{G^\adj}\simeq {}^JW_{G}$ which, roughly speaking, send $F$-crystals with $G^\adj$-structure to the associated $F$-isocrystals with $G^\adj$-structure and $G^\adj$-zips respectively. Associated to an element $[c]\in C(G^\adj,\mu)$, we can define a central leaf, which is a finer structure than the above Newton and Ekedahl-Oort strata. In the Siegel case, a central leaf is the locus where one fixes an isomorphism class of the polarized $p$-divisible groups. The basic properties of central leaves are as follows (cf. Theorem \ref{cent abelian ty-ad}).
\begin{introth}
Each central leaf is a smooth, equi-dimensional locally closed subscheme of $\ES_{0}$. It is closed in the Newton stratum containing it. Any central leaf in a Newton stratum $\ES_0^b$ is of dimension $\langle2\rho,\nu_G(b)\rangle$ if non-empty. Here as above $\rho$ is the half sum of positive roots of $G$. Moreover, central leaves are non-empty if $p>2$.
\end{introth}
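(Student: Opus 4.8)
The plan is to reduce the assertion to the Hodge type case, which is available from the earlier part of the paper (generalizing the results of Hamacher \cite{Foliation-Hamacher} and the second author \cite{level m}), by the same descent along connected components that underlies the proofs of Theorems \ref{Newton for abelian type} and \ref{Th EO abelian type} in the abelian type setting. The decisive structural point is that a central leaf $\ES_0^{[c]}$, for $[c]\in C(G^\adj,\mu)$, is by construction the locus on which the isomorphism class of the $F$-crystal with $G^\adj$-structure is constant; it therefore depends only on the adjoint Shimura datum and is insensitive to the change of group that intervenes between abelian type and Hodge type.

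First I would fix, following Deligne and Kisin, a Hodge type datum $(G_1,X_1)$ with a central isogeny $G_1\to G$ inducing an isomorphism $G_1^\adj\xrightarrow{\sim}G^\adj$ and matching the Hodge cocharacters, together with compatible hyperspecial levels. Kisin's construction of the integral canonical model, as recalled earlier, identifies each geometric connected component of $\ES_{K_pK^p,\overline{\kappa}}(G,X)$ with a geometric connected component of $\ES_{K_{1,p}K_1^p,\overline{\kappa}}(G_1,X_1)$, compatibly with the $F$-crystal with $G^\adj$-structure on both sides and equivariantly for the component-permuting group $\mathcal{A}(G)$ of \cite{CIMK}. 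Under these identifications the restriction of $\ES_0^{[c]}$ to a connected component of $\ES_{K_pK^p,\overline{\kappa}}(G,X)$ is, literally by definition, the restriction of the central leaf attached to the same $[c]\in C(G_1^\adj,\mu_1)=C(G^\adj,\mu)$ on the corresponding component of $\ES_{K_{1,p}K_1^p,\overline{\kappa}}(G_1,X_1)$. Consequently every property that is local on the Shimura variety transfers at once from the Hodge type case: that $\ES_0^{[c]}$ is a locally closed, smooth, equi-dimensional subscheme, that it is closed inside the Newton stratum $\ES_0^b$ containing it (the compatibility of the Newton strata with the component identifications being exactly what is proved for Theorem \ref{Newton for abelian type}), and the dimension formula. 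For the last point one uses that $[b]$ is the common image of $[c]$ in $B(G,\mu)\simeq B(G^\adj,\mu)$ and that $\langle 2\rho,\nu_G(b)\rangle$ depends only on the root datum shared by $G$, $G_1$ and $G^\adj$ and is annihilated by the central part of the Newton point, so that it agrees with the dimension computed on the $G_1^\adj$-crystal leaf in the Hodge type case.

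What remains, and what I expect to be the only genuine obstacle, is non-emptiness, since $C(G^\adj,\mu)$ is a purely group-theoretic index set and one must check that every $[c]$ actually occurs. The input here is that in the Hodge type case every central leaf indexed by $C(G_1^\adj,\mu_1)$ is non-empty, which is the hard geometric ingredient, proved earlier by combining the non-emptiness of Newton strata (Theorem \ref{Newton for abelian type}) with the almost product structure of Newton strata following Hamacher, or with Rapoport--Zink uniformization. Granting this, fix $[c]$; the associated leaf is non-empty on some geometric connected component of $\ES_{K_{1,p}K_1^p,\overline{\kappa}}(G_1,X_1)$. Since the symmetries permuting the connected components --- the prime-to-$p$ Hecke action, and the $p$-adic operators in $\mathcal{A}(G)$, which act through the abelianization --- do not alter the isomorphism class of the $F$-crystal with $G^\adj$-structure (a compatibility that must be, and is, checked along the way), they preserve central leaves; hence a leaf that is non-empty on one component is non-empty on all of them. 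As every component of $\ES_{K_pK^p,\overline{\kappa}}(G,X)$ is realized as a component of $\ES_{K_{1,p}K_1^p,\overline{\kappa}}(G_1,X_1)$, the leaf $\ES_0^{[c]}$ is non-empty. The bulk of the remaining work is then the bookkeeping already needed for Theorems \ref{Newton for abelian type} and \ref{Th EO abelian type}: checking that the connected-component identifications coming from Kisin's models are compatible with the $F$-crystal with $G^\adj$-structure and with the $\mathcal{A}(G)$-action, and that the leaves, defined a priori over $\overline{\kappa}$ component by component, descend to locally closed subschemes already over $\kappa$.
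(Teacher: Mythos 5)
Your strategy --- reduce to the Hodge type case along connected components, using that central leaves are insensitive to the choice of group --- is the paper's strategy, but you gloss over the two points that carry all the technical weight. First, Kisin's construction does not literally \emph{identify} connected components: one has $\ES_{K_p}(G,X)^+ = \ES_{K_{1,p}}(G_1,X_1)^+/\Delta$ with $\Delta = \mathrm{Ker}(\mathscr{A}(G_{1,\INTP})^\circ \to \mathscr{A}(G_{\INTP})^\circ)$ generally nontrivial, so you must prove that the central leaves upstairs are $\Delta$-stable before they descend. This is Proposition \ref{adjoint cent result}, and its proof rests on the twisting of $p$-divisible groups (Corollary \ref{iso of kisin twist}), which in turn requires the center $Z_{G_1}$ to be a torus --- a choice guaranteed by Lemma \ref{Kisin's lemma} and by no means automatic. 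Calling this ``a compatibility that must be, and is, checked along the way'' undersells it: it is the heart of the passage from Hodge type to abelian type.

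Second, you take for granted that a central leaf ``depends only on the adjoint Shimura datum and is insensitive to the change of group.'' The Hodge type central leaves to which Theorem \ref{cent for Hodge} applies are the fibers of $\ES_K(G_1,X_1)(\overline{\kappa}) \to C(G_1,\mu_1)$; they record the Dieudonn\'e module with its $G_1$-tensors, not merely its $G^\adj$-structure. That these agree with the $G^\adj$-leaves is Lemma \ref{iden C(G,mu)}, valid because $G_1\to G^\adj$ has connected kernel (again the torus condition); for a general isogeny $C(G_1,\mu_1)\to C(G^\adj,\mu)$ is not a bijection, your $G^\adj$-leaves would a priori only be disjoint unions of $G_1$-leaves, and the smoothness, equi-dimensionality and closedness in the Newton stratum proved in the Hodge type case would not transfer verbatim. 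Once both points are made explicit, your argument --- including non-emptiness, which the paper deduces from non-emptiness of Newton strata together with \cite{LRKisin} Proposition 1.4.4, and then transfers across the $\Delta$-quotient --- does reproduce the paper's proof of Theorem \ref{cent abelian ty-ad}.
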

The non-emptiness in the abelian type case follows from that in the Hodge type case, which is in turn a consequence of the non-emptiness of the Newton strata. In the PEL type case, see \cite{VW} Theorem 10.2.
The other statements in the Hodge type case are due to Hamacher (cf. \cite{Foliation-Hamacher}; see also \cite{geo of newt PEL} in the PEL type case) and the second author (\cite{level m}) respectively.\\

The ideas to prove the above theorems are as follows. We consider first the Hodge type case, where most of the above are known, see the above remarks after each theorems.
To extend to the abelian type case, we first work with a Shimura datum of abelian type such that the group $G$ is \emph{adjoint}. By using a lemma of Kisin (cf. Lemma \ref{Kisin's lemma}), we can find a Hodge type Shimura datum $(G_1, X_1)$ such that
\begin{enumerate}
	\item $(G_1^\adj,X_1^\adj)\stackrel{\sim}{\longrightarrow} (G,X)$ and $Z_{G_1}$ is a torus;
	
	\item if $(G,X)$ has good reduction at $p$, then $(G_1,X_1)$ in (1) can be chosen to have good reduction at $p$, and such that $E(G,X)_p=E(G_1,X_1)_p$.
\end{enumerate}
Then the integral canonical model for $(G,X)$ is given by \[\begin{split}\ES_{K_p}(G,X)&=[\mathscr{A}(G_{\INTP})\times \ES_{K_{1,p}}(G_1,X_1)^+]/\mathscr{A}(G_{1,\INTP})^\circ\\
&=[\mathscr{A}(G_{\INTP})\times \ES_{K_{p}}(G,X)^+]/\mathscr{A}(G_{\INTP})^\circ,\end{split}\] 
where $\mathscr{A}(G_{\INTP}), \mathscr{A}(G_{1,\INTP})^\circ$ and $\mathscr{A}(G_{\INTP})^\circ$ are the groups defined in \cite{CIMK} 3.3.2 (see also \ref{group-compn to whole}). On geometrically connected components we have
\[\ES_{K_{p}}(G,X)^+=\ES_{K_{1,p}}(G_1,X_1)^+/\Delta\]  with
\[\Delta=\mathrm{Ker}(\mathscr{A}(G_{1,\INTP})^\circ\ra  \mathscr{A}(G_{\INTP})^\circ).\]
To show that the induced Newton stratification, Ekedahl-Oort stratification, central leaves on $\ES_{K_{1,p},0}(G_1,X_1)^+$ desecend to $\ES_{K_p,0}(G,X)^+$, we need to show that the Newton strata, Ekedahl-Oort strata, and central leaves of $\ES_{K_{1,p},0}(G_1,X_1)^+$ are stable under the action of $\Delta$, and their quotients by $\Delta$ are well defined. By \cite{Kisin-Pappas} 4.4 the action of $\Delta$ can be described by certain construction of twisting of abelian varieties. This leads us to study the effect to $p$-divisible groups with additional structures under the construction of twisting abelian varieties in \cite{Kisin-Pappas}. Using the fact that $Z_{G_1}$ is a torus, we can show that this twisting does not change the associated $p$-divisible groups with additional structures, and thus  the Newton strata, Ekedahl-Oort strata, and central leaves of $\ES_{K_{1,p},0}(G_1,X_1)^+$ are stable under the action of $\Delta$, and their quotients by $\Delta$ are indeed well defined. For a general Shimura datum of abelian type $(G, X)$, we first pass to the associated adjoint Shimura datum $(G^\adj, X^\adj)$ and apply the above construction to $(G^\adj, X^\adj)$. Then we define the Newton stratification, Ekedahl-Oort stratification, and central leaves on $\ES_{K_p,0}(G,X)$ by pullling back those on $\ES_{K_p^\adj,0}(G^\adj,X^\adj)$ under the natural morphism \[\ES_{K_p,0}(G,X)\ra \ES_{K_p^\adj,0}(G^\adj,X^\adj).\]

In fact, there is an alternative way (however we need to assume $p>2$ here) to define the Newton stratification, Ekedahl-Oort stratification, and central leaves on $\ES_{K_p,0}(G,X)$, by using the filtered $F$-crystal with $G^c$-structure \[\omega_{\text{cris}}:\mathrm{Rep}_{\mathbb{Z}_p}(G^c)\rightarrow  \mathrm{FFCrys}_{\widehat{\ES_{K_p}}(G,X)}\] on $\ES_{K_p}(G,X)$ constructed by Lovering in \cite{Loverig 2}, which may be viewed as a crystalline model of the universal de Rham bundle $\omega_{\mathrm{dR}}: \mathrm{Rep}_{\mathbb{Q}_p}(G^c)\rightarrow  \mathrm{Fil}^\nabla_{\widehat{\ES_{K_p}}(G,X)^{rig}}$, see \cite{Liu-Zhu}.  Here $G^c=G/Z_G^{nc}$ and $Z_G^{nc}\subset Z_G$ is the largest subtorus of $Z_G$ that is split over $\mathbb{R}$ but anisotropic over $\mathbb{Q}$, $\widehat{\ES_{K_p}}(G,X)$ is the $p$-adic completion of $\ES_{K_p}(G,X)$ along its special fiber, $\widehat{\ES_{K_p}}(G,X)^{rig}$ is the associated adic space, and $\mathrm{FFCrys}_{\widehat{\ES_{K_p}}(G,X)}$ (resp. $\mathrm{Fil}^\nabla_{\widehat{\ES_{K_p}}(G,X)^{rig}}$) is the category of filtered $F$-crystals (resp. filtered isocrystals) on $\widehat{\ES_{K_p}}(G,X)$ (resp. $\widehat{\ES_{K_p}}(G,X)^{rig}$), cf. \ref{subsection filtered}. This construction in turn uses ideas from \cite{Loverig 1} where one constructs an auxiliary Shimura datum of abelian type $(\mathcal{B},X')$, such that there is a commutative diagram of Shimura data $$\xymatrix{
	(\mathcal{B},X')\ar[d]\ar[r]& (G_1,X_1)\ar[d]\\
	(G,X)\ar[r]& (G^\mathrm{ad},X^\mathrm{ad})}$$ inducing a commutative diagram of (integral models of) Shimura varieties
$$\xymatrix{\ES_{K_{\mathcal{B},p}}(\mathcal{B},X')\ar[d]\ar[r]& \ES_{K_{1,p}}(G_1,X_1)\ar[d]\\
	\ES_{K_p}(G,X)\ar[r]& \ES_{K_p^\adj}(G^\mathrm{ad},X^\adj).}$$
Using the auxiliary Shimura datum of abelian type $(\mathcal{B},X')$, one can then construct the universal filtered $F$-crystal with $G^c$-structure
on $\ES_{K_p}(G,X)$  from that on $\ES_{K_{1,p}}(G_1,X_1)$.
If $(G,X)$ is of Hodge type, it is easy to see the construction of the Newton stratification, Ekedahl-Oort stratification, and central leaves using  the filtered $F$-crystal with $G^c$-structure coincides with the construction above. From this we can deduce that the two constructions of the Newton and Ekedahl-Oort stratifications via passing to adjoint and via using filtered $F$-crystal with $G^c$-structure respectively coincide for a general abelian type Shimura datum $(G,X)$, cf. \ref{Newton using crys cano}, \ref{E-O using crys cano}. If the center $Z_G$ is connected, we can show the two constructions of central leaves also coincide. In the general case, except the non-emptiness, all the other statements in the above Theorem C also hold for the \emph{canonical} central leaves defined via the $F$-crystal with $G^c$-structure
on $\ES_{K_p,0}(G,X)$ in Definition \ref{def canonical central leaves}. For more details, see \ref{central leaves using crys cano}.\\

We also study the relations between the Newton stratification, Ekedahl-Oort stratification, and central leaves using the group theoretic methods in \cite{Nie Sian, coxeter type, fully H-N decomp}. The main results are summarized as follows, cf. Proposition \ref{minimal E-O for ab type}, Corollary \ref{C:ordinary}, Examples \ref{E:ordinary}, Propositions \ref{P: EO and Newton} and \ref{E-O NP for ab cox}. As above, after fixing a prime to $p$ level $K^p\subset G(\mathbb{A}_f^p)$, we simply write $\ES_0=\ES_{K_pK^p,0}(G,X)$. Note that there is no confusion for the notion of central leaves in the following theorem.
\begin{introth}
\begin{enumerate}
	\item Assume that each central leaf is non-empty (which holds when  $p>2$). Each Newton stratum contains a minimal Ekedahl-Oort stratum (i.e. an Ekedahl-Oort stratum which is a central leaf). Moreover, if $G$ splits, then each Newton stratum contains a unique minimal Ekedahl-Oort stratum.
	\item The ordinary Ekedahl-Oort stratum (i.e. the open Ekedahl-Oort stratum) coincides with the $\mu$-ordinary locus (i.e. the open Newton stratum), which is a central leaf. In particular the $\mu$-ordinary locus is open dense in $\ES_0$.
	\item Assume that each central leaf is non-empty (which holds when  $p>2$).	For any $[b]\in B(G,\mu)$ and $w\in {}^JW$ (which we view as an element of the $\mu$-admissible subset of the extended affine Weyl group, cf. \ref{admi and EO elements} and \ref{EO and G-zips}), we have
	\[\ES_0^b\cap \ES_0^w\neq\emptyset \Longleftrightarrow X_w(b)\neq \emptyset,\]where $X_{w}(b):=\{gK\mid g^{-1}b\sigma(g)\in K\cdot_\sigma \mathcal{I}w\mathcal{I}\}\subseteq G(L)/K$, with $L=W(\overline{\kappa})_\Q, W=W(\overline{\kappa}), K=G(W)$, $\sigma$ is the Frobenius on $L$ and $W$, $\mathcal{I}\subset G(L)$ is the Iwahori subgroup, and $K\cdot_\sigma \mathcal{I}w\mathcal{I}$ is as in \ref{the set Z}.
	\item Let $(G,X)$ be a Shimura datum of abelian type with good reduction at $p$ whose attached pair $(G_{\mathbb{Q}_p},\mu)$ is fully Hodge-Newton decomposable (cf. Definition \ref{D:fully HN} and \cite{fully H-N decomp}), then
	\begin{enumerate}
		\item each Newton stratum of $\ES_0$ is a union of Ekedahl-Oort strata;
		
		\item each Ekedahl-Oort stratum in a non-basic Newton stratum is a central leaf, and it is open and closed in the Newton stratum, in particular, non-basic Newton strata are smooth;
		
		\item if $(G_{\mathbb{Q}_p},\mu)$ is of Coxeter type (cf. \ref{subsubsection coxeter} and \cite{coxeter type}), then for two Ekedahl-Oort strata $\ES_0^{1}$ and $\ES_0^{2}$, $\ES_0^{1}$ is in the closure of $\ES_0^{2}$ if and only if $\mathrm{dim}(\ES_0^{2})>\mathrm{dim}(\ES_0^{1})$.
	\end{enumerate}
\end{enumerate}
\end{introth}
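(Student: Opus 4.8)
The plan is to reduce all four statements to purely combinatorial assertions about $B(G,\mu)$, ${}^JW$ and $C(G^\adj,\mu)$, and then to invoke the group-theoretic results of He, Nie and G\"ortz--He in \cite{Nie Sian, coxeter type, fully H-N decomp} together with He--Rapoport \cite{He-Rap}. First I would note that, by the constructions underlying Theorems A, B and C, all three stratifications of $\ES_0=\ES_{K_pK^p,0}(G,X)$ are obtained by pullback from $\ES_{K_p^\adj,0}(G^\adj,X^\adj)$ along the canonical finite map, which are in turn descended along the free action of a finite group $\Delta$ on geometrically connected components from a Hodge type cover $\ES_{K_{1,p},0}(G_1,X_1)^+$. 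Since these maps are finite and surjective on the relevant components and $\Delta$ preserves each stratum, every relation appearing in the statement --- closure relations, ``$\ES_0^b\cap\ES_0^w\neq\emptyset$'', ``open and closed in'', equidimensionality and dimensions, ``an Ekedahl--Oort stratum equals a central leaf'' --- holds on $\ES_0$ if and only if it holds on $\ES_{K_{1,p},0}(G_1,X_1)$. So it suffices to treat the Hodge type case, where one has the universal $G$-zip $\zeta\colon\ES_0\to G\text{-}\mathrm{Zip}^\mu$ defining the Ekedahl--Oort stratification, the $F$-isocrystal-with-$G$-structure map defining the Newton stratification, and the filtered $F$-crystal with $G^c$-structure defining the central leaves; the compatibility between $\zeta$ and the Newton map realizes the combinatorial surjection $\Upsilon\colon{}^JW\to B(G,\mu)$ with $\ES_0^w\subseteq\ES_0^{\Upsilon(w)}$.

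For (1), the group-theoretic input (cf. \cite{Nie Sian, He-Rap}) is that each fibre of $\Upsilon$ contains a distinguished element $w$ --- attached to a minimal-length, fundamental representative of $[b]$, i.e. the ``minimal'' $F$-crystal with $G$-structure of Newton type $\nu_G(b)$ --- for which the Ekedahl--Oort stratum $\ES_0^w$ is exactly a central leaf; since $\ES_0^w$ is non-empty by Theorem B and $\ES_0^w\subseteq\ES_0^b$, this produces a central leaf inside $\ES_0^b$. When $G$ splits the distinguished element in each fibre is unique (Oort's uniqueness of minimal $p$-divisible groups in the Siegel case, and its group-theoretic counterpart in general), giving uniqueness of the minimal stratum. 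For (2), the open Ekedahl--Oort stratum corresponds to the maximal $w_{\max}\in{}^JW$ and the open Newton stratum to the maximal class in $B(G,\mu)$; that $\ES_0^{w_{\max}}$ coincides with the $\mu$-ordinary locus and is a central leaf is the $\mu$-ordinariness theorem of Moonen and Wortmann transported through $\zeta$ (cf. \ref{C:ordinary}), and openness and density follow from $w'\preceq w_{\max}$ for all $w'\in{}^JW$, a consequence of the closure relation in Theorem B.

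For (3), the implication ``$\ES_0^b\cap\ES_0^w\neq\emptyset\Rightarrow X_w(b)\neq\emptyset$'' is essentially formal: a geometric point of $\ES_0^b\cap\ES_0^w$ gives an $F$-crystal with $G$-structure whose isocrystal is $b$ and whose reduction is a $G$-zip of type $w$, and trivializing the ambient $W(\overline{\kappa})$-lattice chain turns this into a point of $X_w(b)$. For the converse I would use that the isomorphism classes of ``$F$-crystals with $G$-structure of isocrystal $b$ refining a $G$-zip of type $w$'' are, after fixing base points, parametrized by a quotient of $X_w(b)$ together with the description of $K\cdot_\sigma\mathcal{I}w\mathcal{I}$ in \ref{the set Z}; hence $X_w(b)\neq\emptyset$ yields such an abstract class, and the non-emptiness of \emph{all} central leaves in $C(G^\adj,\mu)$ (recalled after Theorem C, and deduced there from the non-emptiness of Newton strata) guarantees it is realized at a point of $\ES_0$, so $\ES_0^b\cap\ES_0^w\neq\emptyset$.

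For (4), when $(G_{\Q_p},\mu)$ is fully Hodge--Newton decomposable I would invoke \cite{fully H-N decomp}: over each non-basic $[b]$ the fibre of $\Upsilon$ consists of elements $w$ whose Ekedahl--Oort strata are central leaves, the Newton stratification is a union of Ekedahl--Oort strata, and the order $\preceq$ restricted to $\Upsilon^{-1}([b])$ is trivial. Transporting these facts through $\zeta$ gives (4a) and (4b): $\ES_0^b=\bigsqcup_{\Upsilon(w)=[b]}\ES_0^w$ for non-basic $[b]$, with no closure relations among the terms, so each $\ES_0^w$ is open and closed in $\ES_0^b$; each such $\ES_0^w$ is a central leaf and is smooth by Theorem B(3), hence the non-basic Newton strata, being disjoint unions of smooth subschemes, are smooth (the dimension identity $l(w)=\langle2\rho,\nu_G(b)\rangle$ being then forced by Theorem C). For (4c) I would feed in the explicit combinatorics of $({}^JW,\preceq)$ in Coxeter type from \cite{coxeter type} to conclude the closure order is governed by dimension alone. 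The step I expect to be hardest is the converse direction of (3): passing from the group-theoretic non-emptiness of $X_w(b)$ to an actual point of the abelian type Shimura variety requires propagating the non-emptiness of central leaves and the explicit description of the $\Delta$-action through the twisting constructions of \cite{Kisin-Pappas}, and checking that forming $\ES_0^b\cap\ES_0^w$ is compatible with the descent to $\ES_{K_p,0}(G,X)^+$ and with the passage to the adjoint group --- each step being elementary, but their combination delicate.
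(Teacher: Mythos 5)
Your overall plan---reduce to the Hodge type case via the adjoint/twisting constructions of Section~1, then invoke the group-theoretic results of Nie, G\"ortz--He and G\"ortz--He--Nie---is the same as the paper's, and the specific references you cite for each part (Nie's theorem on fundamental elements for (1), Wortmann's $\mu$-ordinariness for (2), non-emptiness of central leaves plus the $C(G,\mu)$-translation for (3), and G\"ortz--He--Nie plus G\"ortz--He for (4)) are exactly the ones the paper uses in Proposition~\ref{minimal E-O for ab type}, Corollary~\ref{C:ordinary}, Proposition~\ref{P: EO and Newton}, and Proposition~\ref{E-O NP for ab cox}.

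However, there is a genuine conceptual error in your setup. You posit a surjection $\Upsilon\colon {}^JW \to B(G,\mu)$ with $\ES_0^w \subseteq \ES_0^{\Upsilon(w)}$, and then phrase (1) and (4) in terms of ``fibres of $\Upsilon$.'' No such map exists in general: a single Ekedahl--Oort stratum $\ES_0^w$ will in general meet several Newton strata, which is precisely the content of part (3) of the theorem (the locus $\{[b]\,:\,X_w(b)\neq\emptyset\}$ can have more than one element). Such a factorization exists only in the fully Hodge--Newton decomposable setting, where it is part~(4a)---but you use $\Upsilon$ in proving (1) and in motivating (3), where it is unavailable. The correct picture is the one in \ref{EO and G-zips} and the beginning of 6.2: both $B(G,\mu)$ and $\mathrm{EO}(\mu)\cong{}^JW$ receive surjections \emph{from} $C(G^\adj,\mu)$ (compatibly with the map $\ES_0(\overline{\kappa})\to C(G^\adj,\mu)$), not from each other, and the intersection pattern is the one recorded by the sets $X_w(b)$. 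Your appeals to Nie's theorem and to G\"ortz--He--Nie are still the right ones and still carry the argument, but not because of an incidence map $\Upsilon$; for (1), the point is that each $[b]\in B(G,\mu)$ contains a unique minimal $K$-$\sigma$-conjugacy class, and its image in $\mathrm{EO}(\mu)$ is a $\sigma$-straight element $w$ with $\tilde\zeta^{-1}(w)$ a singleton, so $\ES_0^w$ \emph{is} that central leaf and therefore lies inside $\ES_0^b$.

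You also overestimate the difficulty of the converse direction of (3). By the time Theorem~D is proved, Theorem~C already gives non-emptiness of every central leaf in $\ES_0$ for the abelian type datum, i.e.\ surjectivity of $\ES_0(\overline{\kappa})\to C(G^\adj,\mu)$. Given this and the observation (\ref{the set Z} and start of 6.2) that $\overline{Z}_w \cap \overline{Z}_{[b]}\neq\emptyset$ iff $X_w(b)\neq\emptyset$, (3) is immediate: $\ES_0^b \cap \ES_0^w \neq \emptyset$ iff the preimage of $\overline{Z}_w \cap \overline{Z}_{[b]}$ under the surjection is non-empty, iff $\overline{Z}_w \cap \overline{Z}_{[b]}\neq\emptyset$, iff $X_w(b)\neq\emptyset$. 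There is no need to re-propagate non-emptiness through the twisting constructions or to re-check compatibility with the descent to $\ES_{K_p,0}(G,X)^+$---those checks are precisely what went into constructing the map to $C(G^\adj,\mu)$ in Sections~2--4 and establishing Theorem~C.
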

Here the first statement was proved in the PEL case by Viehmann-Wedhorn (\cite{VW}) under certain condition and by Nie in \cite{Nie Sian}. The proof of \cite{Nie Sian} is based on some group theoretic results, thus it also applies to our situation. In the Hodge type case the statement (2) is due to Wortmann, see \cite{muordinary}. In the statement (3), the set $X_w(b)$ can be viewed as an Ekedahl-Oort stratum of the affine Deligne-Lusztig variety $X(\mu,b)$, see \ref{the set Z}. The statements in (4) are first due to G\"ortz-He-Nie (\cite{fully H-N decomp}, see also \cite{coxeter type, coxeter erratum}) under the assumption that the axioms of \cite{He-Rap} are verified. Here we do not use any unproved hypothesis or axioms.\\

We now  briefly describe the structure of this article. In the first section, we  first review the construction of integral canonical models for Shimura varieties of abelian type following \cite{CIMK}. We then study twisting of $p$-divisible groups in a general setting which will be used later. In sections 2-4, we construct and study the Newton stratification, Ekedahl-Oort stratification, and central leaves respectively by using the approach of passing to adjoint. We discuss the example of quaternionic Shimura varieties in each section. In section 5, we revisit our constructions of stratifications using the filtered $F$-crystal with $G^c$-structure of \cite{Loverig 2}. In section 6, we study the relations between the Newton stratification, the Ekedahl-Oort stratification, and the central leaves both in the general and special setting. Finally, in section 7 we discuss our results in the setting of GSpin and SO Shimura varieties in details.\\
 \\
\textbf{Acknowledgments.}  Part of this work was done while both the authors were visiting the Academia Sinica in Taipei. We thank Chia-Fu Yu for the invitation and for helpful conversitions. We also thank the Academia Sinica for its hospitality. We thank the referee sincerely for careful readings and useful suggestions on some improvements of the main results.
The first author was partially supported by the Chinese Academy of Sciences grants 50Y64198900, 29Y64200900, the Recruitment Program of Global Experts of China, and the NSFC grants No. 11631009 and No. 11688101, and the National Key R\&D Program of China 2020YFA0712600.

\section[Good reductions of abelian type]{Good reductions of Shimura varieties of abelian type}
In this section, we recall the construction of integral canonical models for Shimura varieties of abelian type in \cite{CIMK} and \cite{2-adic int}. We will start with the construction for those of Hodge type, and then pass to abelian type as in \cite{CIMK}. 

\subsection[Integral models for Shimura varieties of Hodge type]{Integral models for Shimura varieties of Hodge type}\label{model Hodge type}

Let $(G,X)$ be a Shimura datum of Hodge type with good reduction at $p$.
We recall the construction and basic results about the integral canonical models for the associated Shimura varieties.

For a symplectic
embedding $i:(G,X)\hookrightarrow (\mathrm{GSp}(V,\psi),X')$, by \cite{CIMK} Lemma 2.3.1, there exists a
$\mathbb{Z}_p$-lattice $V_{\mathbb{Z}_p}\subseteq
V_{\mathbb{Q}_p}$, such that
$i_{\mathbb{Q}_p}:G_{\mathbb{Q}_p}\subseteq\mathrm{GL}(V_{\mathbb{Q}_p})$
extends uniquely to a closed embedding $\intG\hookrightarrow
\mathrm{GL}(V_{\mathbb{Z}_p}).$ So there is a $\mathbb{Z}$-lattice
$V_{\INT}\subseteq V$ such that $\inttG$, the Zariski closure of $G$ in
$\mathrm{GL}(V_{\mathbb{Z}_{(p)}})$, is reductive, as the base
change to $\mathbb{Z}_p$ of $\inttG$ is $\intG$. Moreover, by Zarhin's trick, we can assume that $\psi$ is perfect on
 $V_{\mathbb{Z}}$. Let $K_p=G_{\mathbb{Z}_p}(\mathbb{Z}_p)$ and $K=K_pK^p$ for a sufficientally small open compact subgroup $K^p\subset G(\mathbb{A}_f^p)$. The
integral canonical model $\ES_K(G,X)$ of $\mathrm{Sh}_K(G,X)$ is
constructed as follows. Let $K'_p=\mathrm{GSp}(V_{\mathbb{Z}_{(p)}},\psi)(\COMP)$, we can choose
$K'=K'_pK'^p\subseteq\mathrm{GSp}(V,\psi)(\mathbb{A}_f)$ with $K'^p$ small enough and containing $K^p$, such that
$\mathrm{Sh}_{K'}(\mathrm{GSp}(V,\psi),X')$ affords a moduli
interpretation, and that the natural morphism \[f:\mathrm{Sh}_K(G,X)\rightarrow
\mathrm{Sh}_{K'}(\mathrm{GSp}(V,\psi),X')_E\] is a closed embedding.
Let $g=\frac{1}{2}\mathrm{dim}(V)$, and $\mathscr{A}_{g,1,K'}$ be the moduli scheme
of principally polarized abelian schemes over $\mathbb{Z}_{(p)}$-schemes with level $K'^p$ structure. Then
$\mathrm{Sh}_{K'}(\mathrm{GSp}(V,\psi),X')$ is the generic fiber of $\mathscr{A}_{g,1,K'}$, and the integral
canonical model \[\ES_K(G,X)\] is defined to be the normalization\footnote{By the recent work of Xu \cite{Xu}, the normalization step in the construction of $\ES_K(G,X)$ is in fact redundant. } of the Zariski
closure of $\mathrm{Sh}_K(G,X)$ in
$\mathscr{A}_{g,1,K'}\otimes O_{E,(v)}$.
\begin{theorem}[\cite{CIMK} Theorem 2.3.8, \cite{2-adic int} Theorem 4.11]
The $O_{E,(v)}$-scheme $\ES_K(G,X)$ is smooth, and morphisms in the inverse system $\varprojlim_{K^p}\ES_K(G,X)$ are \'{e}tale.
\end{theorem}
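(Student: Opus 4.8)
The plan is to reduce the smoothness statement to the formal smoothness of the complete local rings of $\ES_K(G,X)$ at the closed points of its special fibre, and to identify those rings with Kisin's deformation ring of a $p$-divisible group equipped with Frobenius-invariant tensors; the \'etaleness of the transition morphisms will then follow essentially formally from the Siegel case. The base case is classical: $\mathscr{A}_{g,1,K'}$ is smooth over $\mathbb{Z}_{(p)}$ by the deformation theory of abelian varieties (Serre--Tate, Grothendieck--Messing), and for $K'^p\subseteq K''^p$ the map $\mathscr{A}_{g,1,K'_pK'^p}\to\mathscr{A}_{g,1,K'_pK''^p}$ is finite \'etale, being given by adding prime-to-$p$ level structures.

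For the \'etaleness of the transition morphisms, given open compact subgroups $K^p\subseteq K''^p$ of $G(\mathbb{A}_f^p)$ I would choose compatible Siegel levels $K'^p\subseteq K''^{\prime p}$, so that $\mathscr{A}':=\mathscr{A}_{g,1,K'_pK'^p}\to\mathscr{A}'':=\mathscr{A}_{g,1,K'_pK''^{\prime p}}$ is finite \'etale, in particular flat. Since scheme-theoretic closure commutes with flat base change, the Zariski closure of $\mathrm{Sh}_{K_pK^p}(G,X)$ in $\mathscr{A}'\otimes O_{E,(v)}$ is the preimage under $\mathscr{A}'\to\mathscr{A}''$ of the Zariski closure of $\mathrm{Sh}_{K_pK''^p}(G,X)$ in $\mathscr{A}''\otimes O_{E,(v)}$; as normalization commutes with \'etale base change, $\ES_{K_pK^p}(G,X)\to\ES_{K_pK''^p}(G,X)$ is then finite \'etale.

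It remains to show $\ES:=\ES_K(G,X)$ is smooth over $O:=O_{E,(v)}$. By construction $\ES$ is the normalization of the Zariski closure $Z$ of $\mathrm{Sh}_K(G,X)$ in $\mathscr{A}_{g,1,K'}\otimes O$; so $\ES$ is normal and $O$-flat (it is the normalization of the reduced, $O$-flat scheme $Z$) with smooth generic fibre $\mathrm{Sh}_K(G,X)$, and it suffices to show $\widehat{\mathcal{O}}_{\ES,\bar x}$ is formally smooth over $O$ for each closed point $\bar x$ of the special fibre; let $x$ denote its image in $Z$. The key local input, which I would import from \cite{CIMK}: since $x$ lies in the closure of $\mathrm{Sh}_K(G,X)$, the $p$-divisible group $\mathscr{G}_x$ of the pulled-back abelian variety carries Frobenius-invariant tensors $(t_\alpha)$ in its Dieudonn\'e module, together with an isomorphism $\mathbb{D}(\mathscr{G}_x)\simeq V_{\mathbb{Z}_p}^\vee\otimes_{\mathbb{Z}_p}W(k(x))$ carrying $t_\alpha$ to the tensors $s_\alpha$ cutting out $\intG\subseteq\mathrm{GL}(V_{\mathbb{Z}_p})$; by Serre--Tate theory $\mathrm{Spf}\,\widehat{\mathcal{O}}_{\mathscr{A}_{g,1,K'}\otimes O,x}$ is the formally smooth universal deformation space of the polarized $p$-divisible group $(\mathscr{G}_x,\lambda_x)$, and inside it one has a closed formal subscheme $\mathrm{Spf}\,R_{\intG}$ parametrising those deformations whose Hodge filtration is induced by an $\intG$-valued cocharacter in the geometric conjugacy class of $\{\mu\}$; crystalline deformation theory (Faltings, Moonen, and its refinement in \cite{CIMK}) shows $R_{\intG}$ is formally smooth over $O$ of relative dimension $\dim X$. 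To conclude, the polarized $p$-divisible group pulled back to $\widehat{\mathcal{O}}_{Z,x}$ is a deformation of $(\mathscr{G}_x,\lambda_x)$, yielding a surjection $\widehat{\mathcal{O}}_{\mathscr{A}_{g,1,K'}\otimes O,x}\twoheadrightarrow\widehat{\mathcal{O}}_{Z,x}$; over the generic fibre the Hodge tensors on $\mathrm{Sh}_K(G,X)$ are Hodge tensors for the universal abelian variety (this is the content of the closed embedding $\mathrm{Sh}_K(G,X)\hookrightarrow\mathrm{Sh}_{K'}(\mathrm{GSp}(V,\psi),X')_E$), so this surjection factors through $R_{\intG}$ after inverting $p$, hence (as $\widehat{\mathcal{O}}_{Z,x}$ is $p$-torsion free) through a surjection $R_{\intG}\twoheadrightarrow\widehat{\mathcal{O}}_{Z,x}$. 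Since $R_{\intG}$ is regular, in particular integral, of Krull dimension $\dim X+1$, while $Z$ is $O$-flat with equidimensional generic fibre $\mathrm{Sh}_K(G,X)$ and hence $\dim\widehat{\mathcal{O}}_{Z,x}=\dim X+1$, this surjection must be an isomorphism. Thus $\widehat{\mathcal{O}}_{Z,x}\simeq R_{\intG}$ is normal, the normalization $\ES\to Z$ is an isomorphism near $x$, and $\widehat{\mathcal{O}}_{\ES,\bar x}\simeq R_{\intG}$ is formally smooth over $O$, as wanted.

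The hard part is the local deformation-theoretic input, which is the technical core of \cite{CIMK}: constructing the crystalline Frobenius-invariant tensors $t_\alpha\in\mathbb{D}(\mathscr{G}_x)$ at points of the special fibre and matching them with the $s_\alpha$ (this uses integral $p$-adic comparison isomorphisms for $p$-divisible groups, characteristic-$0$ liftings of $x$, and the integral extension of the Hodge tensors over the Shimura variety), and proving that the tensor-adapted deformation ring $R_{\intG}$ is formally smooth over $O$ of the expected dimension $\dim X$ via crystalline (Faltings-type) deformation theory of $p$-divisible groups with $\intG$-structure. Granting that input, everything above is formal.
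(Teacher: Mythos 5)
Your proposal reconstructs Kisin's proof from \cite{CIMK} (which this paper cites without reproving), and the approach matches: \'etaleness of the transition maps from the Siegel case via base change, and smoothness by identifying $\widehat{\mathcal{O}}_{\ES,\bar x}$ with the formally smooth, tensor-adapted deformation ring $R_{\intG}$ via a dimension count, with the technical heart being the construction of the crystalline tensors $t_\alpha$ and the formal smoothness of $R_{\intG}$.

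One small imprecision in the \'etaleness step: the Zariski closure of $\mathrm{Sh}_{K_pK^p}(G,X)$ in $\mathscr{A}'\otimes O_{E,(v)}$ is not literally the preimage of the closure of $\mathrm{Sh}_{K_pK''^p}(G,X)$ under the flat map $\mathscr{A}'\to\mathscr{A}''$; $\mathrm{Sh}_{K_pK^p}(G,X)_E$ is only one open-and-closed piece of $\mathrm{Sh}_{K_pK''^p}(G,X)_E\times_{\mathscr{A}''_E}\mathscr{A}'_E$ (the other pieces being prime-to-$p$ Hecke translates). This does not affect the conclusion: an open-and-closed subscheme of a finite \'etale cover is still finite \'etale, its Zariski closure is clopen in the preimage of the closure, and normalization still commutes with \'etale base change, so $\ES_{K_pK^p}(G,X)\to\ES_{K_pK''^p}(G,X)$ remains finite \'etale.
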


The scheme $\ES_K(G,X)$ is uniquely determined by the Shimura datum and the group $K$ in the sense that $\ES_{K_p}(G,X):=\varprojlim_{K^p}\ES_K(G,X)$ satisfies a certain extension property (see \cite{CIMK} 2.3.7 for the precise statement). This implies that the $G(\mathbb{A}_f^p)$-action on $\varprojlim_{K^p}\Sh_K(G,X)$ extends to $\varprojlim_{K^p}\ES_K(G,X)$.

Let $\A\rightarrow \ES_K(G,X)$ be the pull back to $\ES_K(G,X)$ of the universal abelian scheme on $\mathscr{A}_{g,1,K'/\mathbb{Z}_{(p)}}$. Consider the vector bundle \[\V:=\Hdr(\A/\ES_K(G,K))\] over $\A/\ES_K(G,K)$.  There are certain sections of $\V^\otimes$ which will play an important role in this paper. Let $\V_{\Sh_K(G,X)}$ be the base change of $\V$ to $\Sh_K(G,X)$, which is $\Hdr(\A/\Sh_K(G,K))$ by base change of de Rham cohomology. By \cite{CIMK} Proposition 1.3.2 and \cite{2-adic int} Lemma 4.7, there is a tensor $s\in V_{\INT_{(p)}}^\otimes$ defining $\inttG\subseteq \GL(V_{\INT_{(p)}})$. This tensor gives a section $s_{\dr/E}$ of $\V_{\Sh_K(G,X)}^\otimes$, which is actually defined over $O_{E,(v)}$. More precisely, we have the following result.

\begin{proposition}[\cite{CIMK} Corollary 2.3.9, \cite{2-adic int} Proposition 4.8]
The section $s_{\dr/E}$ of $\V_{\Sh_K(G,X)}^\otimes$ extends to a section $s_{\dr}$ of~$\V^\otimes$.
\end{proposition}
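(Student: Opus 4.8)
The plan is to reduce the statement to Kisin's comparison between the de Rham and $p$-adic étale realizations of the tensor, together with a normalization/descent argument. First I would recall the setup: the tensor $s \in V_{\INT_{(p)}}^\otimes$ cuts out $\inttG \subseteq \GL(V_{\INT_{(p)}})$, and via the symplectic embedding it induces a section $s_{\dr/E}$ of $\V^\otimes_{\Sh_K(G,X)}$ over the generic fiber $\Sh_K(G,X)$, because $\V_{\Sh_K(G,X)} = \Hdr(\A/\Sh_K(G,X))$ carries a canonical trivialization étale-locally compatible with the $G$-structure coming from the moduli interpretation over $\mathrm{GSp}$. The goal is to show this section, a priori only defined on the generic fiber, extends to a section of $\V^\otimes$ over the whole integral model $\ES_K(G,X)$.

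The key steps, in order: (i) By construction $\ES_K(G,X)$ is the normalization of the Zariski closure of $\Sh_K(G,X)$ in $\mathscr{A}_{g,1,K'} \otimes O_{E,(v)}$, hence it is a normal, flat, $O_{E,(v)}$-scheme, and $\V^\otimes$ is a vector bundle (in particular locally free, hence reflexive) on it. A section of a reflexive sheaf on a normal scheme that is defined on an open subscheme whose complement has codimension $\geq 2$ automatically extends; but here the complement of the generic fiber is a divisor (the special fiber), so codimension $1$, and this naive argument does not suffice. (ii) Therefore I would argue that $s_{\dr}$ extends in a neighborhood of each generic point of the special fiber, which by normality and the vector bundle structure then glues to a global section. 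Equivalently, working at the completed local rings, I would use Kisin's integral $p$-adic Hodge theory: for a point valued in the ring of integers $O_F$ of a finite extension of $W(\overline{\mathbb{F}}_p)[1/p]$, the de Rham tensor $s_{\dr}$ matches, under the crystalline comparison isomorphism, the étale tensor $s_{\mathrm{et}} \in T_p\A^\otimes$ coming from the $\mathbb{Z}_p$-structure, and the latter is visibly integral; this forces $s_{\dr}$ to lie in the integral Dieudonné module, i.e. in $\V^\otimes$ and not merely in $\V^\otimes[1/p]$. (iii) Since $\ES_K(G,X)$ is smooth over $O_{E,(v)}$ (by the preceding Theorem), its special fiber is reduced and its generic points are exactly the generic points of the irreducible components, and these are detected by such $O_F$-points after completing; hence the integrality at all generic points of the special fiber follows, and by normality $s_{\dr}$ extends to a section of $\V^\otimes$ over $\ES_K(G,X)$. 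Uniqueness of the extension is immediate from flatness (the generic fiber is schematically dense).

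The main obstacle is step (ii): proving integrality of the de Rham tensor at the special fiber, which is exactly the content of Kisin's work relating the de Rham and crystalline realizations to the Galois-stable lattice $V_{\mathbb{Z}_p}$. This rests on the theory of Breuil–Kisin modules and the fact that the tensor $s$ is defined over $\mathbb{Z}_{(p)}$ (not just $\mathbb{Q}$), so that its image under parallel transport/crystalline comparison remains in the integral lattice; the formal properties of normalization and reflexive sheaves in steps (i) and (iii) are then routine. In the write-up I would simply cite the relevant statements from \cite{CIMK} (the comparison isomorphisms of sections 1.3–1.4 and the integral comparison theorem) and assemble them as above, rather than reproving the $p$-adic Hodge theory input.
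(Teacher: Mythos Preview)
The paper gives no proof of this proposition beyond citing \cite{CIMK} Corollary 2.3.9, and your sketch is a correct outline of Kisin's actual argument there (integrality of the tensor over completed local rings via Breuil--Kisin modules, then extend using normality and the vector bundle structure). One minor imprecision worth cleaning up: in steps (ii)--(iii) you conflate generic points of the special fiber with $O_F$-valued points, which are closed points, and integrality at a closed point does not by itself imply integrality at the generic point of its component; what Kisin proves is that the tensor extends over the full completion $\widehat{O}_x$ at each closed point $x$ (a power series ring over $W(k)$, using deformation theory of $p$-divisible groups with Tate tensors), and since $\widehat{O}_x$ is faithfully flat over $O_x$ this yields extension in a Zariski neighborhood of $x$, hence globally.
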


Let $\mathbb{D}(\A)$ be the Dieudonn\'{e} crystal of $\A[p^\infty]$, then $s_{\dr}$ (and hence $s$) induces an injection of crystals $s_\cris:\mathbf{1}\rightarrow \mathbb{D}(\A)^\otimes$, such that $s_\cris[\frac{1}{p}]:\mathbf{1}[\frac{1}{p}]\rightarrow \mathbb{D}(\A)^\otimes[\frac{1}{p}]$ is Frobenius equivariant. We will simply call $s_\cris$ a tensor of $\mathbb{D}(\A)^\otimes$.

\subsubsection[]{}\label{conn comp Hodge}We need to work with geometrically connected components. Fix a connected component $X^+\subseteq X$. For a compact open subgroup $K\subseteq G(\mathbb{A}_f)$ as before, i.e. $K=K_pK^p$ with $K_p=\inttG(\mathbb{Z}_p)$ and $K^p\subseteq G(\mathbb{A}_f^p)$ open compact and small enough, we denote by $\Sh_{K}(G,X)^+\subseteq \Sh_{K}(G,X)_{\CPLX}$ the geometrically connected component which is the image of $X^+\times 1$. Then by \cite{CIMK} 2.2.4, $\Sh_{K}(G,X)^+$ is defined over $E^p$, the maximal extension of $E$ which is unramified at $p$. Let $O_{(p)}$ be the localization at $(p)$ of the ring of integers of $E^p$, we write \[\ES_{K}(G,X)^+\] for the closure of $\Sh_{K}(G,X)^+$ in $\ES_{K}(G,X)\otimes O_{(p)}$, and set \[\ES_{K_{p}}(G,X)^+:=\varprojlim_{K^p}\ES_{K}(G,X)^+.\]

Recall that by \cite{CIMK} 3.2 there exists an adjoint action of $G^{\mathrm{ad}}(\RAT)^+$ on $\Sh_{K_p}(G,X)$ induced by conjugation of $G$.
The adjoint action of $G^{\mathrm{ad}}(\INTP)^+$ on $\Sh_{K_p}(G,X)$  extends to $\ES_{K_{p}}(G,X)$. It leaves $\Sh_{K_p}(G,X)^+$ stable, and hence induces an action on $\ES_{K_{p}}(G,X)^+$. We will describe this action following \cite{Kisin-Pappas} in the next subsection.

We remark that the special fiber of $\ES_{K_{p}}(G,X)^+$ is connected. Indeed, by \cite{cpt Hodge type}, it has a smooth compactification $\ES_{K_{p}}(G,X)^+_{\mathrm{tor}}$ such that the boundary is either empty or a relative divisor. Let $H^0$ be the ring of regular functions on $\ES_{K_{p}}(G,X)^+_{\mathrm{tor}}$. It is a finite $O_{(p)}$-algebra in $E^p$. Noting that $H^0$ is normal, we have $H^0=O_{(p)}$. By Zariski's connectedness theorem, the special fiber of $\ES_{K_{p}}(G,X)^+_{\mathrm{tor}}$ is connected, and hence that of $\ES_{K_{p}}(G,X)^+$ is connected.

\subsection[Integral models for Shimura varieties of abelian type]{Integral models for Shimura varieties of abelian type}

Recall that a Shimura datum $(G, X)$ is said to be of abelian type, if there is a Shimura datum of Hodge type $(G_1, X_1)$ and a central isogeny $G_1^{\mathrm{der}}\rightarrow G^{\mathrm{der}}$ which induces an isomorphism of adjoint Shimura data $(G_1^{\mathrm{ad}},X_1^{\mathrm{ad}})\st{\sim}{\rightarrow} (G^{\mathrm{ad}},X^{\mathrm{ad}})$.

\subsubsection[]{}In order to explain the construction of integral canonical models for Shimura varieties of abelian type, and also for the convenience of the next subsection, we recall briefly Kisin's construction of twisting abelian varieties. The main reference is \cite{Kisin-Pappas} 4.4.

Let $R$ be a commutative ring, $Z$ be a flat affine group scheme over $\Spec R$, and $\mathcal{P}$ be a $Z$-torsor. Then $\mathcal{P}$ is flat and affine. We write $O_Z$ and $O_{\mathcal{P}}$ for the ring of regular functions on $Z$ and $\mathcal{P}$ respectively. Let $M$ be a $R$-module with $Z$-action, i.e. a homomorphism of fppf sheaves of groups $Z\rightarrow \AAut(M)$, then the subsheaf $M^Z$ is a $R$-submodule of $M$. By \cite{Kisin-Pappas} Lemma 4.4.3, the natural homomorphism \begin{subeqn}\label{baisc twist}
(M\otimes_RO_{\mathcal{P}})^Z\otimes_RO_{\mathcal{P}}\rightarrow M\otimes_RO_{\mathcal{P}}
\end{subeqn} is an isomorphism.

\subsubsection[]{}\label{twist abs} Let $R\subseteq \RAT$ be a normal subring. For a scheme $S$, we define the $R$-isogeny category of abelian schemes over $S$ to be the category of abelian schemes over $S$ by tensoring the $\mathrm{Hom}$ groups by $\otimes_{\INT}R$. An object $\A$ in this category is called an abelian scheme up to $R$-isogeny over $S$. For $T$ an $S$-scheme, we set $\A(T)=\mathrm{Mor}_S(T,\A)\otimes_{\INT} R$. We will write $\uAut_R(\A)$ for the $R$-group whose points in an $R$-algebra $A$ are given by \[\uAut_R(\A)(A)=((\mathrm{End}_S\A)\otimes_RA)^\times.\]

Now we assume that $Z$ is of finite type over $R\subseteq \RAT$. Suppose that we are given a homomorphism of $R$-groups $Z\rightarrow \uAut_R(\A)$, we define a pre-sheaf $\A^{\mathcal{P}}$ by setting \[\A^{\mathcal{P}}(T)=(\A(T)\otimes_RO_{\mathcal{P}})^Z.\] By \cite{Kisin-Pappas} Lemma 4.4.6, $\A^{\mathcal{P}}$ is a sheaf, represented by an abelian scheme up to $R$-isogeny.

\subsubsection[]{}\label{group-compn to whole}Before describing the construction of integral canonical models for Shimura varieties of abelian type, we need to fix some notations. Let $H/\INTP$ be a reductive group. For a subgroup $A\subseteq H(\INTP)$, we write $A_+$ for the pre-image in $A$ of $H^\mathrm{ad}(\mathbb{R})^+$, the connected component of identity in $H^\mathrm{ad}(\mathbb{R})$; and $A^+$ for $A\cap H(\mathbb{R})^+$.  We write $H(\INTP)^-$ (resp. $H(\INTP)^-_+$) for the closure of $H(\INTP)$ (resp. $H(\INTP)_+$) in $H(\mathbb{A}_f^p)$. Let $Z$ be the center of $H$, we set $$\mathscr{A}(H)=H(\mathbb{A}_f^p)/Z(\INTP)^-*_{H(\INTP)_+/Z(\INTP)}H^{\mathrm{ad}}(\INTP)^+$$
and
$$\mathscr{A}(H)^\circ=H(\INTP)_+^-/Z(\INTP)^-*_{H(\INTP)_+/Z(\INTP)}H^{\mathrm{ad}}(\INTP)^+,$$
where $X*_YZ$ is the quotient of $X\rtimes Z$ defined in \cite{varideshi} 2.0.1. By \cite{varideshi} 2.0.12 and \cite{CIMK} 3.3.2, $\mathscr{A}(H)^\circ$ depends only on $H^\mathrm{der}$ and not on $H$.

Now we turn to the construction of integral models.
\subsubsection[]{} Let $(G,X)$ be a Shimura datum of abelian type with good reduction at $p$. By \cite{CIMK} Lemma 3.4.13, there is a Shimura datum of Hodge type $(G_1,X_1)$ with good reduction at $p$, such that there is a central isogeny $G_1^{\mathrm{der}}\rightarrow G^{\mathrm{der}}$ inducing an isomorphism of Shimura data $(G_1^{\mathrm{ad}},X_1^{\mathrm{ad}})\st{\sim}{\rightarrow} (G^{\mathrm{ad}},X^{\mathrm{ad}})$. Let $G_{\INTP}$ be a reductive group over $\INTP$ with generic fiber $G$. By the proof of \cite{CIMK} Corollary 3.4.14, there exists a reductive model $G_{1,\INTP}$ of $G_1$ over $\INTP$, such that the central isogeny $G_1^{\mathrm{der}}\rightarrow G^{\mathrm{der}}$ extends to a central isogeny $G_{1,\INTP}^{\mathrm{der}}\rightarrow G_{\INTP}^{\mathrm{der}}$.

We can now follow discussions as in \ref{conn comp Hodge}. Let $X^+_1\subseteq X_1$ be a connected component. For $K_1=K_{1,p}K_1^p$, let $\Sh_{K_1}(G_1,X_1)^+\subseteq \Sh_{K_1}(G_1,X_1)$ be the geometrically connected component which is the image of $X^+_1\times 1$. Then $\Sh_{K_1}(G_1,X_1)^+$ is defined over $E_1^p$, where $E_1$ is the reflex field of $(G_1,X_1)$, and $E_1^p$ is the maximal extension of $E_1$ which is unramified at $p$. Let $O_{(p)}$ be the localization at $(p)$ of the ring of integers of $E_1^p$, we write \[\ES_{K_1}(G_1,X_1)^+\] for the closure of $\Sh_{K_1}(G_1,X_1)^+$ in $\ES_{K_1}(G_1,X_1)\otimes O_{(p)}$, and \[\ES_{K_{1,p}}(G_1,X_1)^+:=\varprojlim_{K_1^p}\ES_{K_1}(G_1,X_1)^+.\] The $G_1^{\mathrm{ad}}(\INTP)^+$-action on $\Sh_{K_{1,p}}(G_1,X_1)^+$ extends to $\ES_{K_{1,p}}(G_1,X_1)^+$, which (after converting to a right action) induces an action of $\mathscr{A}(G_{1,\INTP})^\circ$ on $\ES_{K_{1,p}}(G_1,X_1)^+$. Here $\mathscr{A}(G_{1,\INTP})^\circ$ is as we introduced in \ref{group-compn to whole}.

The action of $G_1^{\mathrm{ad}}(\INTP)^+$ on $\ES_{K_{1,p}}(G_1,X_1)$  is described in \cite{Kisin-Pappas} as follows. Let $(\A,\lambda, \varepsilon)$ be the pull back to $\ES_{K_{1,p}}(G_1,X_1)$ of the universal abelian scheme (up to $\INTP$-isogeny) with weak $\INTP$-polarization and level structure, and $Z$ be the center of $G_{1,\INTP}$. By \cite{Kisin-Pappas} Lemma 4.5.2, there is a natural embedding \[Z\rightarrow \underline{\mathrm{Aut}}_{\INTP}(\A),\] where $\underline{\mathrm{Aut}}_{\INTP}(\A)$ is as in \ref{twist abs}. For $\gamma\in G^{\mathrm{ad}}(\INTP)^+$, and $\fP$ the fiber of $G_{1,\INTP}\rightarrow G^{\mathrm{ad}}_{\INTP}$ over $\gamma$, by \ref{twist abs} again, we have $\A^{\fP}$, an abelian scheme up to $\INTP$-isogeny. Moreover, by \cite{Kisin-Pappas} Lemma 4.4.8 (resp. Lemma 4.5.4), $\lambda$ (resp. $\varepsilon$) induces a weak $\INTP$-polarization $\lambda^{\fP}$(resp. level structure $\varepsilon^{\fP}$) on $\A^{\fP}$. By \cite{Kisin-Pappas} Lemma 4.5.7, this gives a morphism \[\ES_{K_{1,p}}(G_1,X_1)\rightarrow \ES_{K_{1,p}}(G_1,X_1),\] such that on generic fiber it agrees with the morphism induced by conjugation by $\gamma$. This action stabilizes 
$\ES_{K_{1,p}}(G_1,X_1)^+$.

\begin{theorem}\label{int can abelian type} The quotient $$\ES_{K_{p}}(G,X):=[\mathscr{A}(G_{\INTP})\times \ES_{K_{1,p}}(G_1,X_1)^+]/\mathscr{A}(G_{1,\INTP})^\circ$$
is represented by a scheme over $O_{(p)}$ which descends to $O_{E,(v)}$. Moreover, it is the integral canonical model of $\Sh_{K_{p}}(G,X)$.
\end{theorem}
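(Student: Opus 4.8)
\emph{Strategy.} The plan is to obtain $\ES_{K_{p}}(G,X)$ by descent from the Hodge type model $\ES_{K_{1,p}}(G_1,X_1)^+$ of \ref{model Hodge type}--\ref{conn comp Hodge}, realizing at the integral level Deligne's presentation of abelian type Shimura varieties in characteristic zero (\cite{varideshi}). In order I would: (i) exhibit the geometric connected component $\ES_{K_{p}}(G,X)^+$ as the quotient of $\ES_{K_{1,p}}(G_1,X_1)^+$ by a finite group $\Delta$ acting freely; (ii) assemble the full induced quotient and check it is representable by a smooth $O_{(p)}$-scheme with the expected generic fiber and prime-to-$p$ Hecke action; (iii) descend it from $O_{(p)}$ to $O_{E,(v)}$ by Galois descent; and (iv) verify the extension property, which characterizes the integral canonical model and in particular makes the construction independent of the auxiliary data $(G_1,X_1,G_{1,\INTP})$.

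\emph{Step (i).} Put $\Delta=\ker\!\left(\mathscr{A}(G_{1,\INTP})^\circ\to\mathscr{A}(G_{\INTP})^\circ\right)$; the map is induced by the central isogeny $G_{1,\INTP}^{\der}\to G_{\INTP}^{\der}$ via the fact (\cite{varideshi} 2.0.12) that $\mathscr{A}(H)^\circ$ depends only on $H^{\der}$, and one checks it is surjective with $\Delta$ finite. The crux is that $\Delta$ acts freely on $\ES_{K_{1,p}}(G_1,X_1)^+$: on the generic fiber this is built into Deligne's construction of connected Shimura varieties, and one propagates it to the special fiber using the description of the $\mathscr{A}(G_{1,\INTP})^\circ$-action by twisting of abelian schemes recalled in \ref{twist abs} -- a nontrivial $\delta\in\Delta$ twists the universal $(\A,\lambda,\varepsilon)$ by a nontrivial torsor under the center $Z$ of $G_{1,\INTP}$, and since $Z$ acts faithfully up to $\INTP$-isogeny on $\A$ (\cite{Kisin-Pappas} Lemmas 4.5.2, 4.5.4) this changes the isomorphism class at every point. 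Hence $\ES_{K_{p}}(G,X)^+:=\ES_{K_{1,p}}(G_1,X_1)^+/\Delta$ is a scheme, smooth over $O_{(p)}$ with connected special fiber (the quotient map being finite \'etale, cf. \ref{conn comp Hodge}), and inherits an action of $\mathscr{A}(G_{\INTP})^\circ=\mathscr{A}(G_{1,\INTP})^\circ/\Delta$ extending the one on the generic fiber.

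\emph{Steps (ii)--(iii).} Choosing representatives $a_j$ for $\mathscr{A}(G_{\INTP})^\circ$ in $\mathscr{A}(G_{\INTP})$ yields
\[
[\mathscr{A}(G_{\INTP})\times\ES_{K_{1,p}}(G_1,X_1)^+]/\mathscr{A}(G_{1,\INTP})^\circ\;=\;\bigsqcup_{j}\,\ES_{K_{p}}(G,X)^+/\Gamma_j,
\]
where $\Gamma_j\subseteq\mathscr{A}(G_{\INTP})^\circ$ is the stabilizer of $a_j$; going down the tower in $K^p$ each $\Gamma_j$ acts through a finite group and, by the same twisting argument, freely, so every summand, and hence the whole, is a smooth $O_{(p)}$-scheme with \'etale transition maps in $K^p$, carrying the evident $G(\mathbb{A}_f^p)$-action on the factor $\mathscr{A}(G_{\INTP})$. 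Passing to generic fibers and invoking Deligne's identification of $\Sh_{K_{p}}(G,X)$ with the same quotient of $\Sh_{K_{1,p}}(G_1,X_1)^+$ (\cite{varideshi}) identifies the generic fiber with $\Sh_{K_{p}}(G,X)$. For the descent, $\mathrm{Gal}(E^p/E)$ permutes the geometric connected components compatibly with the $G(\mathbb{A}_f^p)$-action by Deligne's reciprocity law, which provides an effective descent datum relative to $O_{(p)}/O_{E,(v)}$ (each component being quasi-projective), exactly as in \cite{CIMK} 3.4.

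\emph{Step (iv) and the main obstacle.} It remains to check that the smooth $O_{E,(v)}$-scheme so obtained satisfies the extension property of \cite{CIMK} 2.3.7: given a healthy regular $O_{E,(v)}$-scheme $T$ and a morphism $T_{E}\to\Sh_{K_{p}}(G,X)$, one lifts it -- after an \'etale localization on $T$ and a suitable twist -- to a morphism into $\ES_{K_{1,p}}(G_1,X_1)^+$, extends using the Hodge type extension property from \ref{model Hodge type}, and descends the (unique) extension along the finite \'etale cover $\ES_{K_{1,p}}(G_1,X_1)^+\to\ES_{K_{p}}(G,X)^+$ using normality of $T$; uniqueness of integral canonical models then also yields independence of $(G_1,X_1)$. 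I expect step (iv) to be the main difficulty, since one must control the interaction of the quotients by $\Delta$ and by the $\Gamma_j$ with the class of healthy regular schemes while running the Galois descent to $O_{E,(v)}$ in parallel; the secondary technical point is the freeness of the $\Delta$-action on the special fiber, which is precisely where the twisting construction of \ref{twist abs} and the faithfulness of the central action on the universal abelian scheme enter.
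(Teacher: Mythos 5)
Your outline tracks the right strategy -- reduce to the connected component $\ES_{K_{1,p}}(G_1,X_1)^+$, take the quotient by $\Delta$, reassemble, and check the extension property -- which is how the construction goes in \cite{CIMK} 3.4. However, the paper does not reprove this result; it simply cites \cite{CIMK} Theorem 3.4.10 and, importantly, the Errata to that paper in \cite{LRKisin}, precisely because the original argument for this theorem had a gap in the quotient step. Your step (iv) is correctly identified as the hard part and is left as a sketch, which is fair, but the concrete claim in step (i) is actually wrong and points the opposite way from the paper's own machinery.

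Specifically, you assert that a nontrivial $\delta\in\Delta$ twists the universal $\A$ ``by a nontrivial torsor under the center $Z$'' and that faithfulness of $Z\to\uAut_{\INTP}(\A)$ implies ``this changes the isomorphism class at every point.'' Neither half is right. First, elements of $\mathscr{A}(G_{1,\INTP})^\circ$ live in an amalgamated product of a Hecke part and an adjoint part, so a nontrivial $\delta\in\Delta$ need not act as a pure twist at all. Second, and more seriously, the twist that does arise is by the fiber of $G_{1,\INTP}\to G^{\adj}_{\INTP}$ over a rational point, which is a torsor under the torus $Z=Z_{G_{1,\INTP}}$, hence \emph{trivial}; Corollary \ref{iso of kisin twist} of this very paper is exactly the statement that such a twist does \emph{not} change the $p$-divisible group (or the abelian scheme up to isogeny) with its tensors, and this is the key fact used later to prove that the stratifications are $\Delta$-stable. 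Faithfulness of $Z\to\uAut_{\INTP}(\A)$ says nothing against this: a faithful $Z$-action twisted by a trivial torsor still gives back $\A$. The freeness of the $\Delta$-action actually lives in the prime-to-$p$ level structure $\varepsilon$ (for $K^p$ small enough), and propagating it from the generic fiber to the integral model is exactly the delicate point fixed in the \cite{LRKisin} errata via the extension-property formalism (Kisin's Lemma 2.3.3), not via any pointwise ``the abelian variety changed'' argument. You should also not assert that $\Delta$ is finite without argument: it is a priori only a closed subgroup, acting at each finite level $K^p$ through a finite quotient.
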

\begin{proof}This is \cite{CIMK} Theorem 3.4.10 when $p>2$, and  \cite{2-adic int} Theorem 4.11 when $p=2$. See also the Errata for [Ki 2] in \cite{LRKisin} for a fully corrected proof.
\end{proof}

We have also\[\ES_{K_{p}}(G,X)
=[\mathscr{A}(G_{\INTP})\times \ES_{K_{p}}(G,X)^+]/\mathscr{A}(G_{\INTP})^\circ,\] where $\ES_{K_{p}}(G,X)^+\subset \ES_{K_{p}}(G,X)$ is a geometrically connected component over $O_{(p)}$ given by
\[\ES_{K_{p}}(G,X)^+:=\ES_{K_{1,p}}(G_1,X_1)^+/\Delta\]  with
\[\Delta:=\mathrm{Ker}(\mathscr{A}(G_{1,\INTP})^\circ\ra  \mathscr{A}(G_{\INTP})^\circ).\]
For each open compact subgroup $K^p\subset G(\mathbb{A}_f^p)$ which is small enough, we get the integral canonical model \[\ES_{K_pK^p}(G,X):=\ES_{K_{p}}(G,X)/K^p\] of $\Sh_{K_pK^p}(G, X)$.
In this paper, we are mainly interested in the geometry of the special fiber \[\ES_{K_{p},0}(G,X)\] of $\ES_{K_{p}}(G,X)$, i.e. the special fibers $\ES_{K_pK^p,0}(G,X)$ of $\ES_{K_pK^p}(G,X)$ when $K^p$ varies, so we will sometimes work with $\ES_{K_{p}}(G,X)\otimes O_{E,v}$. Here $O_{E,v}$ is the $p$-adic completion of $O_{E,(v)}$. 

We consider the following example of Shimura varieties of abelian type, which will be investigated continuously in the rest of this paper. Another interesting example will be given in section \ref{S:orthogonal}.

\begin{example}\label{quaternion SHV}
Let $D$ be a quaternion algebra over a totally real extension $F$ of $\mathbb{Q}$ of degree $n$. Let $\infty_1,\infty_2,\cdots, \infty_d$ be the infinite places of $F$ at which $D$ is split. We will always assume that $d>0$ in the discussion. Let $G=\mathrm{Res}_{F/\mathbb{Q}}(D^\times)$ and \[h:\mathbb{S}\rightarrow \mathrm{GL}_{2,\mathbb{R}}^d\subseteq D^\times_\mathbb{R}=G_\mathbb{R}\] be the homomorphism given by $z\mapsto (z,z,\cdots, z)\in \mathrm{GL}_{2,\mathbb{R}}^d$. One checks easily that $h$ induces a Shimura datum denoted by $(G, X)$. The associated Shimura variety is of dimension $d$, and it is defined over the totally real number field \[E=\mathbb{Q}(\sum_{i=1}^d\infty_i(f)\mid f\in F)\subseteq \mathbb{C},\] here we view $\infty_i$ as an embedding $F\rightarrow \mathbb{R}$.

If $d=n$, then $(G, X)$ is of PEL type; and if $d< n$, it is of abelian type but not of Hodge type, as the weight cocharacter is not defined over $\mathbb{Q}$. We are mainly interested in the second case here. By \cite{semi-simple zeta} Part I $\S$1, fixing an imaginary quadratic extension $K/F$ together with a subset $P_K$ of archimedean places of $K$ such that the restriction to $F$ induces a bijection of from $P_K$ to $\{\infty_{d+1}, \infty_{d+2},\cdots \infty_n\}$, then one can construct a PEL (coarse) moduli variety $M_{\overline{C}}/E'$ with an open and closed embedding \[\Sh_C(G,X)\otimes E' \hookrightarrow \widetilde{M}_{\overline{C}}.\] Here $E'\supseteq E$ is the reflex field of the zero-dimensional Shimura datum determined by $K$ and $P_K$, $G'$ a certain unitary group associated to $D$ and $K$, $\overline{C}\subset G'(\mathbb{A}_f)$ is the open compact subgroup of $G'(\mathbb{A}_f)$ associated to $C$, and $\widetilde{M}_{\overline{C}}$ is a certain twist of $M_{\overline{C}}$, cf. \cite{semi-simple zeta} p. 11-13.  

If moreover $D$ is split at $p$, the integral canonical model can be constructed as follows. Let $v$ be a place of $E$ over $p$, and $O_{E,v}$ be the $p$-adic completion of the ring of integers at $v$. By assumption $G$ is hyperspecial at $p$, and we set $C_p:=G(\mathbb{Z}_p)$ and we consider open compact subgroups of $G(\mathbb{A}_f)$ in the form $C=C_pC^p$. Consider the pro-varieties over $E_v$: \[M_{C_p}=\varprojlim_{C^p}M_{\overline{C_pC^p}}, \quad \widetilde{M}_{C_p}=\varprojlim_{C^p}\widetilde{M}_{\overline{C_pC^p}}.\]
By \cite{semi-simple zeta} Part I $\S$2, one can choose $K$ and $P_K$, such that $E'\subseteq E_{v}$, and $M_{C_p}$ has an integral model $\mathcal{M}/O_{E,v}$, which is smooth (thus it is the integral canonical model) by our assumption that $D$ is split at $p$. Indeed, one can check that $\mathcal{M}$ coincides with Kisin's construction of canonical integral models for general Hodge type Shimura varieties, cf. \cite{CIMK}. We get a twist $\widetilde{\mathcal{M}}$ of $\mathcal{M}$ with generic fiber $\widetilde{M}_{C_p}$. By construction we have an open and closed embedding \[\Sh_{C_p}(G,X)_{E_v}\hookrightarrow \widetilde{M}_{C_p}.\] The integral model of $\Sh_{C_p}(G,X)_{E_v}$ is then its closure in $\widetilde{\mathcal{M}}$.
\end{example}

\subsection[Twisting $p$-divisible groups]{Twisting $p$-divisible groups}

In order to study stratifications induced by $p$-divisible groups, it will be helpful to have a theory of twisting $p$-divisible group. For our applications, it suffices to think about $p$-divisible groups coming from abelian schemes. But we insist to give a general theory here, as it might be useful to study general Rapoport-Zink spaces.

\subsubsection[]{}\label{twist p-div notations}Consider the setting of \ref{twist abs} with $R=\COMP$. We will fix a group scheme $Z$ over $\Spec R$ which is flat, affine and of finite type as well as a $Z$-torsor $\mathcal{P}$ over $R$. Their rings of regular functions will be denoted by $O_Z$ and $O_{\mathcal{P}}$ respectively.

Let $\D$ be a $p$-divisible group over a scheme $S$. Then $\mathrm{End}_S\D$ is a $R$-module. We will write $\uAut_R(\D)$ for the $R$-group whose points in an $R$-algebra $A$ are given by \[\uAut_R(\D)(A)=((\mathrm{End}_S\D)\otimes_RA)^\times.\]

Suppose now that we are given a homomorphism of $R$-groups $Z\rightarrow \uAut_R(\D)$. For each positive integer $n$, we define a pre-sheaf $\D^{\mathcal{P}}[p^n]$ by setting $$\D^{\mathcal{P}}[p^n](T)=(\D[p^n](T)\otimes_RO_{\mathcal{P}})^Z.$$ They form a direct system denoted by $\D^{\mathcal{P}}$.
\begin{proposition} $\D^{\mathcal{P}}[p^n]$ is represented by a truncated $p$-divisible group of level $n$ over $S$, and $\D^{\mathcal{P}}$ is a $p$-divisible group.\end{proposition}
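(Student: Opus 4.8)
The plan is to reduce everything to the isomorphism \eqref{baisc twist} applied level by level, following the template of \cite{Kisin-Pappas} Lemma 4.4.6. First I would fix $n$ and set $M=\D[p^n]$, viewed as a finite flat group scheme over $S$; the homomorphism $Z\to\uAut_R(\D)$ induces a $Z$-action on $M$ in the sense of an action on the fppf sheaf of its $T$-points compatible with the group law, so that $M\otimes_R O_{\mathcal P}$ makes sense as a sheaf with $Z$-action and $\D^{\mathcal P}[p^n](T)=(M(T)\otimes_R O_{\mathcal P})^Z$. Working fppf-locally on $S$ and on $\Spec R$ (resp. on the base of the torsor $\mathcal P$) one may assume $\mathcal P$ is trivial; then \eqref{baisc twist} gives a canonical isomorphism $(M\otimes_R O_{\mathcal P})^Z\otimes_R O_{\mathcal P}\xrightarrow{\sim} M\otimes_R O_{\mathcal P}$, which exhibits $\D^{\mathcal P}[p^n]$ as an fppf-form of $M$, hence representable by a finite locally free group scheme over $S$ of the same order as $\D[p^n]$. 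Representability descends by fppf descent since the formation of $(-)^Z$ commutes with flat base change.

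Second I would check the truncated $p$-divisible group axioms. Since $\D^{\mathcal P}[p^n]$ becomes isomorphic to $\D[p^n]$ after the faithfully flat base change along $\mathcal P\to S$ (or rather along the composite with the trivialization of the torsor), and these axioms — being killed by $p^n$, being flat of the expected order, and the exactness of $\D^{\mathcal P}[p^n]\to\D^{\mathcal P}[p^{n+1}]\xrightarrow{p^n}\D^{\mathcal P}[p^{n+1}]$ realizing $\D^{\mathcal P}[p^n]$ as the $p^n$-torsion of $\D^{\mathcal P}[p^{n+1}]$ — can all be tested after a faithfully flat base change, they follow from the corresponding facts for $\D[p^n]$. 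Here one uses that the functorial operations $\otimes_R O_{\mathcal P}$ and $(-)^Z$ are exact enough: $O_{\mathcal P}$ is faithfully flat over $R$ so $\otimes_R O_{\mathcal P}$ is exact on the relevant sheaves, and the transition maps $\D[p^n]\hookrightarrow\D[p^{n+1}]$ and $p^n\colon\D[p^{n+1}]\to\D[p^{n+1}]$ are $Z$-equivariant, so passing to $Z$-invariants after twisting preserves the required exact sequences. Then $\D^{\mathcal P}:=\varinjlim_n\D^{\mathcal P}[p^n]$ is a $p$-divisible group of the same height and dimension as $\D$, because these are fppf-local invariants.

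The step I expect to be the main obstacle is the bookkeeping needed to apply \eqref{baisc twist} with $M=\D[p^n]$: the lemma of \cite{Kisin-Pappas} is stated for an $R$-module $M$ with $Z$-action, whereas $\D[p^n]$ is a sheaf of abelian groups (a finite flat group scheme), not an $R$-module in any obvious way. The remedy is to observe that $\eqref{baisc twist}$ only uses $M$ as an fppf sheaf of $R$-modules — more precisely of abelian groups on which $Z$ acts $R$-linearly through $Z\to\uAut_R(\D)$ — and its proof (a faithfully flat descent along $\mathcal P$, using that $O_{\mathcal P}\otimes_R O_{\mathcal P}\cong O_Z\otimes_R O_{\mathcal P}$ and the cocycle condition) goes through verbatim with $M$ replaced by the sheaf $\D[p^n]$, since finite flat group schemes form a full subcategory of fppf abelian sheaves closed under the operations used. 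Once this is granted, everything else is a routine unwinding of fppf descent, exactly parallel to the treatment of $\A^{\mathcal P}$ in \ref{twist abs}; indeed for $\D=\A[p^\infty]$ coming from an abelian scheme $\A$ as in \ref{twist abs}, one checks directly that $\D^{\mathcal P}=(\A^{\mathcal P})[p^\infty]$, which both recovers the proposition in the case of interest and confirms the compatibility that will be used later.
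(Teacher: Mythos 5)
Your argument hinges on the claim that $\D^{\mathcal{P}}[p^n]$ is an fppf-form of $\D[p^n]$ over $S$, so that representability and the truncated $p$-divisible group axioms follow by fppf descent. This is not correct. The torsor $\mathcal{P}$ lives over $\Spec R=\Spec\mathbb{Z}_p$, whereas $S$ is an arbitrary scheme (not assumed to lie over $\Spec R$), so there is no fppf cover $S'\to S$ trivializing $\mathcal{P}$. Moreover, even if $S$ happens to be an $R$-scheme, the operation $(-)\otimes_R O_{\mathcal{P}}$ in the definition of $\D^{\mathcal{P}}[p^n]$ acts value-wise on the $\mathbb{Z}_p$-modules $\D[p^n](T)$, i.e.\ in the coefficient direction: it is a Serre-tensor type operation, not a base change of $S$. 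So restricting $\D^{\mathcal{P}}[p^n]$ to some $S'=S\times_R R'$ still gives the twisted sheaf, not $\D[p^n]|_{S'}$. What \eqref{baisc twist}, specialized along an $R$-point $O_{\mathcal{P}}\to R'$, actually yields is $\D^{\mathcal{P}}[p^n]\otimes_R R'\cong\D[p^n]\otimes_R R'$ over $S$, where $\otimes_R R'$ is value-wise; since $R'$ is finite free over $R$, both sides are $[R':R]$-fold self-direct-sums. That is a genuinely weaker statement than fppf-local triviality, and your later claims that $\D^{\mathcal{P}}[p^n]$ has ``the same order'' and that the axioms can be tested ``after the faithfully flat base change along $\mathcal{P}\to S$'' inherit the same confusion.

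The idea you are missing is the Galois-invariants trick of Kisin--Pappas, which the paper follows. Choose $R'$ finite, integral, torsion-free over $R$ with $\mathcal{P}(R')\neq\emptyset$; then $\D^{\mathcal{P}}[p^n]\otimes_R R'\cong\D[p^n]\otimes_R R'\cong\D[p^n]^{\oplus[R':R]}$ is a truncated $p$-divisible group of level $n$. Arranging $\mathrm{Fr}(R')$ to be Galois over $\mathrm{Fr}(R)$, one recovers $\D^{\mathcal{P}}[p^n]$ as the Galois invariants of $\D^{\mathcal{P}}[p^n]\otimes_R R'$, hence as the kernel of a homomorphism of finite flat group schemes over $S$; this gives representability, and flatness follows because $\D^{\mathcal{P}}[p^n]$ is a direct summand of the flat sheaf $\D^{\mathcal{P}}[p^n]\otimes_R R'$ (using that $R\hookrightarrow R'$ splits as $R$-modules). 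Finally, your justification of exactness of $0\to\D^{\mathcal{P}}[p^{n-i}]\to\D^{\mathcal{P}}[p^n]\to\D^{\mathcal{P}}[p^i]\to 0$ --- that $(-)^Z$ ``preserves the required exact sequences'' because the maps are $Z$-equivariant --- is too weak, since $(-)^Z$ is only left exact. What actually works is that $(-)^{\mathcal{P}}$ becomes the identity functor after $\otimes_R R'$, and $R\to R'$ is faithfully flat (indeed free), so exactness may be tested after $\otimes_R R'$, where it reduces to the known exact sequence for $\D$.
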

\begin{proof}
We proceed as in \cite{Kisin-Pappas} Lemma 4.4.6, and take a finite, integral, torsion free $R$-algebra $R'$ such that $\mathcal{P}(R')$ is non-empty. Specializing \ref{baisc twist} by the map $O_{\mathcal{P}}\rightarrow R'$, we obtain an isomorphism $\D^{\mathcal{P}}[p^n]\otimes_R R'\cong\D[p^n]\otimes_R R'$. $\D^{\mathcal{P}}[p^n]\otimes_R R'$ is a truncated $p$-divisible group of level $n$ as $\D[p^n]\otimes_R R'$ is isomorphic to the sum of $[R':R]$ copies of $\D[p^n]$.

We may assume that $\mathrm{Fr}(R')$ is Galois over $\RAT_p$, then $\D^{\mathcal{P}}[p^n]$ is the $\mathrm{Gal}(\mathrm{Fr}(R')/\RAT_p)$-invariants of $\D^{\mathcal{P}}[p^n]\otimes_R R'$. So $\D^{\mathcal{P}}[p^n]$ is the kernel of a homomorphism of truncated $p$-divisible groups of level $n$, and hence is a group scheme over $S$. It is necessarily flat as it is a direct summand of $\D^{\mathcal{P}}[p^n]\otimes_R R'$. By the same argument, after applying the exact funtor $(\ )^{\mathcal{P}}$ to \[\xymatrix{0\ar[r]&\D[p^{n-i}]\ar[r]&\D[p^n]\ar[r]^{p^{n-i}}&\D[p^i]\ar[r]&0,}\] we have an exact sequence \[\xymatrix{0\ar[r]&\D^{\mathcal{P}}[p^{n-i}]\ar[r]&\D^{\mathcal{P}}[p^n]\ar[r]^{p^{n-i}}&\D^{\mathcal{P}}[p^i]\ar[r]&0}.\]
This implies that $\D^{\mathcal{P}}$ is a $p$-divisible group.
\end{proof}
\begin{remark}
The ways that we twist abelian schemes and $p$-divisible groups are compatible. More precisely, notations and hypothesis as in \ref{twist abs}, but with $R\subseteq \INTP$. Let $R'=\COMP$ and $\D=\A[p^\infty]$. The map $Z\rightarrow \uAut_R(\A)$ induces a map $Z_{R'}\rightarrow \uAut_{R'}(\D)$, and we have $\A^{\mathcal{P}}[p^\infty]=\D^{\mathcal{P}_{R'}}$.
\end{remark}
\subsubsection[]{}\label{twist p-div with addi struc}We will need to work with $p$-divisible groups with additional structure. Notations as in \ref{twist p-div notations}, we assume that $S$ is an integral scheme which is flat over $\mathbb{Z}_{(p)}$, and that $Z$ is \textsl{smooth with connected fibers}. Let $T_p(\D)$ be the $p$-adic Tate module of $\D$ over the generic point of $S$, and $t\in T_p(\D)^\otimes$ be a $Z$-invariant tensor. Using the proof of \cite{LRKisin} Lemmas 4.1.7 and 4.1.5, we have a canonical isomorphism $T_p(\D^\mathcal{P})\cong T_p(\D)^\mathcal{P}$, and the tensor $t\in T_p(\D)^\otimes$ is naturally an element of $T_p(\D^\mathcal{P})^\otimes$.

\begin{corollary}\label{iso of kisin twist}Assumptions as above, there exists an isomorphism $\D^{\mathcal{P}}\cong \D$ respecting $t$.\end{corollary}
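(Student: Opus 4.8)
The plan is to reduce the statement to the triviality of the $Z$-torsor $\mathcal{P}$ over $\Spec\COMP$ (the base ring $R$ of \ref{twist p-div notations} being $\COMP$), after which the desired isomorphism, and its compatibility with $t$, follow formally from the twisting formalism already in place.

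\emph{Step 1: $\mathcal{P}$ is trivial.} Being an fppf torsor under the smooth affine group scheme of finite type $Z$, the scheme $\mathcal{P}$ is itself smooth and of finite type over $\COMP$. Its special fibre $Z_{\mathbb{F}_p}$ is a smooth connected affine algebraic group over the finite field $\mathbb{F}_p$, so Lang's theorem shows that every $Z_{\mathbb{F}_p}$-torsor is trivial; in particular $\mathcal{P}(\mathbb{F}_p)\neq\emptyset$. Since $\COMP$ is a complete (hence Henselian) local ring with residue field $\mathbb{F}_p$ and $\mathcal{P}$ is smooth over it, the reduction map $\mathcal{P}(\COMP)\to\mathcal{P}(\mathbb{F}_p)$ is surjective; fix a point $x\in\mathcal{P}(\COMP)$.

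\emph{Step 2: the isomorphism $\D^{\mathcal{P}}\cong\D$.} The point $x$ gives an $R$-algebra homomorphism $\mathrm{ev}_x\colon O_{\mathcal{P}}\to R$. Specializing the isomorphism (\ref{baisc twist}) along $\mathrm{ev}_x$, with $M=\D[p^n](T)$ for a variable $S$-scheme $T$, yields a functorial isomorphism
\[
\D^{\mathcal{P}}[p^n](T)=\bigl(\D[p^n](T)\otimes_R O_{\mathcal{P}}\bigr)^{Z}\;\xrightarrow{\ \sim\ }\;\D[p^n](T),
\]
compatible with the transition maps in $n$; this is precisely the case $R'=R$ of the argument proving the Proposition above (the subsequent Galois-descent step being vacuous). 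Passing to the limit over $n$ produces an isomorphism $\D^{\mathcal{P}}\cong\D$ of $p$-divisible groups over $S$.

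\emph{Step 3: compatibility with $t$, and the main obstacle.} To finish I would restrict to the generic point of $S$ (of characteristic $0$, so the $p$-adic Tate module is well behaved) and pass to Tate modules. Under the canonical identification $T_p(\D^{\mathcal{P}})\cong T_p(\D)^{\mathcal{P}}=\bigl(T_p(\D)\otimes_R O_{\mathcal{P}}\bigr)^{Z}$ coming from the proof of \cite{LRKisin} Lemma 4.1.7, the isomorphism $T_p(\D^{\mathcal{P}})\cong T_p(\D)$ induced by Step 2 is again $\mathrm{ev}_x$. By construction the element $t$, viewed in $T_p(\D^{\mathcal{P}})^{\otimes}$, is the tensor $t\otimes 1\in\bigl(T_p(\D)^{\otimes}\otimes_R O_{\mathcal{P}}\bigr)^{Z}$ — which lies in the invariants precisely because $t$ is $Z$-invariant — and $\mathrm{ev}_x$ sends $t\otimes 1$ to $t$. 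Hence the isomorphism of Step 2 respects $t$, as claimed. The only non-formal ingredient is Step 1: one genuinely uses both hypotheses on $Z$, connectedness of the fibres (to apply Lang's theorem over $\mathbb{F}_p$) and smoothness (to lift the $\mathbb{F}_p$-point to $\COMP$ by Hensel), while Steps 2 and 3 are bookkeeping with the constructions of \ref{twist abs}--\ref{twist p-div with addi struc}. For disconnected $Z$ the torsor $\mathcal{P}$ could be nontrivial and $\D^{\mathcal{P}}$ need not be isomorphic to $\D$, so the hypotheses are sharp.
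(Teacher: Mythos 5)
Your proof is correct and takes the same route as the paper's: reduce to the triviality of $\mathcal{P}$, specialize the isomorphism \ref{baisc twist} at a $\COMP$-point of $\mathcal{P}$, and observe that this respects $t$ by construction. The paper simply asserts ``Noting that $Z$ is smooth with connected fibers, $\mathcal{P}$ is a trivial $Z$-torsor'' and ``It is by definition that its induced map on Tate modules respects $t$,'' whereas you unpack both: the Lang-plus-Hensel argument in Step~1 is exactly the standard justification that the paper leaves implicit, and your Step~3 spells out why $t\otimes 1$ is $Z$-invariant and is carried to $t$ by $\mathrm{ev}_x$. One minor slip: in Step~1 the antecedent of ``Its special fibre'' is $Z$, not $\mathcal{P}$, and what you actually use is that the special fibre of $\mathcal{P}$ is a torsor under the smooth connected group $Z_{\mathbb{F}_p}$ and hence trivial by Lang; the wording could mislead a reader into thinking you are identifying $\mathcal{P}_{\mathbb{F}_p}$ with $Z_{\mathbb{F}_p}$ before that has been shown. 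The closing remark about sharpness of the connectedness hypothesis is a nice observation not found in the paper.
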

\begin{proof}
Noting that $Z$ is smooth with connected fibers, $\mathcal{P}$ is a trivial $Z$-torsor. Indeed, by Lang's theorem (\cite{gpfinifield}) the special fiber $\mathcal{P}_{\mathbb{F}_p}$ is a trivial $Z_{\mathbb{F}_p}$-torsor, and the rational points on $\mathcal{P}_{\mathbb{F}_p}$ lift to rational points of $\mathcal{P}$.  Specializing \ref{baisc twist} at $w\in \mathcal{P}(R)$, we get an isomorphism $\D^{\mathcal{P}}\cong \D$. It is by definition that its induced map on Tate modules respects $t$.
\end{proof}

\section[Newton stratifications]{Newton stratifications}

We study the Newton stratifications on the special fibers of the Shimura varieties introduced in the last section.

\subsection[Group theoretic preparations]{Group theoretic preparations}\label{group theo settings for NP}
Let $G$ be a reductive group over $\mathbb{Z}_p$, and $\mu$ be a cocharacter of $G$ defined over $W(\kappa)$ with $\kappa|\mathbb{F}_p$ a finite field. Let $W=W(\overline{\kappa})$, $L=W[1/p]$ and $\sigma$ be the Frobenius on them. We need the following objects. Let $C(G)$ (resp. $B(G)$) be the set of $G(W)$-$\sigma$-conjugacy (resp. $G(L)$-$\sigma$-conjugacy) classes in $G(L)$, $C(G,\mu)$ be the set of $G(W)$-$\sigma$-conjugacy classes in $G(W)\mu(p)G(W)$, and $B(G,\mu)$ be the image of \[C(G,\mu)\hookrightarrow C(G)\twoheadrightarrow B(G).\] The set $B(G)$ parametrizes isomorphism classes of $F$-isocrystals with $G$-structure over an algebraically closed field of characteristic $p$, cf. \cite{class of F-isocrys} Remark 3.4 (i).

\subsubsection[]{}\label{gp theo NP}Let $T$ be a maximal torus of $G$, and $X_*(T)$ be its group of cocharacters. Let $\pi_1(G)$ be the quotient of $X_*(T)$ by the coroot lattice, and $W_G$ be the Weyl group of $G$. Since $G$ is unramified, we can fix a Borel subgroup $T\subset B\subset G$. To a $G(L)$-$\sigma$-conjugacy class $[b]\in B(G)$, Kottwitz defines two functorial invariants \[\nu_G([b])\in (X_*(T)_{\mathbb{Q}}/W_G)^\Gamma\cong X_*(T)_{\mathbb{Q},\mathrm{dom}}^\Gamma\] and \[\kappa_G([b])\in \pi_1(G)_\Gamma\] in \cite{isocys with addi}. Here $\Gamma=\mathrm{Gal}(\overline{\mathbb{Q}}_p/\mathbb{Q}_p)$, and $X_*(T)_{\mathbb{Q},\mathrm{dom}}\subset X_*(T)_{\mathbb{Q}}$ is the cone spanned by dominant coweights corresponding to $B$. These two invariants determines $[b]$ uniquely. In the following, we will also write $\nu_G(b)$ and $\kappa_G(b)$ for an element $b\in G(L)$ for the two invariants of $[b]\in B(G)$, the $G(L)$-$\sigma$-conjugacy class of $b$.

We consider the partial order $\leq$ on $X_*(T)_{\mathbb{Q}}$ given by $\chi'\leq\chi$ if and only if $\chi-\chi'$ is a linear combination of non-negative coroots with positive rational coefficients. We write $\overline{\mu}$ for the average of the $\Gamma$-orbit of $\mu$. By \cite{class of F-isocrys} Theorem 4.2, we have $\nu_G(b)\leq \overline{\mu}$ and $\kappa_G(b)=\mu_*$ for $b\in G(W)\mu(p)G(W)$. Here $\mu_*$ is the image of $\mu$ in $\pi_1(G)_\Gamma$. By works of Gashi, Kottwitz, Lucarelli, Rapoport and Richartz, we have (see \cite{VW} 8.6)) $$B(G,\mu)=\{[b]\in B(G)\mid \nu_G(b)\leq \overline{\mu}\text{ and }\kappa_G(b)=\mu_*\}.$$ The partial order $\leq$ on $X_*(T)_{\mathbb{Q}}$ induces a partial order on the set $B(G,\mu)$, denoted also by $\leq$.

\begin{remark}
One can define for any algebraically closed field $k\supseteq \mathbb{F}_p$ a set $B'(G)$ exactly as how we define $B(G)$. But by \cite{class of F-isocrys} Lemma 1.3, the obvious map $B(G)\rightarrow B'(G)$ is bijective.
\end{remark}
\begin{remark}\label{R:ordinary-basic}
There is a unique maximal (resp. minimal) element in $B(G,\mu)$. For a variety $X/\kappa$ with a map $X(\overline{\kappa})\rightarrow B(G,\mu)$, the preimage of this element is called the $\mu$-ordinary locus (resp. basic locus).
\end{remark}

To each $G(L)$-$\sigma$-conjugacy class $[b]$, one defines $M_b$ to be the centralizer in $G$ of $\nu_G(b)$, and $J_b$ be the group scheme over $\mathbb{Q}_p$ such that for any $\mathbb{Q}_p$-algebra $R$, $$J_b(R)=\{g\in G(R\otimes_{\mathbb{Q}_p}L)\mid gb=b\sigma(g)\}.$$
The group $J_b$ is an inner form of $M_b$ which, up to isomorphism, does not depend on the choices of representatives in $[b]$ (see \cite{isocys with addi} 5.2). Kottwitz introduced the notion of defect in \cite{Kott}, based on earlier work of Chai \cite{Chai}.
\begin{definition}\label{D:def}
For $[b]\in B(G)$, the defect of $[b]$ is defined by $$\mathrm{def}_G(b)=\mathrm{rank}_{\mathbb{Q}_p}G-\mathrm{rank}_{\mathbb{Q}_p}J_b.$$
\end{definition}
Hamacher gives a formula for $\mathrm{def}_G(b)$ using root data.
\begin{proposition}[\cite{geo of newt PEL} Proposition 3.8]
Let $w_1,\cdots,w_l$ be the sums over all elements in a Galois orbit of absolute fundamental weights of $G$. For $[b]\in B(G)$, we have $$\mathrm{def}_G(b)=2\cdot\sum_{i=1}^l\{\langle \nu_G(b),w_i\rangle\},$$
where $\{\cdot\}$ means the fractional part of a rational number.
\end{proposition}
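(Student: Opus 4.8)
The plan is to unwind the definition $\mathrm{def}_G(b)=\mathrm{rank}_{\mathbb{Q}_p}G-\mathrm{rank}_{\mathbb{Q}_p}J_b$ into a statement purely about the inner twisting of the Levi $M_b$, reduce to $\mathbb{Q}_p$-simple groups, and then match the two sides factor by factor, the representative case being a restriction of scalars of $\mathrm{GL}_n$, where everything becomes an elementary identity about Newton slopes. First I would note that $\mathrm{rank}_{\mathbb{Q}_p}M_b=\mathrm{rank}_{\mathbb{Q}_p}G$: choosing $\nu:=\nu_G(b)$ dominant and $T\subset B$ as in \ref{gp theo NP}, we may assume $T$ contains a maximal $\mathbb{Q}_p$-split torus $S$ of $G$; since $T$ is commutative, $S$ centralizes $\nu$, hence $S\subseteq M_b=Z_G(\nu)$. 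Therefore $\mathrm{def}_G(b)=\mathrm{rank}_{\mathbb{Q}_p}M_b-\mathrm{rank}_{\mathbb{Q}_p}J_b$ depends only on the inner twist $M_b\rightsquigarrow J_b$. Both this quantity and the right hand side of the proposition are unchanged under $G\rightsquigarrow G^{\mathrm{ad}}$ (inner twisting preserves the center, so the central split rank cancels on the left; the $w_i$ and the pairings $\langle\nu_G(b),w_i\rangle$ live on the adjoint group), and both are additive for restriction of scalars; so I reduce to $G$ adjoint and $\mathbb{Q}_p$-simple, i.e. $G=\mathrm{Res}_{F/\mathbb{Q}_p}H$ with $H$ absolutely simple over an unramified extension $F$.

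Next I would invoke Kottwitz's structure theory of $J_b$: since $\nu$ is central in $M_b$, the element $b$ is basic in $M_b$, and $J_b$ is the $\mathbb{Q}_p$-inner form of the quasi-split group $M_b$ in which $b$ is basic with Newton point $\nu$; its inner-twisting class in $\mathrm{H}^1(\mathbb{Q}_p,M_b^{\mathrm{ad}})$ is computed from the invariant $\kappa_{M_b}(b)\in\pi_1(M_b)_\Gamma$. The relative rank of $J_b$ can then be read off on each $\mathbb{Q}_p$-simple factor of $M_b^{\mathrm{ad}}$: the factor either stays split or becomes a non-split inner form, and the resulting rank drop is governed by the fractional parts $\{\langle\nu,w_i\rangle\}$ of the pairings with the fundamental weights $w_i$ indexing that factor. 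For a factor of type $A$ this is completely explicit, and already covers $\mathrm{GL}_n$ and the groups occurring in the PEL setting of the source: writing the slopes of $b$ as $\lambda_1\geq\cdots\geq\lambda_n$, one has $M_b=\prod_i\mathrm{GL}_{n_i}$ and $J_b=\prod_i\mathrm{GL}_{n_i/s_i}(D_{\lambda_i})$, where $\lambda_i=r_i/s_i$ is in lowest terms, $D_{\lambda_i}$ is the division algebra of invariant $\lambda_i$, and $s_i\mid n_i$ by Dieudonn\'{e}--Manin, so $\mathrm{def}_{\mathrm{GL}_n}(b)=\sum_i(n_i-n_i/s_i)$; since $\langle\nu,w_k\rangle=\lambda_1+\cdots+\lambda_k$ for the $k$-th fundamental weight, the identity to verify is \[2\sum_{k=1}^{n-1}\Bigl\{\sum_{j=1}^{k}\lambda_j\Bigr\}=\sum_i\Bigl(n_i-\frac{n_i}{s_i}\Bigr).\] This follows by partitioning $\{1,\dots,n\}$ into the slope segments (whose endpoints are lattice points of the Newton polygon, so the partial sums are integral there) and observing that over each block of $s_i$ consecutive indices the fractional parts of the partial sums take each value in $\{0,\tfrac{1}{s_i},\dots,\tfrac{s_i-1}{s_i}\}$ exactly once, contributing $s_i-1$ after multiplication by $2$.

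Finally I would treat the general case by running exactly the same bookkeeping on each $\mathbb{Q}_p$-simple factor of $M_b^{\mathrm{ad}}$, using the (finite) classification of inner forms of absolutely simple $p$-adic groups and their Tits indices to compute the rank drop in each case. I expect the main obstacle to be precisely this last point: establishing, preferably by a uniform argument rather than a type-by-type table, the dictionary between the relative rank drop of the inner form of a simple factor of $M_b$ and the quantity $2\sum\{\langle\nu,w_i\rangle\}$ taken over the fundamental weights indexing that factor --- which amounts to knowing that the slope data of $b$ along each such factor is sufficiently divisible (the analogue of the Dieudonn\'{e}--Manin multiplicity constraint) for the fractional parts to organize into complete blocks as in the $\mathrm{GL}_n$ computation above.
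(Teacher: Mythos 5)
The paper cites this result from Hamacher (\cite{geo of newt PEL}, Proposition 3.8) without supplying a proof, so there is no in-paper argument to compare against; I can only judge your proposal on its own terms. The opening reductions ($\mathrm{rank}_{\mathbb{Q}_p}M_b=\mathrm{rank}_{\mathbb{Q}_p}G$, reduction of the defect to the inner twist $M_b\rightsquigarrow J_b$, additivity under restriction of scalars) and the $\mathrm{GL}_n$ fractional-part identity are correct and cleanly presented; you have identified the right model computation. But two things stand in the way of calling this a proof.

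The first is an internal inconsistency in the adjoint reduction. Your $\mathrm{GL}_n$ verification uses $\langle\nu,w_k\rangle=\lambda_1+\cdots+\lambda_k$, which implicitly takes $w_k=e_1+\cdots+e_k$; this is a lift of the $k$-th fundamental weight to $X^*(T)$ that is not orthogonal to the center and does not descend to $\mathrm{PGL}_n$. The claim that ``the $w_i$ and the pairings $\langle\nu_G(b),w_i\rangle$ live on the adjoint group'' requires the root-span normalization $\omega_k=e_1+\cdots+e_k-\frac{k}{n}\sum_j e_j$, and with that normalization the $\mathrm{GL}_n$ identity fails: for $n=2$ and $\nu=(1,0)$ one has $\mathrm{def}_{\mathrm{GL}_2}(b)=0$, yet $2\{\langle\nu,\omega_1\rangle\}=2\{\frac{1}{2}\}=1$. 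So the adjoint-reduction step and the $\mathrm{GL}_n$ step are using incompatible conventions, and you need to pin down which normalization of $w_i$ the cited formula actually intends before the reduction can be trusted. The second gap is the one you yourself flag, and it is not mere bookkeeping: after reducing to $\mathbb{Q}_p$-simple $G$, the defect is a statement about $M_b$ and its inner form $J_b$, while the right-hand side is expressed via the fundamental weights of $G$. To run the factor-by-factor argument you would need (a) the integrality $\langle\nu_G(b),w_i\rangle\in\mathbb{Z}$ whenever $\alpha_i\notin\Delta_{M_b}$, i.e.\ the analogue of ``Newton breakpoints are lattice points'', which for general $G$ rests on the Kottwitz constraints relating $\nu_G(b)$ to $\kappa_M(b)$ for Levis $M\supseteq M_b$ and is not a consequence of Dieudonn\'e--Manin; and (b) a precise dictionary between the $G$-fundamental weights indexed by $\Delta_{M_b}$ and the inner-twisting class of $M_b^{\mathrm{ad}}$ determined by $\kappa_{M_b}(b)$, including the non-type-$A$ factors. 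Neither is supplied, and the closing appeal to Tits indices presupposes both. The outline is plausible and the obstacle you name is the real one, but what is written is a sketch with the hard inputs still missing.
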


\subsection[Newton stratifications on Shimura varieties of Hodge type]{Newton stratifications on Shimura varieties of Hodge type}\label{subsection newton Hodge}
No surprisingly, Newton strata on Shimura varieties of abelian type are, in some manner, induced by those on Shimura varieties of Hodge type. So we will first recall definition of Newton strata on Shimura varieties of Hodge type.

\subsubsection[]{}\label{notation NP 1}Notations as in \ref{model Hodge type}. Let $\kappa$ be the residue field of $O_{E,(v)}$. The Hodge type Shimura datum $(G,X)$ determines a $G$-orbit of cocharacters. It extends uniquely to a $G_{\COMP}$-orbit of cocharacters, and hence has a representative $\mu:\mathbb{G}_m\rightarrow G_{W(\kappa)}$ which is unique up to conjugacy. We remark that $\mu$ has weights $0$ and $1$ on $V_{\COMP}^\vee\otimes W(\kappa)$. 

Let $W=W(\overline{\kappa})$ and $L=W[1/p]$. Let $K=K_pK^p$ with $K_p=G(\mathbb{Z}_p)$. For $z\in \ES_K(G,X)(\overline{\kappa})$, we will simply write $D_z$ for $\mathbb{D}(\A_z[p^\infty])(W)$. In fact, we have an $F$-crystal $\mathbb{D}(\A[p^\infty])$ with a crystalline Tate tensor $s_{\cris}$ over $\ES_{K,0}(G,X)$, the special fiber of $\ES_K(G,X)$. On a point $x\in \ES_K(G,X)(\overline{\kappa})$ it gives rise to $(D_x, s_{\cris,x})$. Two points $x,y\in \ES_K(G,X)(\overline{\kappa})$ are said to be in the same \emph{Newton stratum} if there exists an isomorphism of $F$-isocrystals \[D_x\otimes L\rightarrow D_y\otimes L\] mapping $s_{\cris, x}$ to $s_{\cris, y}$. 

For $x\in \ES_K(G,X)(\overline{\kappa})$, choosing an isomorphism $t:V_{\COMP}^\vee\otimes W\rightarrow D_x$ mapping $s$ to $s_{\cris, x}$, we get a Frobenius on $V_{\COMP}^\vee\otimes W$ which is of the form $(\mathrm{id}\otimes \sigma)\circ g_{x,t}$ with  $g_{x,t}$ lies in $G(W)\mu(p)G(W)$. Moreover, changing $t$ to another isomorphism $V_{\COMP}^\vee\otimes W\rightarrow D_x$ mapping $s$ to $s_{\cris, x}$ amounts to $G(W)$-$\sigma$-conjugacy of $g_{x,t}$. So we have a well defined map \[\ES_K(G,X)(\overline{\kappa})\rightarrow C(G,\mu).\] Similarly, changing $t$ to another isomorphism $V_{\COMP}^\vee\otimes L\rightarrow D_x\otimes L$ mapping $s$ to $s_{\cris, x}$ amounts to $G(L)$-$\sigma$-conjugacy of $g_{x,t}$ (in $B(G)$), and we have a well defined map \[\ES_K(G,X)(\overline{\kappa})\rightarrow B(G,\mu).\] It is clear that $x,y\in \ES_K(G,X)(\overline{\kappa})$ are in the same Newton stratum if and only if they have the same image in $B(G,\mu)$.

Before stating the results about Newton strata on Shimura varieties of Hodge type, we need to fix some notations. When there is no confusion about the level $K$ and the Shimura datum $(G, X)$, we simply denote by $\ES_0=\ES_{K,0}(G,X)$ the special fiber of $\ES_K(G,X)$. For $[b]\in B(G,\mu)$, we will write $\ES_0^b$ for the Newton stratum corresponding to it. It is, a priori, just a subset of $\ES_0(\overline{\kappa})$.

\begin{theorem}\label{Newton for Hodge type}
The Newton stratum $\ES_0^b$ is a non-empty equi-dimensional locally closed subscheme of $\ES_0$ of dimension \[\langle\rho,\mu+\nu_G(b)\rangle-\frac{1}{2}\mathrm{def}_G(b).\] Here $\rho$ is the half-sum of positive roots of $G$. Moreover, $\overline{\ES_0^b}$, the closure of $\ES_0^b$, is the union of strata $\ES_0^{b'}$ with $[b']\leq [b]$, and $\overline{\ES_0^b}-\ES_0^b$ is either empty or pure of codimension 1 in $\overline{\ES_0^b}$.
\end{theorem}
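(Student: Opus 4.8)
The plan is to deduce all four assertions from the available literature, taking as the only geometric input the $F$-crystal with crystalline Tate tensor $(\mathbb{D}(\A[p^\infty]),s_\cris)$ over $\ES_0$ of \ref{model Hodge type} and the associated Newton map $\ES_0(\overline{\kappa})\to B(G,\mu)$, $x\mapsto[b_x]$. The first point is that the stratification consists of locally closed subschemes: by Grothendieck's specialization theorem in the Rapoport--Richartz form for $F$-isocrystals with $G$-structure (which one checks on the finitely many representations of $G_{\COMP}$ cutting out $G$, thereby reducing to the classical statement for $p$-divisible groups), the Newton point $x\mapsto\nu_G(b_x)$ is lower semicontinuous, while $\kappa_G(b_x)=\mu_*$ is constant; hence each $\ES_0^{\leq b}:=\{x\mid[b_x]\leq[b]\}$ is closed, $\ES_0^b$ is open in $\ES_0^{\leq b}$, so locally closed in $\ES_0$, and we give it the reduced structure. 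The same semicontinuity gives at once the easy inclusion $\overline{\ES_0^b}\subseteq\bigsqcup_{[b']\leq[b]}\ES_0^{b'}$.

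For non-emptiness I would simply invoke the theorem of Dong Uk Lee \cite{Newton non-epty} (and, independently, Kisin--Madapusi Pera, and Chia-Fu Yu \cite{Yu}): every class in $B(G,\mu)$ is realized by a point of $\ES_0$. The input here genuinely lies outside the crystalline formalism --- it rests on the description of $\ES_0(\overline{\kappa})$ via (essentially) the Langlands--Rapoport picture together with the existence of enough special points of prescribed reduction type --- so I would not attempt to reprove it.

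For equidimensionality and the dimension formula I would use the almost-product structure of the Newton stratum (Oort, Mantovan, Hamacher): after passing to perfections, $\ES_0^b$ is locally a product of a central leaf, which is smooth and equidimensional of dimension $\langle2\rho,\nu_G(b)\rangle$ (in the Hodge-type case this is \cite{Foliation-Hamacher}, and \cite{level m}), and an ``isogeny leaf'' of dimension $\langle\rho,\mu-\nu_G(b)\rangle-\tfrac12\mathrm{def}_G(b)$, equal to the dimension of the affine Deligne--Lusztig variety $X_\mu(b)$ by the Hamacher--Viehmann--Zhu formula. Since $\rho$ is $\Gamma$-invariant one has the identity $\langle2\rho,\nu_G(b)\rangle+\big(\langle\rho,\mu-\nu_G(b)\rangle-\tfrac12\mathrm{def}_G(b)\big)=\langle\rho,\mu+\nu_G(b)\rangle-\tfrac12\mathrm{def}_G(b)$, which is the asserted dimension, and equidimensionality descends from the two factors. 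Alternatively one cites \cite{Foliation-Hamacher} (and \cite{geo of newt PEL} in the PEL case) and \cite{level m} directly.

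Finally, for the closure relation and the codimension-one statement I would invoke the purity theorem for Newton strata of $F$-crystals (de Jong--Oort, Vasiu; in the presence of $s_\cris$ one uses the $G$-equivariant refinement) applied to $(\mathbb{D}(\A[p^\infty]),s_\cris)$ over $\ES_0$: this yields directly that $\overline{\ES_0^b}-\ES_0^b$ is empty or pure of codimension $1$ in $\overline{\ES_0^b}$. To upgrade the easy inclusion to the equality $\overline{\ES_0^b}=\bigsqcup_{[b']\leq[b]}\ES_0^{b'}$, one combines purity with the strict monotonicity of the dimension formula along $\leq$ on $B(G,\mu)$ (so $\dim\ES_0^{b'}<\dim\ES_0^b$ whenever $[b']<[b]$, cf. \cite{Foliation-Hamacher}), with the equidimensionality of $\ES_0^b$, and with the connectedness of the relevant geometric components: $\overline{\ES_0^b}$ is a closed equidimensional subset of $\ES_0^{\leq b}$ of the same dimension as $\ES_0^b$, the locus $\ES_0^{<b}:=\ES_0^{\leq b}-\ES_0^b$ has strictly smaller dimension, purity forces its codimension-one part to consist exactly of the strata covered by $[b]$, and one then runs an induction on the lengths of maximal chains below $[b]$, using transitivity of closures. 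The main obstacle is precisely this reverse inclusion $\ES_0^{b'}\subseteq\overline{\ES_0^b}$ for $[b']\leq[b]$: it is the one place where purity, the dimension formula, non-emptiness, and a connectedness/irreducibility input must all be woven together, whereas local closedness and equidimensionality are comparatively formal, and the dimension formula itself is a clean consequence of the product structure once the central-leaf and affine-Deligne--Lusztig dimensions are granted. In practice one would cite \cite{Foliation-Hamacher} and \cite{level m} for the whole package in the Hodge-type case.
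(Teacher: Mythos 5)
Your proposal is correct and takes essentially the same route as the paper, which itself is a collection of citations: Rapoport--Richartz (\cite{class of F-isocrys} Thm.\ 3.6) for local closedness, Lee / Kisin--Madapusi Pera / Yu for non-emptiness, and Hamacher \cite{Foliation-Hamacher} (Thm.\ 1.2, Cor.\ 5.3, with \cite{level m}) for the dimension formula, closure relation and purity. The extra gloss you supply — the almost-product decomposition into central leaf and isogeny leaf / affine Deligne--Lusztig variety for the dimension count, and purity plus dimension-monotonicity for the reverse closure inclusion — is a correct explanation of what lies behind those citations rather than an alternative proof; the only point you silently skip is that the Newton map is $\sigma$-equivariant so that $\ES_0^b$ is in fact defined over $\kappa$ and hence a subscheme of $\ES_0$, not merely of $\ES_{\overline\kappa}$.
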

\begin{proof}
That $\ES_0^b$ is locally closed follows from \cite{class of F-isocrys} Theorem 3.6. Let $b_0\in B(G,\mu)$ be the basic element. The non-emptiness of $\ES_0^{b_0}$ is proved by Dong-Uk Lee, Kisin-Madapusi Pera and Chia-Fu Yu respectively, one can see for example \cite{Newton non-epty}. Fixing $x\in \ES^{b_0}_0(\overline{\kappa})$, let $X(\mu,b_0)$ be the affine Deligne-Lusztig variety attached to $b_0$, we consider the uniformization map $\tau_x: X(\mu,b_0)\rightarrow \mathscr{A}_{g,K'}$.  The dimension of $\ES^{b_0}_0$ is no bigger than that of the image of $\tau_x$, which is $\langle\rho,\mu+\nu_G(b_0)\rangle-\frac{1}{2}\mathrm{def}_G(b_0)$ by \cite{zhu-aff gras in mixed char}. But then the theorem holds by \cite{Vieh} Lemma 5.12.  When $p>2$, the dimension formula is also given in \cite{Foliation-Hamacher} and \cite{level m}.
\end{proof}

When the prime to $p$ level $K^p$ varies, by construction the Newton strata $\ES_{K_pK^p, 0}^b$ are invariant under the prime to $p$ Hecke action. In this way we get also the Newton stratification on $\ES_{K_p,0}=\varprojlim_{K^p}\ES_{K_pK^p,0}$.

\subsection[Newton stratifications on Shimura varieties of abelian type]{Newton stratifications on Shimura varieties of abelian type}
The guiding idea of our construction is as follows. Let $(G,X)$ be a Shimura datum of abelian type with good reduction at $p$, $K^p\subset G(\mathbb{A}_f^p)$ be a sufficiently small open compact subgroup, and $\ES_{K,0}(G,X)$ be the special fiber of the associated integral canonical model (with $K=K_pK^p, K_p=G(\mathbb{Z}_p)$). In order to define a stratification on $\ES_{K,0}(G,X)$, the easiest way (and also the most direct way) one could think about is to do this for $\ES_{K^\adj,0}(G^\adj,X^\adj)$ first, where $K^\adj=G^\adj(\mathbb{Z}_p)K^{p,\adj}\subset G^{\adj}(\mathbb{A}_f)$ containes the image of $K$ under the induced map $G(\mathbb{A}_f)\ra G^{\adj}(\mathbb{A}_f)$,  and then pull it back via \[\ES_{K,0}(G,X)\rightarrow\ES_{K^\adj,0}(G^\adj,X^\adj).\] The goal of this subsection is to explain how to define and study Newton stratifications for Shimura varieties of abelian type via this ``passing to adjoints'' approach.

We would like to begin with the following lemma, which says that if one wants to use $B(G,\mu)$ to parameterize all the Newton strata, then he could pass to the adjoint group freely. 
\begin{lemma}
Let $f:G\rightarrow H$ be a central isogeny of reductive groups over $\mathbb{Z}_p$,  $\mu$ a cocharacter of $G$ defined over $W(\kappa)$ with $\kappa|\mathbb{F}_p$ finite, and $\mu_H=f\circ \mu$ the associated cocharater of $H$. Then
the map $B(G,\mu)\rightarrow B(H,\mu_H)$ is a bijection respecting partial orders.
\end{lemma}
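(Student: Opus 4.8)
The plan is to analyze the effect of a central isogeny $f\colon G\to H$ on the three pieces of data that enter the definition of $B(G,\mu)$ and $B(H,\mu)$, namely the double coset $G(W)\mu(p)G(W)$, the Newton map $\nu$, and the Kottwitz map $\kappa$, and then use the characterization $B(G,\mu)=\{[b]\mid \nu_G(b)\le\overline\mu,\ \kappa_G(b)=\mu_*\}$ recalled in \ref{gp theo NP}. Since $f$ is a central isogeny, $G$ and $H$ have the same adjoint group, hence the same absolute root system, the same Weyl group $W_G=W_H$, and $X_*(T_G)_\RAT\xrightarrow{\sim}X_*(T_H)_\RAT$ is an isomorphism of $\Gamma$-modules identifying the coroot lattices up to finite index; in particular it identifies the rational dominant cones and the partial order $\le$ on both sides. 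This already shows that a bijection $B(G,\mu)\to B(H,\mu)$, once established, automatically respects the partial orders, since $\le$ is read off from the Newton points which are matched under $f$.

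First I would recall (from Kottwitz, and as used implicitly in \cite{class of F-isocrys}) that for a central isogeny the induced map $B(G)\to B(H)$ is \emph{injective}: a central isogeny has central (hence finite, commutative) kernel, and the fibers of $B(G)\to B(H)$ are controlled by $H^1$ of the kernel, which injects into the picture in a way that is killed once one remembers $\kappa_G$; more precisely two classes in $B(G)$ with the same image in $B(H)$ have the same Newton point and the same image in $\pi_1(H)_\Gamma$, and since $\ker f$ is central the map $\pi_1(G)_\Gamma\to\pi_1(H)_\Gamma$ together with the Newton point separates them. So $B(G)\hookrightarrow B(H)$. Next I would check that $f$ maps $B(G,\mu)$ \emph{into} $B(H,\mu)$: this is immediate because $f(G(W)\mu(p)G(W))\subseteq H(W)\mu(p)H(W)$ (here $\mu$ on the $H$-side means $f\circ\mu$), $\nu_H(f(b))$ corresponds to $\nu_G(b)$ under the identification $X_*(T_G)_\RAT^\Gamma=X_*(T_H)_\RAT^\Gamma$, and $\overline\mu$ on both sides corresponds as well, while $\kappa_H(f(b))=f_*(\kappa_G(b))=f_*(\mu_*)=(f\circ\mu)_*$.

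The substantive step — and the one I expect to be the main obstacle — is \emph{surjectivity} of $B(G,\mu)\to B(H,\mu)$, i.e. showing that every $[b']\in B(H,\mu)$ lifts to some $[b]\in B(G,\mu)$. The natural approach: given $[b']\in B(H,\mu)$, pick a representative $b'\in H(W)\mu(p)H(W)$ and try to lift it to $b\in G(L)$; surjectivity of $G(L)\to H(L)$ is not automatic, but $f$ is surjective as a map of fppf sheaves and $H^1(\RAT_p,\ker f)$ is finite, so after $\sigma$-conjugation one can arrange a lift $b\in G(L)$ — this is a Lang-type / Steinberg-vanishing argument using that $\ker f$ is finite of multiplicative type and $W(\overline\kappa)$ has no nontrivial unramified cohomology in the relevant degree. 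Then one must check the lift can be chosen to (i) land in $G(W)\mu(p)G(W)$ and (ii) have $\kappa_G(b)=\mu_*$ in $\pi_1(G)_\Gamma$. For (ii), the obstruction to adjusting $\kappa_G(b)$ within its fiber over $\kappa_H(b')=\mu_*$ lives in $\ker(\pi_1(G)_\Gamma\to\pi_1(H)_\Gamma)$, and one uses that this kernel is exactly the image of $H^1$-type data coming from $\ker f$, which one can kill by further $\sigma$-conjugation by a suitable element of $Z_G(L)$; since $\nu_G$ and the double-coset condition are insensitive to twisting by $\sigma$-conjugation by a norm-$1$ central element, (i) is preserved. Once this lifting is done, $[b]$ maps to $[b']$, $\nu_G(b)\le\overline\mu$ because $\nu_H(b')\le\overline\mu$ and the orders agree, and $\kappa_G(b)=\mu_*$ by construction, so $[b]\in B(G,\mu)$. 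Combining injectivity, well-definedness of image, and surjectivity gives the bijection, and order-compatibility is then free from the identification of Newton points; I would also remark that it suffices to treat the case $H=G^{\adj}$ (so $\mu$-data on $H$ being the adjoint cocharacter) since a general central isogeny factors through the adjoint quotient on both sides in a compatible way, if that simplifies the cohomological bookkeeping.
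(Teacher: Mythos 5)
The paper's proof of this lemma is a one-line citation to \cite{isocys with addi 2} 6.5; your proposal attempts to reprove Kottwitz's result from scratch. That is a legitimately different route, but as written it has a genuine gap in the surjectivity step, and the injectivity step, while repairable, rests on a false auxiliary claim.

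On injectivity: you assert that $B(G)\hookrightarrow B(H)$ because Newton points plus ``the map $\pi_1(G)_\Gamma\to\pi_1(H)_\Gamma$'' separate classes. But although $\pi_1(G)\hookrightarrow\pi_1(H)$ on the nose, passing to $\Gamma$-coinvariants is only right exact, and $\pi_1(G)_\Gamma\to\pi_1(H)_\Gamma$ need not be injective; the element of $\ker(\pi_1(G)_\Gamma\to\pi_1(H)_\Gamma)$ that would separate two hypothetical classes is torsion and invisible to the $\Q$-compatibility between $\nu$ and $\kappa$. So $B(G)\to B(H)$ injectivity is not established. Fortunately you do not need it: inside $B(G,\mu)$ every class has $\kappa_G(b)=\mu_*$, so classes are distinguished by $\nu_G$ alone, and since $\nu_G$ and $\nu_H$ are matched under the identification $X_*(T_G)_\RAT\cong X_*(T_H)_\RAT$ the restricted map $B(G,\mu)\to B(H,\mu)$ is injective directly. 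This is the version of the argument you should make.

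On surjectivity — which you rightly flag as the substantive step — the Lang/Steinberg lifting argument does not go through. The obstruction to lifting $b'\in H(L)$ to $G(L)$ lies in $H^1(L,\ker f)$ with $L=W(\overline\kappa)[1/p]$, and this group is \emph{not} trivial: for $\ker f=\mu_n$ with $\gcd(n,p)=1$ one has $L^\times/(L^\times)^n\cong\mathbb{Z}/n$ from the valuation, and for $p\mid n$ it is larger. So ``$W(\overline\kappa)$ has no nontrivial unramified cohomology in the relevant degree'' is not correct, and it is not clear that $\sigma$-conjugation inside $H(L)$ can be used to kill the obstruction; at least, you would need to prove that. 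The argument that actually works, and that underlies Kottwitz's cited result, avoids lifting elements entirely: use the injection $(\nu_G,\kappa_G)\colon B(G)\hookrightarrow \mathcal{N}(G)\times\pi_1(G)_\Gamma$ together with Kottwitz's characterization of its image (the compatibility that the image of $\kappa_G(b)$ in $\pi_1(G)_{\Gamma,\Q}$ agrees with the average of $\nu_G(b)$). Given $[b']\in B(H,\mu)$ with invariants $(\nu_H(b'),\mu_*^H)$, the pair $(\nu_H(b'),\mu_*^G)\in\mathcal{N}(G)\times\pi_1(G)_\Gamma$ is compatible for $G$, because $\pi_1(G)_{\Gamma,\Q}\cong\pi_1(H)_{\Gamma,\Q}$ and $\mu_*^G\mapsto\mu_*^H$; hence there is $[b]\in B(G)$ realizing it, and then $\nu_G(b)=\nu_H(b')\le\overline\mu$ and $\kappa_G(b)=\mu_*$, so $[b]\in B(G,\mu)$, and it maps to $[b']$. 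Your reduction to $H=G^{\adj}$ is fine, and your observation that order-compatibility is automatic from the identification of Newton cones is correct.
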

\begin{proof}
This follows from \cite{isocys with addi 2} 6.5.
\end{proof}

The technical starting point is the following result of Kisin. It implies that for an adjoint Shimura datum of abelian type with good reduction at $p$, one can always realize it as the adjoint Shimura datum of a Hodge type one with very good properties.

\begin{lemma}[\cite{LRKisin} Lemma 4.6.6]\label{Kisin's lemma}
Let $(G,X)$ be a Shimura datum of abelian type with $G$ an adjoint group. Then there exists a Shimura datum of Hodge type $(G_1,X_1)$ such that
\begin{enumerate}
\item $(G_1^\adj,X_1^\adj)\stackrel{\sim}{\longrightarrow} (G,X)$ and $Z_{G_1}$ is a torus; moreover, for any other Hodge type datum $(G_2,X_2)$ with $(G_2^\adj,X_2^\adj)\stackrel{\sim}{\longrightarrow} (G,X)$, $G_2^{\mathrm{der}}$ is a quotient of $G_1^{\mathrm{der}}$;

\item if $(G,X)$ has good reduction at $p$, then $(G_1,X_1)$ in (1) can be chosen to have good reduction at $p$, and such that $E(G,X)_p=E(G_1,X_1)_p$.
\end{enumerate}
\end{lemma}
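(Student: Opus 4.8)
The plan is to construct $(G_1,X_1)$ by hand, starting from Deligne's classification of adjoint Shimura data of abelian type, and to keep track of the two conditions ``good reduction at $p$'' and ``$E(G,X)_p=E(G_1,X_1)_p$'' throughout; the last step, forcing $Z_{G_1}$ to be a torus, is then a formal manipulation with central tori. First I would reduce to the case where $G$ is $\mathbb{Q}$-simple: writing $(G,X)=\prod_i(G_i,X_i)$ with each $G_i$ adjoint and $\mathbb{Q}$-simple, each $(G_i,X_i)$ is again of abelian type and of good reduction at $p$, and if each factor admits a suitable Hodge-type datum $(G_{1,i},X_{1,i})$ then the product $(G_1,X_1):=\prod_i(G_{1,i},X_{1,i})$ is of Hodge type (a product of symplectic embeddings is symplectic), has $(G_1^{\adj},X_1^{\adj})\cong(G,X)$, has $Z_{G_1}=\prod_iZ_{G_{1,i}}$ a torus, has good reduction at $p$, and satisfies $E(G,X)_p=E(G_1,X_1)_p$ because the reflex field of a product is the compositum of those of the factors and the same remains true after completion at the chosen places above $p$.

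For $(G,X)$ $\mathbb{Q}$-simple adjoint of abelian type, write $G=\mathrm{Res}_{F/\mathbb{Q}}G^*$ with $G^*$ absolutely simple adjoint over a number field $F$ (totally real in types $B$, $C$, $D$; a CM field in type $A$). By Deligne's construction (\cite{varideshi} \S 2.3) there is an auxiliary Shimura datum of Hodge type $(G_2,X_2)$, built from the symplectic, orthogonal or unitary similitude group of an appropriate algebra with involution, such that $(G_2^{\adj},X_2^{\adj})\cong(G,X)$. Good reduction of $(G,X)$ at $p$ forces $F$ to be unramified at $p$ and $G^*$ to be unramified at the places above $p$; the remaining data entering Deligne's recipe (the auxiliary CM field, quaternion algebra, Hermitian form, etc.) can then be chosen to be unramified — indeed split — at the primes above $p$ while respecting the prescribed structure at the archimedean places, so that $G_2$ is unramified over $\mathbb{Q}_p$ and $(G_2,X_2)$ has good reduction at $p$.

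Next I would correct the center. Set $G_1:=(G_2\times T)/N$ for a suitable torus $T$ (e.g.\ an induced torus) and a finite central subgroup $N$ chosen so that $Z_{G_1}$ is a torus, and let $X_1$ be obtained from $X_2$ by multiplying the Hodge homomorphism by a suitable one of $T$; this keeps $G_1$ of Hodge type and does not change the adjoint datum. Since $Z_{G_1}$ is a torus, one checks that the field of definition $E(G_1,X_1)$ of the Hodge conjugacy class $\{\mu_1\}$ equals the compositum of $E(G_1^{\adj},X_1^{\adj})=E(G,X)$ (recall $G=G^{\adj}$) with the reflex field $E(G_1^{\mathrm{ab}},\mu_1^{\mathrm{ab}})$ of the induced datum on the cocenter $G_1^{\mathrm{ab}}$ — indeed a conjugacy class of cocharacters of $G_1$ is determined by its image in $G_1^{\adj}$ together with its image in $G_1^{\mathrm{ab}}$. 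Hence it suffices to choose $T$ and its Hodge cocharacter so that $T$ is split at $p$ and $E(G_1^{\mathrm{ab}},\mu_1^{\mathrm{ab}})$ is split at $p$; this is possible because one is only prescribing a torus with a given weight cocharacter over $\mathbb{Q}$ and a given Hodge type, with no constraint away from the archimedean places and $p$. This gives $E(G_1,X_1)_p=E(G,X)_p$ with $Z_{G_1}$ a torus, completing (1) and (2).

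I expect the main obstacle to be carrying out the construction of $(G_2,X_2)$ — especially in the exceptional type $D$ cases, where the available symplectic embeddings are rigid — while \emph{simultaneously} arranging that all auxiliary number fields and algebras are unramified (suitably split) at $p$, so that neither does the group become ramified at $p$ nor does the local reflex field grow. A cleaner route, which I would pursue in parallel, is to invoke the weaker \cite{CIMK} Lemma 3.4.13 (already available) to produce some Hodge-type datum $(H,X_H)$ of good reduction at $p$ with $(H^{\adj},X_H^{\adj})\cong(G,X)$, and then to perform only the central modification $G_1=(H\times T)/N$ above; this isolates the genuinely new content of the lemma to a formal operation on central tori.
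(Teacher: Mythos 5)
The paper itself offers no proof here: it cites \cite{LRKisin} Lemma 4.6.6, whose argument has the shape of your ``route (A)'' --- reduce to $\mathbb{Q}$-simple factors and re-run Deligne's case-by-case construction (\cite{varideshi} 2.3.10--2.3.13), choosing the auxiliary CM fields, quaternion algebras, Hermitian forms, etc.\ so that the resulting similitude group simultaneously has connected center, has good reduction at $p$, and has the prescribed local reflex field. Your sketch of (A) is the right skeleton, but the sentence ``the remaining data can then be chosen to be unramified --- indeed split --- at the primes above $p$'' is precisely where the content lies: good reduction only yields that $E(G_1,X_1)_p/E(G,X)_p$ is unramified, not that it is trivial, and arranging triviality is a genuine case-by-case computation (particularly delicate in type $D$, as you note). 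Also, the justification for the reduction to the $\mathbb{Q}$-simple case is not quite right: $\GSp(V_1)\times\GSp(V_2)$ does not embed into $\GSp(V_1\oplus V_2)$ (only the fiber product over the similitude characters does), so ``a product of symplectic embeddings is symplectic'' needs a fix.

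The preferred ``cleaner route (B)'' has a genuine gap. If $G_1=(H\times T)/N$ with $N$ finite central meeting $H^{\der}$ trivially, the isogeny $H^{\mathrm{ab}}\times T\to G_1^{\mathrm{ab}}$ induces a Galois-equivariant finite-index inclusion $X_*(H^{\mathrm{ab}}\times T)\hookrightarrow X_*(G_1^{\mathrm{ab}})$ under which $\mu_1^{\mathrm{ab}}$ is exactly $(\mu_H^{\mathrm{ab}},\mu_T)$. Hence its stabilizer in $\mathrm{Gal}(\overline{\mathbb{Q}}_p/\mathbb{Q}_p)$ is $\mathrm{Stab}(\mu_H^{\mathrm{ab}})\cap\mathrm{Stab}(\mu_T)$, and so $E(G_1^{\mathrm{ab}},\mu_1^{\mathrm{ab}})_p\supseteq E(H^{\mathrm{ab}},\mu_H^{\mathrm{ab}})_p$ for \emph{any} choice of $T$ and $\mu_T$. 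Combined with the identity you correctly isolated, $E(G_1,X_1)=E(G,X)\cdot E(G_1^{\mathrm{ab}},\mu_1^{\mathrm{ab}})$, this forces $E(G_1,X_1)_p\supseteq E(H,X_H)_p$. Since \cite{CIMK} Lemma 3.4.13 only gives good reduction and says nothing about $E(H,X_H)_p$ versus $E(G,X)_p$, the central-torus modification cannot in general achieve condition (2). In other words, the local reflex field constraint is not ``a formal operation on central tori'' --- it must be built in when the Hodge datum is first constructed, which is why the real proof goes through route (A). (A smaller omission in (B): you never produce a symplectic embedding of $(H\times T)/N$, which does not follow formally from one for $H$.)
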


\subsubsection[]{}\label{adjoint Newton} Let $(G,X)$ be an \emph{adjoint} Shimura datum of abelian type with good reduction at $p$, and $(G_1,X_1)$ be a Shimura datum of Hodge type satisfying the two conditions in the above lemma. Then the center of $G_{1,\INTP}$ is a torus.

Consider $\ES_{K_p}(G,X)$. By Theorem \ref{int can abelian type}, it is given by
\[\begin{split}
\ES_{K_p}(G,X)&=[\mathscr{A}(G_{\INTP})\times \ES_{K_{1,p}}(G_1,X_1)^+]/\mathscr{A}(G_{1,\INTP})^\circ\\
&=[\mathscr{A}(G_{\INTP})\times \ES_{K_{p}}(G,X)^+]/\mathscr{A}(G_{\INTP})^\circ,\end{split}\] 
where on connected components we have
\[\ES_{K_{p}}(G,X)^+=\ES_{K_{1,p}}(G_1,X_1)^+/\Delta\]  with
\[\Delta=\mathrm{Ker}(\mathscr{A}(G_{1,\INTP})^\circ\ra  \mathscr{A}(G_{\INTP})^\circ).\]
By the last subsection, there is a Newton stratification on $\ES_{K_{1,p},\overline{\kappa}}(G_1,X_1)$. We can restrict it to $\ES_{K_{1,p},\overline{\kappa}}(G_1,X_1)^+$ and then extend it trivially to $\mathscr{A}(G_{\INTP})\times \ES_{K_{1,p},\overline{\kappa}}(G_1,X_1)^+$. We will sometimes call this the \emph{induced Newton stratification} on $\mathscr{A}(G_{\INTP})\times \ES_{K_{1,p},\overline{\kappa}}(G_1,X_1)^+$. Similarly for $\mathscr{A}(G_{1,\INTP})\times \ES_{K_{1,p},\overline{\kappa}}(G_1,X_1)^+$.

\begin{proposition}\label{adjoint Newton-result}The induced Newton stratification on $\mathscr{A}(G_{\INTP})\times \ES_{K_{1,p},\overline{\kappa}}(G_1,X_1)^+$ (resp. $\mathscr{A}(G_{1,\INTP})\times \ES_{K_{1,p},\overline{\kappa}}(G_1,X_1)^+$) is $\mathscr{A}(G_{1,\INTP})^\circ$-stable. Moreover, the induced Newton stratification on  $\mathscr{A}(G_{1,\INTP})\times \ES_{K_{1,p},\overline{\kappa}}(G_1,X_1)^+$ descends to the Newton stratification on $\ES_{K_{1,p},0}(G_1,X_1)$.
\end{proposition}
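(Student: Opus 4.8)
The plan is to reduce everything to the explicit description of the $\mathscr{A}(G_{1,\INTP})^\circ$-action on $\ES_{K_{1,p}}(G_1,X_1)$ recalled just before Theorem \ref{int can abelian type}, namely the twisting-of-abelian-schemes construction of \cite{Kisin-Pappas}, and then to invoke the twisting results for $p$-divisible groups with tensors established in \ref{twist p-div with addi struc}. First I would observe that, since the Newton stratification is defined pointwise via the isomorphism class of $(D_x, s_{\cris,x})$ as an $F$-isocrystal with tensor (see \ref{notation NP 1}), it suffices to show that for every geometric point $x$ and every $\gamma$ in the relevant group, the point $\gamma\cdot x$ has an $F$-isocrystal with tensor isomorphic to that of $x$. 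The group $\mathscr{A}(G_{1,\INTP})^\circ$ is generated by the image of $G(\mathbb{A}_f^p)$-type Hecke operators (which are already known to preserve Newton strata, as recalled at the end of \ref{subsection newton Hodge}) together with the operators coming from $\gamma\in G^{\mathrm{ad}}(\INTP)^+$ via the twist $\A\mapsto\A^{\fP}$; so the content is entirely in the latter.

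The key step is therefore: for $\gamma\in G^{\mathrm{ad}}(\INTP)^+$ and $\fP$ the fiber of $G_{1,\INTP}\to G^{\mathrm{ad}}_{\INTP}$ over $\gamma$, the abelian scheme $\A^{\fP}$ (up to $\INTP$-isogeny), with its induced tensor, has the same Dieudonné module with tensor at each geometric point as $\A$. Passing to $p$-divisible groups, $\A^{\fP}[p^\infty] = \D^{\fP_{\COMP}}$ where $\D=\A[p^\infty]$ and $Z=Z_{G_{1,\INTP}}$ acts through the embedding $Z\to\uAut_{\INTP}(\A)$ of \cite{Kisin-Pappas} Lemma 4.5.2 (this compatibility is exactly the Remark after the Proposition in \ref{twist p-div notations}). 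Now $Z_{G_{1,\INTP}}$ is a torus by hypothesis (condition (1) of Lemma \ref{Kisin's lemma} together with the extension of the reductive model), hence in particular smooth with connected fibers, and the tautological tensor $s\in T_p(\D)^\otimes$ cutting out $G_{1,\INTP}$ is $Z$-invariant. So Corollary \ref{iso of kisin twist} applies and gives an isomorphism $\D^{\fP_{\COMP}}\cong\D$ respecting $s$, over the generic point; specializing to any geometric point of the special fiber and applying the Dieudonné functor, $(D_x,s_{\cris,x})$ for $\A^{\fP}$ is isomorphic to that for $\A$ as $F$-isocrystal with tensor. This proves the $\mathscr{A}(G_{1,\INTP})^\circ$-stability of the induced Newton stratification on both $\mathscr{A}(G_{\INTP})\times\ES_{K_{1,p},\overline{\kappa}}(G_1,X_1)^+$ and $\mathscr{A}(G_{1,\INTP})\times\ES_{K_{1,p},\overline{\kappa}}(G_1,X_1)^+$; the trivial extension in the $\mathscr{A}(G_{\INTP})$ (resp. $\mathscr{A}(G_{1,\INTP})$) factor is visibly compatible with the action.

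For the second assertion, the descent, I would argue that the quotient map $\mathscr{A}(G_{1,\INTP})\times\ES_{K_{1,p},\overline{\kappa}}(G_1,X_1)^+\to\ES_{K_{1,p},\overline{\kappa}}(G_1,X_1)$ expressing the integral canonical model (as in the displayed formulas before Theorem \ref{int can abelian type}) is a quotient by a free action with \'etale, or at least open, structure, so a locally closed subscheme that is a union of fibers descends to a locally closed subscheme downstairs; concretely, the Newton strata on $\ES_{K_{1,p},\overline{\kappa}}(G_1,X_1)$ recalled in \ref{subsection newton Hodge} are precisely the images of the $\mathscr{A}(G_{1,\INTP})^\circ$-stable strata upstairs, because the stratification on $\ES_{K_{1,p}}(G_1,X_1)$ is already $G(\mathbb{A}_f^p)$-Hecke invariant and the components $\ES_{K_{1,p}}(G_1,X_1)^+$ cover $\ES_{K_{1,p}}(G_1,X_1)$ under the Hecke action. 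Hence the induced stratification descends and one checks it coincides with the Newton stratification of \ref{subsection newton Hodge} by comparing on one connected component, where both are the restriction of the same stratification.

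I expect the main obstacle to be purely bookkeeping rather than conceptual: one must be careful that the abelian schemes involved are only defined up to $\INTP$-isogeny, so the relevant invariant is the rational Dieudonné module (the $F$-isocrystal) together with the integral tensor $s_{\cris}$, and one must make sure Corollary \ref{iso of kisin twist} is being applied with $S$ the (integral, flat over $\mathbb{Z}_{(p)}$) model $\ES_{K_{1,p}}(G_1,X_1)$ so that the generic-fiber Tate module statement propagates to the special fiber via the crystalline comparison; and that the trivialization of the $Z$-torsor $\fP$ is compatible with the way $\fP$ sits over $\gamma$. The fact that $Z_{G_{1,\INTP}}$ is a torus — the whole reason Lemma \ref{Kisin's lemma} was invoked — is what makes the twist invisible at the level of $p$-divisible groups with tensors, and this is the single input one cannot dispense with.
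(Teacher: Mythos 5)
Your argument is correct and follows essentially the same route as the paper: both reduce to the observation that the $\mathscr{A}(G_{1,\INTP})^\circ$-action on the product changes the $p$-divisible group with tensors only by the twist $\D\mapsto\D^{\fP}$, which is trivialized by Corollary~\ref{iso of kisin twist} because $Z_{G_{1,\INTP}}$ is a torus (smooth with connected fibers). You spell out more explicitly the reduction to the $G^{\mathrm{ad}}(\INTP)^+$-twist part and the descent step, which the paper handles with a one-line ``by the same argument,'' but the key input and its justification are identical.
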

\begin{proof}
To see the first statement, for $([g,h],x)\in \mathscr{A}(G_{\INTP})\times \ES_{K_{1,p,\overline{\kappa}}}(G_1,X_1)^+$, with $g\in G(\mathbb{A}_f^p)$, $h\in G(\INTP)^+$ and $x\in \ES_{K_{1,p, \overline{\kappa}}}(G_1,X_1)^+$, its $p$-divisible group is given by $\A_x[p^\infty]$. So, to prove the claim, it suffices to show that for any $[g',h']\in \mathscr{A}(G_{1,\INTP})^\circ$ with $g'\in G_1(\mathbb{Z}_{(p)})_+^-$, $h'\in G_1^{\adj}(\INTP)^+$, the $p$-divisible group attached to $([g,h],x)\cdot(g',h')$ is isomorphic to $\A_x[p^\infty]$ respecting additional structure. But this follows from Corollary \ref{iso of kisin twist}. By the same argument, we see that the induced Newton stratification on  $\mathscr{A}(G_{1,\INTP})\times \ES_{K_{1,p},\overline{\kappa}}(G_1,X_1)^+$ descends to the Newton stratification on $\ES_{K_{1,p},0}(G_1,X_1)$.
\end{proof}

The induced Newton stratification on $\mathscr{A}(G_{\INTP})\times \ES_{K_{1,p},\overline{\kappa}}(G_1,X_1)^+$ descends to a stratification on $\ES_{K_p,0}(G,X)$, and we will call it the Newton stratification. By construction, one sees easily that this does not depend on the choice of $(G_1, X_1)$. More formally, we have the following formulas for $(G_1, X_1)$:
\[\begin{split}
\ES_{K_{1,p},0}(G_1,X_1)&=\coprod_{[b]\in B(G_1,\mu_1)}\ES_{K_{1,p},0}(G_1,X_1)^b, \\ \ES_{K_{1,p},\overline{\kappa}}(G_1,X_1)^+&=\coprod_{[b]\in B(G_1,\mu_1)}\ES_{K_{1,p},\overline{\kappa}}(G_1,X_1)^{+,b}, \\ \ES_{K_{1,p},\overline{\kappa}}(G_1,X_1)^b&=[\mathscr{A}(G_{1,\INTP})\times \ES_{K_{1,p},\overline{\kappa}}(G_1,X_1)^{+,b}]/\mathscr{A}(G_{1,\INTP})^\circ,
\end{split}\]
and for  $(G, X)$:
\[\begin{split}\ES_{K_{,p},0}(G,X)&=\coprod_{[b]\in B(G,\mu)}\ES_{K_{p},0}(G,X)^b, \\ \ES_{K_{p},\overline{\kappa}}(G,X)^+&=\coprod_{[b]\in B(G,\mu)}\ES_{K_{p},\overline{\kappa}}(G,X)^{+,b}, \\ \ES_{K_{p},\overline{\kappa}}(G,X)^b&=[\mathscr{A}(G_{\INTP})\times \ES_{K_{p},\overline{\kappa}}(G,X)^{+,b}]/\mathscr{A}(G_{\INTP})^\circ.
\end{split}\]
Moreover, we have
\[\begin{split}
\ES_{K_{p},\overline{\kappa}}(G,X)^{+,b}&=\ES_{K_{1,p},\overline{\kappa}}(G_1,X_1)^{+,b}/\Delta, \\
\ES_{K_{p},\overline{\kappa}}(G,X)^b&=[\mathscr{A}(G_{\INTP})\times \ES_{K_{1,p},\overline{\kappa}}(G_1,X_1)^{+,b}]/\mathscr{A}(G_{1,\INTP})^\circ. \end{split}\]

The proposition also indicates how to relate Newton strata to the group theoretic object $B(G,\mu)$. For $x\in \ES_{K_p,0}(G,X)(\overline{\kappa})$, we can find $x_0\in \ES_{K_p,0}(G,X)^+(\overline{\kappa})$ which is in the same Newton stratum as $x$. Noting that $x_0$ lifts to $\widetilde{x_0}\in \ES_{K_{1,p},0}(G_1,X_1)^+(\overline{\kappa})$ whose image in $B(G_1,\mu_1)\simeq B(G,\mu)$ depends only on $x$, we get a well defined map \[\ES_{K_p,0}(G,X)(\overline{\kappa})\rightarrow B(G,\mu)\] whose fibers are Newton strata of $\ES_{K_p,0}(G,X)$. 

\subsubsection[]{}\label{diagram for NP abe type}Now we are ready to think about general Shimura varieties of abelian type. Let $(G,X)$ be a Shimura datum of abelian type (\emph{not adjoint in general}) with good reduction at $p$. Let $(G^\adj,X^\adj)$ be its adjoint Shimura datum, and $(G_1,X_1)$ be a Shimura datum of Hodge type satisfying the two conditions in Lemma \ref{Kisin's lemma} with respect to $(G^\adj,X^\adj)$. 

By the previous discussions, we have a commutative diagram
\[\xymatrix{&\ES_{K_{1,p},0}(G_1,X_1)(\overline{\kappa})\ar[r]\ar[d]&B(G_1,\mu_1)\ar[d]_{\simeq}\\
\ES_{K_p,0}(G,X)(\overline{\kappa})\ar[r]&\ES_{K^{\adj}_p,0}(G^\adj,X^\adj)(\overline{\kappa})\ar[r]&B(G^\adj,\mu)&B(G,\mu)\ar[l]_(0.4){\simeq}.}\]
Here for $\mu$ (resp. $\mu_1$), we use the same notation when viewing it as a cocharacter of $G^\adj$, and we identified $B(G^\adj,\mu)$ and $B(G^\adj,\mu_1)$ silently.
Now we can imitate the main results in Hodge type cases.
Before stating the results, we fix notations as follows. Choose a sufficiently small open compact subgroup $K^p\subset G(\mathbb{A}_f^p)$. We simply denote by $\ES_0$ the special fiber of $\ES_K(G,X)$, and by $\delta_{K^p}$ the induced Newton map $\ES_0(\overline{\kappa})\rightarrow B(G,\mu)$. For $[b]\in B(G,\mu)$, we will write $\ES_0^b$ for the Newton stratum corresponding to it.
\begin{theorem}\label{Newton for abelian type}
The Newton stratum $\ES_0^b$ is non-empty, and it is an equi-dimensional locally closed subscheme of $\ES_0$ of dimension \[\langle\rho,\mu+\nu_G(b)\rangle-\frac{1}{2}\mathrm{def}_G(b).\] Here $\rho$ is the half-sum of positive roots of $G$. Moreover, $\overline{\ES_0^b}$, the closure of $\ES_0^b$, is the union of strata $\ES_0^{b'}$ with $[b']\leq [b]$, and $\overline{\ES_0^b}-\ES_0^b$ is either empty or pure of codimension 1 in $\overline{\ES_0^b}$.
\end{theorem}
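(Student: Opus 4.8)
The plan is to deduce Theorem~\ref{Newton for abelian type} from the Hodge type case (Theorem~\ref{Newton for Hodge type}) by carefully tracking how properties descend along the two maps in the commutative diagram of \ref{diagram for NP abe type}. First I would reduce to the adjoint case: since $\ES_{K_p,0}(G,X) \to \ES_{K_p^\adj,0}(G^\adj,X^\adj)$ is defined by pulling back the Newton stratification, and this morphism is finite étale (it is a quotient by a finite group acting freely, up to the prime-to-$p$ level issues handled in \cite{CIMK}), every property in the statement---non-emptiness, local closedness, equidimensionality, the dimension formula, and the closure relation---transfers between $\ES_0$ and its adjoint quotient. The only subtlety is the dimension formula, but $\rho$, $\nu_G(b)$, and $\mathrm{def}_G(b)$ all depend only on the adjoint group (for $\rho$ and $\nu$ this is clear; for $\mathrm{def}_G$ one uses that $J_b$ and $G$ have the same rank over $\mathbb{Q}_p$ modulo the center, and the formula in \cite{geo of newt PEL} Proposition~3.8 is insensitive to central tori), together with the bijection $B(G,\mu) \simeq B(G^\adj,\mu)$ from the lemma quoted just before \ref{Kisin's lemma}. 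So it suffices to treat the adjoint case.

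For $(G,X)$ adjoint, I would invoke Proposition~\ref{adjoint Newton-result} and the explicit presentations displayed in \ref{adjoint Newton}: on a geometrically connected component we have $\ES_{K_p,0}(G,X)^{+} = \ES_{K_{1,p},0}(G_1,X_1)^{+}/\Delta$, and the whole space is $[\mathscr{A}(G_{\INTP})\times \ES_{K_p,0}(G,X)^{+}]/\mathscr{A}(G_{\INTP})^\circ$. Since $\ES_{K_{1,p},0}(G_1,X_1)^{+,b} \to \ES_{K_p,0}(G,X)^{+,b}$ is a quotient by the finite group $\Delta$ acting freely (étale locally it is a disjoint union of copies, by the argument in \ref{conn comp Hodge} on connectedness of the special fiber together with the extension property), and the Hecke-translates under $\mathscr{A}(G_{\INTP})$ just reshuffle the components, each property descends:
\begin{itemize}
\item non-emptiness: $\ES_{K_{1,p},0}(G_1,X_1)^b \neq \emptyset$ by Theorem~\ref{Newton for Hodge type}, hence its intersection with the connected component $\ES_{K_{1,p},0}(G_1,X_1)^+$ is nonempty (this needs the connectedness of the special fiber, from \ref{conn comp Hodge}, plus the fact that Hecke translates are transitive on components, so some translate of the stratum meets $\ES^+$), and the image in $\ES_0^b$ is nonempty;
\item local closedness and equidimensionality: preserved under finite étale morphisms and their sources/targets;
\item dimension: unchanged under finite étale maps, so $\dim \ES_0^b = \dim \ES_{K_{1,p},0}(G_1,X_1)^b = \langle\rho,\mu+\nu_G(b)\rangle - \tfrac12 \mathrm{def}_G(b)$, using the identifications of the group-theoretic data above;
\item closure relation and codimension-one boundary: the image and preimage of a union of strata under a finite étale map is again the corresponding union of strata, and purity of codimension is étale-local.
\end{itemize}

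The main obstacle I expect is the non-emptiness of $\ES_0^b$ for the \emph{connected component} $\ES_{K_p,0}(G,X)^+$, i.e. checking that every Newton stratum of the Hodge type model actually meets the fixed geometrically connected component $\ES_{K_{1,p},0}(G_1,X_1)^+$ rather than living entirely in other components. This is where the Hecke-equivariance is essential: the prime-to-$p$ Hecke action of $G(\mathbb{A}_f^p)$ permutes the connected components transitively (by the description of $\pi_0$ via $\mathscr{A}(G_{\INTP})$) and preserves each Newton stratum, so from non-emptiness of $\ES_{K_{1,p},0}(G_1,X_1)^b$ somewhere one gets it on $\ES_{K_{1,p},0}(G_1,X_1)^+$ after translating. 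Making this precise---and similarly arguing that the descended stratification is well-defined, i.e. that $\Delta$ and $\mathscr{A}(G_{\INTP})^\circ$ genuinely preserve the strata, which is exactly the content of Proposition~\ref{adjoint Newton-result} via Corollary~\ref{iso of kisin twist}---is the crux; everything else is formal transfer of geometric properties along finite étale morphisms.
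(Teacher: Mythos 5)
Your proposal is correct and follows essentially the same two-step argument as the paper: first establish the adjoint case by combining Theorem \ref{Newton for Hodge type} with Proposition \ref{adjoint Newton-result}, then transfer all properties along the finite étale morphism $\ES_{\overline{\kappa}}(G,X)^+\to\ES_{\overline{\kappa}}(G^\adj,X^\adj)^+$. The extra details you supply (the Hecke-transitivity argument for non-emptiness on a fixed geometrically connected component, and the observation that $\rho$, $\nu_G$, $\mathrm{def}_G$ are adjoint-invariant) are implicit in the paper's terser proof and are accurately identified as the points requiring care.
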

\begin{proof}
For $\ES_0(G^\adj,X^\adj)$, the statements for $\ES_0(G^\adj,X^\adj)^b$ follow by combining Theorem \ref{Newton for Hodge type} with Proposition \ref{adjoint Newton-result}. On geometrically connected components, the morphism \[\ES_{\overline{\kappa}}(G,X)^+\rightarrow\ES_{\overline{\kappa}}(G^\adj,X^\adj)^+\] is a finite \'{e}tale cover, and hence the statements for $\ES_0^b$ hold.
\end{proof}

Thus for a Shimura datum $(G, X)$ of abelian type with good reduction at $p$, we have the Newton stratification on the special fiber $\ES_0$ of $\ES_K(G, X)$
\[\ES_0=\coprod_{[b]\in B(G,\mu)}\ES_0^b,\quad  \overline{\ES_0^b}=\coprod_{[b']\leq [b]}\ES_0^{b'}. \]
As in Remark \ref{R:ordinary-basic}, there is a unique minimal (closed) stratum $\ES_0^{b_0}$, the basic locus, associated to the minimal element $[b_0]\in B(G,\mu)$; there is also a unique maximal (open) stratum $\ES_0^{b_{\mu}}$, the $\mu$-ordinary locus, associated to the maximal element $[b_\mu]\in B(G,\mu)$.

\begin{remark}
Historically to study the geometry of Newton strata, one usually first proves that there exists some kind of almost product structure by introducing certain Igusa varieties over central leaves (cf. section \ref{S:leaves}) and the related Rapoport-Zink spaces, and then study the geometry of the associated Igusa varieties and Rapoport-Zink spaces respectively. This was done in the PEL type case in \cite{coho of PEL, geo of newt PEL} and in the Hodge type case in \cite{Foliation-Hamacher, level m}. In the abelian type case, we could also do this, using the Rapoport-Zink spaces constructed in \cite{Sh1}. However, we will not pursue this aspect here. 
\end{remark}

\begin{example}\label{NP quaternion SHV}
Notations as in Example \ref{quaternion SHV}, we assume that $D$ is split at $p$ and that $F$ is unramified at $p$. Let $\mathfrak{p}_1,\cdots, \mathfrak{p}_t$ be places of $F$ over $p$, and $F_{\mathfrak{p}_i}$ be the $p$-adic completion of $F$. We will fix an identification $$\iota: \mathrm{Hom}(F,\mathbb{R})\cong \mathrm{Hom}(F,\overline{\mathbb{Q}}_p)\cong\coprod_i\mathrm{Hom}(F_{\mathfrak{p}_i},\overline{\mathbb{Q}}_p).$$ After reordering the $\mathfrak{p}_i$, we can find $1\leq s\leq t$, such that for $i\leq s$, $\mathrm{Hom}(F_{\mathfrak{p}_i},\overline{\mathbb{Q}}_p)$ contains some $\infty_j$ with $j\leq d$; and for $i> s$, $\mathrm{Hom}(F_{\mathfrak{p}_i},\overline{\mathbb{Q}}_p)$ contains only $\infty_j$ with $j> d$.

Then \[G_{\mathbb{Q}_p}\cong \prod_{i=1}^t\mathrm{Res}_{F_{\mathfrak{p}_i}/\mathbb{Q}_p}\GL_{2,F_{\mathfrak{p}_i}}:=\prod_{i=1}^tG_{\mathfrak{p}_i}.\] The Shimura datum gives a cocharacter $\mu:\mathbb{G}_m\rightarrow G_{\overline{\mathbb{Q}}_p}$ as in \ref{notation NP 1}. Under the isomorphism $$G_{\overline{\mathbb{Q}}_p}\cong \prod_{i=1}^t\Big(\prod_{\sigma:F_{\mathfrak{p}_i}\hookrightarrow{\overline{\mathbb{Q}}_p}}\GL_{2,\overline{\mathbb{Q}}_p}\Big),$$ the cocharacter $\mu$ decomposes into \[\mu_i:\mathbb{G}_m\rightarrow G_{\mathfrak{p}_i\overline{\Q}_p}=\prod_{\sigma:F_{\mathfrak{p}_i}\hookrightarrow{\overline{\mathbb{Q}}_p}}\GL_{2,\overline{\mathbb{Q}}_p}.\] As $\infty_j,\,1\leq j\leq d$, are all the archimedean places where $D$ splits, by our choice of ordering of the primes $\mathfrak{p}_i$,  $\mu_i$ is trivial for $i>s$, and for $1\leq i\leq s$ it is of the form (reordering the  $\sigma:F_{\mathfrak{p}_i}\hookrightarrow{\overline{\mathbb{Q}}_p}$ if necessary)
\[ z\mapsto \left(\left(\begin{array}{cc}
z&0\\
0&1
\end{array}\right),\cdots,\left(\begin{array}{cc}
z&0\\
0&1
\end{array}\right),\left(\begin{array}{cc}
1&0\\
0&1
\end{array}\right),\cdots,\left(\begin{array}{cc}
1&0\\
0&1
\end{array}\right) \right).\] 
For $1\leq i\leq s$, we will write $a_i$ for the number of non-trivial factors of $\mu_i$. Then \[B(G,\mu)\cong \prod_{i=1}^sB(G_{\mathfrak{p}_i},\mu_i)=\prod_{i=1}^s B(\mathrm{Res}_{F_{\mathfrak{p}_i}/\mathbb{Q}_p}\GL_{2,F_{\mathfrak{p}_i}},\mu_i),\] and we can use \cite{corr local L and coho R-Z} 2.1 to compute $B(G_{\mathfrak{p}_i},\mu_i)$.

Let $n_i=[F_{\mathfrak{p}_i}:\mathbb{Q}_p]$.  Let $B(G_{\mathfrak{p}_i},\mu_i)_2$ (resp. $B(G_{\mathfrak{p}_i},\mu_i)_1$) be the subset of $B(G_{\mathfrak{p}_i},\mu_i)$ with 2 slopes (resp. 1 slope). Then $B(G_{\mathfrak{p}_i},\mu_i)_2$ is the set of pairs \[(\frac{d_1}{n_i},\frac{d_2}{n_i})\] such that $d_1, d_2$ are non-negative integers with $d_1>d_2$ and $d_1+d_2=a_i$, and $B(G_{\mathfrak{p}_i},\mu_i)_1$ contains only one element which is the pair \[(\frac{a_i}{2n_i},\frac{a_i}{2n_i}).\] It is then easy to see that the cardinality of $B(G_{\mathfrak{p}_i},\mu_i)$ is $\lceil\frac{a_i}{2}\rceil+1$, where as usual for a real number $x$, $\lceil x\rceil$ is the smallest integer which is no less than $x$.
The cardinality of $B(G,\mu)$ is the product of those of $B(G_{\mathfrak{p}_i},\mu_i)$.

One sees easily that for each $i$, $B(G_{\mathfrak{p}_i},\mu_i)$ is totally ordered. For $[b]\in B(G,\mu)$, its projection to $B(G_{\mathfrak{p}_i},\mu_i)$ is of form $(\frac{\lambda_1}{n_i},\frac{\lambda_2}{n_i})$ with $\lambda_1\geq \lambda_2$ and $\lambda_1+ \lambda_2=a_i$. These $\lambda_i$ are integers unless $\lambda_1=\lambda_2$ and $a_i$ odd. Let \[l_i(b):=[\lambda_1],\] where $[x]$ is the integer part of $x$. By Theorem \ref{Newton for abelian type}, $\ES_0^b$ is non-empty and equi-dimensional. One deduces easily from purity that it is of dimension $\sum_{i=1}^sl_i(b)$.
\end{example}

\section[Ekedahl-Oort stratifications]{Ekedahl-Oort stratifications}

We study the Ekedahl-Oort stratifications on the special fibers of the Shimura varieties introduced in the first section.

\subsection[$F$-zips]{$F$-zips and $G$-zips}
In this subsection, we will follow \cite{disinv} and \cite{zipaddi}
to introduce $F$-zips and $G$-zips. They should be viewed as a kind of de Rham realizations of certain abelian motives. They are introduced by Moonen-Wedhorn and Pink-Wedhorn-Ziegler with the aim to study Ekedahl-Oort strata for Shimura varieties.

Let $S$ be a scheme, and $M$
be a locally free $O_S$-module of finite rank. By a descending
(resp. ascending) filtration $C^\bullet$ (resp. $D_\bullet$) on
$M$, we always mean a separating and exhaustive filtration such that
$C^{i+1}(M)$ is a locally direct summand of $C^i(M)$ (resp.
$D_i(M)$ is a locally direct summand of $D_{i+1}(M)$).

Let $\text{\texttt{LF}}(S)$ be the category of locally free
$O_S$-modules of finite rank, $\text{\texttt{FilLF}}^\bullet(S)$
be the category of locally free $O_S$-modules of finite rank with
descending filtration. For two objects $(M,C^{\bullet}(M))$ and
$(N,C^{\bullet}(N))$ in $\text{\texttt{FilLF}}^\bullet(S)$, a
morphism \[f:(M,C^{\bullet}(M))\rightarrow (N,C^{\bullet}(N)) \] is a
homomorphism of $O_S$-modules such that $f(C^i(M))\subseteq
C^i(N)$. We also denote by $\text{\texttt{FilLF}}_\bullet(S)$ the
category of locally free $O_S$-modules of finite rank with
ascending filtration. For two objects $(M,C^\bullet)$ and
$(M',C'^\bullet)$ in $\text{\texttt{FilLF}}^\bullet(S)$, their
tensor product is defined to be $(M\otimes M', T^\bullet)$ with
$T^i=\sum_{j}C^j\otimes C'^{i-j}$. Similarly for
$\text{\texttt{FilLF}}_\bullet(S)$. For an object $(M,C^\bullet)$
in $\text{\texttt{FilLF}}^\bullet(S)$, one defines its dual to be
$$(M,C^\bullet)^\vee=({}^{\vee}M:=M^\vee,{}^{\vee}C^i:=(M/C^{1-i})^\vee);$$ and for an
object $(M,D_\bullet)$ in $\text{\texttt{FilLF}}_\bullet(S)$, one
defines its dual to be
$$(M,D_\bullet)^\vee=({}^{\vee}M:=M^\vee,{}^{\vee}D_i:=(M/D_{-1-i})^\vee).$$
It is clear from the convention that $(M,C^\bullet)^\vee=({}^{\vee}M,{}^{\vee}C^\bullet)=(M^\vee,{}^{\vee}C^\bullet)$, and similarly for  $D_\bullet$.

If $S$ is over $\mathbb{F}_p$, we will denote by $\sigma:
S\rightarrow S$ the morphism which is the identity on the
topological space and $p$-th power on the sheaf of functions. For
an $S$-scheme $T$, we will write $T^{(p)}$ for the pull back of
$T$ via $\sigma$. For a quasi-coherent $O_S$-module $M$, $M^{(p)}$
means the pull back of $M$ via $\sigma$. For a $\sigma$-linear map
$\varphi:M\rightarrow M$, we will denote by
$\varphi^\lin:M^{(p)}\rightarrow M$ its linearization.

\begin{definition}\label{defini F-zip}
Let $S$ be an $\mathbb{F}_p$-scheme. 
\begin{enumerate}
\item By an $F$-zip over $S$, we mean a tuple $\underline{M}=(M,\ C^{\bullet},\
D_{\bullet},\ \varphi_{\bullet})$ where 

\begin{itemize}
\item $M$ is an object in $\text{\texttt{LF}}(S)$, i.e. $M$ is a
locally free sheaf of finite rank on~$S$;

\item $(M,C^{\bullet})$ is an object in
$\text{\texttt{FilLF}}^\bullet(S)$, i.e. $C^\bullet$ is a
descending filtration on~$M$;

\item $(M,D_{\bullet})$ is an object in
$\text{\texttt{FilLF}}_\bullet(S)$, i.e. $D_\bullet$ is an
ascending filtration on~$M$;

\item $\varphi_i:C^i/C^{i+1}\rightarrow D_i/D_{i-1}$ is a
$\sigma$-linear map whose linearization
$$\varphi_i^{\mathrm{lin}}:(C^i/C^{i+1})^{(p)}\rightarrow
D_i/D_{i-1}$$ is an isomorphism.
\end{itemize}

\item By a morphism of $F$-zips $$\underline{M}=(M,C^\bullet, D_\bullet,
\varphi_\bullet)\rightarrow \underline{M'}=(M',C'^\bullet,
D'_\bullet, \varphi'_\bullet),$$ we mean a morphism of
$O_S$-modules $f: M\rightarrow N$, such that for all $i\in \mathbb{Z}$,
$f(C^i)\subseteq C'^i$, $f(D_i)\subseteq D'_i$, and $f$ induces a
commutative diagram
\[\begin{CD}
C^i/C^{i+1}@>\varphi_i >>D_i/D_{i-1}\\
@V f VV @VV f V\\
C'^i/C'^{i+1}@>\varphi'_i >>D'_i/D'_{i-1}.
\end{CD}\]
\end{enumerate}
\end{definition}
\begin{example}\label{Tateobj}(\cite{zipaddi} Example 6.6)
The Tate $F$-zip of weight $d$ is $$\mathbf{1}(d):=(O_S,
C^\bullet, D_\bullet, \varphi_\bullet),$$ where
$$C^i=\left\{
\begin{aligned}
         O_S&\text{ \ \ \ for }i\leq d;\\
         0&\text{ \ \ \ for }i> d;
                          \end{aligned} \right.\ \ \ \ \ \
D_i=\left\{
\begin{aligned}
         0&\text{ \ \ \ for }i< d;\\
         O_S&\text{ \ \ \ for }i\geq d;
                          \end{aligned} \right.$$
and $\varphi_d$ is the Frobenius.
\end{example}
One can talk about tensor products and duals in the category of $F$-zips.
\begin{definition}(\cite{zipaddi} Definition 6.4)
Let $\underline{M}$, $\underline{N}$ be two $F$-zips over $S$,
then their tensor product is the $F$-zip
$\underline{M}\otimes\underline{N}$, consisting of the tensor
product $M\otimes N$ with induced filtrations $C^\bullet$ and
$D_\bullet$ on $M\otimes N$, and induced $\sigma$-linear maps
$$\xymatrix{\gr_C^i(M\otimes N)\ar[d]^{\cong}& &\gr^D_i(M\otimes N)\\
\bigoplus_j\gr^j_C(M)\otimes
\gr^{i-j}_C(N)\ar[rr]^{\bigoplus_j\varphi_j\otimes \varphi_{i-j}}
&& \bigoplus_j\gr_j^D(M)\otimes \gr_{i-j}^D(N)\ar[u]^{\cong} }$$
whose linearization are isomorphisms.
\end{definition}
\begin{definition}(\cite{zipaddi} Definition 6.5)
The dual of an $F$-zip $\underline{M}$ is the $F$-zip
$\underline{M}^\vee$ consisting of the dual sheaf of $O_S$-modules
$M^\vee$ with the dual descending filtration of $C^\bullet$ and
dual ascending filtration of $D_\bullet$, and $\sigma$-linear maps
whose linearization are isomorphisms
$$\xymatrix@1{(\gr_C^i(M^\vee))^{(p)}=((\gr_C^{-i}M)^\vee)^{(p)}\ar[rr]^(0.6){\big((\varphi_{-i}^{\mathrm{lin}})\big)^{-1\vee}} &&
(\gr_{-i}^DM)^\vee\cong \gr^D_i(M^\vee)}.$$
\end{definition}

For the Tate $F$-zips introduced in Example \ref{Tateobj}, we have natural isomorphisms $\mathbf{1}(d)\otimes
\mathbf{1}(d')\cong\mathbf{1}(d+d')$ and
$\mathbf{1}(d)^\vee\cong\mathbf{1}(-d)$. The $d$-th Tate twist of
an $F$-zip $\underline{M}$ is defined as
$\underline{M}(d):=\underline{M}\otimes \mathbf{1}(d)$, and there
is a natural isomorphism $\underline{M}(0)\cong \underline{M}$.

\begin{definition}\label{abmisizip}
A morphism between two objects in $\text{\texttt{LF}}(S)$ is said
to be admissible if the image of the morphism is a locally direct
summand. A morphism $f:(M,C^\bullet)\rightarrow(M',C'^\bullet)$ in
$\text{\texttt{FilLF}}^\bullet(S)$ (resp.
$f:(M,D_\bullet)\rightarrow(M',D'_\bullet)$ in
$\text{\texttt{FilLF}}_\bullet(S)$) is called admissible if for
all $i$, $f(C^i)$ (resp. $f(D_i)$) is equal to $f(M)\cap C'^i$
(resp. $f(M)\cap D'_i$) and is a locally direct summand of $M'$. A
morphism between two $F$-zips $\underline{M}\rightarrow
\underline{M'}$ in $F\text{-\texttt{Zip}}(S)$ is called admissible
if it is admissible with respect to the two filtrations.
\end{definition}

With admissible morphisms, tensor products, duals and the Tate object
$\mathbf{1}(0)$ as above, $F\text{-\texttt{Zip}}(S)$ becomes an
$\mathbb{F}_p$-linear exact rigid tensor category (see
\cite{zipaddi} section 6). By \cite{zipaddi} Lemma
4.2 and Lemma 6.8, for a morphism in $F\text{-\texttt{Zip}}(S)$, the property of being admissible is local for the fpqc topology.

We will introduce $G$-zips following \cite{zipaddi}. These may be viewed as $F$-zips with $G$-structure. Note that the authors of \cite{zipaddi} work with reductive groups
over a general finite field $\mathbb{F}_q$ containing
$\mathbb{F}_p$, and $q$-Frobenius. But we don't need the most
general version of $G$-zips, as our reductive groups are connected
and defined over $\mathbb{F}_p$.

\subsubsection[]{}\label{G-zip settings}Let $G$ be a connected reductive group over $\mathbb{F}_p$, and $\chi$ be a cocharacter of $G$ defined over $\kappa$, a finite extension of $\mathbb{F}_p$. Let $P_+\subseteq G_{\kappa}$ (resp. $L\subseteq G_{\kappa},\ P_-\subseteq G_{\kappa}$) be the subgroup whose Lie algebra is the submodule of $\mathrm{Lie}(G_{\kappa})$ of non-negative weights (resp. of weight 0, of non-positive weights) with respect to $\chi$ composed with the adjoint action of $G_{\kappa}$ on $\mathrm{Lie}(G_{\kappa})$. The unipotent subgroup of $P_+$ (resp. $P_-$) will be denoted by $U_+$ (resp. $U_-$).

\begin{definition}\label{G-ziptypechi}
Let $S$ be a scheme over $\kappa$. 
\begin{enumerate}
\item A $G$-zip of type
$\chi$ over $S$ is a tuple $\underline{I}=(I, I_+, I_-, \iota)$
consisting of 
\begin{itemize}
\item a right $G_k$-torsor $I$ over~$S$, 
\item a right
$P_+$-torsor $I_+\subseteq I$ (i.e. the inclusion $I_+\subseteq I$ is such that it is compatible for the $P_+$-action on $I_+$ and the $G_\kappa$-action on $I$), 
\item a right $P_-^{(p)}$-torsor
$I_-\subseteq I$ (similarly as for $I_+\subseteq I$), and 
\item an isomorphism of $L^{(p)}$-torsors
$\iota:I^{(p)}_+/U^{(p)}_+\rightarrow I_-/U^{(p)}_-$.
\end{itemize}

\item A morphism $(I, I_+, I_-, \iota)\rightarrow (I', I'_+, I'_-,
\iota')$ of $G$-zips of type $\chi$ over $S$ consists of
equivariant morphisms $I\rightarrow I'$ and $I_{\pm}\rightarrow
I'_{\pm}$ that are compatible with inclusions and the isomorphisms
$\iota$ and~$\iota'$.
\end{enumerate}
\end{definition}

Here by a torsor over $S$ of an fpqc group scheme $G/S$, we mean an fpqc scheme $X/S$ with a $G$-action $\rho:X\times_S G\rightarrow X$ such that the morphism $X\times G\rightarrow X\times_S X$, $(x,g)\rightarrow (x,x\cdot g)$ is an isomorphism.

The category of $G$-zips of type $\chi$ over $S$ will be denoted
by $G\texttt{-Zip}_\kappa^{\chi}(S)$. When $G=\GL_n$ we recover the category of $F$-zips, cf. \cite{zipaddi} subsection 8.1.
With the evident notation of pull back, the $G\texttt{-Zip}_\kappa^{\chi}(S)$ form a fibered category
over the category of schemes over $\kappa$, denoted by $G\texttt{-Zip}_\kappa^{\chi}$. Noting that morphisms in $G\texttt{-Zip}_\kappa^{\chi}(S)$ are isomorphisms, $G\texttt{-Zip}_\kappa^{\chi}$ is a category fibered in groupoids.
\begin{theorem}\label{mainthofGzip}(\cite{zipaddi} Corollary 3.12)
The fibered category $G\texttt{-}\mathtt{Zip}_\kappa^{\chi}$ is a
smooth algebraic stack of dimension 0 over $\kappa$.
\end{theorem}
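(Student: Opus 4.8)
The plan is to realize $G\texttt{-Zip}_\kappa^{\chi}$ as a quotient stack for an algebraic group action and then read off smoothness and dimension. First I would introduce the \emph{zip group} $E = E_{G,\chi}$, the subgroup of $P_+ \times P_-^{(p)}$ consisting of pairs $(p_+, p_-)$ whose images in $L$ (resp. $L^{(p)}$, via the projections $P_+ \twoheadrightarrow L$ and $P_-^{(p)} \twoheadrightarrow L^{(p)}$) correspond under the Frobenius $\sigma \colon L \to L^{(p)}$. This $E$ acts on $G_\kappa$ by the rule $(p_+, p_-) \cdot g = p_+ g p_-^{-1}$. The key claim is that there is an isomorphism of fibered categories
\[
G\texttt{-Zip}_\kappa^{\chi} \;\cong\; [E \backslash G_\kappa].
\]
To construct the functor in one direction, given a $G$-zip $\underline{I} = (I, I_+, I_-, \iota)$ over $S$, one produces an $E$-torsor by taking a suitable fiber product of $I_+$ and $I_-$ over the identification $\iota$ of the associated $L^{(p)}$-torsors, together with an $E$-equivariant map to $G_\kappa$ recording the position of $I_+$ and $I_-$ inside $I$; conversely, from an $E$-torsor $T \to S$ with equivariant $T \to G_\kappa$ one reconstructs $I := T \times^{E, \mathrm{pr}_1} P_+ \times^{\ldots}$ etc. This is exactly the content of \cite{zipaddi} Corollary 3.12, and I would simply cite the equivalence rather than rederive it; the only thing to check by hand is that the dictionary is compatible with pullback, which is formal.

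Given the quotient-stack description, the remaining assertions are structural facts about $[E \backslash G_\kappa]$. Since $E$ is a linear algebraic group over $\kappa$ (a closed subgroup of $P_+ \times P_-^{(p)}$ cut out by the Frobenius-twisted condition, hence affine of finite type) acting on the smooth affine variety $G_\kappa$, the quotient $[E \backslash G_\kappa]$ is an algebraic stack of finite type over $\kappa$, and it is smooth because both $E$ and $G_\kappa$ are smooth over $\kappa$ (a quotient of a smooth scheme by a smooth group scheme is a smooth stack). For the dimension: a quotient stack $[E \backslash Y]$ has dimension $\dim Y - \dim E$. Here $\dim G_\kappa$ equals $\dim G$, and $\dim E = \dim P_+ + \dim U_-^{(p)}$ because $E$ surjects onto $P_+$ via $\mathrm{pr}_1$ with kernel isomorphic to $U_-^{(p)} = \ker(P_-^{(p)} \to L^{(p)})$ (the first coordinate freely determines the image in $L$, hence in $L^{(p)}$, hence the $L^{(p)}$-part of the second coordinate, leaving only the unipotent part free). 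Now $\dim P_+ = \dim L + \dim U_+$ and, since $U_+$ and $U_-$ are the unipotent radicals of opposite parabolics with common Levi $L$, one has $\dim U_+ = \dim U_-$ and $\dim G = \dim L + \dim U_+ + \dim U_-$. Therefore $\dim E = \dim L + \dim U_+ + \dim U_- = \dim G$, and $\dim [E \backslash G_\kappa] = \dim G - \dim G = 0$.

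The main obstacle is really just the bookkeeping in establishing the equivalence $G\texttt{-Zip}_\kappa^{\chi} \cong [E \backslash G_\kappa]$ — unwinding the definitions of torsors, quotients of torsors by unipotent radicals, and the Frobenius twist so that the $E$-torsor one extracts from a $G$-zip is well-defined and functorial in $S$. But this is precisely \cite{zipaddi} Corollary 3.12, so in the write-up I would state the equivalence, attribute it, and then spend the bulk of the argument on the (short) dimension count $\dim E = \dim G$ and the observation that smoothness of $E$ and $G_\kappa$ gives smoothness of the stack. I do not expect any genuinely hard point beyond correctly identifying $\dim E$, which rests on the opposite-parabolics relation $\dim U_+ = \dim U_-$ and $\dim G = \dim L + \dim U_+ + \dim U_-$.
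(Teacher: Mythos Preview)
Your proposal is correct and follows exactly the approach taken in \cite{zipaddi} and recalled in the paper: the paper simply cites the result (Corollary 3.12 of \cite{zipaddi}) without proof, and immediately afterwards records the quotient-stack description $G\texttt{-Zip}_\kappa^{\chi}\cong [E_{G,\chi}\backslash G_\kappa]$ (Theorem \ref{mainPWZ}) from which smoothness and the dimension count $\dim E_{G,\chi}=\dim G_\kappa$ follow just as you outline.
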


\subsubsection[]{Some technical constructions about $G$-zips}\label{subsubsection group G-zips}

We need more information about the structure of
$G\texttt{-Zip}_\kappa^{\chi}$. First, we need to introduce some
standard $G$-zips as in~\cite{zipaddi}.
\begin{construction}\label{constG-zip}(\cite{zipaddi} Construction 3.4)
Let $S/\kappa$ be a scheme. For a section $g\in
G(S)$, one associates a $G$-zip of type $\chi$ over $S$ as
follows. Let $I_g=S\times_\kappa G_\kappa$ and $I_{g,+}=S\times_\kappa P_+\subseteq
I_g$ be the trivial torsors. Then $I_g^{(p)}\cong
S\times_\kappa G_\kappa=I_g$ canonically, and we define $I_{g,-}\subseteq
I_g$ as the image of $S\times_\kappa P_-^{(p)}\subseteq S\times_\kappa G_\kappa$
under left multiplication by $g$. Then left multiplication by $g$
induces an isomorphism of $L^{(p)}$-torsors
$$\iota_g:I_{g,+}^{(p)}/U_+^{(p)}=S\times_\kappa P_+^{(p)}/U_+^{(p)}\cong S\times_\kappa P_-^{(p)}/U_-^{(p)}
\stackrel{\sim}{\rightarrow}g(S\times_\kappa P_-^{(p)})/U_-^{(p)}=I_{g,-}/U_-^{(p)}.$$ We thus
obtain a $G$-zip of type $\chi$ over $S$, denoted by
$\underline{I}_g$.
\end{construction}
\begin{lemma}(\cite{zipaddi} Lemma 3.5)
Any $G$-zip of type $\chi$ over $S$ is \'{e}tale locally of the
form $\underline{I}_g$.
\end{lemma}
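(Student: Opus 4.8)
The plan is to normalize a given $G$-zip $\underline{I}=(I,I_+,I_-,\iota)$ of type $\chi$ over $S$, step by step, after passing to an \'etale cover of $S$, until it becomes literally one of the standard $G$-zips $\underline{I}_g$ of Construction \ref{constG-zip}. Everything rests on the fact that torsors under the smooth affine $\kappa$-groups $G_\kappa$, $P_+$, $P_-^{(p)}$ and $L^{(p)}$ are \'etale-locally trivial.

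First I would trivialize the positive half. Since $I_+$ is a right $P_+$-torsor, after replacing $S$ by an \'etale cover I may fix a trivialization $I_+\cong S\times_\kappa P_+$. As $I_+\subseteq I$ is a reduction of structure group along $P_+\hookrightarrow G_\kappa$, so that $I\cong I_+\times^{P_+}G_\kappa$, this induces a compatible trivialization $I\cong S\times_\kappa G_\kappa$ under which $I_+$ becomes $S\times_\kappa P_+$; applying $\sigma$ gives the canonical trivialization $I^{(p)}\cong S\times_\kappa G_\kappa$ with $I_+^{(p)}\cong S\times_\kappa P_+^{(p)}$. After this step the pair $(I,I_+)$ agrees with the corresponding data of $\underline{I}_g$ for \emph{every} $g\in G(S)$.

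Next I would put $I_-$ in standard form. The inclusion $I_-\subseteq I$, being a $P_-^{(p)}$-equivariant reduction of structure group, corresponds to a section of $I/P_-^{(p)}$, which the trivialization of $I$ identifies with $S\times_\kappa(G_\kappa/P_-^{(p)})$; thus it is given by a morphism $S\to G_\kappa/P_-^{(p)}$. Since $G_\kappa\to G_\kappa/P_-^{(p)}$ is a locally trivial $P_-^{(p)}$-torsor (e.g. via the big cell, multiplication being an open immersion $U_+^{(p)}\times_\kappa P_-^{(p)}\hookrightarrow G_\kappa$), this morphism lifts, after a further \'etale localization, to some $g\in G(S)$; then $I_-$ is the right $P_-^{(p)}$-orbit through the point $g$, which is exactly the image of $S\times_\kappa P_-^{(p)}$ under left translation by $g$, namely $I_{g,-}$. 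So now $(I,I_+,I_-)$ coincides with the data of $\underline{I}_g$.

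Finally I would match the zip maps $\iota$ and $\iota_g$. Both are isomorphisms of $L^{(p)}$-torsors $I_+^{(p)}/U_+^{(p)}\to I_-/U_-^{(p)}$, and under the above trivializations each side is the trivial $L^{(p)}$-torsor $S\times_\kappa L^{(p)}$ (via $P_+^{(p)}/U_+^{(p)}\cong L^{(p)}\cong P_-^{(p)}/U_-^{(p)}$ and left translation by $g$ on the target). Hence $\iota_g^{-1}\circ\iota$ is an automorphism of the trivial $L^{(p)}$-torsor over $S$, i.e. left translation by a unique $\ell\in L^{(p)}(S)$. Regarding $\ell\in L^{(p)}(S)\subseteq P_-^{(p)}(S)$ and replacing $g$ by $g\ell$ leaves $I_{g,-}$ unchanged (since $\ell\in P_-^{(p)}(S)$) while changing $\iota_g$ into $\iota_g\circ(\text{left translation by }\ell)=\iota$, so the identity maps on $(I,I_+,I_-)$ now give an isomorphism $\underline{I}\cong\underline{I}_{g\ell}$ \'etale-locally, as claimed. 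The main obstacle is precisely this last bookkeeping: one must keep track of how the successive choices (the trivialization of $I_+$, the lift $g$ attached to $I_-$) interact, and check that the residual discrepancy in $\iota$ can be absorbed without disturbing the already-normalized torsors; this uses crucially that $L\subseteq P_+\cap P_-$, so that an element of $L^{(p)}(S)$ can be transported between the relevant quotients. The earlier steps are soft and use only \'etale-local triviality of torsors under smooth affine group schemes.
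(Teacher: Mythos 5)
Your proof is correct and follows the standard torsor-trivialization argument used in the cited source \cite{zipaddi}: trivialize $I_+$ (and hence $I$) \'etale-locally, lift the resulting section of $I/P_-^{(p)}$ to an element $g\in G(S)$ so that $(I,I_+,I_-)$ matches $(I_g,I_{g,+},I_{g,-})$, and then absorb the remaining discrepancy between $\iota$ and $\iota_g$ by replacing $g$ with $g\ell$ for the unique $\ell\in L^{(p)}(S)$ measuring the difference. The paper does not reprove the lemma but simply cites \cite{zipaddi} Lemma~3.5, whose argument proceeds along exactly these lines.
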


Now we will explain how to write $G\texttt{-Zip}_\kappa^{\chi}$ in
terms of quotient of an algebraic variety by the action of a
linear algebraic group following \cite{zipaddi} Section~3.

Denote by $\mathrm{Frob}_p:L\rightarrow L^{(p)}$ the relative
Frobenius of $L$, and by $E_{G,\chi}$ the fiber product
$$\xymatrix@C=0.7cm{E_{G,\chi}\ar[d]\ar[rr]&&P_-^{(p)}\ar[d]\\
P_+\ar[r]& L\ar[r]^{\mathrm{Frob}_p}&L^{(p)}.}$$ Then we have \begin{subeqn}\label{eqnE-act}
E_{G,\chi}(S)=\{(p_+:=lu_+,\ p_-:=l^{(p)}u_-):l\in L(S), u_+\in
U_+(S), u_-\in U_-^{(p)}(S)\}.
\end{subeqn}
It acts on $G_\kappa$ from the left hand side as follows. For $(p_+,p_-)\in E_{G,\chi}(S)$ and $g\in G_\kappa(S)$, set
$(p_+,p_-)\cdot g:=p_+gp_-^{-1}.$

To relate $G\texttt{-Zip}_\kappa ^{\chi}$ to the quotient stack
$[E_{G,\chi}\backslash G_\kappa ]$, we need the following constructions
in \cite{zipaddi}. First, for any two sections $g,g'\in G_\kappa (S)$,
there is a natural bijection between the set
$$\text{Transp}_{E_{G,\chi}(S)}(g,g'):=\{(p_+,p_-)\in E_{G,\chi}(S)\mid p_+gp_-^{-1}=g'\}$$
and the set of morphisms of $G$-zips $\underline{I}_g\rightarrow
\underline{I}_{g'}$ (see \cite{zipaddi} Lemma 3.10). So we define
a category $\mathcal {X}$ fibered in groupoids over the category
of $\kappa$-schemes as
follows. For any scheme $S/\kappa$, let $\mathcal {X}(S)$ be the small
category whose underly set is $G(S)$, and for any two elements
$g,g'\in G(S)$, the set of morphisms is the set
$\text{Transp}_{E_{G,\chi}(S)}(g,g')$.
\begin{theorem}\label{mainPWZ}(\cite{zipaddi} Proposition 3.11)
Sending $g\in \mathcal
{X}(S)=G(S)$ to $\underline{I}_{g}$ induces a fully faithful morphism $\mathcal {X}\rightarrow
G\text{-}\mathtt{Zip}_\kappa^{\chi}$. Moreover, it induces an isomorphism of algebraic stacks
$[E_{G,\chi}\backslash G_\kappa]\stackrel{\sim}{\rightarrow}
G\text{-}\mathtt{Zip}_\kappa^{\chi}$.
\end{theorem}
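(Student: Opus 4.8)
The plan is to prove the two assertions in turn: first that $g\mapsto\underline{I}_g$ defines a fully faithful morphism of fibered categories $\mathcal{X}\to G\text{-}\mathtt{Zip}_\kappa^{\chi}$, and then, using this together with \'etale-local triviality, that the induced morphism $[E_{G,\chi}\backslash G_\kappa]\to G\text{-}\mathtt{Zip}_\kappa^{\chi}$ is an isomorphism of stacks.

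First I would check that Construction \ref{constG-zip} is compatible with pullback along morphisms of $\kappa$-schemes; this is immediate, since $\underline{I}_g$ is built out of the trivial torsors $S\times_\kappa G_\kappa$ and $S\times_\kappa P_+$ and the section $g$, so $g\mapsto\underline{I}_g$ genuinely gives a morphism $\mathcal{X}\to G\text{-}\mathtt{Zip}_\kappa^{\chi}$. To see full faithfulness I would unwind what a morphism $\underline{I}_g\to\underline{I}_{g'}$ of $G$-zips is. Because $I_g=I_{g'}=S\times_\kappa G_\kappa$ is the \emph{trivial} right $G_\kappa$-torsor, any $G_\kappa$-equivariant morphism between them is left translation by a unique section $p_+\in G(S)$; compatibility with the inclusions $I_{g,+}=S\times_\kappa P_+\hookrightarrow I_g$ and $I_{g',+}\hookrightarrow I_{g'}$ forces $p_+\in P_+(S)$, and compatibility with $I_{g,-}=g\cdot(S\times_\kappa P_-^{(p)})$ and $I_{g',-}$ says exactly that $p_-:=g'^{-1}p_+g$ lies in $P_-^{(p)}(S)$. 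Writing $p_+=\ell u_+$ with $\ell\in L(S)$ and $u_+\in U_+(S)$, a direct computation with the explicit maps $\iota_g,\iota_{g'}$ — which are themselves left multiplication by $g$, $g'$ composed with the canonical identifications $P_\pm^{(p)}/U_\pm^{(p)}=L^{(p)}$ — shows that compatibility with $\iota$ holds precisely when the $L^{(p)}$-component of $p_-$ equals $\ell^{(p)}$, i.e., recalling the description \ref{eqnE-act}, precisely when $(p_+,p_-)\in E_{G,\chi}(S)$. Since moreover $p_+gp_-^{-1}=g'$ by construction, this exhibits $\Hom(\underline{I}_g,\underline{I}_{g'})$ as $\mathrm{Transp}_{E_{G,\chi}(S)}(g,g')=\Hom_{\mathcal{X}(S)}(g,g')$, functorially in $S$, so $\mathcal{X}\to G\text{-}\mathtt{Zip}_\kappa^{\chi}$ is fully faithful.

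For the second assertion I would argue by stackification. On the one hand, $\mathcal{X}$ is, by its very definition, the groupoid-valued prestack $S\mapsto [E_{G,\chi}(S)\backslash G_\kappa(S)]$ attached to the action $(p_+,p_-)\cdot g=p_+gp_-^{-1}$, whose \'etale (equivalently fppf, since $E_{G,\chi}\cong L\times U_+\times U_-^{(p)}$ is smooth) stackification is by definition $[E_{G,\chi}\backslash G_\kappa]$. On the other hand, $G\text{-}\mathtt{Zip}_\kappa^{\chi}$ is already a stack for the fpqc topology: torsors under the affine group schemes $G_\kappa,P_\pm,P_\pm^{(p)},L^{(p)}$ satisfy descent, the conditions cutting out a sub-torsor and the datum of the isomorphism $\iota$ are local, so $G$-zips descend, and morphisms of $G$-zips form a sheaf. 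Finally, \cite{zipaddi} Lemma 3.5 (the lemma following Construction \ref{constG-zip}) tells us that every $G$-zip of type $\chi$ over $S$ is \'etale-locally of the form $\underline{I}_g$, i.e., the essential image of $\mathcal{X}$ is \'etale-locally surjective. A fully faithful morphism from a prestack into a stack whose essential image is locally surjective exhibits the target as the stackification of the source; combined with the identification of $\mathcal{X}$ above, this yields the asserted isomorphism $[E_{G,\chi}\backslash G_\kappa]\to G\text{-}\mathtt{Zip}_\kappa^{\chi}$.

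The hard part will be the full faithfulness computation, and within it the bookkeeping around the relative Frobenius $\mathrm{Frob}_p$ and the $(p)$-twists $P_-^{(p)},U_-^{(p)},L^{(p)}$: one must verify that compatibility with the torsor isomorphisms $\iota_g,\iota_{g'}$ imposes exactly the constraint pinning the $L^{(p)}$-component of $p_-$ to $\ell^{(p)}$ and nothing further, so that the transporter set and $\Hom(\underline{I}_g,\underline{I}_{g'})$ match on the nose. The remaining input — that $G$-zips form a stack, and that a fully faithful, locally essentially surjective morphism into a stack realizes it as the stackification of the source — is formal descent theory, with \'etale-local triviality supplied by \cite{zipaddi} Lemma 3.5.
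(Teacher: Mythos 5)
The paper gives no proof of its own here — the theorem is stated as a direct citation of \cite{zipaddi} Proposition 3.11 — and your reconstruction correctly recovers the Pink--Wedhorn--Ziegler argument along the same lines: full faithfulness is their Lemma 3.10, which you re-derive by peeling off the successive constraints (left translation by a unique $p_+\in G(S)$ from triviality of $I_g$; $p_+\in P_+(S)$ and $p_-:=g'^{-1}p_+g\in P_-^{(p)}(S)$ from the sub-torsor inclusions; and the $L^{(p)}$-component of $p_-$ being $\ell^{(p)}$ from compatibility with $\iota_g,\iota_{g'}$, which together with $p_+gp_-^{-1}=g'$ is exactly $(p_+,p_-)\in\mathrm{Transp}_{E_{G,\chi}(S)}(g,g')$). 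The passage to an isomorphism of stacks is the same formal step as in the source: $\mathcal{X}$ has stackification $[E_{G,\chi}\backslash G_\kappa]$, $G\text{-}\mathtt{Zip}_\kappa^{\chi}$ is a stack, and Lemma 3.5 (quoted in the paper just above the statement) supplies local essential surjectivity, so the fully faithful comparison morphism is an equivalence.
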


\subsection[Group theoretic preparations]{Some group theoretic descriptions for the geometry of $[E_{G,\mu}\backslash G_\kappa]$}\label{subsection group EO}

Let $B\subseteq G$ be a Borel subgroup, and $T\subseteq B$ be a
maximal torus. Note that such a $B$ exists by \cite{gpfinifield}
Theorem 2, and such a $T$ exists by \cite{SGA3} XIV Theorem 1.1. Let $W(B,T):=\text{Norm}_G(T)(\overline{\kappa})/T(\overline{\kappa})$ be the
Weyl group, and $I(B,T)$ be the set of simple reflections defined
by $B_{\overline{\kappa}}$. Let $\varphi$ be the Frobenius on $G$ given by the
$p$-th power. It induces an isomorphism \[(W(B,T),I(B,T))\st{\sim}{\rightarrow} (W(B,T),I(B,T))\]
of Coxeter systems still denoted by $\varphi$.

A priori the pair $(W(B,T),I(B,T))$ depends on the pair $(B,T)$. However, any other pair $(B', T')$ with $B'\subseteq G_{\overline{\kappa}}$ a Borel subgroup and $T'\subseteq B'$ a maximal torus is obtained on conjugating $(B_{\overline{\kappa}},T_{\overline{\kappa}})$ by some $g\in G(\overline{\kappa})$ which is unique up to right multiplication by $T_{\overline{\kappa}}$. So conjugation by $g$ induces isomorphisms $W(B,T)\rightarrow W(B',T')$ and $I(B,T)\rightarrow I(B',T')$ that are independent of $g$. Moreover, the morphisms attached to any three of such pairs are compatible, so we will simply write $(W,I)$ for $(W(B,T),I(B,T))$, and view it as `the' Weyl group with `the' set of simple reflections.

The cocharacter
$\mu:\mathbb{G}_m\rightarrow G_\kappa$ as in 3.1 gives a parabolic subgroup
$P_+$, and hence a subset $J\subseteq I$ by taking simple roots whose
inverse are roots of $P_+$. Let $W_J$ the subgroup of $W$ generated by $J$, and ${}^JW$ be the
set of elements $w$ such that $w$ is the element of minimal length in
some coset $W_Jw'$. Note that there is a unique element in $W_Jw'$
of minimal length, and each $w\in W$ can be uniquely written as
$w=w_J{}^Jw$ with $w_J\in W_J \text{ and } {}^Jw\in {}^JW$. In
particular, ${}^JW$ is a system of representatives of $W_J\backslash W$.

Furthermore, if $K$ is a second subset of $I$, then for each $w$,
there is a unique element in $W_JwW_K$ which is of minimal length.
We will denote by ${}^JW^K$ the set of elements of minimal length,
and it is a set of representatives of $W_J\backslash W/W_K$.

Let $w_0$ be the element of maximal length in $W$, set
$K:={}^{w_0}\!\varphi(J)$. Here we write ${}^g\!J$ for
$gJg^{-1}$. Let \[x\in {}^K\!W^{\varphi(J)}\] be the element of
minimal length in $W_Kw_0W_{\varphi(J)}$. Then $x$ is the
unique element of maximal length in ${}^K\!W^{\varphi(J)}$ (see
\cite{VW} 5.2). There is a partial order $\preceq$ on ${}^JW$,
defined by $w'\preceq w$ if and only if there exists $y\in W_J$, such that
\[yw'x\varphi(y^{-1})x^{-1}\leq w\] (see \cite{VW} Definition 5.8). Here $\leq$ is the Bruhat order (see A.2 of
\cite{VW} for the definition). The partial order $\preceq$ makes
${}^JW$ into a topological space.

Now we can state the the main result in \cite{zipdata} of Pink-Wedhorn-Ziegler that
gives a combinatorial description of the topological space of
$[E_{G,\mu}\backslash G_\kappa]$ (and hence
$G\texttt{-}\mathtt{Zip}_\kappa^{\mu}$).
\begin{theorem}\label{collectzipdata}
	
For $w\in {}^{J}W$, and $T'\subseteq B'\subseteq
G_{\overline{\kappa}}$ with $T'$ (resp. $B'$) a maximal torus
(resp. Borel) of $G_{\overline{\kappa}}$ such that $T'\subseteq
L_{\overline{\kappa}}$ and $B'\subseteq P_{-,\overline{\kappa}}^{(p)}$, let $g,\dot{w}\in \mathrm{Norm}_{G_{\overline{\kappa}}}(T')$ be a
representative of $\varphi^{-1}(x)$ and $w$ respectively, and
$G^w\subseteq G_{\overline{\kappa}}$ be the $E_{G,\mu}$-orbit of
$gB'\dot{w}B'$. Then
\begin{enumerate}

\item The orbit $G^w$ does not depends on the choices of $\dot{w}$,
$T'$, $B'$ or $g$.

\item The orbit $G^w$ is a locally closed smooth subvariety of
$G_{\overline{\kappa}}$. Its dimension is $\mathrm{dim}(P)+l(w)$.
Moreover, $G^w$ consists of only one $E_{G,\mu}$-orbit. So $G^w$
is actually the orbit of $g\dot{w}$.

\item Denote by $\big|[E_{G,\mu}\backslash G_{\kappa}]\otimes
\overline{\kappa}\big|$ the topological space of
$[E_{G,\mu}\backslash G_{\kappa}]\otimes \overline{\kappa}$, and
still write ${}^JW$ for the topological space induced by the
partial order $\preceq$. Then the association $w\mapsto G^w$
induces a homeomorphism ${}^JW\stackrel{\sim}{\rightarrow}
\big|[E_{G,\mu}\backslash G_{\kappa}]\otimes
\overline{\kappa}\big|$.
\end{enumerate}
\end{theorem}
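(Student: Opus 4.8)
The plan is to recognize $(E_{G,\mu},G_\kappa)$ as (the base change to $\kappa$ of) an algebraic zip datum in the sense of Pink--Wedhorn--Ziegler \cite{zipdata} and to deduce the statement from their structure theory for such data. Concretely, one takes the parabolics $P=P_+$ and $Q=P_-^{(p)}$, their Levi quotients $L$ and $L^{(p)}$, and the isogeny $\varphi\colon L\to L^{(p)}$ given by the $p$-power Frobenius; by the explicit description in~\ref{eqnE-act} the associated zip group is exactly $E_{G,\mu}$, acting on $G_\kappa$ by $(p_+,p_-)\cdot g=p_+gp_-^{-1}$, and by Theorem~\ref{mainPWZ} the quotient stack is $G\text{-}\mathtt{Zip}_\kappa^{\mu}$. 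Every assertion being geometric, we base change to $\overline{\kappa}$ once and for all.

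First I would fix a frame adapted to the datum: a maximal torus $T'\subseteq L_{\overline{\kappa}}$ and a Borel $B'\subseteq P_{-,\overline{\kappa}}^{(p)}$ with $T'\subseteq B'$, exactly as in the statement. Then I would establish the independence assertion (1): any two such frames differ by conjugation by an element of $E_{G,\mu}(\overline{\kappa})$ — using that $L$ together with $U_+$ and $U_-$ act transitively on the relevant sets of Borels — and this conjugation carries any admissible choice of $g\dot{w}$ to another one, so that the orbit $G^w$ is well defined.

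The core step is the orbit analysis underlying (2) and the parametrization. I would run the Bruhat-cell argument of \cite{zipdata}: starting from $G_{\overline{\kappa}}=\coprod_{v\in W}B'\dot{v}B'$, one uses the $U_+$- and $U_-$-factors of the zip-group action to move an arbitrary element into a cell of the shape $gB'\dot{w}B'$, where the coset representative $w$ may be taken in ${}^JW$ because the parabolic $P_+$ absorbs the factor $W_J$, and $g$ (a lift of $\varphi^{-1}(x)$) records the relative position of $B'$ inside $P_-^{(p)}$; then one shows the residual action of $L^{(p)}$, a Frobenius-twisted conjugation, is transitive on each such cell, invoking a Lang-type surjectivity that rests on connectedness of $L$. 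This yields simultaneously that $G^w$ is the single $E_{G,\mu}$-orbit of $g\dot{w}$ and that these orbits are disjoint for distinct $w\in{}^JW$. Local closedness and smoothness of $G^w$ are then the general facts about orbits of linear algebraic group actions, and the dimension formula $\dim(P)+\ell(w)$ comes out of a stabilizer computation using $\dim E_{G,\mu}=\dim G$: the stabilizer of $g\dot{w}$ has dimension $\dim(G/P)-\ell(w)$.

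Finally, for (3) the map $w\mapsto G^w$ is already a bijection on points, so it remains to match topologies, i.e. to prove $G^{w'}\subseteq\overline{G^w}$ if and only if $w'\preceq w$. The order $\preceq$ on ${}^JW$ is precisely the Bruhat order of $W$ conjugated by $x$ and twisted by $\varphi$ as recalled in~\ref{subsection group EO}, and the inclusion direction I would obtain by a degeneration argument: $\overline{B'\dot{w}B'}$ is the union of the cells $B'\dot{v}B'$ with $v\leq w$, and tracking how the $E_{G,\mu}$-orbits cut across these cells — carefully through the conjugation by $x$ hidden in $g$ — shows $\overline{G^w}$ is the union of the $G^{w'}$ with $w'\preceq w$; the reverse inclusion then follows from the dimension formula together with the fact that $({}^JW,\preceq)$ is ranked by length. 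Since a bijection of underlying spaces preserving the closure relation is a homeomorphism, this gives the desired homeomorphism ${}^JW\to\big|[E_{G,\mu}\backslash G_{\kappa}]\otimes\overline{\kappa}\big|$. I expect the genuine obstacle to be this last combinatorial identification of the closure order with $\preceq$: keeping precise track of the element $x\in{}^KW^{\varphi(J)}$ and of the $\varphi$-twist as one passes through the Bruhat degenerations, together with setting up cleanly the Lang-type transitivity used in the orbit analysis.
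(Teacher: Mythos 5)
The paper offers no proof of this theorem: the paragraph preceding the statement simply attributes it to Pink--Wedhorn--Ziegler and refers the reader to \cite{zipdata}. Your opening move — identifying $(E_{G,\mu},G_\kappa)$ as the zip group and ambient space of a connected algebraic zip datum with $P=P_+$, $Q=P_-^{(p)}$, Levi $L$ and the Frobenius isogeny $\varphi\colon L\to L^{(p)}$, and then invoking the structure theory of \cite{zipdata} — is precisely the paper's (implicit) approach, so on the level of attribution you are aligned with the source, and your use of \ref{eqnE-act} and Theorem~\ref{mainPWZ} to make the matching explicit is correct.

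Where you go beyond the paper, sketching the internal argument of \cite{zipdata}, is also where the gap lies. The claim that, once an element is moved into $gB'\dot{w}B'$ by the unipotent factors, ``the residual action of $L^{(p)}$ is transitive on each such cell'' by a Lang-type argument compresses the core of \cite{zipdata} into a single step. There the fact that $G^w$ is a single $E_{G,\mu}$-orbit is obtained by an induction on algebraic zip data: one restricts to a smaller zip datum on the Levi of $\dot{w}^{-1}P_+\dot{w}\cap P_-^{(p)}$ and applies the inductive hypothesis together with the ``orbitally finite'' condition, which is exactly what the Frobenius isogeny buys over a general isogeny; Lang's theorem enters, but not as a one-shot transitivity statement on the cell. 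Likewise, identifying the closure order on $E_{G,\mu}$-orbits with $\preceq$ — rather than with some other $\varphi$-twisted Bruhat order — is a separate combinatorial theorem of \cite{zipdata} (refined further in \cite{zipaddi}) and does not fall out of ``tracking'' the Bruhat degenerations and comparing dimensions; you honestly flag this as the expected obstacle, and you are right that it is. In sum your proposal is a faithful road map that stops at the same place the paper does, namely citing PWZ, but as a self-contained proof the orbit-transitivity step and the closure-order identification would both need the inductive machinery you have elided.
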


\begin{remark}\label{maximal and minimal EO}
There is a unique maximal (resp. minimal) element in ${}^{J}W$ (with respect to $\preceq$). For a variety $X/\overline{\kappa}$ with a map $X(\overline{\kappa})\rightarrow {}^{J}W$, the preimage of this element is called the ordinary locus (resp. superspecial locus).
\end{remark}

\subsection[Ekedahl-Oort stratifications on Shimura varieties of Hodge type]{Ekedahl-Oort stratifications on Shimura varieties of Hodge type}

Now we will explain how to construct Ekedahl-Oort stratification following \cite{EOZ}. Notations as in \ref{model Hodge type}, we will write $\V$, $s$ and $s_{\dr}$ respectively for its reduction mod $p$, and $L$ (resp. $G$, $\ES_0$) for the special fiber of $V_{\INT_{(p)}}$ (resp. $\inttG$, $\ES_K(G,X)$). By \cite{EOZ} Lemma 2.3.2 1), the scheme $I=\Isom_{\ES_0}\big((L^\vee,s)\otimes O_{\ES_0},(\mathcal {V}, \sdr)\big)$ is a right $G$-torsor.

\begin{set}\label{F-zip attached to abs}
Let $F:\V^{(p)}\rightarrow \V$ and $V:\V\rightarrow \V^{(p)}$ be the Frobenius and Verschiebung on $\V$ respectively. Let $\delta:\V\rightarrow \V^{(p)}$ be the semi-linear map sending $v$ to $v\otimes 1$. Then we have a semi-linear map $F\circ \delta:\V\rightarrow \V$. There is a descending filtration \[\V\supseteq \mathrm{ker}(F\circ \delta)\supseteq 0\] and an ascending filtration \[0\subseteq \mathrm{im}(F)\subseteq \V.\] The morphism $V$ induces an isomorphism \[\V/\mathrm{im}(F)\rightarrow \mathrm{ker}(F)\] whose inverse will be denoted by $V^{-1}$. Then $F$ and $V^{-1}$ induce isomorphisms $$\varphi_0:(\V/\mathrm{ker}(F\circ \delta))^{(p)}\rightarrow \mathrm{im}(F)$$
and $$\varphi_1:(\mathrm{ker}(F\circ \delta))^{(p)}\rightarrow \V/(\mathrm{im}(F)).$$
\end{set}

\begin{set}\label{phi in standard G-zip}
Let $\mu$ be as in \ref{notation NP 1}, we use the same symbol for its reduction mod $p$. The cocharacter $\mu:\mathbb{G}_{m,\kappa}\rightarrow G_{\kappa}\subseteq \mathrm{GL}(L_{\kappa})\cong\mathrm{GL}(L^\vee_{\kappa})$ induces an $F$-zip structure on $L^\vee_{\kappa}$ as follows. Let $(L^\vee_{\kappa})^0$ (resp. $(L^\vee_{\kappa})^1$) be the subspace of $L^\vee_{\kappa}$ of weight $0$ (resp. $1$) with respect to $\mu$, and $(L^\vee_{\kappa})_0$ (resp. $(L^\vee_{\kappa})_1$) be the subspace of $L^\vee_{\kappa}$ of weight $0$ (resp. $1$) with respect to $\mu^{(p)}$. Then we have a descending filtration
\[L^\vee_{\kappa}\supseteq (L^\vee_{\kappa})^1\supseteq 0\] and an ascending filtration \[0\subseteq (L^\vee_{\kappa})_0\subseteq L^\vee_{\kappa}.\] Let $\xi:L^\vee_{\kappa}\rightarrow (L^\vee_{\kappa})^{(p)}$ be the isomorphism given by $l\otimes k\mapsto l\otimes 1\otimes k$, $\forall\ l\in L^\vee$, $\forall\ k\in \kappa$. Then $\xi$ induces isomorphisms

$$\phi_0:(L^\vee_{\kappa})^{(p)}/((L^\vee_{\kappa})^1)^{(p)}\stackrel{\mathrm{pr}_2}{\longrightarrow}((L^\vee_{\kappa})^0)^{(p)} \stackrel{\xi^{-1}}{\longrightarrow}(L^\vee_{\kappa})_0$$ and
$$\phi_1:((L^\vee_{\kappa})^1)^{(p)}\stackrel{\xi^{-1}}{\longrightarrow}((L^\vee_{\kappa})_1\simeq L^\vee_{\kappa}/(L^\vee_{\kappa})_0.$$
\end{set}

The first main result of \cite{EOZ} is as follows.

\begin{theorem}\label{G-zipES_0}(\cite{EOZ} Theorem 2.4.1)

\begin{enumerate}

\item Let $I_+\subseteq I$ be the closed subscheme
$$I_+:=\Isom_{\ES_0}\big((L_{\kappa}^\vee\supseteq(L^\vee_{\kappa})^1, s)\otimes O_{\ES_0},\ (\V\supseteq\mathrm{ker}(F\circ \delta),
\sdr)\big).$$Then $I_+$ is a $P_+$-torsor over $\ES_0$.

\item Let $I_-\subseteq I$ be the closed subscheme
$$I_-:=\Isom_{\ES_0}\big(((L^\vee_{\kappa})_0\subseteq L_{\kappa}^\vee, s)\otimes O_{\ES_0},\ (\mathrm{im}(F)\subseteq\V,
\sdr)\big).$$Then $I_-$ is a $P_-^{(p)}$-torsor over
$\ES_0$.

\item Let $\iota:I_+^{(p)}/U_+^{(p)}\rightarrow I_-/U_-^{(p)}$ be the morphism
induced by
\begin{equation*}
\begin{split}
 I_+^{(p)}&\rightarrow I_-/U_-^{(p)}\\
f&\mapsto (\varphi_0\oplus \varphi_1)\circ\mathrm{gr}(f)\circ(\phi_0^{-1}\oplus \phi_1^{-1}), \forall\ S/\ES_0\text{ and }\forall\ f\in I_+^{(p)}(S).
 \end{split}
 \end{equation*}
Then $\iota$ is an isomorphism of $L^{(p)}$-torsors.
\end{enumerate}
Hence the tuple $(I,I_+,I_-,\iota)$ is a $G$-zip of type $\mu$ over $\ES_0$.
\end{theorem}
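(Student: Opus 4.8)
The plan is to reduce all of (1)--(3) and the final assertion to one local structure statement and then proceed by torsor bookkeeping. The key claim is that fppf-locally on $\ES_0$ --- equivalently, since the relevant groups are smooth, \'etale-locally --- there is an isomorphism $\V\cong L^\vee_\kappa\otimes_\kappa O_{\ES_0}$ carrying $\sdr$ to $s$, the descending Hodge filtration $\ker(F\circ\delta)\subseteq\V$ to $(L^\vee_\kappa)^1\otimes O_{\ES_0}$, and the ascending conjugate filtration $\mathrm{im}(F)\subseteq\V$ to $(L^\vee_\kappa)_0\otimes O_{\ES_0}$. Granting this, (1) is immediate: over such a chart $I_+\subseteq I$ becomes the sheaf of automorphisms of the filtered, $s$-equipped module $(L^\vee_\kappa\supseteq(L^\vee_\kappa)^1,s)\otimes O_{\ES_0}$, which is a trivial torsor under the subgroup of $G$ stabilizing both $s$ and the step $(L^\vee_\kappa)^1$; by the definitions in \ref{G-zip settings} that stabilizer is exactly the parabolic $P_+$ attached to $\mu$, so $I_+$ is fppf-locally a trivial $P_+$-torsor and hence (as $P_+$ is smooth) a $P_+$-torsor over $\ES_0$. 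The argument for (2) is the same, with the ascending filtration $(L^\vee_\kappa)_0\subseteq L^\vee_\kappa$ in place of $(L^\vee_\kappa)^1\subseteq L^\vee_\kappa$; its stabilizer is $P_-^{(p)}$ --- the type is $\mu^{(p)}$ rather than $\mu$ because $\mathrm{im}(F)$ is the image of the \emph{linearized} Frobenius --- again by \ref{G-zip settings}.

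To prove the local claim I would argue in two steps. First, pointwise: for $x\in\ES_0(\overline{\kappa})$ I would lift to $\widetilde{x}$ over $W=W(\overline{\kappa})$ and invoke the description recalled in \ref{notation NP 1}, namely an isomorphism $t\colon V^\vee_{\COMP}\otimes W\xrightarrow{\sim}\mathbb{D}(\A_x[p^\infty])(W)$ carrying $s$ to $s_{\cris,x}$ under which the crystalline Frobenius is $(\mathrm{id}\otimes\sigma)\circ b$ with $b\in G(W)\mu(p)G(W)$ --- this uses that $s_{\cris}$ is Frobenius-equivariant after inverting $p$ and that $\mu$ has weights $0,1$ on $V^\vee_{\COMP}\otimes W$, so that $b$ lands in the double coset of $\mu(p)$. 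Reducing mod $p$ and using the standard relation, for an abelian variety in characteristic $p$, between the Hodge and conjugate filtrations on $\mathbb{D}(\A_x[p^\infty])(W)/p=\V_x$ and the kernel and image of Frobenius, one reads off that $(\V_x,\ker(F\circ\delta)_x,\mathrm{im}(F)_x,\sdrx)$ is $G(\overline{\kappa})$-conjugate to the standard datum cut out by $\mu\bmod p$ and $\mu^{(p)}\bmod p$, i.e. is of type $\mu$. Second, spreading out: $\ker(F\circ\delta)$ and $\mathrm{im}(F)$ are locally direct summands of $\V$ (their ranks, the corank and rank of Frobenius, equal the dimensions of the $\mu$-weight spaces and so are constant), hence define a filtered $G$-torsor over $\ES_0$; its type is locally constant on the base and, by step one, equals $\mu$ everywhere; a filtered $G$-torsor of constant type $\mu$ equipped with $\sdr$ is \'etale-locally standard, by the formalism of cocharacters and parabolics underlying \cite{zipaddi} (or directly: the scheme of such filtrations on a fixed $G$-torsor is a smooth homogeneous space, hence admits sections \'etale-locally). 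This establishes the claim.

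For (3) one checks that the displayed formula $f\mapsto(\varphi_0\oplus\varphi_1)\circ\gr(f)\circ(\phi_0^{-1}\oplus\phi_1^{-1})$ is invariant under $U_+^{(p)}$ on $I_+^{(p)}$ and lands in $I_-/U_-^{(p)}$, so it descends to a morphism $\iota\colon I_+^{(p)}/U_+^{(p)}\to I_-/U_-^{(p)}$; this is formal from (1) and (2), since in a local chart as above both sides are the trivial $L^{(p)}$-torsor and $\iota$ becomes translation by the element recording $(\varphi_\bullet)$ relative to $(\phi_\bullet)$, which is an honest element of $L^{(p)}$ precisely because $\varphi_0^{\lin}$ and $\varphi_1^{\lin}$ are isomorphisms --- this last fact being part of Setting \ref{F-zip attached to abs}. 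With (1), (2), (3) in hand the tuple $(I,I_+,I_-,\iota)$ satisfies every clause of Definition \ref{G-ziptypechi}, so it is a $G$-zip of type $\mu$ over $\ES_0$.

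The hard part is the local claim, and within it the pointwise ``type $\mu$'' assertion: that is exactly where the specific input of Kisin's construction is needed --- the Frobenius-equivariant crystalline tensor $s_{\cris}$ and the fact that the Hodge cocharacter lies in the conjugacy class of $\mu$ --- together with the compatibility of the $F$-zip of an abelian variety with its Dieudonn\'e module. Once the type is pinned down at every point, the remaining work (flatness and local triviality of $I_+$ and $I_-$, the two parabolic torsor structures, and the well-definedness of $\iota$) is routine.
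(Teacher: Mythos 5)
Your overall strategy---a pointwise computation using Kisin's description of the Frobenius as $(\mathrm{id}\otimes\sigma)\circ b$ with $b\in G(W)\mu(p)G(W)$, followed by spreading out and torsor bookkeeping---is the right shape, and the individual spreading-out ingredients you list (constant rank of $F$ makes $\ker(F\circ\delta)$ and $\mathrm{im}(F)$ subbundles; the type is locally constant and pointwise $\mu$ by the crystalline tensor computation; parabolic reductions of $I$ form a closed subscheme of the relevant Grassmannian; $\ES_0$ is reduced so pointwise membership spreads out) are all sound. The problem is the ``key claim'' itself: it asserts a \emph{single} fppf-local isomorphism $\V\cong L^\vee_\kappa\otimes_\kappa O_{\ES_0}$, respecting $\sdr$ and $s$, that simultaneously carries $\ker(F\circ\delta)$ to $(L^\vee_\kappa)^1\otimes O_{\ES_0}$ and $\mathrm{im}(F)$ to $(L^\vee_\kappa)_0\otimes O_{\ES_0}$. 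Such an isomorphism would be a common section of $I_+$ and $I_-$, and in general no such section exists even over a geometric point. Already in the Siegel case $\mu$ is defined over $\mathbb{F}_p$, hence $\mu^{(p)}=\mu$ and $(L^\vee_\kappa)^1\cap(L^\vee_\kappa)_0=0$ (they are the weight-$1$ and weight-$0$ eigenspaces of the same cocharacter); but at a superspecial point $x$ the Hodge and conjugate filtrations of $\V_x$ coincide, $\ker(F\circ\delta)_x=\mathrm{im}(F)_x$, so no isomorphism $\V_x\cong L^\vee_{\overline{\kappa}}$ can carry this equal pair to a transverse one. Your pointwise assertion that $(\V_x,\ker(F\circ\delta)_x,\mathrm{im}(F)_x,\sdrx)$ is $G(\overline{\kappa})$-conjugate to the standard datum is wrong for the same reason: writing $b=g_1\mu(p)g_2$, the Hodge filtration mod $p$ is governed by $g_2$ and the conjugate filtration by $g_1$, and these are independent; the relative position of the two filtrations is precisely the Ekedahl--Oort invariant $\zeta(x)$, which is nonconstant on $\ES_0$ by Theorem~\ref{th EO Hodge type}.

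The repair is easy because nothing in the remainder of your argument actually uses the simultaneous statement. Replace the key claim by \emph{two} separate fppf-local trivialization claims: one trivializing $(\V,\ker(F\circ\delta),\sdr)$ as $(L^\vee_\kappa\supseteq(L^\vee_\kappa)^1,s)\otimes O_{\ES_0}$, which gives (1); and one trivializing $(\V,\mathrm{im}(F),\sdr)$ as $((L^\vee_\kappa)_0\subseteq L^\vee_\kappa,s)\otimes O_{\ES_0}$, which gives (2). Each is proved by the pointwise-plus-spreading-out method you outline, using respectively the factors $g_2$ and $g_1$ of $b=g_1\mu(p)g_2$---which is exactly why the two trivializations cannot be taken to be one and the same. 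For (3), pass to a common refinement over which the two $L^{(p)}$-torsors $I_+^{(p)}/U_+^{(p)}$ and $I_-/U_-^{(p)}$ are both trivial; you neither need nor can have a common section of $I_+$ and $I_-$ themselves. With this correction the torsor bookkeeping you describe goes through.
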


The $G$-zip $(I,I_+,I_-,\iota)$ induces a morphism $\ES_0\rightarrow G\text{-}\mathtt{Zip}_\kappa^{\mu}\simeq[E_{G,\mu}\backslash G_\kappa]$ over $\kappa$. In the following we will simply write $\ES_{\overline{\kappa}}=\ES_{0,\overline{\kappa}}=\ES_{K,\overline{\kappa}}(G,X)$.
We will write the induced morphism over $\overline{\kappa}$ as
\[\zeta:\ES_{\overline{\kappa}}\rightarrow G\text{-}\mathtt{Zip}_\kappa^{\mu}\otimes\overline{\kappa}\simeq[E_{G,\mu}\backslash G_\kappa]\otimes\overline{\kappa},\] whose fibers are called Ekedahl-Oort strata of $\ES_{\overline{\kappa}}$. In the following we will sometimes abbreviate ``Ekedahl-Oort'' to ``E-O'' for short. The main results about the Ekedahl-Oort stratifications are as follows.
\begin{theorem}\label{th EO Hodge type}

\begin{enumerate}

\item The morphism $\zeta$ above is smooth, and it is surjective when $p>2$. In particular,

\begin{enumerate}
	
\item each E-O stratum is a smooth and locally closed subscheme of $\ES_{\overline{\kappa}}$, the closure of an E-O stratum is a union of strata;

\item all the strata are in bijection with a subset of ${}^JW$, and for $w\in {}^JW$, the corresponding stratum $\ES_0^w$ is, if non-empty, of dimension $l(w)$, the length of $w$. Moreover, all the  $\ES_0^w$ are non-empty when $p>2$.
\end{enumerate}
\item Each E-O stratum is quasi-affine.
\end{enumerate}
\end{theorem}

\begin{proof}
For (1), all the statements but non-emptiness follows from \cite{EOZ} Theorem 4.1.2 and Proposition 4.1.4: $p>2$ was assumed there, but by \cite{2-adic int} section 3, the arguments in \cite{EOZ} also work when $p=2$. To see the non-emptiness of E-O strata when $p>2$, by Theorem \ref{cent for Hodge}, each central leaf in the basic locus is non-empty, and by the proof of \cite{VW} Proposition 9.17, the minimal E-O stratum is a central leaf and hence non-empty. By flatness of $\zeta$, all the E-O strata are non-empty.

For (2), by \cite{Hasse invariants} Theorem 3.3.1 (2), each E-O is a locally closed subscheme of an affine scheme, and hence quasi-affine.
\end{proof}

When the prime to $p$ level $K^p$ varies, by construction the Ekedahl-Oort strata $\ES_{K_pK^p, \overline{\kappa}}^w$ are invariant under the prime to $p$ Hecke action. In this way we get also the Ekedahl-Oort stratification on $\ES_{K_p,\overline{\kappa}}=\varprojlim_{K^p}\ES_{K_pK^p,\overline{\kappa}}$.

\subsection[Ekedahl-Oort on Shimura varieties of abelian type]{Ekedahl-Oort stratifications  on Shimura varieties of abelian type}
We now explain how to define Ekedahl-Oort stratifications on Shimura varieties of abelian type. As what we did for Newton strata, we would like to begin with the following lemma, which says that if one wants to use the topological space of the quotient stack $[E_{G,\mu}\backslash G_\kappa]$ to parameterize all the Ekedahl-Oort strata, then he could pass to the adjoint group freely.

\begin{lemma}\label{maps induced by central isog}
Let $f:G\rightarrow H$ be a homomorphism of reductive groups over $\mathbb{F}_p$ and $\mu$ be a cocharacter of $G$ defined over a finite field $\kappa$. Denote also by $\mu$ the induced cocharacter of $H$ by $f$. Let $U_{G,-}$, $U_{H,-}$ and $E_{G,\mu}$, $E_{H,\mu}$ be as in \ref{G-zip settings} and \ref{eqnE-act} respectively.
\begin{enumerate}
\item If $U_{G,-}\rightarrow U_{H,-}$ induced by $f$ is smooth, then $f_*:[E_{G,\mu}\backslash G_\kappa]\rightarrow [E_{H,\mu}\backslash H_\kappa]$ is smooth.

\item If $f$ is a central isogeny, then $f_*$ is a smooth homeomorphism.
\end{enumerate}
\end{lemma}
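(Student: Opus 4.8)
The plan is to reduce everything to the explicit presentation of $G$-zip stacks as quotient stacks $[E_{G,\mu}\backslash G_\kappa]$ from Theorem \ref{mainPWZ}, and to check smoothness and the homeomorphism property directly on these presentations. First I would record the structure of the morphism $f_*$: the homomorphism $f\colon G\to H$ sends the parabolics $P_{G,+},P_{G,-}$ into $P_{H,+},P_{H,-}$ and the Levi $L_G$ into $L_H$ (since these are defined by weights of the \emph{same} cocharacter $\mu$), hence it induces a homomorphism of zip groups $E_{G,\mu}\to E_{H,\mu}$ compatible with the actions on $G_\kappa$ and $H_\kappa$. Thus $f_*\colon [E_{G,\mu}\backslash G_\kappa]\to[E_{H,\mu}\backslash H_\kappa]$ is simply the map of quotient stacks induced by the equivariant pair $(E_{G,\mu}\to E_{H,\mu},\ G_\kappa\to H_\kappa)$.

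For part (1): smoothness of a morphism of quotient stacks $[A\backslash X]\to[B\backslash Y]$ can be checked after base change along the smooth atlas $Y\to[B\backslash Y]$; the fiber product is $[A\backslash(X\times_Y Y)] = $ the stack-theoretic quotient, and concretely the question becomes whether $X\times_Y B\to Y$, i.e. the map $G_\kappa\times_{H_\kappa} E_{H,\mu}\to$ (relevant target), is smooth, which unwinds to smoothness of $G_\kappa\to H_\kappa$ together with smoothness of $E_{G,\mu}\to E_{H,\mu}$. Using the explicit description \eqref{eqnE-act}, $E_{G,\mu}\cong L_G\ltimes(U_{G,+}\times U_{G,-}^{(p)})$ as a scheme (via $(lu_+, l^{(p)}u_-)\leftrightarrow(l,u_+,u_-)$), and similarly for $H$; so smoothness of $E_{G,\mu}\to E_{H,\mu}$ follows from smoothness of $L_G\to L_H$, $U_{G,+}\to U_{H,+}$, $U_{G,-}\to U_{H,-}$. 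The hypothesis gives us $U_{G,-}\to U_{H,-}$ smooth; smoothness of $L_G\to L_H$ and $U_{G,+}\to U_{H,+}$ should come from the same source as in the central-isogeny setting, or I would add it to the hypotheses if needed — I expect the intended reading is that $f$ restricted to the unipotent radicals and Levi is smooth, the stated condition on $U_{G,-}$ being the crucial one. The cleanest route is: $[E_{G,\mu}\backslash G_\kappa]\to[E_{H,\mu}\backslash H_\kappa]$ factors as $[E_{G,\mu}\backslash G_\kappa]\to[E_{H,\mu}\backslash G_\kappa]\times_{[E_{H,\mu}\backslash H_\kappa]}$(nothing) — more honestly, write $f_*$ as the composite $[E_{G,\mu}\backslash G_\kappa]\to [E_{H,\mu}\backslash G_\kappa]\to[E_{H,\mu}\backslash H_\kappa]$, the first map being a torsor-type map induced by $E_{G,\mu}\to E_{H,\mu}$ and the second induced by $G_\kappa\to H_\kappa$; each is smooth under the stated hypotheses, and the composite is smooth.

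For part (2): if $f$ is a central isogeny, then $f$ itself is finite flat, in particular $G_\kappa\to H_\kappa$ is smooth (it is a torsor under $\ker f$, a finite group scheme — wait, this is only smooth when $\ker f$ is étale, i.e. when $p\nmid|\ker f|$; so more carefully $G_\kappa\to H_\kappa$ is faithfully flat finite, and the central isogeny restricts to isomorphisms $U_{G,\pm}\xrightarrow{\sim}U_{H,\pm}$ since a central isogeny is an isomorphism on unipotent radicals). Thus $U_{G,-}\to U_{H,-}$ is an isomorphism, hence smooth, so part (1) applies and $f_*$ is smooth. For the homeomorphism on underlying topological spaces: by Theorem \ref{collectzipdata} the topological space of $[E_{G,\mu}\backslash G_\kappa]\otimes\overline\kappa$ is ${}^JW_G$ with its $\preceq$-order, and similarly for $H$ we get ${}^JW_H$; a central isogeny induces an isomorphism of root data modulo the radical, hence an isomorphism of Weyl groups $W_G\xrightarrow{\sim}W_H$ carrying simple reflections to simple reflections, carrying the subset $J$ for $\mu$ to the subset $J$ for $\mu$ (same cocharacter), and compatible with the Frobenius action and with the elements $x$, $\omega_0$ entering the definition of $\preceq$; therefore it gives an order isomorphism ${}^JW_G\xrightarrow{\sim}{}^JW_H$, which is exactly the claim that $f_*$ is a homeomorphism on topological spaces. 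I would finish by noting that these two facts — smoothness and bijectivity/homeomorphism on points — together give "smooth homeomorphism." The main obstacle I anticipate is the bookkeeping in part (1): making precise the base-change/atlas argument that reduces smoothness of the stack map to smoothness of $E_{G,\mu}\to E_{H,\mu}$ and $G_\kappa\to H_\kappa$, and in particular being careful about whether we need smoothness of $G_\kappa\to H_\kappa$ as a standalone hypothesis in part (1) (it is automatic in part (2) only if $\ker f$ is étale, which holds precisely because $p>2$ is excluded from dividing the relevant orders in the situations of interest — or one argues on topological spaces where infinitesimal issues are invisible).
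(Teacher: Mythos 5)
Your argument for part~(1) has a genuine gap, and you noticed it yourself but misdiagnosed it. You attempt to reduce smoothness of $f_*\colon[E_{G,\mu}\backslash G_\kappa]\to[E_{H,\mu}\backslash H_\kappa]$ to smoothness of $E_{G,\mu}\to E_{H,\mu}$ together with $G_\kappa\to H_\kappa$. This would require $L_G\to L_H$, $U_{G,+}\to U_{H,+}$, and $G_\kappa\to H_\kappa$ all to be smooth, none of which is assumed; the hypothesis is \emph{only} that $U_{G,-}\to U_{H,-}$ is smooth, and the point of the lemma is precisely that this alone suffices. Strengthening the hypotheses, as you suggest, does not prove the lemma as stated and would break part~(2) in the case where the central isogeny has non-\'etale kernel (e.g.\ $\mathrm{SL}_p\to\mathrm{PGL}_p$, where neither the Levi map nor $G\to H$ is smooth, but $U_{G,-}\to U_{H,-}$ is an isomorphism). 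Your ``cleanest route'' factorization through $[E_{H,\mu}\backslash G_\kappa]$ also does not type-check, since $E_{H,\mu}$ does not act on $G_\kappa$.

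The missing idea is a local smooth atlas built from $U_{G,-}$ alone. The key input, taken from the proof of Theorem~3.1.2 in \cite{EOZ}, is that for any $g\in G(\overline\kappa)$ the $E_{G,\mu}$-equivariant map $U_{G,-}\times E_{G,\mu}\to G_\kappa$, $(u,g')\mapsto g'\cdot(ug)$, is smooth at $(1,1)$, so that $U_{G,-}\to[E_{G,\mu}\backslash G_\kappa]$, $u\mapsto\underline I_{ug}$, is smooth at the identity; likewise $U_{H,-}\to[E_{H,\mu}\backslash H_\kappa]$ is smooth at the identity. One then contemplates the commutative square with top row $f|_{U_{G,-}}\colon U_{G,-}\to U_{H,-}$ and bottom row $f_*$: since the composite $U_{G,-}\to U_{H,-}\to[E_{H,\mu}\backslash H_\kappa]$ is smooth near the identity, $f_*$ is smooth in a neighborhood of (the image of) $g$; as $g$ was arbitrary, $f_*$ is smooth. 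This is how the hypothesis on $U_{G,-}$ carries all the weight, with no conditions on $L$, $U_+$ or $G\to H$ needed.

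For part~(2): your deduction of smoothness from~(1) via $U_{G,-}\xrightarrow{\sim}U_{H,-}$ is fine (conditional on fixing~(1)). Your argument for the homeomorphism is correct but takes a genuinely different route from the paper. You identify both topological spaces with ${}^JW$ via Theorem~\ref{collectzipdata} and check that a central isogeny induces an order isomorphism of the two copies of ${}^JW$; this is a clean, purely combinatorial argument. The paper instead observes that $f$ faithfully flat forces $f_*$ faithfully flat, hence an open surjection on topological spaces, and then finishes by noting that both sides are finite of the same cardinality $|{}^JW|$, so the open continuous surjection is a homeomorphism. Your combinatorial argument identifies the homeomorphism explicitly, while the paper's counting argument is shorter and avoids chasing through the definition of $\preceq$; both are valid.
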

\begin{proof}
To see (1), for $g\in G(\overline{\kappa})$, by the last paragraph of the proof of \cite{EOZ} Theorem 3.1.2, the $E_{G,\mu}$-equivariant morphism $U_{G,-}\times E_{G,\mu}\rightarrow G_\kappa$ given by $(u,g')\mapsto g'\cdot(ug)$ is smooth at $(1,1)\in U_{G,-}\times E_{G,\mu}$. So the induced morphism $U_{G,-}\rightarrow [E_{G,\mu}\backslash G_\kappa]$ is smooth at the identity. Similarly $f(g)\in H(\overline{\kappa})$ induces a morphism $U_{H,-}\rightarrow [E_{H,\mu}\backslash H_\kappa]$ which is smooth at the identity. Consider the commutative diagram
$$\xymatrix{U_{G,-}\ar[r]^{f|_{U_{G,-}}}\ar[d]& U_{H,-}\ar[d]\\
[E_{G,\mu}\backslash G]\ar[r]^{f_*} &[E_{H,\mu}\backslash H],}$$
the composition $U_{G,-}\rightarrow U_{H,-}\rightarrow [E_{H,\mu}\backslash H]$ is smooth at the identity, and hence $f_*$ is smooth in a neighborhood of $g$. But $g$ can be any point, so $f_*$ is smooth.

To see (2), the smoothness follows from (1), as $U_{G,-}\rightarrow U_{H,-}$ is an isomorphism. The homomorphism $f$ is faithfully flat, so is $f_*:[E_{G,\mu}\backslash G]\rightarrow [E_{H,\mu}\backslash H]$. The induced map on topological spaces is then an open surjection. Noting that they both have cardinality $|{}^JW|$, it will then be a homeomorphism.
\end{proof}

\subsubsection[]{}\label{adjoint EO}As what we did for Newton stratifications, we consider adjoint groups first. More precisely, let $(G,X)$ be an \emph{adjoint} Shimura datum of abelian type with good reduction at $p$, and $(G_1,X_1)$ be a Shimura datum of Hodge type satisfying the two conditions in \ref{Kisin's lemma}.

There is an E-O stratification on $\ES_{K_{1,p},\overline{\kappa}}(G_1,X_1)$, we can, as in \ref{adjoint Newton}, restrict it to $\ES_{K_{1,p},\overline{\kappa}}(G_1,X_1)^+$ and then extend it trivially to $\mathscr{A}(G_{\INTP})\times \ES_{K_{1,p},\overline{\kappa}}(G_1,X_1)^+$. We will sometimes call this the \emph{induced E-O stratification} on $\mathscr{A}(G_{\INTP})\times \ES_{K_{1,p},\overline{\kappa}}(G_1,X_1)^+$. Similarly for $\mathscr{A}(G_{1,\INTP})\times \ES_{K_{1,p},\overline{\kappa}}(G_1,X_1)^+$.

\begin{proposition}\label{adjoint EO-part result 1}The induced E-O stratification on $\mathscr{A}(G_{\INTP})\times \ES_{K_{1,p},\overline{\kappa}}(G_1,X_1)^+$ (resp. $\mathscr{A}(G_{1,\INTP})\times \ES_{K_{1,p},\overline{\kappa}}(G_1,X_1)^+$) is $\mathscr{A}(G_{1,\INTP})^\circ$-stable. Moreover, the induced E-O stratification on  $\mathscr{A}(G_{1,\INTP})\times \ES_{K_{1,p},\overline{\kappa}}(G_1,X_1)^+$ descends to the E-O stratification on $\ES_{K_{1,p},\overline{\kappa}}(G_1,X_1)$.
\end{proposition}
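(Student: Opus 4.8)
The plan is to argue exactly as in the proof of Proposition~\ref{adjoint Newton-result}, replacing the $F$-isocrystal $\mathbb{D}(\A_x[p^\infty])\otimes L$ with $s_{\cris,x}$ by the $G_1$-zip of type $\mu_1$ attached to $x$ by Theorem~\ref{G-zipES_0}. The first point to record is that, just as the image of a point $x$ of $\ES_{K_{1,p},\overline{\kappa}}(G_1,X_1)$ in $B(G_1,\mu_1)$ depends only on $(\A_x[p^\infty],s_{\cris,x})$, the E-O stratum containing $x$ depends only on the isomorphism class of its $G_1$-zip, and this $G_1$-zip is built functorially out of $\V_x=\Hdr(\A_x)$ (the reduction mod $p$ of $\mathbb{D}(\A_x[p^\infty])$) equipped with its Frobenius, Verschiebung and the tensor $s_{\dr,x}$; in particular it is determined by $\A_x[p^\infty]$ together with its crystalline tensor.

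With this in hand I would reduce the $\mathscr{A}(G_{1,\INTP})^\circ$-stability of the induced E-O stratification to the following claim: for $(g',h')\in\mathscr{A}(G_{1,\INTP})^\circ$ with $g'\in G_1(\INTP)_+^-$ and $h'\in G_1^{\adj}(\INTP)^+$, and for a point $([g,h],x)$ of $\mathscr{A}(G_{\INTP})\times\ES_{K_{1,p},\overline{\kappa}}(G_1,X_1)^+$ (resp. of $\mathscr{A}(G_{1,\INTP})\times\ES_{K_{1,p},\overline{\kappa}}(G_1,X_1)^+$), the $G_1$-zip attached to $([g,h],x)\cdot(g',h')$ is isomorphic to the one attached to $x$. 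By the description of the action recalled before Theorem~\ref{int can abelian type}, the abelian scheme (up to $\INTP$-isogeny) underlying $([g,h],x)\cdot(g',h')$ is obtained from $\A_x$ by the twisting construction of \ref{twist abs}, so its $p$-divisible group is the twist $\A_x[p^\infty]^{\fP}$, where $\fP$ is the fiber of $G_{1,\INTP}\to G^{\adj}_{1,\INTP}$ over the image of $h'$. Since $Z_{G_1}$, and therefore the center $Z$ of $G_{1,\INTP}$, is a torus, $Z$ is smooth with connected fibers, so Corollary~\ref{iso of kisin twist} applies over the integral model $\ES_{K_{1,p}}(G_1,X_1)$ and, after restriction to the point, gives an isomorphism $\A_x[p^\infty]^{\fP}\cong\A_x[p^\infty]$ respecting the crystalline tensor. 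By the first paragraph the two $G_1$-zips are then isomorphic, which gives the stability in both cases.

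For the final assertion I would use the presentation $\ES_{K_{1,p},\overline{\kappa}}(G_1,X_1)=[\mathscr{A}(G_{1,\INTP})\times\ES_{K_{1,p},\overline{\kappa}}(G_1,X_1)^+]/\mathscr{A}(G_{1,\INTP})^\circ$. The same twisting argument, applied now to the adjoint action of $G_1^{\adj}(\INTP)^+$ on $\ES_{K_{1,p}}(G_1,X_1)$, shows that the E-O stratification on $\ES_{K_{1,p},\overline{\kappa}}(G_1,X_1)$ is invariant under the whole $\mathscr{A}(G_{1,\INTP})$-action (it is already invariant under the prime-to-$p$ Hecke action by construction). Consequently the preimage under the quotient morphism $q$ of an E-O stratum of $\ES_{K_{1,p},\overline{\kappa}}(G_1,X_1)$ is exactly the corresponding induced stratum of the product; since the latter is $\mathscr{A}(G_{1,\INTP})^\circ$-stable by the previous paragraph, it descends along $q$, and by this identification it descends precisely to the E-O stratification on $\ES_{K_{1,p},\overline{\kappa}}(G_1,X_1)$.

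The main obstacle is the assertion that Kisin--Pappas twisting preserves the $p$-divisible group together with its crystalline tensor, i.e. Corollary~\ref{iso of kisin twist}; its proof crucially uses that $Z_{G_1}$ is a torus, which is exactly the property supplied by Lemma~\ref{Kisin's lemma}. One point worth spelling out is that Corollary~\ref{iso of kisin twist} is phrased in terms of the Tate module tensor $t$ over the generic fiber, whereas the E-O stratum is read off from the crystalline realization over the special fiber; these match because the isomorphism of Corollary~\ref{iso of kisin twist} is an honest isomorphism of $p$-divisible groups over the integral (hence flat over $\INTP$) model, so it induces compatible isomorphisms on all realizations, and $t$, $s_{\cris}$, $s_{\dr}$ all arise from the single tensor $s$.
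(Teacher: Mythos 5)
Your proposal is correct and follows essentially the same route as the paper, which simply declares the proof identical to that of Proposition~\ref{adjoint Newton-result}: reduce the $\mathscr{A}(G_{1,\INTP})^\circ$-stability to the invariance of the $G_1$-zip under the Kisin--Pappas twisting, and invoke Corollary~\ref{iso of kisin twist}, whose applicability rests on $Z_{G_1}$ being a torus. You additionally spell out two details the paper leaves implicit, namely that the $G_1$-zip is determined by $\A_x[p^\infty]$ with its crystalline tensor, and that the tensor match between the Tate-module statement of Corollary~\ref{iso of kisin twist} and the mod-$p$ crystalline realization comes from the isomorphism living over the flat integral model; both points are sound and useful clarifications.
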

\begin{proof}
The proof is identical to that of Proposition \ref{adjoint Newton-result}.
\end{proof}
The induced E-O stratification on  $\mathscr{A}(G_{\INTP})\times \ES_{K_{1,p},\overline{\kappa}}(G_1,X_1)^+$ descends to a stratification on $\ES_{K_{p},\overline{\kappa}}(G,X)$, this will be called the E-O stratification. As before, this does not depend on the choice of $(G_1, X_1)$. More formally, we have the following formulas for $(G_1, X_1)$:
\[\begin{split}
\ES_{K_{1,p},\overline{\kappa}}(G_1,X_1)&=\coprod_{w\in {}^JW_{G_1}}\ES_{K_{1,p},\overline{\kappa}}(G_1,X_1)^w, \\ \ES_{K_{1,p},\overline{\kappa}}(G_1,X_1)^+&=\coprod_{w\in {}^JW_{G_1}}\ES_{K_{1,p},\overline{\kappa}}(G_1,X_1)^{+,w}, \\ \ES_{K_{1,p},\overline{\kappa}}(G_1,X_1)^w&=[\mathscr{A}(G_{1,\INTP})\times \ES_{K_{1,p},\overline{\kappa}}(G_1,X_1)^{+,w}]/\mathscr{A}(G_{1,\INTP})^\circ,
\end{split}\]
and for  $(G, X)$:
\[\begin{split}\ES_{K_{p},\overline{\kappa}}(G,X)&=\coprod_{w\in {}^JW_{G}}\ES_{K_{p},\overline{\kappa}}(G,X)^w, \\ \ES_{K_{p},\overline{\kappa}}(G,X)^+&=\coprod_{w\in {}^JW_{G}}\ES_{K_{p},\overline{\kappa}}(G,X)^{+,w}, \\ \ES_{K_{p},\overline{\kappa}}(G,X)^w&=[\mathscr{A}(G_{\INTP})\times \ES_{K_{p},\overline{\kappa}}(G,X)^{+,w}]/\mathscr{A}(G_{\INTP})^\circ.
\end{split}\]
Moreover, we have
\[\begin{split}
\ES_{K_{p},\overline{\kappa}}(G,X)^{+,w}&=\ES_{K_{1,p},\overline{\kappa}}(G_1,X_1)^{+,w}/\Delta, \\
\ES_{K_{p},\overline{\kappa}}(G,X)^w&=[\mathscr{A}(G_{\INTP})\times \ES_{K_{1,p},\overline{\kappa}}(G_1,X_1)^{+,w}]/\mathscr{A}(G_{1,\INTP})^\circ. \end{split}\]

One can also define E-O stratifications as follows.
\begin{proposition}\label{adjoint EO-part result 2}
We have a commutative diagram of smooth morphisms $$\xymatrix{\ES_{K_{1,p},\overline{\kappa}}(G_1,X_1)\ar[d]^f\ar[r]^(0.5){\zeta_1}&[E_{G_1,\mu}\backslash G_{1,\kappa}]\otimes\overline{\kappa} \ar[d]^{f_*}\\
\ES_{K_p,\overline{\kappa}}(G,X)\ar[r]^(0.5){\zeta_2}& [E_{G,\mu}\backslash G_{\kappa}]\otimes\overline{\kappa} }$$
\end{proposition}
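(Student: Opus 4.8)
The statement asserts the existence of a commutative square of smooth morphisms relating the $G_1$-zip map $\zeta_1$ on the Hodge type Shimura variety, the $G$-zip map $\zeta_2$ on the adjoint Shimura variety, the covering $f$ between (geometric) special fibers, and the induced map $f_*$ on zip stacks. First I would observe that the bottom arrow $\zeta_2$ has already been given meaning in \ref{adjoint EO}: it is the morphism descending the composite $\mathscr{A}(G_{\INTP})\times\ES_{K_{1,p},\overline{\kappa}}(G_1,X_1)^+\to[E_{G,\mu}\backslash G_\kappa]\otimes\overline{\kappa}$ obtained from $\zeta_1$ via $f_*$. So the content of the proposition is twofold: (a) that the square \emph{commutes}, and (b) that $\zeta_2$ (equivalently, the descended map) is \emph{smooth}.

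For commutativity, the key point is that $\zeta_2\circ f$ is, by the very construction of the E-O stratification on the adjoint side, forced to equal $f_*\circ\zeta_1$. More precisely: the $G_1$-zip on $\ES_{K_{1,p},\overline{\kappa}}(G_1,X_1)$ is constructed in Theorem \ref{G-zipES_0} from the de Rham bundle $\V$ with its tensor $s_{\dr}$; applying the pushforward along $f:G_1\to G$ (which by Lemma \ref{maps induced by central isog}(2) is a central isogeny, hence $U_{G_1,-}\xrightarrow{\sim}U_{G,-}$) produces a $G$-zip on the same scheme, and its classifying map is $f_*\circ\zeta_1$. On the other hand, Proposition \ref{adjoint EO-part result 1} shows this $G$-zip-valued stratification is $\mathscr{A}(G_{1,\INTP})^\circ$-stable and descends through $\Delta$ (using Corollary \ref{iso of kisin twist}, exactly as in Proposition \ref{adjoint Newton-result}: the twisting by $\mathcal{P}$ does not change the $p$-divisible group with its tensor, hence does not change the associated $F$-zip with $G_1$-structure, hence does not change its pushforward $G$-zip). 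Thus there is a genuine $G$-zip on $\ES_{K_p,\overline{\kappa}}(G,X)$ pulling back to $f_*({\text{the }G_1\text{-zip}})$ along $f$, and $\zeta_2$ is by definition its classifying map. Commutativity of the square is then immediate.

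For smoothness, the three maps $\zeta_1$, $f_*$, $f$ are all already known to be smooth: $\zeta_1$ by Theorem \ref{th EO Hodge type}(1), $f_*$ by Lemma \ref{maps induced by central isog}(2), and $f$ because on geometrically connected components $\ES_{\overline{\kappa}}(G,X)^+$ is a finite étale quotient of $\ES_{\overline{\kappa}}(G_1,X_1)^+$ by $\Delta$ (as noted in the proof of Theorem \ref{Newton for abelian type}), hence $f$ is finite étale, in particular smooth. Therefore $\zeta_2\circ f=f_*\circ\zeta_1$ is smooth; since $f$ is a smooth surjection (indeed faithfully flat), smoothness of $\zeta_2$ follows by fppf descent of smoothness along $f$. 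One should spell out that descent here is legitimate: $f$ is surjective and flat, so a morphism out of $\ES_{K_p,\overline{\kappa}}(G,X)$ is smooth as soon as its base change along $f$ is, which is the standard descent statement.

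\textbf{Main obstacle.} The only genuinely non-formal point is verifying that the pushforward-of-$G_1$-zip construction is compatible with the quotient by $\Delta$ — i.e. that $f_*$ applied to the $\Delta$-twisted $G_1$-zips yields $G$-zips that remain canonically identified, so that the descended object is a well-defined $G$-zip whose classifying map is what we call $\zeta_2$. This is exactly the $G$-zip analogue of Proposition \ref{adjoint EO-part result 1}, and it reduces cleanly to Corollary \ref{iso of kisin twist} together with the functoriality of the F-zip-with-$G_1$-structure construction of Theorem \ref{G-zipES_0} under the central isogeny $G_1\to G$; I expect no surprises, but this is where the real bookkeeping lies. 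Everything else — commutativity and the smoothness conclusion — is then a formal consequence of results already in hand.
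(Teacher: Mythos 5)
Your plan is essentially the paper's: note $f$ is \'etale and $\zeta_1,f_*$ are already known smooth, construct $\zeta_2$ by descending the $G$-zip along the quotient by $\Delta$, get commutativity by construction, and conclude smoothness of $\zeta_2$ by descent along the surjective flat $f$. This is the same decomposition of the problem that the paper uses.

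One caution on the step you flag as the ``main obstacle'': your phrasing ``the twisting by $\mathcal{P}$ does not change the $p$-divisible group with its tensor, hence does not change the associated $F$-zip with $G_1$-structure'' is too strong. Corollary \ref{iso of kisin twist} gives an isomorphism $\D^{\mathcal{P}}\cong\D$ depending on a trivialization of $\mathcal{P}$, so the $G_1$-zips are isomorphic but \emph{not} canonically so, and in particular the $G_1$-zip does \emph{not} descend to the quotient --- the stratification does (Proposition \ref{adjoint EO-part result 1}), but the zip itself does not. The point carried out in the paper's proof, and the one your ``main obstacle'' paragraph correctly gestures at, is that the isomorphism $\widetilde{\gamma}:\D^{\mathcal{P}}[p]\to\D[p]$ induces an isomorphism of $G_1$-zips which becomes independent of the choice of $\widetilde{\gamma}\in\mathcal{P}(\COMP)$ only after pushing forward along $G_1\to G$: two lifts differ by a section of $Z_{G_1,\COMP}$, which maps into the center of $G$ and hence acts trivially on the $G$-zip since $G$ is adjoint. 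That canonicity is exactly what furnishes a $\Delta$-descent datum and hence a $G$-zip on $\ES_{K_p,\overline{\kappa}}(G,X)^+$; Proposition \ref{adjoint EO-part result 1} alone gives only the coarser statement that the stratification descends. Once you insert this, the rest of your argument (commutativity tautological, smoothness by fppf descent along the finite \'etale $f$) is correct and matches the paper.
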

\begin{proof}
The morphism $\ES_{K_{1,p},\overline{\kappa}}(G_1,X_1)\ra\ES_{K_p,\overline{\kappa}}(G,X)$ is \'{e}tale. Smoothness of $\zeta_1$ (resp. $f_*$) was proven in Theorem \ref{th EO Hodge type} (1) (resp. Lemma \ref{maps induced by central isog}). We only need to show how to construct $\zeta_2:\ES_{K_p,\overline{\kappa}}(G,X)\rightarrow [E_{G,\mu}\backslash G_{\kappa}]\otimes \overline{\kappa}$ and why it is smooth.

We use notations as in \ref{twist p-div with addi struc}. Let $\D$ be the $p$-divisible group over $\ES_{K_{1,p},\overline{\kappa}}(G_1,X_1)^+$, $\mathcal{D}[p]$ gives a $G_{1,\kappa}$-zip, and hence a $G_{\kappa}$-zip over $\ES_{K_{1,p},\overline{\kappa}}(G_1,X_1)$. For $\gamma\in G(\mathbb{Z}_{(p)})^+$, we write $\mathcal{P}$ for the fiber in $G_{1,\COMP}$ of $\gamma$ viewed as an element in $G(\mathbb{Z}_{p})$. It is a trivial torsor under the center of $G_{1,\COMP}$. For $\widetilde{\gamma}\in \mathcal{P}(\COMP)$, the isomorphism $\widetilde{\gamma}:\mathcal{D}^\mathcal{P}[p]\rightarrow \mathcal{D}[p]$ induces an isomorphism of $G_{1,\kappa}$-zips, which depends only on $\gamma$ (i.e. is independent of choices of $\widetilde{\gamma}$) when passing to $G_{\kappa}$-zips. But this means that the $G_{\kappa}$-zip attached to $\mathcal{D}[p]$ on $\ES_{K_{1,p},\overline{\kappa}}(G_1,X_1)^+$ descends to $\ES_{K_p,\overline{\kappa}}(G,X)^+$, and hence induces a morphism $\ES_{K_p,\overline{\kappa}}(G,X)^+\rightarrow [E_{G,\mu}\backslash G_{\kappa}]\otimes \overline{\kappa}$ which is necessarily smooth. Putting together these morphisms on geometrically connected components, we get $\zeta_2$ which is necessarily smooth.
\end{proof}
\begin{remark}
By \ref{F-crys to zip}, $\zeta_2$ is actually defined over $\kappa$, the field of definition of $\ES_{K_p,0}(G,X)$.
\end{remark}
\subsubsection[]{}\label{diagram for E-O abe type}Now we consider general Shimura varieties of abelian type. Let $(G,X)$ be a Shimura datum of abelian type (\emph{not adjoint in general}) with good reduction at $p$. Let $(G^\adj,X^\adj)$ be its adjoint Shimura datum, and $(G_1,X_1)$ be a Shimura datum of Hodge type satisfying the two conditions in Lemma \ref{Kisin's lemma} with respect to $(G^\adj,X^\adj)$.

By the previous discussions, we have a commutative diagram
\[\xymatrix{&\ES_{K_{1,p},\overline{\kappa}}(G_1,X_1)\ar[r]\ar[d]&[E_{G_1,\mu}\backslash G_{1,\kappa}]\otimes\overline{\kappa}\ar[d]\\
\ES_{K_p,\overline{\kappa}}(G,X)\ar[r]&\ES_{K_p^\adj,\overline{\kappa}}(G^{\adj}, X^\adj)\ar[r]&[E_{G^\mathrm{ad},\mu}\backslash G^\mathrm{ad}_{\kappa}]\otimes\overline{\kappa}& [E_{G,\mu}\backslash G_{\kappa}]\otimes\overline{\kappa}\ar[l]. }\]

Now we can imitate the main results in Hodge type cases. Fix a sufficiently small open compact subgroup $K^p\subset G(\mathbb{A}_f^p)$. Let us simply write $\ES_{\overline{\kappa}}=\ES_{K,\overline{\kappa}}(G, X)$, and  \[\zeta: \ES_{\overline{\kappa}}\rightarrow [E_{G^\mathrm{ad},\mu}\backslash G^\mathrm{ad}_{\kappa}]\otimes\overline{\kappa}.\]

\begin{theorem}\label{Th EO abelian type}

\begin{enumerate}

\item $\zeta$ is smooth, and it is surjective when $p>2$. In particular,
\begin{enumerate}

\item each stratum $\ES_{\overline{\kappa}}^w$ is a smooth and locally closed subscheme of $\ES_{\overline{\kappa}}$, the closure of $\ES_{\overline{\kappa}}^w$ is a union of strata $\overline{\ES_{\overline{\kappa}}^w}=\coprod_{w'\preceq w}\ES_{\overline{\kappa}}^{w'}$ (recall that the partial order $\preceq$ was introduced above Theorem \ref{collectzipdata});

\item all the strata are in bijection with a subset of ${}^JW$, and for $w\in {}^JW$, the corresponding stratum is of dimension $l(w)$, the length of $w$, if non-empty. When $p>2$, each $\ES_{\overline{\kappa}}^w$ is non-empty.
\end{enumerate}
\item Each E-O stratum $\ES_{\overline{\kappa}}^w$ is quasi-affine.
\end{enumerate}
\end{theorem}

\begin{proof}
Noting that $\ES_{K,0}(G,X)\rightarrow \ES_{K^\adj,0}(G^\adj,X^\adj)$ is \'{e}tale, the smoothness of $\zeta$ is a direct consequence of the previous proposition. All the other statements follow by combining Theorem \ref{th EO Hodge type} with Proposition \ref{adjoint EO-part result 1}.
\end{proof}

Recall by Remark \ref{maximal and minimal EO},
there is a unique maximal length element $w_\mu\in {}^JW$. We call the associated open E-O stratum the ordinary E-O stratum. By the above closure relation, it is dense in $\ES_{\overline{\kappa}}$.
\begin{corollary}\label{C:ordinary}
The $\mu$-ordinary locus in $\ES_{\overline{\kappa}}$ coincides with the ordinary E-O stratum. In particular, the $\mu$-ordinary locus is open dense. 
\end{corollary}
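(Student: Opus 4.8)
The plan is to reduce the assertion to the Hodge type case, where the coincidence of the ordinary Ekedahl--Oort stratum with the $\mu$-ordinary locus is Wortmann's theorem \cite{muordinary}. I would begin by recording that both loci are already known to be open and dense in $\ES_{\overline\kappa}$: the ordinary E-O stratum $\ES_{\overline\kappa}^{w_\mu}$ is locally closed by Theorem \ref{Th EO abelian type}, and since $w_\mu$ is the unique maximal element of $({}^JW,\preceq)$ (Remark \ref{maximal and minimal EO}) its closure $\coprod_{w'\preceq w_\mu}\ES_{\overline\kappa}^{w'}$ is all of $\ES_{\overline\kappa}$, so it is in fact open; dually, the $\mu$-ordinary Newton stratum $\ES_{\overline\kappa}^{b_\mu}$ is open and dense by Theorem \ref{Newton for abelian type} and Remark \ref{R:ordinary-basic}. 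Hence it suffices to prove the set-theoretic equality $\ES_{\overline\kappa}^{w_\mu}=\ES_{\overline\kappa}^{b_\mu}$, after which the final ``open dense'' clause is automatic.

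For the reduction I would use the two-step descent built into the constructions. By \ref{diagram for E-O abe type} and \ref{diagram for NP abe type}, both stratifications on $\ES_{\overline\kappa}(G,X)$ are pulled back along $\pi\colon\ES_{\overline\kappa}(G,X)\to\ES_{K_p^\adj,\overline{\kappa}}(G^\adj,X^\adj)$, and under the order-preserving bijections ${}^JW_G\simeq{}^JW_{G^\adj}$ (Lemma \ref{maps induced by central isog}) and $B(G,\mu)\simeq B(G^\adj,\mu)$ (the lemma preceding Lemma \ref{Kisin's lemma}, from \cite{isocys with addi 2} 6.5) the maximal elements correspond; since the two loci upstairs are the $\pi$-preimages of the two loci downstairs, it is enough to treat an \emph{adjoint} datum $(G,X)$. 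In that case I pick a Hodge type $(G_1,X_1)$ as in Lemma \ref{Kisin's lemma}; on a geometrically connected component $\ES_{K_p,\overline\kappa}(G,X)^+=\ES_{K_{1,p},\overline\kappa}(G_1,X_1)^+/\Delta$, and by \ref{adjoint Newton}--\ref{adjoint EO} (again using that the maximal elements of $B(G_1,\mu_1)$ and ${}^JW_{G_1}$ go to the maximal elements of $B(G^\adj,\mu)$ and ${}^JW_{G^\adj}$) the $\mu$-ordinary Newton stratum and the ordinary E-O stratum of $\ES_{K_p,\overline\kappa}(G,X)^+$ are the descents, along the finite étale \emph{surjection} $q\colon\ES_{K_{1,p},\overline\kappa}(G_1,X_1)^+\to\ES_{K_p,\overline\kappa}(G,X)^+$, of the corresponding strata of $\ES_{K_{1,p},\overline\kappa}(G_1,X_1)^+$. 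Applying Wortmann's theorem on $\ES_{K_{1,p},\overline\kappa}(G_1,X_1)$, restricting it to $\ES_{K_{1,p},\overline\kappa}(G_1,X_1)^+$, and using surjectivity of $q$ yields the equality on $\ES_{K_p,\overline\kappa}(G,X)^+$; spreading it out over the disjoint union via the $\mathscr{A}(G_{1,\INTP})^\circ$-quotient description of $\ES_{K_p,\overline\kappa}(G^\adj,X^\adj)$ finishes the adjoint case, and then the previous step finishes the general one.

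The step I expect to demand the most care is not conceptual but the compatibility bookkeeping: one has to verify that the chain of canonical identifications ${}^JW_{G}\simeq{}^JW_{G^\adj}\simeq{}^JW_{G_1}$ and $B(G,\mu)\simeq B(G^\adj,\mu)\simeq B(G_1,\mu_1)$ is compatible and order-preserving, so that the label ``ordinary'' is transported correctly at every stage, and that the maps $q$ and the $\mathscr{A}(G_{1,\INTP})^\circ$-quotient are genuinely surjective, so that equality of strata downstairs is equivalent to equality of their preimages upstairs. All of this is implicit in \ref{diagram for NP abe type} and \ref{diagram for E-O abe type}; no input beyond Wortmann's Hodge-type result is needed.
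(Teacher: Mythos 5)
Your proof is correct and follows the same route as the paper, which simply cites Wortmann's theorem in the Hodge type case and notes that the abelian type case follows by construction; you have unpacked the descent bookkeeping (reduction to the adjoint case via pullback, then to the Hodge type case via the connected-component quotient description) that the paper leaves implicit in the phrase ``by our construction.''
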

\begin{proof}
In the Hodge type case, this follows from \cite{muordinary} Theorem 6.10. The abelian type case follows from the Hodge type case by our construction.
\end{proof}

Thus for a Shimura datum $(G, X)$ of abelian type with good reduction at $p$, we have the Ekedahl-Oort stratification on the geometric special fiber $\ES_{\overline{\kappa}}$ of $\ES_K(G, X)$
\[\ES_{\overline{\kappa}}=\coprod_{w\in{}^JW}\ES_{\overline{\kappa}}^w,\quad  \overline{\ES_{\overline{\kappa}}^w}=\coprod_{w'\preceq w}\ES_{\overline{\kappa}}^{w'}. \]
As in Remark \ref{maximal and minimal EO}, there is a unique closed (minimal) stratum $\ES_{\overline{\kappa}}^{w_0}$ (the superspecial locus),  associated to the element $w_0=1\in {}^JW$; there is also a unique open (maximal) stratum $\ES_{\overline{\kappa}}^{w_{\mu}}$ (the ordinary locus),  associated to the maximal element $w_{\mu}\in {}^JW$.

\begin{example}\label{EO quaternion SHV}
Notations as in \ref{quaternion SHV}, but we will write $G$ for the special fiber and $W$ for its Weyl group. Then we have \[W\cong (\mathbb{Z}/2\mathbb{Z})^n,\quad W_J\cong (\mathbb{Z}/2\mathbb{Z})^{n-d},\] and \[{}^JW\cong (\mathbb{Z}/2\mathbb{Z})^d.\] The partial order $\preceq$ on ${}^JW$ is the Bruhat order. More explicitly, for $w,w'\in {}^JW$, $w\preceq w'$ if and only if $w$ is obtained from $w'$ by changing some of the $1$ to $0$. The dimension of $\ES_{\overline{\kappa}}^w$ is the number of $1$s in $w$. In particular, for $0\leq i \leq d$, there are ${d\choose i}$ strata of dimension $i$. We refer the reader to \cite{TX} for some related construction for these quaternionic Shimura varieties.
\end{example}

\section[Central leaves]{Central leaves}\label{S:leaves}

In this section, we consider a refinement for both the Newton and the Ekedahl-Oort stratifications studied in the previous two sections.

\subsection[Central leaves on Shimura varieties of Hodge type]{Central leaves on Shimura varieties of Hodge type}
Central leaves were first introduced and studied by Oort in the Siegel case, cf. \cite{foliation-Oort}. They were generalized by Mantovan in the PEL type case in \cite{coho of PEL} and independently by P. Hamacher (\cite{Foliation-Hamacher}) and C. Zhang (\cite{level m}) in the Hodge type case.
 
Notations as in \ref{model Hodge type}, for $z\in \ES_K(G,X)(\overline{\kappa})$, we simply write $D_z$ for $\mathbb{D}(\A_z[p^\infty])(W)$, here $W=W(\overline{\kappa})$. We will also write $L=W[1/p]$ as in \ref{group theo settings for NP}. Two points $x,y\in \ES_K(G,X)(\overline{\kappa})$ are said to be in the same \emph{central leaf} if there exists an isomorphism of Dieudonn\'{e} modules $D_x\rightarrow D_y$ mapping $s_{\cris, x}$ to $s_{\cris, y}$. It is clear from the definition that the $\overline{\kappa}$-points of a Ekedahl-Oort stratum (resp. Newton stratum) is a union of central leaves. We can also define \emph{classical central leaves} by putting together $\overline{\kappa}$-points with isomorphic Dieudonn\'{e} modules. Each classical central leaf is locally closed in $\ES_{K,\overline{\kappa}}(G,X)$.

Let $C(G,\mu)$ and $B(G,\mu)$ be as at the beginning of \ref{group theo settings for NP}. For $x\in \ES_K(G,X)(\overline{\kappa})$, choosing an isomorphism $t:V_{\COMP}^\vee\otimes W\rightarrow D_x$ mapping $s$ to $s_{\cris, x}$, we get a Frobenius on $V_{\COMP}^\vee\otimes W$ which is of the form $(\mathrm{id}\otimes\sigma)\circ g_{x,t}$ with $g_{x,t}$ lies in $G(W)\mu(p)G(W)$. Moreover, changing $t$ to another isomorphism $V_{\COMP}^\vee\otimes W\rightarrow D_x$ mapping $s$ to $s_{\cris, x}$ amounts to $G(W)$-$\sigma$-conjugacy of $g_{x,t}$. So we have a well defined map \[\ES_K(G,X)(\overline{\kappa})\rightarrow C(G,\mu)\] whose fibers are central leaves. The composition \[\ES_K(G,X)(\overline{\kappa})\rightarrow C(G,\mu)\rightarrow B(G,\mu)\] has Newton strata as fibers.

We denote by $\ES_0$ the special fiber of $\ES_K(G,X)$, and by $\nu_G(-)$ the Newton map. For $[b]\in B(G,\mu)$ (resp. $[c]\in C(G,\mu)$), we write $\ES_{\overline{\kappa}}^b$ (resp. $\ES_{\overline{\kappa}}^c$) for the corresponding Newton stratum (resp. central leaf). The main results for central leaves on Shimura varieties of Hodge type are as follows.
\begin{theorem}\label{cent for Hodge}
For $[c]\in C(G,\mu)$, $\ES_{\overline{\kappa}}^c$ is a smooth, equi-dimensional locally closed subscheme of $\ES_{\overline{\kappa}}$. It is open and closed in the classical central leaf containing it, and closed in the Newton stratum containing it. Any central leaf in a Newton stratum $\ES_{\overline{\kappa}}^b$ is of dimension $\langle2\rho,\nu_G(b)\rangle$ if non-empty (this holds when $p>2$). Here $\rho$ is the half sum of positive roots.
\end{theorem}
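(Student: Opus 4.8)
The plan is to assemble the statement from the work of Hamacher \cite{Foliation-Hamacher} and the second author \cite{level m}, which is essentially a matter of transporting the results known in the Siegel case along the Hodge embedding $\ES_0 \hookrightarrow \mathscr{A}_{g,1,K',0}$ while keeping track of the crystalline tensors $s_{\cris}$. First I would recall that the $F$-crystal $\mathbb{D}(\A[p^\infty])$ with its tensor $s_{\cris}$ over $\ES_0$ gives, at each $\overline{\kappa}$-point $x$, the pair $(D_x, s_{\cris,x})$, and that the map $x \mapsto [c_x] \in C(G,\mu)$ of \ref{subsection newton Hodge}--style is well-defined; the central leaf $\ES_{\overline{\kappa}}^c$ is its fiber. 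To see that $\ES_{\overline{\kappa}}^c$ is locally closed and that the $\overline{\kappa}$-points indeed underlie a reduced locally closed subscheme, I would use the fact that fixing the isomorphism class of $(D_x, s_{\cris,x})$ is, by a theorem of Vasiu (or by the slope filtration / Dieudonné theory argument in \cite{level m}), a constructible condition cut out inside the classical central leaf (forgetting $s_{\cris}$), which is locally closed in $\mathscr{A}_{g,1,K',0}$ by Oort \cite{foliation-Oort}; intersecting with $\ES_0$ and imposing the tensor condition gives a locally closed subscheme. This yields the ``open and closed in the classical central leaf'' and ``closed in the Newton stratum'' assertions, the latter because within a fixed isocrystal the isomorphism class of the integral Dieudonné module with tensor is a closed (indeed finite, up to the $J_b$-action) condition — one can invoke \cite{Foliation-Hamacher} Section 4 directly here.

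Next, for smoothness and equidimensionality with the dimension formula $\langle 2\rho, \nu_G(b)\rangle$, I would follow the Igusa-variety / purity approach of Hamacher. The key point is that over a central leaf $\ES_{\overline{\kappa}}^c$ inside a Newton stratum $\ES_{\overline{\kappa}}^b$ one builds the tower of Igusa varieties trivializing $\A[p^\infty]$ together with $s_{\cris}$; combined with the Rapoport--Zink space for $(G,\mu,b)$ one obtains an almost-product structure on $\ES_{\overline{\kappa}}^b$ (cf.\ the remark after Theorem \ref{Newton for abelian type} and \cite{Foliation-Hamacher, level m}). Smoothness of the leaf then follows because the Igusa tower is pro-étale over it and the whole Newton stratum is, locally, a product of the leaf with a Rapoport--Zink-type piece whose dimension is known; equidimensionality is inherited from the homogeneity of this picture under the prime-to-$p$ Hecke action together with the transitive action of $J_b(\mathbb{Q}_p)$ on leaves within a fixed Newton stratum. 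The dimension count $\dim \ES_{\overline{\kappa}}^b - \dim(\text{RZ-part}) = \langle 2\rho, \nu_G(b)\rangle$ is then exactly the computation carried out in \cite{Foliation-Hamacher} (Theorem 1.2 and its corollaries) and independently in \cite{level m}; I would cite these rather than redo it, perhaps sketching that $\langle \rho, \mu + \nu_G(b)\rangle - \tfrac12 \mathrm{def}_G(b)$ minus the RZ-dimension collapses to $\langle 2\rho, \nu_G(b)\rangle$.

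Finally, non-emptiness of every $\ES_{\overline{\kappa}}^c$ for $[c] \in C(G,\mu)$: since every $\overline{\kappa}$-point of a Newton stratum lies in some leaf and $J_b(\mathbb{Q}_p)$ acts transitively on the set of leaves in $\ES_{\overline{\kappa}}^b$ (this is the content of the fact that all integral structures on a fixed isocrystal with $G$-structure differing by $J_b$ are realized — see \cite{Foliation-Hamacher}), non-emptiness of a single leaf in each $\ES_{\overline{\kappa}}^b$ follows from non-emptiness of $\ES_0^b$ (Theorem \ref{Newton for Hodge type}), and hence all leaves mapping to $[b]$ are non-empty. The main obstacle, and the part requiring genuine care rather than citation, is verifying that the tensor $s_{\cris}$ is correctly propagated through the Igusa construction and the almost-product decomposition — i.e.\ that the $G$-structure is preserved integrally, not merely rationally — so that the leaves with tensor (as opposed to classical leaves) are the ones appearing; this is where one leans on Vasiu's and Kisin's results on the crystalline tensors being horizontal and on the comparison in \cite{level m}. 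I expect no genuinely new difficulty beyond faithfully importing \cite{Foliation-Hamacher, level m}.
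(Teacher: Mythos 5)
Your overall strategy matches the paper's, which simply cites \cite{Foliation-Hamacher} and \cite{level m} for local-closedness, smoothness, equidimensionality, the closure properties and the dimension formula, and treats non-emptiness separately; your sketch of the Igusa-tower and almost-product argument is a reasonable unpacking of those references, and your citation trail for the geometric properties is correct.

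The genuine gap is in the non-emptiness argument. You assert that $J_b(\mathbb{Q}_p)$ acts transitively on the set of central leaves inside a Newton stratum $\ES_{\overline{\kappa}}^b$, and deduce from non-emptiness of $\ES_0^b$ that every leaf over $[b]$ is non-empty. This is not correct as stated: the set of classes $[c]\in C(G,\mu)$ lying over a fixed $[b]$ is naturally identified with $J_b(\mathbb{Q}_p)\backslash X(\mu,b)(\overline{\kappa})$, i.e.\ with $J_b$-orbits of $G(W)$-lattices in the isocrystal attached to $b$, so $J_b(\mathbb{Q}_p)$ acts \emph{trivially} on this index set, and in general there are many distinct leaves inside a single Newton stratum. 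What is really needed is the statement that, starting from one point $x\in\ES_0^b(\overline{\kappa})$, every $[c]$ over $[b]$ is realized by a point in the isogeny class of $x$ --- i.e.\ every $G(W)$-lattice structure with $G$-structure in the isocrystal of $x$ is geometrically realized on $\ES_0$. This is exactly the content of \cite{LRKisin} Proposition 1.4.4, which the paper invokes together with non-emptiness of the Newton strata, and which your proposal does not use; without it you only get that one leaf in each $\ES_{\overline{\kappa}}^b$ is non-empty. Your parenthetical that the set of integral structures with tensor is ``finite up to the $J_b$-action'' is also not right in general and is not what the closedness-in-the-Newton-stratum argument rests on; that part is better left to the specialization argument in \cite{Foliation-Hamacher}, as you do cite.
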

\begin{proof}
The non-emptiness of $\ES_{\overline{\kappa}}^c$ follows from non-emptiness of Newton strata and \cite{LRKisin} Proposition 1.4.4. All other statements are proved in \cite{Foliation-Hamacher} and \cite{level m} respectively, using different methods.
\end{proof}

When the prime to $p$ level $K^p$ varies, by construction the central leaves $\ES_{K_pK^p, \overline{\kappa}}^c$ are invariant under the prime to $p$ Hecke action. In this way we get also the central leaves on $\ES_{K_p,\overline{\kappa}}=\varprojlim_{K^p}\ES_{K_pK^p,\overline{\kappa}}$.

\subsection[Central leaves on Shimura varieties of abelian type]{Central leaves on Shimura varieties of abelian type}
We now explain how to define central leaves on Shimura varieties of abelian type. As before, we start with a group theoretic result which says that if one wants to use $C(G,\mu)$ to parameterize all central leaves, then he could pass to the adjoint group freely. But due to technical difficulties, we can only prove the following special case.
\begin{lemma}\label{iden C(G,mu)}
Let $f:G\rightarrow H$ be a central isogeny of reductive groups over $\mathbb{Z}_p$ with kernel denoted by $Z$, and $\mu$ be a cocharacter of $G$ defined over $W(\kappa)$ with $\kappa|\mathbb{F}_p$ finite. If $Z_{\mathbb{Q}_p}$ is connected, then
the map $f_*:C(G,\mu)\rightarrow C(H,\mu)$ is a bijection.
\end{lemma}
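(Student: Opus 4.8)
The plan is to reduce the statement to a computation in $B(G)$-type sets via the description of $\sigma$-conjugacy classes by their Newton and Kottwitz invariants. Recall that $C(G,\mu)$ is the set of $G(W)$-$\sigma$-conjugacy classes in $G(W)\mu(p)G(W)$, and similarly for $H$. Since $f:G\to H$ is a central isogeny with connected kernel $Z_0:=\ker f$, $Z_0$ is a torus, and we have an exact sequence $1\to Z_0\to G\to H\to 1$ of reductive group schemes over $\COMP$. First I would show that $f_*$ is well-defined, i.e. sends $G(W)\mu(p)G(W)$ into $H(W)\mu(p)H(W)$; this is immediate since $f(\mu(p))=\mu(p)$ and $f$ is a homomorphism. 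For injectivity and surjectivity, the key point is to control the fibers of $G(W)\to H(W)$ and the Galois/$\sigma$-cohomology that measures the failure of $f$ to be surjective on $W$-points and on $\sigma$-conjugacy classes.

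The main steps, in order: (1) \emph{Surjectivity of $f$ on $W$-points.} Because $Z_0$ is a torus and $W=W(\overline\kappa)$ is a complete discrete valuation ring with algebraically closed residue field, $\HH^1_{\mathrm{fppf}}(W,Z_0)$ vanishes (Lang's theorem plus smoothness/Hensel, or directly $\HH^1(W,\GG_m)=0$ and $Z_0$ is an extension of split tori after an unramified base change, which is harmless here since we may enlarge $\kappa$). Hence $G(W)\to H(W)$ is surjective, and likewise $G(W)\mu(p)G(W)\to H(W)\mu(p)H(W)$ is surjective, giving surjectivity of $f_*$. (2) \emph{Injectivity.} Suppose $g,g'\in G(W)\mu(p)G(W)$ with $f(g')=h\,f(g)\,\sigma(h)^{-1}$ for some $h\in H(W)$. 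Lift $h$ to $\tilde h\in G(W)$ using step (1); then $g'':=\tilde h\, g\,\sigma(\tilde h)^{-1}$ satisfies $f(g'')=f(g')$, so $g'=z g''$ for a unique $z\in Z_0(W)$. One must show $z$ can be absorbed by a further $\sigma$-conjugation \emph{inside} $G(W)$, i.e. that $z\in\{c\,\sigma(c)^{-1}: c\in Z_0(W)\}$ — equivalently that the class of $z$ in $\HH^1(\langle\sigma\rangle, Z_0(W))$ (Frobenius-twisted) is trivial. The constraint is that $g',g''$ both lie in $G(W)\mu(p)G(W)$; comparing Kottwitz points shows $z\in Z_0(W)$ has trivial image in $\pi_1(Z_0)_\Gamma$ suitably interpreted, and then a Lang-type surjectivity for the torus $Z_0$ over $W$ (the map $c\mapsto c\,\sigma(c)^{-1}$ on $Z_0(W)$ is surjective onto the kernel of $\kappa_{Z_0}$, which here is everything since $\HH^1$ vanishes) finishes it.

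I expect the main obstacle to be step (2): making precise why the residual element $z\in Z_0(W)$ is \emph{exactly} a $\sigma$-coboundary in $Z_0(W)$ rather than only in $G(W)$, and checking that the double-coset condition $G(W)\mu(p)G(W)$ forces the relevant Kottwitz/$\pi_1$-obstruction to vanish. The cleanest route is probably to invoke the analogue of \cite{isocys with addi 2} 6.5 (already used in the excerpt for $B(G,\mu)$) at the level of $W$-points: the connected-kernel hypothesis should make the map on $G(W)$-$\sigma$-conjugacy classes behave like the map on $G(L)$-$\sigma$-conjugacy classes, where surjectivity of $c\mapsto c\sigma(c)^{-1}\sigma$-conjugation on the torus is standard. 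If a direct cohomological argument gets unwieldy, the fallback is to factor $f$ and reduce to the case $Z_0=\GG_m$ or an induced torus, where everything is explicit. Once well-definedness, surjectivity, and injectivity are established, the lemma follows.
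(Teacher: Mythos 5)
Your proposal is correct and matches the paper's proof in all essentials: for surjectivity you use smoothness of $f$ (the connected kernel torus $Z_0$ is smooth) over the Henselian ring $W$ to get $G(W)\twoheadrightarrow H(W)$; for injectivity you lift $h$ to $\tilde h\in G(W)$ and reduce to showing the residual central element $z\in Z_0(W)$ lies in the image of the twisted Lang map $c\mapsto c\sigma(c)^{-1}$ on $Z_0(W)$, which the paper verifies after reducing to $Z_0=\GG_m$ by an explicit Witt-vector induction (solving $\sigma(x)=xy$ coordinatewise). One cleanup worth making when you flesh out step (2): the detour through Kottwitz points and $\pi_1(Z_0)_\Gamma$ is a red herring. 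The Kottwitz invariant $\kappa_{Z_0}$ of a torus is detected by the $p$-adic valuation and is therefore automatically trivial on $Z_0(W)$ (as opposed to $Z_0(L)$), so there is no obstruction to absorb and the double-coset condition $G(W)\mu(p)G(W)$ plays no role in this step; the only content is that the Lang map on the \emph{integral} points $Z_0(W)$ is surjective, which is exactly what the $\GG_m$ computation supplies. Likewise, invoking the analogue of \cite{isocys with addi 2} 6.5 for $B(-,\mu)$ would not suffice, since the statement for $C(-,\mu)$ is strictly finer and the integral Lang surjectivity is precisely the extra input.
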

\begin{proof}
The group scheme $Z$ is of multiplicative type, so we have an exact sequence $$\xymatrix{0\ar[r]&T_Z\ar[r]&Z\ar[r]&Z^{\mathrm{fini}}\ar[r]&0},$$
where $T_Z\subseteq Z$ is the maximal torus, and $Z^{\mathrm{fini}}$ is a finite flat group scheme of multiplicative type. By our assumption, $Z^{\mathrm{fini}}_{\mathbb{Q}_p}$ is trivial, and hence $Z^{\mathrm{fini}}$ is trivial. In particular, $Z=T_Z$ is a torus.
	
Let $W$ be $W(\overline{\kappa})$ and $L$ be $W[1/p]$ as before. To see that $f_*$ is surjective, noting that any element in $C(H,\mu)$ has a representative in $G(L)$ of form $h\mu(p)$ with $h\in H(W)$, it suffices to show that $G(W)\rightarrow H(W)$ is surjective. But $f$ is smooth, so $G(W)\rightarrow H(W)$ is surjective as it is so for $\overline{\kappa}$-points.

Now we prove that $f_*$ is injective. Assume that $g_1\mu(p),g_2\mu(p)\in G(L)$ have the same image in $C(H,\mu)$, then there is $h\in H(W)$ such that \[h^{-1}\overline{g_1}\mu(p)\sigma(h)=\overline{g_2}\mu(p)\in H(L).\]
Here for $i=1,2$, $\overline{g_i}$ is the image in $H(W)$ of $g_i$. Take $g\in G(W)$ mapping to $h$, then \[g^{-1}g_1\mu(p)\sigma(g)\mu(p)^{-1}g_2^{-1}=z\in Z(W),\] here $Z=\mathrm{ker}(f)$ is a torus by the above discussion. We rewrite the above equation as \[g^{-1}g_1\mu(p)\sigma(g)=zg_2\upsilon(p).\] To prove that $f_*$ is injective, it suffices to show that \[z=t^{-1}\sigma(t)\] for some $t\in Z(W)$.

Noting that $Z$ splits over an unramified extension and we are working with $W$-points, we can assume that $Z=\mathbb{G}_{m,W}$. Consider the equation $\sigma(x)=xy$. Writing $x=(x_0,x_1,\cdots)$ and $y=(y_0,y_1\cdots)$ as Witt vectors, the equation becomes \[(x_0^p,x_1^p,\cdots)=(x_0,x_1,\cdots)(y_0,y_1\cdots).\] The multiplication on the right is given by a polynomial $P_n$ of degree $p^n$ with the assignment $\mathrm{deg}(x_i)=\mathrm{deg}(y_i)=p^i$, so for given $(x_0,x_1,\cdots,x_{n-1})$ and $(y_0,y_1\cdots, y_n)$, the equation \[x_n^p-P_n(x,y)=0\] is of form \[x_n^p+a_1x_n+a_0=0,\] and hence always has solution in $k$. But $x_0^p=x_0y_0$ has a non-zero solution for any $y_0\neq 0$, so by induction, $\sigma(x)=xy$ has a solution in $W^\times$ for any $y\in W^\times$.
\end{proof}

\subsubsection[]{}\label{adjoint cent def}Let $(G,X)$ be an \emph{adjoint} Shimura datum of abelian type with good reduction at $p$, and $(G_1,X_1)$ be a Shimura datum of Hodge type satisfying the two conditions in Lemma \ref{Kisin's lemma}. Then the center of $G_{1,\INTP}$ is a torus.

By the last subsection, we have central leaves on $\ES_{K_{1,p},\overline{\kappa}}(G_1,X_1)$. We can restrict them to $\ES_{K_{1,p},\overline{\kappa}}(G_1,X_1)^+$ and then extend them trivially to $\mathscr{A}(G_{\INTP})\times \ES_{K_{1,p},\overline{\kappa}}(G_1,X_1)^+$. We will sometimes call these the \emph{induced central leaves} on $\mathscr{A}(G_{\INTP})\times \ES_{K_{1,p},\overline{\kappa}}(G_1,X_1)^+$. Similarly for $\mathscr{A}(G_{1,\INTP})\times \ES_{K_{1,p},\overline{\kappa}}(G_1,X_1)^+$.

\begin{proposition}\label{adjoint cent result}Each induced central leaf on \[\mathscr{A}(G_{\INTP})\times \ES_{K_{1,p},\overline{\kappa}}(G_1,X_1)^+\] (resp. $\mathscr{A}(G_{1,\INTP})\times \ES_{K_{1,p},\overline{\kappa}}(G_1,X_1)^+$) is $\mathscr{A}(G_{1,\INTP})^\circ$-stable. Moreover, induced central leaves on  $\mathscr{A}(G_{1,\INTP})\times \ES_{K_{1,p},\overline{\kappa}}(G_1,X_1)^+$ descend to central leaves on $\ES_{K_{1,p},\overline{\kappa}}(G_1,X_1)$.
\end{proposition}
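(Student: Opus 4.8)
The plan is to run the argument of Propositions \ref{adjoint Newton-result} and \ref{adjoint EO-part result 1} almost verbatim, the only change being that the relevant invariant is now the isomorphism class of an \emph{integral} Dieudonn\'e module with tensor rather than of an isocrystal or of a $G$-zip. Recall from \ref{adjoint cent def} that, by condition (1) of Lemma \ref{Kisin's lemma}, the center $Z:=Z_{G_{1,\INTP}}$ is a torus over $\INTP$, hence smooth with connected fibers, and that the $\mathscr{A}(G_{1,\INTP})^\circ$-action on $\ES_{K_{1,p},\overline{\kappa}}(G_1,X_1)^+$ is the one described in \cite{Kisin-Pappas} 4.4--4.5 via the twisting construction of \ref{twist abs}: an element $\gamma\in G_1^{\adj}(\INTP)^+$, with associated $Z$-torsor $\fP$, sends the point carrying $(\A_x,\lambda_x,\varepsilon_x)$ to the point carrying $(\A_x^{\fP},\lambda_x^{\fP},\varepsilon_x^{\fP})$ together with its induced Kisin tensors. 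On the first factor $\mathscr{A}(G_{1,\INTP})^\circ$ acts on $\mathscr{A}(G_{\INTP})$ (resp. $\mathscr{A}(G_{1,\INTP})$) through the natural map, while the induced central leaves are extended trivially there.

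First I would prove the stability assertion. The central leaf through a point $x\in\ES_{K_{1,p},\overline{\kappa}}(G_1,X_1)(\overline{\kappa})$ depends only on the isomorphism class of the pair $\bigl(\mathbb{D}(\A_x[p^\infty])(W),s_{\cris,x}\bigr)$, and the induced central leaves are extended trivially along the $\mathscr{A}(G_{\INTP})$- (resp. $\mathscr{A}(G_{1,\INTP})$-) factor, so it suffices to check that for $([g,h],x)$ and any $[g',h']\in\mathscr{A}(G_{1,\INTP})^\circ$ the Dieudonn\'e module with tensor attached to $([g,h],x)\cdot(g',h')$ is isomorphic to $\bigl(\mathbb{D}(\A_x[p^\infty])(W),s_{\cris,x}\bigr)$. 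By the description just recalled this amounts to an isomorphism $\A_x^{\fP}[p^\infty]\cong\A_x[p^\infty]$ of $p$-divisible groups respecting the crystalline tensor, which is exactly Corollary \ref{iso of kisin twist} applied with $\D=\A_x[p^\infty]$ and the torus $Z$. (Here one uses, as in the proofs of Propositions \ref{adjoint Newton-result} and \ref{adjoint EO-part result 1}, that an isomorphism of $p$-divisible groups respecting the Tate-module tensor $t$ induces one of the associated Dieudonn\'e crystals respecting $s_{\cris}$, by the compatibility of $t$ and $s_{\cris}$ under Kisin's integral $p$-adic comparison.) This yields the first statement.

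For the descent assertion, one uses that $\ES_{K_{1,p},\overline{\kappa}}(G_1,X_1)^+$ is a connected component of $\ES_{K_{1,p},\overline{\kappa}}(G_1,X_1)$ on which the universal abelian scheme restricts to the universal abelian scheme of $\ES_{K_{1,p},\overline{\kappa}}(G_1,X_1)^+$, and that the quotient map $q\colon\mathscr{A}(G_{1,\INTP})\times\ES_{K_{1,p},\overline{\kappa}}(G_1,X_1)^+\to\ES_{K_{1,p},\overline{\kappa}}(G_1,X_1)$ identifies the remaining components with $\ES_{K_{1,p},\overline{\kappa}}(G_1,X_1)^+$ via Hecke/twisting, under which $(\A_x[p^\infty],s_{\cris,x})$ is preserved up to isomorphism by the first part. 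Hence the preimage under $q$ of a central leaf of $\ES_{K_{1,p},\overline{\kappa}}(G_1,X_1)$ is precisely the corresponding induced central leaf; since $q$ is a quotient by the $\mathscr{A}(G_{1,\INTP})^\circ$-action which stabilizes this partition into locally closed reduced subschemes, the partition descends and recovers, as locally closed subschemes, the central leaves of $\ES_{K_{1,p},\overline{\kappa}}(G_1,X_1)$ from Theorem \ref{cent for Hodge}.

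I do not expect a serious obstacle: the whole argument is parallel to the Newton and Ekedahl--Oort cases and rests on the single input Corollary \ref{iso of kisin twist}, which was tailored precisely for $p$-divisible groups with tensors. The one point requiring a little care is the passage from the Tate-module identity in Corollary \ref{iso of kisin twist} to an identity of Dieudonn\'e modules with crystalline tensors; but, as noted, this compatibility is already implicitly used in Propositions \ref{adjoint Newton-result} and \ref{adjoint EO-part result 1}, so it can be invoked here as well.
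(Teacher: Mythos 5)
Your argument is correct and is essentially the paper's own: the paper simply declares the proof ``identical to that of Proposition \ref{adjoint Newton-result},'' and you reproduce that argument verbatim, reducing to Corollary \ref{iso of kisin twist} via the Kisin--Pappas twisting description of the $\mathscr{A}(G_{1,\INTP})^\circ$-action. Your remark that one must pass from the Tate-module statement of Corollary \ref{iso of kisin twist} to an isomorphism of Dieudonn\'e modules respecting $s_{\cris}$ is a fair observation about a step the paper leaves implicit, but it does not constitute a departure from its approach.
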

\begin{proof}
The proof is identical to that of Proposition \ref{adjoint Newton-result}.
\end{proof}
The central leaves of $\mathscr{A}(G_{\INTP})\times \ES_{K_{1,p},\overline{\kappa}}(G_1,X_1)^+$ descend to locally closed subschemes of $\ES_{K_p,\overline{\kappa}}(G,X)$, and we will call them central leaves of $\ES_{K_{p},\overline{\kappa}}(G,X)$. This does not depend on the choice of $(G_1, X_1)$. More formally, we have the following formulas:
\[\begin{split}
 \ES_{K_{1,p},\overline{\kappa}}(G_1,X_1)^c&=[\mathscr{A}(G_{1,\INTP})\times \ES_{K_{1,p},\overline{\kappa}}(G_1,X_1)^{+,c}]/\mathscr{A}(G_{1,\INTP})^\circ,\\
 \ES_{K_{p},\overline{\kappa}}(G,X)^c&=[\mathscr{A}(G_{\INTP})\times \ES_{K_{p},\overline{\kappa}}(G,X)^{+,c}]/\mathscr{A}(G_{\INTP})^\circ.
\end{split}\]
Moreover, we have
\[\begin{split}
\ES_{K_{p},\overline{\kappa}}(G,X)^{+,c}&=\ES_{K_{1,p},\overline{\kappa}}(G_1,X_1)^{+,c}/\Delta, \\
\ES_{K_{p},\overline{\kappa}}(G,X)^c&=[\mathscr{A}(G_{\INTP})\times \ES_{K_{1,p},\overline{\kappa}}(G_1,X_1)^{+,c}]/\mathscr{A}(G_{1,\INTP})^\circ. \end{split}\]

The proposition also indicates how to relate central leaves to the group theoretic object $C(G,\mu)$. For $x\in \ES_{K_{p},\overline{\kappa}}(G,X)(\overline{\kappa})$, we can find $x_0\in \ES_{K_{p},\overline{\kappa}}(G,X)^+(\overline{\kappa})$ which is in the same central leaf as $x$. Noting that $x_0$ lifts to $\widetilde{x_0}\in \ES_{K_{1,p},\overline{\kappa}}(G_1,X_1)^+(\overline{\kappa})$ whose image in $C(G,\mu)$ depends only on $x$, we get a well defined map \[\ES_{K_{p},\overline{\kappa}}(G,X)(\overline{\kappa})\rightarrow C(G,\mu)\] whose fibers are central leaves of $\ES_{K_{p},\overline{\kappa}}(G,X)$.

\subsubsection[]{}Now we can pass to general Shimura varieties of abelian type. Let $(G,X)$ be a Shimura datum of abelian type (\emph{not adjoint in general}) with good reduction at $p$. Let $(G^\adj,X^\adj)$ be its adjoint Shimura datum, and $(G_1,X_1)$ be a Shimura datum of Hodge type satisfying the two conditions in Lemma \ref{Kisin's lemma} with respect to $(G^\adj,X^\adj)$. In particular the center $Z_{G_1}$ is connected.
By Lemma \ref{iden C(G,mu)}, we have $C(G_1,\mu_1)\cong C(G^\adj,\mu)$, and by the above discussions, we have a commutative diagram
\[\xymatrix{&\ES_{K_{1,p},\overline{\kappa}}(G_1,X_1)(\overline{\kappa})\ar[r]\ar[d]&C(G_1,\mu_1)\ar[d]_{\simeq}\\
\ES_{K_p,\overline{\kappa}}(G,X)(\overline{\kappa})\ar[r]&\ES_{K_p^\adj,\overline{\kappa}}(G^\adj,X^\adj)(\overline{\kappa})\ar[r]&C(G^\adj,\mu).}\]
Here as in \ref{diagram for NP abe type}, we use the same notation when viewing $\mu$ (resp. $\mu_1$) as a cocharacter of $G^\adj$, and identify $B(G^\adj,\mu)$ with $B(G^\adj,\mu_1)$ silently.

Now we can imitate the main results in Hodge type cases. We fix a prime to $p$ level $K^p$. Let $\ES_0$ be the special fiber of $\ES_K(G,X)$, and by $\nu_G(-)$ the Newton map. For $[b]\in B(G,\mu)\simeq B(G^\adj,\mu)$ (resp. $[c]\in C(G^\adj,\mu)$), we write $\ES_{\overline{\kappa}}^b$ (resp. $\ES_{\overline{\kappa}}^c$) for the corresponding Newton stratum (resp. central leaf). 
\begin{theorem}\label{cent abelian ty-ad}
Each central leaf is a smooth, equi-dimensional locally closed subscheme of $\ES_{\overline{\kappa}}$. It is closed in the Newton stratum containing it. Any central leaf in a Newton stratum $\ES_{\overline{\kappa}}^b$ is of dimension $\langle2\rho,\nu_G(b)\rangle$ if non-empty (this holds when $p>2$). Here $\rho$ is the half sum of positive roots.
\end{theorem}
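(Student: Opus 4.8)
The plan is to follow exactly the pattern already established for the Newton and Ekedahl--Oort stratifications in Theorems~\ref{Newton for abelian type} and \ref{Th EO abelian type}, reducing everything to the Hodge type statement (Theorem~\ref{cent for Hodge}) via passage to the adjoint group. First I would treat the case where $(G,X)$ is \emph{adjoint}. Pick a Hodge type datum $(G_1,X_1)$ as in Lemma~\ref{Kisin's lemma}, so that $Z_{G_1}$ is a torus. By Proposition~\ref{adjoint cent result}, the induced central leaves on $\mathscr{A}(G_{\INTP})\times \ES_{K_{1,p},\overline{\kappa}}(G_1,X_1)^+$ descend to locally closed subschemes of $\ES_{K_p,\overline{\kappa}}(G,X)$, and on geometrically connected components we have $\ES_{K_p,\overline{\kappa}}(G,X)^{+,c}=\ES_{K_{1,p},\overline{\kappa}}(G_1,X_1)^{+,c}/\Delta$. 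Since $\ES_{K_{1,p},\overline{\kappa}}(G_1,X_1)^+\to\ES_{K_p,\overline{\kappa}}(G,X)^+$ is a finite \'etale cover (and $\Delta$ acts freely), smoothness, equidimensionality, and the property of being locally closed all descend from the corresponding properties of $\ES_{K_{1,p},\overline{\kappa}}(G_1,X_1)^{+,c}$, which hold by Theorem~\ref{cent for Hodge}. Non-emptiness follows from the non-emptiness of the Newton strata (Theorem~\ref{Newton for abelian type}) together with the fact, recorded in the proof of Theorem~\ref{cent for Hodge}, that any central leaf inside a non-empty Newton stratum is non-empty (via \cite{LRKisin} Proposition 1.4.4 applied to $(G_1,X_1)$).

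Next I would address the closedness in the ambient Newton stratum and the dimension formula. Both are compatible with finite \'etale covers and with the trivial extension/quotient operations used above: a central leaf $\ES_{K_{1,p},\overline{\kappa}}(G_1,X_1)^{+,c}$ is closed in the Newton stratum $\ES_{K_{1,p},\overline{\kappa}}(G_1,X_1)^{+,b}$ containing it (with $[b]$ the image of $[c]$ under $C(G_1,\mu_1)\to B(G_1,\mu_1)$), hence the same holds after dividing by $\Delta$ and after passing to all connected components via $\mathscr{A}(G_{\INTP})$. For the dimension, Theorem~\ref{cent for Hodge} gives $\dim \ES_{K_{1,p},\overline{\kappa}}(G_1,X_1)^{+,c}=\langle 2\rho_{G_1},\nu_{G_1}(b)\rangle$; since $G_1\to G$ induces an isomorphism on adjoint groups, $\nu_{G_1}(b)$ and $\nu_G(b)$ correspond under $B(G_1,\mu_1)\simeq B(G,\mu)$ and $\langle 2\rho,-\rangle$ is insensitive to the difference between $G$ and $G^{\mathrm{ad}}$ (it factors through the adjoint data), so the formula $\langle 2\rho,\nu_G(b)\rangle$ is preserved. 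Finally, for a general (not necessarily adjoint) abelian type datum $(G,X)$, I would pull back the central leaves on $\ES_{K_p^{\adj},\overline{\kappa}}(G^{\adj},X^{\adj})$ along the natural map $\ES_{K_p,\overline{\kappa}}(G,X)\to\ES_{K_p^{\adj},\overline{\kappa}}(G^{\adj},X^{\adj})$, using Lemma~\ref{iden C(G,mu)} to identify $C(G_1,\mu_1)\cong C(G^{\adj},\mu)$ so the parametrizing set is as claimed; since this map is \'etale, all the geometric properties transfer, and the dimension formula is unchanged because the map is \'etale (the fiber dimension is $0$) and $\langle 2\rho,\nu_G(b)\rangle$ only depends on the adjoint data.

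The main obstacle I anticipate is \emph{not} in the Hodge-type input but in checking that the central leaves on the Hodge type connected component $\ES_{K_{1,p},\overline{\kappa}}(G_1,X_1)^+$ are stable under the action of $\mathscr{A}(G_{1,\INTP})^\circ$ (and that the quotient by $\Delta$ is well-defined with the right geometric properties) --- this is precisely Proposition~\ref{adjoint cent result}, whose proof rests on Corollary~\ref{iso of kisin twist}: the twist $\D^{\mathcal{P}}$ of the universal $p$-divisible group by a $Z_{G_1,\COMP}$-torsor $\mathcal{P}$ is isomorphic to $\D$ \emph{respecting the crystalline tensors}, which crucially uses that $Z_{G_1}$ is a torus (so that $\mathcal{P}$ is Zariski-locally trivial and the isomorphism on Dieudonn\'e modules, not just on isocrystals, preserves $s_{\cris}$). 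Since a central leaf is cut out by an isomorphism class of $(D_x,s_{\cris,x})$ as a Dieudonn\'e module with tensor --- a finer datum than for Newton strata, which only see the isocrystal --- one must verify that the twisting isomorphism of Corollary~\ref{iso of kisin twist} is genuinely an isomorphism of integral Dieudonn\'e modules and not merely an isogeny; this is exactly what the cited proof via \cite{LRKisin} Lemma 4.1.7 provides, so the argument goes through, but it is the one place where the central-leaf case is genuinely more delicate than the Newton or Ekedahl--Oort cases. The passage from $C(G,\mu)$ to $C(G^{\adj},\mu)$ in the non-connected-center case is the reason the theorem is stated for $C(G^{\adj},\mu)$ rather than $C(G,\mu)$, consistent with the footnote in the introduction and Lemma~\ref{iden C(G,mu)}.
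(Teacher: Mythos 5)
Your proposal follows exactly the paper's route: first establish the statement for the adjoint datum $(G^{\adj},X^{\adj})$ by combining Theorem~\ref{cent for Hodge} (Hodge type) with Proposition~\ref{adjoint cent result} (descent under the adjoint action, resting on Corollary~\ref{iso of kisin twist}), then pull back along the finite \'etale map $\ES_0(G,X)\to\ES_0(G^{\adj},X^{\adj})$. Your elaboration on why the dimension formula and closedness are preserved under these operations, and on the role of $Z_{G_1}$ being a torus, is correct and simply fills in details the paper leaves implicit.
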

\begin{proof}
For $\ES_0(G^\adj,X^\adj)$ and $[b]\in B(G^\adj,\mu)$, the statement follows by combining Theorem \ref{cent for Hodge} with Proposition \ref{adjoint cent result}. But then the general case follows by noting that $\ES_0(G,X)\rightarrow\ES_0(G^\adj,X^\adj)$ is finite \'{e}tale.
\end{proof}
\begin{example}\label{CL quaternion SHV}Notations as in \ref{NP quaternion SHV}. For $[b]\in B(G,\mu)$, its projection to $B(G_{\mathfrak{p}_i},\mu_i)$ is of form $(\frac{\lambda_1}{n_i},\frac{\lambda_2}{n_i})$ with $\lambda_1\geq \lambda_2$, $\lambda_1+ \lambda_2=a_i$ and these $\lambda_i$ are integers unless $\lambda_1=\lambda_2$. Let \[c_i(b):=\lambda_1-\lambda_2,\] then central leaves in $\ES_{\overline{\kappa}}^b$ are smooth varieties of dimension $\sum_{i=1}^sc_i(b)$.
\end{example}

\

\section[Filtered $F$-crystals with $G$-structure and stratifications]{Filtered $F$-crystals with $G$-structure and stratifications}

In this section, we revisit the Newton stratification, the Ekedahl-Oort stratification, and the central leaves on Shimura varieties of abelian type studied previously from the point of view of $p$-adic Hodge theory. We assume $p>2$ in this section.

\subsection[Filtered $F$-crystals with $G$-structure]{Filtered $F$-crystals with $G$-structure}\label{subsection filtered}
We will mainly follow \cite{Loverig 2} in this and the next subsections.
For a scheme $X$, we will write $\mathrm{Bun}_X$ for the groupoid of vector bundles (of finite rank) over $X$. By a \emph{filtration} $\mathrm{Fil}^\bullet$ on a vector bundle $N/X$, we mean a separating exhaustive descending filtration such that $\mathrm{Fil}^{i+1}$ is a locally direct summand of $\mathrm{Fil}^{i}$. The groupoid of vector bundles over $X$ with a filtration is denoted by $\mathrm{Fil}_X$. Both $\mathrm{Bun}_X$ and $\mathrm{Fil}_X$ are rigid exact tensor categories.

\subsubsection[$G$-bundles and filtered $G$-bundles]{$G$-bundles and filtered $G$-bundles}

Let $G$ be an fppf affine group scheme over $S=\Spec R$. We write $\mathrm{Rep}_R(G)$ for the category of algebraic representations of $G$ taking values in finite projective $R$-modules. Let $X$ be a scheme which is faithfully flat over $S$. By a \emph{$G$-bundle} on $X$, we mean a faithful exact $R$-linear
tensor functor \[\mathrm{Rep}_R(G)\rightarrow \mathrm{Bun}_X.\] By a \emph{filtered $G$-bundle} on $X$, we mean a faithful exact $R$-linear tensor functor \[\mathrm{Rep}_R(G)\rightarrow \mathrm{Fil}_X.\]

\emph{For simplicity, we assume that $R=\mathbb{Z}_p$ and $G$ is reductive from now on}.

By \cite{G-torsor over dedk} Theorem 1.2, to give a $G$-bundle on $X$ is the same as to give a $G$-torsor on $X$. As explained in \cite{Loverig 2} 2.2.8, by putting together Propositions 2.1.5 and 2.2.7 of loc. cit., we find that to give a filtered $G$-bundle on $X$ is the same as to give a $G$-torsor $I/X$ together with a $G$-equivariant morphism $I\rightarrow \mathcal{P}$. Here $\mathcal{P}$ is the scheme of parabolic subgroups of $G$.

One can also talk about the \emph{type} of a filtered $G$-bundle. More precisely, we fix the type $\tau$ of a conjugacy class of parabolic subgroups of $G$, it is defined over a finite \'{e}tale extension $A$ of $R$. Assume that the structure map $X\rightarrow S=\Spec R$ factors through $\Spec A$. Then a filtered $G$-bundle is said to be of type $\tau$ if the associated morphism $I\rightarrow \mathcal{P}$ factors through $\mathcal{P}^\tau$. Here $\mathcal{P}^\tau\subseteq \mathcal{P}$ is the subscheme of parabolic subgroups of $G$ of type $\tau$. It is smooth over $A$ with geometrically connected fibers.

\subsubsection[The operator $\mathrm{R}$]{The functor $\mathrm{R}$}\label{oper R}

For $(N,\mathrm{Fil}^\bullet)\in \mathrm{Fil}_X$, we define $$\mathrm{R}(N):=\sum_i p^{-i}\mathrm{Fil}^i\subseteq N[p^{-1}].$$ By the proof of \cite{Loverig 2} Proposition 2.1.5, $\mathrm{R}(-)$ is an exact tensor functor from $\mathrm{Fil}_X$ to $\mathrm{Bun}_X$ compatible with taking duals. In fact, here our $\mathrm{R}(-)$ is just the specialization of the $\mathrm{Rees}(-)$ in loc. cit. to $t=p$.

\subsubsection[Filtered F-crystals]{Filtered F-crystals}

Let $\kappa|\mathbb{F}_p$ be a finite field and $Y/W(\kappa)$ be a smooth scheme. We denote by $\mathrm{Bun}_Y^\nabla$ (resp. $\mathrm{Fil}^\nabla_Y$) the category of vector bundles on $Y$ with integrable connection (resp. filtered vector bundles on $Y$ with integrable connection satisfying the Griffiths transversality). Let $W=W(\kappa), K=W[1/p]$, $X$ be the formal scheme over $W$ obtained by $p$-adic completion of $Y$, and $X_K$ be the rigid generic fibre over $\mathrm{Spa}(K,W)$. We write $\mathrm{Bun}_X^\nabla$ (resp. $\mathrm{Bun}_{X_K}^\nabla$, $\mathrm{Fil}^\nabla_X$, $\mathrm{Fil}^\nabla_{X_K}$) for the similar category but with the condition that $\nabla$ is topologically quasi-nilpotent.  An object in $\mathrm{Bun}_X^\nabla$ (resp. $\mathrm{Bun}_{X_K}^\nabla$, $\mathrm{Fil}^\nabla_X$, $\mathrm{Fil}^\nabla_{X_K}$) is called a crystal (resp. an isocrystal, a filtered crystal, a filtered isocrystal). There is an obvious commutative diagram\[\xymatrix{\mathrm{Fil}^\nabla_X\ar[r]\ar[d]& \mathrm{Fil}^\nabla_{X_K}\ar[d]\\
\mathrm{Bun}_X^\nabla\ar[r]& \mathrm{Bun}_{X_K}^\nabla	,
}\]where the horizontal arrows are the functors which take generic fibers.

Let $U\subseteq X$ be open affine, and $\sigma_U$ be a lift of the Frobenius on the special fiber of $U$. An  $F$-\emph{isocrystal} is an isocrystal $M/X_K$ together with for each pair $(U,\sigma_U)$ an isomorphism
$\varphi_{\sigma_U}: \sigma^*_UM_{U_K}\rightarrow
M_{U_K}$,
such that the $\varphi_{\sigma_U}$ are horizontal with respect to the
natural connections on both sides, and that if $(U',\sigma_{U'})$ is another pair,  the composition
$$\sigma^*_UM_{U_K\cap U'_K}\stackrel{\varphi_{\sigma_U}}{\longrightarrow}
M_{U_K\cap U'_K}\stackrel{\varphi_{\sigma_{U'}}}{\longleftarrow}\sigma^*_{U'}M_{U_K\cap U'_K}$$ is the natural isomorphism induced by the connection $\nabla$. One can define an $F$\emph{-crystal} to be a ``lattice'' of an $F$-isocrystal. More precisely, it is an $F$-isocrystal
$M/X_K$ together with a crystal $N/X$ and an identification $N[1/p]\cong M$. The category of $F$-isocrystals (resp. $F$-crystals) over $X$ is denoted by $\mathrm{FIsoCrys}_{X_K}$ (resp. $\mathrm{FCrys}_{X}$). We have a natural functor $\mathrm{FCrys}_{X}\ra \mathrm{FIsoCrys}_{X_K}$.

A \emph{filtered} $F$-\emph{crystal} on $X$ is then a filtered crystal $(N,\mathrm{Fil}^\bullet, \nabla)\in \mathrm{Fil}^\nabla_X$ together with for each pair $(U,\sigma_U)$ as above a horizontal isomorphism \[\varphi_U:\mathrm{R}(\sigma^*_UN_{U})\rightarrow N_{U}\] which forms an isocrystal after inverting $p$ (see also \cite{Loverig 2} 2.4.6, \cite{crys and p-galois} II. d) and e), and \cite{intcrys} section 3). Here $\mathrm{R}(\sigma^*_UN_U)$ as in \ref{oper R} is canonically a submodule of $\sigma^*_U(N_{U})[p^{-1}]$, and is equipped with a canonical flat connection by \cite{crys and p-galois} Page 34. In particular, the words ``horizontal'' and ``isocrystal'' make sense. The category of filtered $F$-crystals on $X$ is denoted by $\mathrm{FFCrys}_{X}$. Similarly (and more easily) we have the category of filtered $F$-isocrystals $\mathrm{FFIsoCrys}_{X_K}$. There is an obvious commutative diagram 
\[\xymatrix{\mathrm{FFCrys}_{X}\ar[r]\ar[d]& \mathrm{FFIsoCrys}_{X_K}\ar[d]\\
\mathrm{FCrys}_{X}\ar[r]& \mathrm{FIsoCrys}_{X_K}.
}\]

A \emph{filtered} $F$\emph{-crystal with} $G$-\emph{structure} is then a $\COMP$-linear faithful exact tensor functor \[\omega:\mathrm{Rep}_{\COMP}(G)\rightarrow \mathrm{FFCrys}_{X}.\] Similarly, 
a \emph{filtered} $F$\emph{-isocrystal with} $G$-\emph{structure} is then a $\Q_p$-linear exact tensor functor \[\omega:\mathrm{Rep}_{\Q_p}(G)\rightarrow \mathrm{FFIsoCrys}_{X_K}.\]These objects can be equivalently defined as  filtered $G$-bundles with a flat topologically quasi-nilpotent connection and certain further structures, for more details, see \cite{Loverig 2} 2.4.7 and 2.4.9.

\subsection[Filtered F-crystals on Shimura varieties]{Filtered $F$-crystals on Shimura varieties}

Notations and assumptions as in 1.2.5. We will write $\widehat{\ES_{K_p}}$ for the $p$-adic completion of the integral canonical model  $\ES_{K_p}:= \varprojlim_{K^p}\ES_K(G,X)$. This is a formal scheme over $O_{E_v}=W(\kappa)$ which is formally smooth. Its generic fiber, as an adic space over $\mathrm{Spa}(E_v,O_{E_v})$, is still denoted by $\Sh_{K_p}(G,X)$. We will sometimes simply write $\Sh_{K_p}$ for it.

\subsubsection[]{}\label{I_E_v} Let $Z_{nc}\subseteq Z_G$ be the largest subtorus of $Z_G$ that is split over $\mathbb{R}$ but anisotropic over $\mathbb{Q}$, and set $G^c =G/Z_{nc}$. If $(G,X)$ is a Shimura datum of Hodge type, then we have $G=G^c$.
Let $G_{\COMP}$ (resp. $G^c_{\COMP}$) be the reductive model of $G_{\mathbb{Q}_p}$ (resp. $G^c_{\mathbb{Q}_p}$). We will write $\mathrm{Rep}_{\mathbb{Q}_p}(G)$ (resp. $\mathrm{Rep}_{\mathbb{Z}_p}(G)$) for the category of algebraic representations of $G_{\mathbb{Q}_p}$ (resp. $G_{\mathbb{Z}_p}$) taking values in finite dimensional $\mathbb{Q}_p$-vector spaces (finite free $\mathbb{Z}_p$-modules). Similarly for $G^c$.

Let $\mathrm{Lisse}_{\mathbb{Z}_p}(\Sh_{K_p})$ (resp. $\mathrm{Lisse}_{\mathbb{Q}_p}(\Sh_{K_p})$ ) be the categroy of $\mathbb{Z}_p$-local systems (resp. $\mathbb{Q}_p$-local systems) on $\Sh_{K_p}$. By \cite{Liu-Zhu} page 340-341, the pro-Galois $G^c(\COMP)$-cover
$\Sh(G,X)\rightarrow \Sh_{K_p}(G,X)$
gives a $\mathbb{Z}_p$-linear faithful exact tensor functor
$$\omega_{\text{\'{e}t}}: \mathrm{Rep}_{\mathbb{Z}_p}(G^c)\rightarrow  \mathrm{Lisse}_{\mathbb{Z}_p}(\Sh_{K_p}),$$
which induces a
$\mathbb{Q}_p$-linear tensor functor
\[\omega_{\text{\'{e}t},\Q_p}: \mathrm{Rep}_{\mathbb{Q}_p}(G^c)\rightarrow  \mathrm{Lisse}_{\mathbb{Q}_p}(\Sh_{K_p}). \] By Theorem 1.2 in \cite{Liu-Zhu} of Liu and Zhu, it is de Rham and thus by comparison theorem it extends to a functor
$$\omega_{\mathrm{dR}}: \mathrm{Rep}_{\mathbb{Q}_p}(G^c)\rightarrow  \mathrm{Fil}^\nabla_{\Sh_{K_p}}.$$
This $\omega_{\mathrm{dR}}$ factors via $\mathrm{Rep}_{E_v}(G^c_{E_v})\rightarrow  \mathrm{Fil}^\nabla_{\Sh_{K_p}}$ which defines a filtered $G^c$-bundle $I_{E_v}$ with flat connection on $\Sh_{K_p}$. Liu and Zhu conjecture (see \cite{Liu-Zhu} Remark 4.1 (ii)) that this should agree with the analytification of the canonical model of the automorphic vector constructed by Milne in the case when $Z(G)^\circ$ is split by a CM
field.  By using the theory of abelian motives, this is true in the abelian type case (compare \cite{Loverig 2} 3.1.3.).

\subsubsection[]{} Lovering constructs in \cite{Loverig 2} a certain filtered $F$-crystal with $G^c_{\COMP}$-structure over $\widehat{\ES_{K_p}}$ whose underlying filtered isocrystal on the generic fiber is $\omega_{\mathrm{dR}}$. Lovering calls it the ``crystalline canonical model'' of $\omega_{\mathrm{dR}}$ (or $I_{E_v}$). It is characterized by a CPLF condition (means ``crystalline points lattice $+$ Frobenius'', see \cite{Loverig 2} 3.1.5 for the precise definition). Roughly speaking, this condition is imposed to ensure that one can have certain integral crystalline comparison theorem between $\omega_{\text{\'{e}t}}$ and $\omega_{\text{cris}}$ (see below).
By \cite{Loverig 2} Proposition 3.1.6, a crystalline canonical model, if exists, is unique up to isomorphism. We will write \[\omega_{\text{cris}}:\mathrm{Rep}_{\mathbb{Z}_p}(G^c)\rightarrow  \mathrm{FFCrys}_{\widehat{\ES_{K_p}}},\] and sometimes $I$, for the crystalline canonical model of $\omega_{\mathrm{dR}}$.

By \cite{Loverig 1}  Lemma 3.1.3, a morphism  $(G,X)\rightarrow (G',X')$ of Shimura
data induces a homomorphism $G^c\rightarrow G'^c$. If moreover, it comes from a morphism of reductive group schemes $G_{\mathbb{Z}_{(p)}}\rightarrow G'_{\mathbb{Z}_{(p)}}$, we have a natural homomorphism $G^c_{\mathbb{Z}_{(p)}}\rightarrow G'^c_{\mathbb{Z}_{(p)}}$.

\begin{theorem}(\cite{Loverig 2} 3.4.8, Proposition 3.1.6)\label{main result Lov 2}
\begin{enumerate}

\item If $(G,X)$ is of abelian type, then the crystalline canonical model of $\omega_{\mathrm{dR}}$ exists.

\item Let $f : (G,X)\rightarrow (G',X')$ be a morphism of Shimura data of abelian type induced by a homomorphism $G_{\mathbb{Z}_{(p)}}\rightarrow G'_{\mathbb{Z}_{(p)}}$ of reductive groups over $\INT_{(p)}$, and $I$ (resp. $I'$) be the crystalline canonical model over
$\widehat{\ES_{K_p}}$ (resp. $\widehat{\ES'_{K_p'}}$). Then we have a canonical isomorphism $I\times^{G^c_{\COMP}}
G'^c_{\COMP}\cong f^*I' $ of filtered $F$-crystals over $\widehat{\ES_{K_p}}$ with $G'^c_{\COMP}$-structure.
\end{enumerate}
\end{theorem}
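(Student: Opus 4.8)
The plan is to treat the two assertions separately, since (1) is an existence statement that will be bootstrapped from the Hodge type case through the ``passing to adjoint'' mechanism already used throughout the paper, and (2) is a functoriality statement that will follow from the characterizing property (the CPLF condition) together with the uniqueness in \cite{Loverig 2} Proposition 3.1.6. For (1), I would first recall that if $(G,X)$ is of Hodge type, then $G = G^c$ and the crystalline canonical model is nothing but the filtered $F$-crystal $\bM := \mathbb{D}(\A[p^\infty])$ attached to the universal abelian scheme, equipped with the Kisin tensors $s_\cris$; the fact that this is a filtered $F$-crystal with $G_{\COMP}$-structure (i.e.\ that the tensors cut out $G_{\COMP}$ integrally on the Dieudonné crystal) is exactly the content of \cite{CIMK} Corollary 2.3.9 together with the $p$-adic Hodge theory input recalled in \ref{model Hodge type}, and that it satisfies the CPLF condition is \cite{Loverig 2} 3.1.5. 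So in the Hodge type case one has existence directly. Then, to pass to abelian type, I would use the auxiliary Shimura datum $(\mathcal{B},X')$ of abelian type fitting into the commutative square with $(G_1,X_1)$ (Hodge type), $(G,X)$ and $(G^\adj,X^\adj)$ as recalled in the introduction: one transports the crystalline canonical model along $\ES_{K_{\mathcal{B},p}}(\mathcal{B},X') \to \ES_{K_{1,p}}(G_1,X_1)$ by restriction of structure group and pullback, then descends it along $\ES_{K_{\mathcal{B},p}}(\mathcal{B},X') \to \ES_{K_p}(G,X)$. The descent is justified exactly as in Propositions \ref{adjoint Newton-result}, \ref{adjoint EO-part result 1}: the relevant $\Delta$-action (equivalently, the $\mathscr{A}(G_{1,\INTP})^\circ$-action) is realized by Kisin's twisting construction of \S1.3, and Corollary \ref{iso of kisin twist} says this twisting does not change the $p$-divisible group (hence the filtered $F$-crystal) together with its tensors, because the relevant center is a torus — so the filtered $F$-crystal with $G^c_{\COMP}$-structure on the cover is canonically $\Delta$-equivariant and descends.

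For (2), I would argue that the statement is essentially forced by uniqueness. The morphism $f$ of Shimura data induces, on $\COMP$-points of the structure groups, a map $G^c_{\COMP} \to G'^c_{\COMP}$, and hence an exact tensor functor $\mathrm{Rep}_{\COMP}(G'^c) \to \mathrm{Rep}_{\COMP}(G^c)$; composing $f^*$ of $I'$ with this gives a filtered $F$-crystal with $G'^c_{\COMP}$-structure on $\widehat{\ES_{K_p}}$, which is precisely $I \times^{G^c_{\COMP}} G'^c_{\COMP}$ after one unwinds the change-of-structure-group formalism. Both $I \times^{G^c_{\COMP}} G'^c_{\COMP}$ and $f^* I'$ are filtered $F$-crystals with $G'^c_{\COMP}$-structure on $\widehat{\ES_{K_p}}$ whose underlying filtered isocrystal on the generic fiber is $\omega_{\mathrm{dR}}$ for $(G,X)$ composed with $G^c \to G'^c$ — this matches because on the generic (rigid) fiber the étale-to-de Rham comparison is functorial in the Shimura datum by \cite{Liu-Zhu} (the pro-étale $G^c(\COMP)$-cover pushes forward along $G^c \to G'^c$ to the pro-étale $G'^c(\COMP)$-cover). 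One then checks that $I \times^{G^c_{\COMP}} G'^c_{\COMP}$ still satisfies the CPLF condition for $(G',X')$: the crystalline points of $\widehat{\ES_{K_p}}$ map to crystalline points of $\widehat{\ES'_{K'_p}}$ and the lattice-plus-Frobenius data is preserved by extension of structure group, so this is a routine compatibility check. Since $f^* I'$ also satisfies CPLF (being the pullback of one that does), the uniqueness part of \cite{Loverig 2} Proposition 3.1.6 gives the desired canonical isomorphism.

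The main obstacle, I expect, is the descent step in (1): one must show that the twisting construction $\D \rightsquigarrow \D^{\mathcal{P}}$ of \S1.3, which a priori only tells us about the $p$-divisible group and its Tate module with tensors over the generic fiber, actually induces an isomorphism of the full filtered $F$-crystal-with-$G^c$-structure package over the $p$-adic formal scheme, \emph{compatibly} as $\mathcal{P}$ (equivalently $\gamma \in \Delta$) varies, so that the collection of isomorphisms forms a genuine descent datum rather than just pointwise isomorphisms. Concretely this means promoting Corollary \ref{iso of kisin twist} from a statement about a single twist to a cocycle-compatible family, and checking that the canonical identification $T_p(\D^{\mathcal{P}}) \cong T_p(\D)^{\mathcal{P}}$ from \cite{LRKisin} Lemma 4.1.7 is compatible with the crystalline realization and with the Frobenius and filtration — i.e.\ that the various functorialities of $p$-adic Hodge theory are respected by Kisin's twisting operation. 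The rest (functoriality in (2)) should be comparatively formal, reducing to the uniqueness characterization and the fact that all constructions in sight are tensor-functorial in the Shimura datum.
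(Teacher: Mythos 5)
The paper itself offers no proof of this statement: it is a black-boxed citation of \cite{Loverig 2} (3.4.8 and Proposition 3.1.6 there), and the only editorial content the authors add is the remark immediately following the theorem, which addresses how to correctly interpret its conclusion. So there is no ``paper's own proof'' against which to compare your attempt; I will instead check your outline against what the paper says about Lovering's approach.

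Your outline for part (1) --- take the Dieudonn\'e crystal with Kisin tensors as the crystalline canonical model in the Hodge type case, transport it through the auxiliary $(\mathcal{B},X')$ diagram, and descend along $\Delta$ via Kisin's twisting --- is essentially the strategy the introduction attributes to Lovering, and the analogy with Propositions \ref{adjoint Newton-result} and \ref{adjoint EO-part result 1} is apt. You are also right to flag the cocycle-compatibility of the twisting isomorphisms as the nontrivial step.

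Part (2) has a genuine logical gap, and it is exactly the point that the paper's remark after the theorem is at pains to make. You assume that $f^*I'$ is a genuine (not merely weak) filtered $F$-crystal with $G'^c_{\COMP}$-structure satisfying CPLF, and then invoke the uniqueness clause of \cite{Loverig 2} Proposition 3.1.6. But Lovering's Proposition 3.1.6(2) only gives an isomorphism of \emph{weak} filtered $F$-crystals with $G'^c_{\COMP}$-structure, and the remark explicitly cautions that it is in general difficult to determine whether the base change of a filtered $F$-crystal is again one. The correct logic runs the other way: $I\times^{G^c_{\COMP}}G'^c_{\COMP}$, defined as the composite $\mathrm{Rep}_{\mathbb{Z}_p}(G'^c)\to\mathrm{Rep}_{\mathbb{Z}_p}(G^c)\to\mathrm{FFCrys}_{\widehat{\ES_{K_p}}}$, is automatically a genuine filtered $F$-crystal; the Lovering isomorphism of weak objects then forces $f^*I'$ to be a genuine filtered $F$-crystal a posteriori. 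Your ``routine compatibility check'' presupposes precisely what is meant to be concluded, and the uniqueness theorem you appeal to is stated for genuine filtered $F$-crystals, so it cannot be applied to $f^*I'$ until that status is established.
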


\begin{remark}
Notations as in the above theorem. The morphism $I\times^{G^c_{\COMP}}
G'^c_{\COMP}\cong f^*I' $ in (2) is stated in \cite{Loverig 2} Proposition 3.1.6 (2) as an isomorphism of \emph{weak} filtered $F$-crystals with $G'^c_{\COMP}$-structure. But $I\times^{G^c_{\COMP}}
G'^c_{\COMP}$ given by $$\mathrm{Rep}_{\mathbb{Z}_p}(G'^c)\rightarrow \mathrm{Rep}_{\mathbb{Z}_p}(G^c)\rightarrow \mathrm{FFCrys}_{\widehat{\ES_{K_p}}} $$ is by definition a filtered $F$-crystal with $G'^c_{\COMP}$-structure, and hence $f^*I'$ is a filtered $F$-crystal with $G'^c_{\COMP}$-structure. It is in general difficult to determine whether the base-change of a filtered $F$-crystal is again a filtered $F$-crystal.
\end{remark}

\begin{remark}\label{type of fil G-crys}
Notations as above. Let $\tau$ be a type of parabolic subgroups of $G_{\COMP}$ defined over $W(\kappa)$. Then a filtered $F$-crystal with $G^c_{\COMP}$-structure (over $\widehat{\ES_{K_p}}$) is said to be of type $\tau$ if its underlying filtered $G^c_{\COMP}$-bundle is of type $\tau$. Here we view $\tau$ as a type of parabolic subgroups of $G^c_{\COMP}$. The crystalline canonical model $\omega_{\mathrm{cris}}$ of $\omega_{\mathrm{dR}}$ is of type $\mu$. Here we write $\mu$ for the type of $P_+\subseteq G^c_{W(\kappa)}$ where $\mu$ is viewed as a cocharacter of $G^c_{W(\kappa)}$.
\end{remark}

\subsection[Definitions of various stratifications]{Stratifications via filtered $F$-crystals}

We will explain in this subsection, how to define and study stratifications on Shimura varieties of abelian type using the filtered $F$-crystal with $G^c_{\COMP}$-structure $\omega_{\mathrm{cris}}$. The good point is that, this filtered $F$-crystal with $G^c_{\COMP}$-structure is intrinsically determined by the Shimura datum, and once we define stratifications using it, these stratifications will be automatically intrinsically determined by the Shimura datum. In the next subsection we will identify the stratifications with those defined previously in sections 2-4. In particular, our constructions of the Newton strata, E-O strata, central leaves are independent of the choice of Hodge type data.

\subsubsection[]{}\label{crys determined by S_0}Let $A$ be the $p$-adic completion of a formally smooth $W(\kappa)$-algebra, and $\sigma$ be a lifting of the Frobenius of $A_0:=A\otimes_{W(\kappa)}\kappa$. It is well known that an $F$-isocrystal (resp. $F$-crystal) over $A$ depends only on $A_0$ up to isomorphism. We will simply call an $F$-isocrystal (resp. $F$-crystal) over $A$ (or equivalently, over $A_0$) an $F$-isocrystal (resp. $F$-crystal), and the corresponding category is denoted by $\mathrm{FIsoCrys}_{A_0}$ (resp. $\mathrm{FCrys}_{A_0}$).

Let \[\omega_{\text{cris}}:\mathrm{Rep}_{\mathbb{Z}_p}(G^c)\rightarrow  \mathrm{FFCrys}_{\widehat{\ES_{K_p}}}\] be the filtered $F$-crystal with $G^c_{\COMP}$-structure over $\widehat{\ES_{K_p}}$. By forgetting the filtrations, we get a faithful exact tensor functor \[\omega:\mathrm{Rep}_{\mathbb{Z}_p}(G^c)\rightarrow  \mathrm{FCrys}_{\ES_{_{K_p},0}}.\]

Now we can define stratifications on $\ES_{_{K_p},0}$. We will define Newton strata and central leaves pointwise first using $\omega$, and then define Ekedahl-Oort strata using $G_0^c$-zips. For $x\in \ES_{_{K_p},0}(\overline{\kappa})$, pulling back the $F$-crystal with $G^c_{\COMP}$-structure $\omega$ over $\ES_{_{K_p},0}$ to $x$ induces an $F$-crystal with $G^c_{\COMP}$-structure over $\overline{\kappa}$, i.e. a faithful exact $\mathbb{Z}_p$-linear tensor functor \[\omega_{x}:\mathrm{Rep}_{\mathbb{Z}_p}(G^c)\rightarrow  \mathrm{FCrys}_{\overline{\kappa}}.\] Passing to isocrystals, we get an $F$-isocrystal with $G^c_{\mathbb{Q}_p}$-structure, i.e. an exact $\mathbb{Q}_p$-linear tensor functor \[\omega_{x,\mathbb{Q}_p}:\mathrm{Rep}_{\mathbb{Q}_p}(G^c_{\mathbb{Q}_p})\rightarrow  \mathrm{FIsoCrys}_{\overline{\kappa}}.\]
\begin{definition}\label{def canonical central leaves}
Two points $x,y\in \ES_{_{K_p},0}(\overline{\kappa})$ are said to be in the same \emph{central leaf} if the $F$-crystals with $G^c_{\COMP}$-structure $\omega_{x}$ and $\omega_{y}$ are isomorphic. They are said to be in the same \emph{Newton stratum} if the $F$-isocrystals with $G^c_{\mathbb{Q}_p}$-structure $\omega_{x,\mathbb{Q}_p}$ and $\omega_{y,\mathbb{Q}_p}$ are isomorphic.
\end{definition}

Let $\upsilon=\sigma(\mu)$ be the cocharacter of $G_{W(\kappa)}$ with the induced cocharacter of $G^c_{W(\kappa)}$ denoted by the same notation. For $x\in \ES_{_{K_p},0}(\overline{\kappa})$ with a lift $\widetilde{x}\in \ES_{K_p}(W(\overline{\kappa}))$, the torsor $I_{\widetilde{x}}$ is trivial, and we can take $t\in I_{\widetilde{x}}(W(\overline{\kappa}))$ such that the filtration in the filtered $F$-crystal is induced by $\upsilon$. For a faithful representation \[G^c_{\COMP}\rightarrow \GL(L),\] $I_{\widetilde{x}}$ gives a filtered $F$-crystal structure on $L_{W(\overline{\kappa})}$, and the linearization of the Frobenius $\varphi$ is of form $g\upsilon(p)$, where $g\in \GL(L)(W(\overline{\kappa}))$ is the composition
$$\xymatrix{L_{W(\overline{\kappa})}\ar[r]^\xi& L_{W(\overline{\kappa})}^\sigma\ar[r]^{\upsilon(p)^{-1}}& \mathrm{R}(L_{W(\overline{\kappa})}^\sigma)\ar[r]^{\varphi^\mathrm{lin}}& L_{W(\overline{\kappa})}}.$$
Here we use the filtration induced by $\upsilon$ to construct $\mathrm{R}(L_{W(\overline{\kappa})}^\sigma)$, $\varphi^\mathrm{lin}$ is the isomorphism induced by $\varphi$, and the isomorphism $\xi: L_{W(\overline{\kappa})}\rightarrow L_{W(\overline{\kappa})}^\sigma$ is given by $l\otimes k\mapsto l\otimes 1\otimes k$. Let $s\in L^\otimes$ be a tensor fixed by $G^c_{\COMP}$, then it is also in $\mathrm{R}(L_{W(\overline{\kappa})}^\otimes)$, and such that $\varphi(s)=s$. In paticular, $g\in G^c_{\COMP}(W(\overline{\kappa}))$, and the assignment $x\mapsto \sigma^{-1}(g)$ gives well defined maps \[\ES_{_{K_p},0}(\overline{\kappa})\rightarrow C(G^c,\mu)\] and \[\ES_{_{K_p},0}(\overline{\kappa})\rightarrow B(G^c,\mu).\] The fibers of there maps are central leaves and Newton strata respectively. 

\subsubsection[]{}\label{F-crys to zip}We now explain how to define the Ekedahl-Oort stratification. Unlike for central leaves or Newton strata, we can work directly with families using \cite{disinv} Example 7.3. Let \[\omega_{\text{cris}}:\mathrm{Rep}_{\mathbb{Z}_p}(G^c)\rightarrow  \mathrm{FFCrys}_{\widehat{\ES_{K_p}}}\] be the crystalline canonical model of $\omega_{\text{dR}}$ over $\widehat{\ES_{K_p}}$. To define the morphism \[\zeta:\ES_{{K_p},0}\rightarrow [E_{G^c,\mu}\backslash G^c_\kappa],\] we need to construct a $G^c_0$-zip $(I_0,I_{0,+},I_{0,-},\iota)$ of type $\mu$ on $\ES_{K_p,0}$. Here $G^c_0$ is the special fiber of $G^c_{\mathbb{Z}_p}$.

One could get $I_0$ and $I_{0,+}$ (almost) directly from the underlying filtered $G^c_{\mathbb{Z}_p}$-bundle of $\omega_{\text{cris}}$, and $I_{0,-},\varphi$ from the filtered $F$-crystal structure. To get started, we fix a faithful representation \[G^c_{\COMP}\rightarrow \GL(L)\] and a tensor $s\in L^\otimes$ defining $G^c_{\COMP}$. Then $\omega_{\text{cris}}$ gives a filtered $F$-crystal $(M,\mathrm{Fil}^\bullet, \nabla)$ and an embedding of filtered $F$-crystals $s_{\cris}: O_{\ES_{K_p}}\rightarrow M^\otimes$. The reduction mod $p$ of $M$ (resp. $\mathrm{Fil}^\bullet$, $s_{\cris}$) is denoted by $M_0$ (resp. $C^\bullet$, $s_{\cris,0}$).

Now set \[I_0=\IIsom((L_\kappa,s),(M_0,s_{\cris,0})).\] We can also see it without choosing any embedding, as it is the special fiber of the underlying $G^c_{\mathbb{Z}_p}$-bundle $I$. Let $L^\bullet$ be the descending filtration on $L_{W(\kappa)}$ induced by $\mu$, then set $$I_{0,+}=\IIsom((L_\kappa,L^\bullet_{\kappa},s),(M_0,C^\bullet, s_{\cris,0})).$$

We still need to show that $(M_0,C^\bullet)$ can be ``extended'' to an $F$-zip. Let $A$ be an open affine of $\ES_{K_p}$ with a Frobenius lifting $\sigma$ of $A_0:=A/(p)$. Let $D_i|_{A_0}$ be elements $m\in M_0\otimes A_0$ such that there exists $n\in M_A$ with $p^{-i}\varphi(n)\in M_A$ and the image in $M_0\otimes A_0$ of $p^{-i}\varphi(n)$ is $m$. By the discussions in \cite{disinv} Example 7.3, $D_\bullet|_{A_0}$ is a descending filtration on $M_0\otimes A_0$ as in Definition \ref{defini F-zip}, and $p^{-i}\varphi$ induces an $F$-zip \[(M_0\otimes A_0, C^\bullet\otimes A_0, D_\bullet|{A_0},p^{-i}\varphi).\]

We remark that we assumed that $\varphi$ is strongly divisible with respect to all $(A,\sigma)$, so \[(M_0\otimes A_0, C^\bullet\otimes A_0, D_\bullet|{A_0},p^{-i}\varphi)\] is always an $F$-zip. The flat connection induces canonical isomorphism for different choices of $\sigma$. In particular, these $(M_0\otimes A_0, C^\bullet\otimes A_0, D_\bullet|{A_0},p^{-i}\varphi)$ can be glued into an $F$-zip $(M_0, C^\bullet, D_\bullet,\phi_\bullet)$ on $\ES_{{K_p},0}$.

Let $L_\bullet$ be the ascending filtration on $L_{W(\kappa)}$ induced by $\sigma(\mu)$, set \[I_{0,-}=\IIsom((L_\kappa,L_{\bullet,{\kappa}},s),(M_0,D_\bullet, s_{\cris,0})),\] and let $\iota$ be simply the isomorphism induced by $\phi_\bullet$. We remark that the isomorphism $\varphi:\mathrm{R}(M^\sigma)\rightarrow M$ respects $s_{\cris}$. This implies that the morphism $\iota:I_{0,+}/U_+\rightarrow I_{0,-}/U_-^{(p)}$ is well defined.

\begin{definition}
Two points in $\ES_{{K_p},0}(\overline{\kappa})$ are in the same \emph{Ekedahl-Oort stratum} if and only if their attached $G^c_0$-zip functors are isomorphic.
\end{definition}
It is clear by construction that fibers of $\zeta\otimes\overline{\kappa}$ are the Ekedahl-Oort strata in $\ES_{{K_p},\overline{\kappa}}$.

\subsection[Properties of stratifications]{Properties of stratifications}We will study properties of various stratifications here. We will mainly deduce these properties from what we know for those of Hodge type, and also compare the definitions here and those we gave before. It should be possible to study stratications directly using the filtered $F$-crystal with $G^c_{\COMP}$-structure, but we would not do it here.

\subsubsection{Functoriality: some fundamental diagrams}Notations as in Theorem \ref{main result Lov 2}. We assume moreover that both $(G,X)$ and $(G',X')$ are of abelian type. As we have remarked, $f^*I'$ is a filtered $F$-crystal with $G'^c$-structure over $\widehat{\ES_{K_p}}(G,X)$.

We have a canonical identification $I\times ^{G^c}G'^c\cong f^*I'$ which induces, by our discussion in the previous parts, commutative diagrams

$\xymatrix{\ES_{K_p}(G,X)(\overline{\kappa})\ar[d]\ar[r]& B(G^c,\mu)\ar[d]\\
\ES_{K_p'}(G',X')(\overline{\kappa})\ar[r]& B(G'^c,\mu)}$\ \ \ \
$\xymatrix{\ES_{K_p,\kappa}(G,X)\ar[d]\ar[r]& [E_{G^c,\mu}\backslash G^c_\kappa]\ar[d]\\
\ES_{K'_p,\kappa}(G',X')\ar[r]& [E_{G'^c,\mu}\backslash G'^c_\kappa]}$
\[
\xymatrix{\ES_{K_p}(G,X)(\overline{\kappa})\ar[d]\ar[r]& C(G^c,\mu)\ar[d]\\
	\ES_{K_p'}(G',X')(\overline{\kappa})\ar[r]& C(G'^c,\mu).}\]

\subsubsection{Settings}To study properties of stratifications defined using the filtered $F$-crystal with $G^c_{\COMP}$-structure, as well as to compare them with those we defined via passing to adjoint groups (as we will see, they are usually the same thing), we introduce the following settings.

Let $(G,X)$ be Shimura datum of abelian type as above, and $(G_1,X_1)$ be a Shimura datum of Hodge type with $Z_{G_1}$ a torus and $(G^\mathrm{ad},X^\mathrm{ad})\cong (G_1^\mathrm{ad},X_1^\mathrm{ad})$ (see Lemma \ref{Kisin's lemma}). Let $(\mathcal{B},X')$ be the Shimura datum constructed in \cite{Loverig 2} Proposition 3.4.2 (see also \cite{Loverig 1} 4.6) using $G_1^{\mathrm{der}}$ and the reflex field of $(G_1,X_1)$, then there is a commutative diagram of Shimura data $$\xymatrix{
	(\mathcal{B},X')\ar[d]\ar[r]& (G_1,X_1)\ar[d]\\
(G,X)\ar[r]& (G^\mathrm{ad},X^\mathrm{ad})}$$ inducing a commutative diagram of (integral models of) Shimura varieties
$$\xymatrix{\ES_{K_{\mathcal{B},p}}(\mathcal{B},X')\ar[d]\ar[r]& \ES_{K_{1,p}}(G_1,X_1)\ar[d]\\
\ES_{K_p}(G,X)\ar[r]& \ES_{K_p^\adj}(G^\mathrm{ad},X^\adj).}$$
The reflex field of $(\mathcal{B},X')$  is the same as that of $(G_1,X_1)$ by construction (cf. \cite{Loverig 1} 4.6). By Lemma \ref{Kisin's lemma} (2), the local reflex fields of the Shimura varieties in the above diagram are the same. As before, we denote by $\kappa$ the common residue field of the local reflex field $E_v$.
\subsubsection[Newton stratifications]{Newton stratifications}\label{Newton using crys cano}Using the fundamental diagram for Newton strata, we find a commutative diagram
$$\xymatrix@C=0.3cm{
  \ES_{K_{\mathcal{B},p}}(\mathcal{B},X')(\overline{\kappa}) \ar[dr]\ar[rr]\ar[dd]
      &  & \ES_{K_{1,p}}(G_1,X_1)(\overline{\kappa}) \ar[dr]\ar[dd] \\
   & B(\mathcal{B}^c,\mu) \ar'[r][rr] \ar'[d][dd]
      &  & B(G^c_1,\mu) \ar[dd]        \\
\ES_{K_p}(G,X)(\overline{\kappa}) \ar[rr]\ar[dr]
      &  & \ES_{K_p^\adj}(G^\mathrm{ad},X^\adj)(\overline{\kappa}) \ar[dr]   \\
      & B(G^c,\mu) \ar[rr]
      &  & B(G^{\mathrm{ad}},\mu).               }$$

This implies that the Newton stratification on $\ES_{K_{1,p},\kappa}(G_1,X_1)$ (resp. $\ES_{K_p,\kappa}(G,X)$) is a refinement of the pullback of that on $\ES_{K_p^\adj,\kappa}(G^\mathrm{ad},X^\adj)$, and the Newton stratification on $\ES_{K_{\mathcal{B},p},\kappa}(\mathcal{B},X')$ is a refinement of both the pullback of that on $\ES_{K_{1,p},\kappa}(G_1,X_1)$ and that on $\ES_{K_p,\kappa}(G,X)$. However, noting that the maps on $B(-,\mu)$ are bijective, the Newton stratification on $\ES_{K_{\mathcal{B},p},\kappa}(\mathcal{B},X')$ (resp. $\ES_{K_{1,p},\kappa}(G_1,X_1)$, $\ES_{K_p,\kappa}(G,X)$) is just the pullback of that on $\ES_{K_p^\adj,\kappa}(G^\mathrm{ad},X^\adj)$.

By the construction of $\omega_{\text{cris}}$ in the Hodge type case (see \cite{Loverig 2} Theorem 3.3.3), the Newton stratification on $\ES_{K_{1,p},\kappa}(G_1,X_1)$ we defined here coincides with that we defined in \ref{subsection newton Hodge}. So the above discussions also show that the Newton stratification on $\ES_{K_p^\adj,\kappa}(G^\mathrm{ad},X^\adj)$ (and hence the Newton stratification on $\ES_{K_p,\kappa}(G,X)$) we defined here coincides with the one we defined in \ref{diagram for NP abe type}.

\subsubsection[The E-O stratifications]{Ekedahl-Oort stratifications}\label{E-O using crys cano} By the fundamental diagram for E-O stratification, we have a commutative diagram of morphisms of stacks
$$\xymatrix@C=0.3cm{
  \ES_{K_{\mathcal{B},p},\kappa}(\mathcal{B},X') \ar[dr]\ar[rr]\ar[dd]
      &  & \ES_{K_{1,p},\kappa}(G_1,X_1)\ar[dr]\ar[dd] \\
   & [E_{\mathcal{B}^c,\mu}\backslash\mathcal{B}^c_\kappa] \ar'[r][rr] \ar'[d][dd]
      &  &[E_{G^c_1,\mu}\backslash G^c_{1,\kappa}]\ar[dd]        \\
 \ES_{K_p,\kappa}(G,X)\ar[rr]\ar[dr]
      &  & \ES_{K_p^\adj,\kappa}(G^\mathrm{ad},X^\adj) \ar[dr]   \\
      & [E_{G^c,\mu}\backslash G^c_{\kappa}] \ar[rr]
      &  & [E_{G^{\mathrm{ad}},\mu}\backslash G^{\mathrm{ad}}_\kappa].               }$$
Similar to Newton stratifications, the E-O stratification on $ \ES_{K_{\mathcal{B},p},\overline{\kappa}}(\mathcal{B},X')$ (resp. $\ES_{K_{1,p},\overline{\kappa}}(G_1,$ $X_1)$,  $\ES_{K_p,\overline{\kappa}}(G,X)$) is just the pullback of that on $\ES_{K_p^\adj,\overline{\kappa}}(G^\mathrm{ad},X^\adj)$, and the E-O stratification on $\ES_{K_p^\adj,\overline{\kappa}}(G^\mathrm{ad},X^\adj)$ (and hence the E-O stratification on $ \ES_{K_p,\overline{\kappa}}(G,X)$) we defined in \ref{diagram for E-O abe type} coincides with the E-O stratification we defined here. In particular, the morphism $ \ES_{K_p,\overline{\kappa}}(G,X)\rightarrow [E_{G^{\mathrm{ad}},\mu}\backslash G^{\mathrm{ad}}_\kappa]\otimes \overline{\kappa}$ is smooth surjective.

\subsubsection[Central leaves]{Central leaves}\label{central leaves using crys cano}
We have a similar commutative diagram as in \ref{Newton using crys cano} (one only needs to replace $B(-,\mu)$ by $C(-,\mu)$). It implies that central leaves on $\ES_{K_{1,p},\overline{\kappa}}(G_1,X_1)$ (resp. $ \ES_{K_p,\overline{\kappa}}(G,X)$) are refinements of the pullback of those on $\ES_{K_p^\adj,\overline{\kappa}}(G^\mathrm{ad},X^\adj)$, and central leaves on $ \ES_{K_{\mathcal{B},p},\overline{\kappa}}(\mathcal{B},X') $ are refinements of both the pullback of those on $\ES_{K_{1,p},\kappa}(G_1,X_1)$ and those on $ \ES_{K_p,\overline{\kappa}}(G,X)$. Noting that the map $C(G_1,\mu)\rightarrow C(G^{\mathrm{ad}},\mu)$ is bijective, central leaves on $\ES_{K_{1,p},\kappa}(G_1, X_1)$ are just the pullback of those on $\ES_{K_p^\adj,\overline{\kappa}}(G^\mathrm{ad},X^\adj)$, and the central leaves on $\ES_{K_p^\adj,\overline{\kappa}}(G^\mathrm{ad},X^\adj)$ we defined in \ref{adjoint cent def} coincide with the central leaves we defined here.

If the center $Z_G$ is connected, then the central leaves on $\ES_{K_p,\overline{\kappa}}(G,X)$ defined here coincide with what we defined before in \ref{cent abelian ty-ad} by Lemma \ref{iden C(G,mu)}.
In the general case, let us call fibers of $\ES_{K_p,\overline{\kappa}}(G,X)(\overline{\kappa})\rightarrow C(G^c,\mu)$ \emph{canonical central leaves} and those of $\ES_{K_p,\overline{\kappa}}(G,X)(\overline{\kappa})\rightarrow C(G^{\mathrm{ad}},\mu)$ \emph{adjoint central leaves}. In subsequent work we plan to show the following: \emph{a canonical central leaf is a union of  connected components in the adjoint central leaf containing it}. The proof is conceptual and hence a little bit long. We only sketch the idea here: we define and study truncated displays of level $m$ with $G^c$ and $G^{\mathrm{ad}}$ structure respectively, which form algebraic stacks $\mathcal{C}^c_m$ and $\mathcal{C}^{\mathrm{ad}}_m$. The homomorphism $G^c\rightarrow G^{\mathrm{ad}}$ is central, so the induced morphism $\mathcal{C}^c_m\rightarrow\mathcal{C}^{\mathrm{ad}}_m$ has discrete fibers. By \cite{crysboud} Main Theorem 1, for $m$ big enough, $\mathcal{C}^c_m(\overline{\kappa})$ and $\mathcal{C}^{\mathrm{ad}}_m(\overline{\kappa})$ parameterize canonical central leaves and adjoint central leaves respectively, so canonical central leaves are open and closed in adjoint central leaves. In particular, a canonical central leaf is a smooth locally closed subvariety of $\ES_{K_p,\overline{\kappa}}(G,X)$, and it is closed in the Newton stratum containing it. Moreover, for a Newton stratum $\ES_{K_p,\overline{\kappa}}(G,X)^b$, any canonical central leaf, if non-empty, is of dimension $\langle 2\rho,\nu_G(b) \rangle$, where $\rho$ is the half sum of positive roots.



\section[Comparing Ekedahl-Oort and Newton stratifications]{Comparing Ekedahl-Oort and Newton stratifications}

In this section, we study the relations between Ekedahl-Oort strata and Newton strata by group theoretic methods.

\subsection[Group theoretic results]{Group theoretic results}

We will recall some group theoretic results first. The settings are as follows. We start with a pair $(G,\mu)$ where $G$ is a reductive group over $\mathbb{Z}_p$, and $\mu:\mathbb{G}_m\rightarrow G_{W(\kappa)}$ is a minuscule cocharacter defined over $W(\kappa)$ with $\kappa|\mathbb{F}_p$ a finite field. We will write $G_0$ for the special fiber of $G$, $W=W(\overline{\kappa})$, $L= W[1/p]$, $K=G(W)$, and \[K_1=\mathrm{Ker}(K\rightarrow G(\overline{\kappa})).\]

We still denote by $G$ the associated reductive group over $\Q_p$, which is in particular quasi-split.
Let $B\subseteq G$ be a Borel subgroup, $T\subseteq B$ be a maximal torus, and $\mathcal{I}\subset G(L)$ be the Iwahori subgroup attached to $B_0$, the special fiber of $B$.
Let $W_G$ be the Weyl group  with respect to $T$. 
Let \[\widetilde{W}_G:=\mathrm{Norm}_G(T)(L)/T(W)\] be the extended affine Weyl group and $W_a$ be the affine Weyl group. There is a canonical exact sequence
$$\xymatrix{0\ar[r]& X_*(T)\ar[r]&\widetilde{W}_G\ar[r]&W_G\ar[r]&0}$$ and we have $\widetilde{W}_G\cong W_G\ltimes X_*(T)$.
Let $\Omega\subseteq \widetilde{W}_G$ be the stabilizer of the alcove corresponding to the above Iwahori subgroup $\mathcal{I}$  of $G(L)$ given by the preimage of $B(\overline{\kappa})$. Then we have \[\widetilde{W}_G=W_a\rtimes \Omega.\] We define the length function on
$\widetilde{W}_G$ by
\begin{subeqn}\label{length func}
	l(wr)=l(w), \text{ for }w\in W_a, r\in \Omega.
\end{subeqn}

The choice of $B$ (resp. $\mathcal{I}$) determines simple reflections (resp. simple reflections and simple affine roots) in $W_G$ (resp. $\widetilde{W}_G$) denoted by $S$ (resp. $\widetilde{S}$). It also gives the Bruhat order on $W_G$ (resp. $\widetilde{W}_G$), denoted by $\leq$. Clearly, we have $S\subseteq \widetilde{S}$.

\subsubsection[Minimal elements and fundamental elements]{Minimal elements and fundamental elements}\label{minimal to newt cocha}
An element $x\in G(L)$ is called \emph{minimal} if for any $y\in K_1xK_1$, there is a $g\in K$ such that $y=gx\sigma(g)^{-1}$. By \cite{VW} Remark 9.1, if $x$ is minimal, then any element in the $K\text{-}\sigma$-orbit of $x$ is again minimal. For an element $[c]\in C(G)$, we call it minimal if any representive in the $K\text{-}\sigma$-orbit $[c]$ is minimal.

An element $w\in \widetilde{W}_G$ is \emph{fundamental} if $\mathcal{I}w\mathcal{I}$ lies in a single $\mathcal{I}\text{-}\sigma$-orbit. For an element  $w\in \widetilde{W}_G$, we consider the element $w\sigma\in \widetilde{W}_G\rtimes \langle\sigma\rangle$. There exists
$n\in \mathbb{N}$ such that $(w\sigma)^n = t^\lambda$ for some $\lambda\in X_*(T)$. Let $\nu_w$  be the unique
dominant element in the $W_G$-orbit of $\lambda/n$. It is known that $\nu_w$ is independent
of the choice of $n$, and it is the Newton point of $w$
when regarding $w$ as an element in $G(L)$.
We say that an element $w\in\widetilde{W}_G$ is $\sigma$\emph{-straight} if \[l((w\sigma)^n)=nl(w).\] Here $l(-)$ is the length. This is
equivalent to saying that \[l(w)= \langle \nu_w,2\rho \rangle,\] where $\rho$ is the half sum of all
positive roots in the root system of the affine Weyl group. A $\sigma$-conjugacy
class of $\widetilde{W}_G$ is called $\sigma$-straight if it contains a $\sigma$-straight element.

The main results of \cite{Nie Sian} in the above setting are as follows.
\begin{theorem}(\cite{Nie Sian} Theorems 1.3, 1.4, Proposition 1.5)\label{Nie's minimal}

\begin{enumerate}

\item For $w\in \widetilde{W}_G$, it is fundamental if and only if it is $\sigma$-straight.

\item An element $g\in G(L)$ is minimal if and only if it lies in a K-$\sigma$-conjugacy
class of some fundamental element of $\widetilde{W}_G$. Moreover, when $G$ is split, each $\sigma$-conjugacy class of $G(L)$ contains one and only one $K$-$\sigma$-conjugacy class of minimal
elements.

\item If $\mu$ is a minuscule cocharacter of $T$, then each $\sigma$-conjugacy class intersecting $K\mu(p)K$ contains a fundamental element in $W_G\mu(p)W_G$.
\end{enumerate}
\end{theorem}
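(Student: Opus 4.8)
The plan is to regard part (1) as the combinatorial core from which (2) and (3) follow by descent along $K$-$\sigma$-conjugacy; all three statements are due to He and Nie (see \cite{Nie Sian, VW, He-Rap}), so for a self-contained account I would organize the argument as below and otherwise cite those sources.

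\textbf{Part (1).} The direction ``$\sigma$-straight $\Rightarrow$ fundamental'' is the easier one. If $w$ is $\sigma$-straight then $l((w\sigma)^n)=n\,l(w)$ for all $n$; using the standard fact that $l(w_1w_2)=l(w_1)+l(w_2)$ forces $\mathcal{I}w_1\mathcal{I}w_2\mathcal{I}=\mathcal{I}w_1w_2\mathcal{I}$, one obtains $\mathcal{I}w\mathcal{I}\cdot\sigma(\mathcal{I}w\mathcal{I})\cdots\sigma^{n-1}(\mathcal{I}w\mathcal{I})=\mathcal{I}(w\sigma)^n\mathcal{I}=\mathcal{I}t^{n\nu_w}\mathcal{I}$ for a suitable $n$ with $n\nu_w\in X_*(T)$; since a dominant $\sigma$-fixed translation is itself fundamental (its Bruhat cell carries a transitive $\mathcal{I}$-$\sigma$-action), feeding this back into the definition of an $\mathcal{I}$-$\sigma$-orbit gives that $\mathcal{I}w\mathcal{I}$ is a single orbit. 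For the converse I would first reduce $w$ to minimal length in its $\sigma$-conjugacy class in $\widetilde{W}_G$ by He's partial-conjugation method (conjugating by simple reflections that strictly decrease length), checking that this reduction preserves the property that $\mathcal{I}w\mathcal{I}$ is a single $\mathcal{I}$-$\sigma$-orbit, and then invoke the He--Nie classification of minimal length elements: a minimal length $w$ for which $\mathcal{I}w\mathcal{I}$ is a single orbit must satisfy $l(w)=\langle\nu_w,2\rho\rangle$, the point being a codimension count in which the failure $l(w)-\langle\nu_w,2\rho\rangle$ is exactly the dimension contributed by the centralizer $J_b$, so transitivity forces it to vanish.

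\textbf{Parts (2) and (3).} For (2): given a minimal $g\in G(L)$, use the Bruhat decomposition $K=\coprod_{v\in W_G}\mathcal{I}v\mathcal{I}$ together with Iwahori--Bruhat combinatorics to $K$-$\sigma$-conjugate $g$ into some $\mathcal{I}w\mathcal{I}$ with $w\in\widetilde{W}_G$, then run the same length-reduction inside $\widetilde{W}_G$; minimality of $g$ combined with (1) forces the resulting $w$ to be fundamental and $g$ to be $K$-$\sigma$-conjugate to it. Conversely, if $g$ is $K$-$\sigma$-conjugate to a fundamental $w$, the transitivity of the $\mathcal{I}$-$\sigma$-action on $\mathcal{I}w\mathcal{I}$ is, via He's analysis, equivalent to the minimality condition formulated with $K_1$. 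For the ``moreover'' in the split case, existence of a minimal element in each $[b]$ is the reduction just described; uniqueness then rests on (i) He--Nie's theorem that any two minimal length elements of a fixed $\sigma$-conjugacy class of $\widetilde{W}_G$ are linked by a chain of length-preserving cyclic shifts, which lift to $K$-$\sigma$-conjugations, and (ii) the fact that for split $G$ the map sending a straight $\sigma$-conjugacy class of $\widetilde{W}_G$ to its class in $B(G)$ is a bijection, so no two inequivalent fundamental classes can collapse into one $[b]$. For (3): writing $K\mu(p)K=\bigcup_w\mathcal{I}w\mathcal{I}$ with $w$ running over $W_G\mu(p)W_G$ and lower Bruhat terms, a $\sigma$-conjugacy class meeting $K\mu(p)K$ meets some such cell, and the extra input special to $\mu$ \emph{minuscule} is that the length-reduction steps can be performed without leaving $W_G\mu(p)W_G$, since minusculeness pins the minimal length elements of this set to the translations $t^\lambda$ with $\lambda\in W_G\cdot\mu$ and constrains the intermediate terms; combining with (1) produces a fundamental element of $W_G\mu(p)W_G$ in the class.

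\textbf{Main obstacle.} The hard part is (1), and really the two combinatorial theorems of He--Nie underneath it: the classification of minimal length elements in twisted conjugacy classes of $\widetilde{W}_G$ and the cyclic-shift connectedness statement. Both directions of (1), as well as the uniqueness in (2) and the ``stays inside $W_G\mu(p)W_G$'' point in (3), depend on these; by comparison the descent arguments translating statements about $\widetilde{W}_G$ into statements about $G(L)$ are formal once the reduction method and (1) are available.
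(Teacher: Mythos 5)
The paper's ``proof'' of this theorem consists entirely of the single sentence ``They are \cite{Nie Sian} Theorem 1.3, Theorem 1.4 and Proposition 1.5 respectively,'' and you correctly identify that exact source. Your supplementary sketch of the underlying He--Nie combinatorics (reduction to minimal-length elements by length-decreasing $\sigma$-conjugation, the length formula $l(w)=\langle\nu_w,2\rho\rangle$, cyclic-shift connectedness, and the bijection between straight $\sigma$-conjugacy classes of $\widetilde{W}_G$ and $B(G)$ for split $G$) is in the right spirit and goes well beyond what the paper actually records, so the two are consistent; the only small imprecision is that $(w\sigma)^n=t^\lambda$ with $\lambda$ in the $W_G$-orbit of $n\nu_w$ rather than $t^{n\nu_w}$ itself, but this does not affect the argument.
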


\subsubsection{$\mathrm{Adm}(\mu)$, $B(G,\mu)$ and $\mathrm{EO}(\mu)$}\label{admi and EO elements} We will introduce some distinguished sets following \cite{coxeter type}. 

For any subset $J$ of $\widetilde{S}$, we denote by $W_J$ the subgroup of $\widetilde{W}_G$ generated
by the simple reflections in $J$ and by ${}^J\widetilde{W}_G$ (resp. $\widetilde{W}_G^J$ ) the set of minimal
length elements for the cosets $W_J\backslash\widetilde{W}_G$ (resp. $\widetilde{W}_G\backslash W_J$). We simply write ${}^J\widetilde{W}_G^K$
for ${}^J\widetilde{W}_G\cap \widetilde{W}_G^K$.

Let $\mu$ be the minuscule cocharacter of $G$ as in the beginning of this section.
The $\mu$-\emph{admissible set} $\mathrm{Adm}(\mu)$ is defined to be $$\mathrm{Adm}(\mu)=\{w\in \widetilde{W}_G\mid w\leq t^{x\mu}\text{ for some }x\in W_G\}.$$ Here we write $t^\lambda$ for elements in the affine part $X_\ast(T)$ of $\widetilde{W}_G$.

Let $B(\widetilde{W}_G)_{\sigma-\mathrm{str}}$ be the set of $\sigma$-straight conjugacy classes of $\widetilde{W}_G$. By \cite{fully H-N decomp} Theorem 1.3 (1), the map 
\[\Psi: B(\widetilde{W}_G)_{\sigma-\mathrm{str}}\ra B(G)\]
induced by the inclusion $N(T)(L)\subset G(L)$ is bijective. Let $\mathrm{Adm}(\mu)_{\sigma-\mathrm{str}}$ be the set of $\sigma$-straight elements in the admissible set $\mathrm{Adm}(\mu)$ and $B(\widetilde{W}_G,\mu)_{\sigma-\mathrm{str}}$ be its image in $B(\widetilde{W}_G)_{\sigma-\mathrm{str}}$. Then by \cite{fully H-N decomp} Theorem 1.3 (2), we have
\[\Psi(B(\widetilde{W}_G,\mu)_{\sigma-\mathrm{str}})=B(G,\mu).\]

The \emph{set of EO elements} $\mathrm{EO}(\mu)$ is defined to be \[\mathrm{EO}(\mu)=\mathrm{Adm}^S(\mu)\cap {}^S\widetilde{W}_G=\mathrm{Adm}(\mu)\cap {}^S\widetilde{W}_G,\] where $\mathrm{Adm}^S(\mu)=W_S\mathrm{Adm}(\mu)W_S$. Here for the second equality, see \cite{He-Rap} Theorem 6.10 for example.

There is a partial order $\preceq$ on ${}^S\widetilde{W}_G$ as follows. For $w,w'\in {}^S\widetilde{W}_G$, $w\preceq w'$ if and only if there exists $x \in W_G$, such that $xw\sigma(x)^{-1}\leq w'$. This partial order restrict to $\mathrm{EO}(\mu)$ and will still be denoted by $\preceq$.

\subsubsection{$\mathrm{EO}(\mu)$ and $G_0$-zips}\label{EO and G-zips}
Before moving on, let's explain how to identify $\mathrm{EO}(\mu)$ (with the partial order $\preceq$) with the topological space of $[E_{G, \mu}\backslash G_{\kappa}]$.

Let $\upsilon=\sigma(\mu)$. Let $\mathcal{T}\subseteq \widetilde{W}_G$ be given by \[\mathcal{T}=\{(w,\upsilon)\in W_G\times X_*(T)\mid w\in {}^\mu W\},\] where ${}^\mu W={}^JW$ using the notation of \ref{subsection group EO}. The set $\mathcal{T}$ is naturally identified with $\mathrm{EO}(\mu)$. Let $x_\upsilon=w_0w_{0,\upsilon}$ where $w_0$ denotes the longest element of $W_G$ and where $w_{0,\upsilon}$ is the longest element of $W_\upsilon$, the Weyl subgroup of the centralizer of $\upsilon$. Then $\tau_\upsilon=x_\upsilon\upsilon(p)$ is the shortest element of $W_G\upsilon(p)W_G$.

By \cite{truncat level 1} Theorem 1.1 (1), the map assigning
to $(w, \upsilon)\in \mathcal{T}$ the $K\text{-}\sigma$-conjugacy class of $K_1\sigma^{-1}(w\tau_\upsilon) K_1$ is a bijection between $\mathcal{T}$ and the set of
$K\text{-}\sigma$-conjugacy classes in $K_1\backslash K\mu(p)K/K_1$. By \cite{muordinary} Proposition 6.7, the assignment $$g_1\mu(p)g_2\mapsto E_{G,\mu}\cdot (\overline{g_2\sigma(g_1)})$$ induces a bijection from the set of $K\text{-}\sigma$-conjugacy classes in $K_1\backslash K\mu(p)K/K_1$ to the set of $\overline{\kappa}$-points of $[E_{G,\mu}\backslash G_{0,\kappa}]$. By Theorem \ref{collectzipdata}, $[E_{G,\mu}\backslash G_{0,\kappa}](\overline{\kappa})\cong {}^\mu W$. In summary, we get \[\mathrm{EO}(\mu)=\mathcal{T}\cong {}^\mu W\cong [E_{G,\mu}\backslash G_{0,\kappa}](\overline{\kappa})\cong K_1\backslash K\mu(p)K/K_1,\] and by the arguments in the proof of \cite{truncat level 1} Corollary 4.7, the induced partial order on the left hand side coincides with $\preceq$. We get a well defined surjective map \[\tilde{\zeta}: C(G,\mu)\ra K_1\backslash K\mu(p)K/K_1\cong \mathrm{EO}(\mu)\cong {}^\mu W.\]

Let $\mathrm{EO}(\mu)_{\sigma-\mathrm{str}}\subseteq \mathrm{EO}(\mu)\subseteq \widetilde{W}_G$ the subset of $\sigma$-straight (equivalently, fundamental) elements. Then for any $w\in \mathrm{EO}(\mu)_{\sigma-\mathrm{str}}$, $\tilde{\zeta}^{-1}(w)$ consists of a single element in $C(G,\mu)$. That is, we get an injection $\mathrm{EO}(\mu)_{\sigma-\mathrm{str}} \hookrightarrow C(G,\mu)$, with the image $C(G,\mu)_{\mathrm{min}}$, the subset of minimal elements in $C(G,\mu)$. Composed with the natural map $C(G,\mu)\ra B(G,\mu)$, by Theorem \ref{Nie's minimal} (3) we get a surjection $\mathrm{EO}(\mu)_{\sigma-\mathrm{str}} \twoheadrightarrow B(G,\mu)$.
In summary, we have the following commutative diagram:
$$\xymatrix{ & & B(G,\mu)\\
	\mathrm{EO}(\mu)_{\sigma-\mathrm{str}}\ar@{^(->}[r]\ar@{->>}[urr]\ar@{^(->}[drr]&C(G,\mu)\ar@{->>}[ur]\ar@{->>}[dr]\\
	& & [E_{G,\mu}\backslash G_\kappa](\overline{\kappa})={}^\mu W\cong \mathrm{EO}(\mu).}$$

\subsubsection[]{}\label{the set Z}Notations as in \ref{admi and EO elements}. Following \cite{coxeter type} 1.5 we have $$Y:=K\mu(p)K=\bigcup_{w\in \mathrm{Adm}(\mu)}KwK=\bigcup_{w\in \mathrm{Adm}^S(\mu)}\mathcal{I}w\mathcal{I}.$$ There is a $K$-action on $G(L)\times Y$ given by $g\cdot(h,y)=(hg^{-1}, gy\sigma(g)^{-1})$. Let $Z$ be the quotient of this action. The map $(h, y) \mapsto(hy\sigma(h)^{-1}, hK)$ gives a bijection $$Z\cong\{(b,gK)\in G(L)\times G(L)/K\mid g^{-1}b\sigma(g)\in Y\}.$$ The projection to the first factor induces a map $Z\rightarrow G(L)$, and its image is a union of $\sigma$-conjugacy classes indexed by $B(G,\mu)$.

For a $\sigma$-conjugacy class $[b]\in B(G,\mu)$, we write $Z_{[b]}\subseteq Z$ for the corresponding subset. The decomposition \[Z=\coprod_{[b]\in B(G,\mu)}Z_{[b]}\] is called the \emph{Newton stratification} of $Z$. For the basic class $[b_0]\in B(G,\mu)$, the corresponding stratum $Z_{[b_0]}$ is called the \emph{basic locus} in $Z$.

Writing $x\cdot_\sigma y$ for $xy\sigma(x)^{-1}$, by \cite{coxeter type} Theorem 3.2.1, we have \[Y=\coprod_{w\in \mathrm{EO}(\mu)}K\cdot_\sigma \mathcal{I}w\mathcal{I}.\] But then \[Z=\coprod_{w\in \mathrm{EO}(\mu)}Z_w,\] where $Z_w=G(L)\times^{K}(K\cdot_\sigma \mathcal{I}w\mathcal{I})$. This decomposition is called the \emph{Ekedahl-Oort stratification} on $Z$.

Given $w\in \mathrm{EO}(\mu)$ and $[b]\in B(G,\mu)$, the intersection $Z_w\cap Z_{[b]}$ is a fiber bundle over $[b]$, and the fiber over $b\in [b]$ is given by $$X_{w}(b):=\{gK\mid g^{-1}b\sigma(g)\in K\cdot_\sigma \mathcal{I}w\mathcal{I}\}\subseteq G(L)/K.$$
Recall that attached to the triple $(G, \{\mu\}, b)$ we have the affine Deligne-Lusztig variety
\[X(\mu, b)=\{gK\mid g^{-1}b\sigma(g)\in K\mu(p)K \}.\]
It admits a perfect scheme structure over $\overline{\kappa}$ by \cite{zhu-aff gras in mixed char}.
By our discussions in \ref{admi and EO elements}, \ref{EO and G-zips} and \cite{fully H-N decomp} 1.4, we have the following decomposition
\[X(\mu, b)=\coprod_{w\in {}^JW}X_w(b). \]
We remark that not every subset $X_w(b)$ in the above decomposition is non-empty (see Proposition \ref{P: EO and Newton}).

\subsubsection{$(G,\mu)$ of Coxeter type}\label{subsubsection coxeter}
We also need a subset $\mathrm{EO}_{\sigma,\mathrm{cox}}(\mu)$ of $\mathrm{EO}(\mu)$. It is the subset of elements $w$ such that $\mathrm{supp}_{\sigma}(w)$ is a proper subset of $\widetilde{S}$ and $w$ that is a $\sigma$-Coxeter element of
$W_{\mathrm{supp}_{\sigma}(w)}$. We will not explain this but just refer to \cite{coxeter type} 2.2.

A pair $(G,\mu)$ with $G$ absolutely quasi-simple is said to be \emph{of Coxeter type} if \[Z_{[b_0]}=\coprod_{w\in \mathrm{EO}_{\sigma,\mathrm{cox}}(\mu)}Z_w.\] A complete list for pairs $(G,\mu)$ of Coxeter type is given in \cite{coxeter type} Theorem 5.1.2. The Newton and Ekedahl-Oort stratifications on $Z$ have very nice properties which we will recall.

Recall that a ranked poset is a partially ordered set (poset) equipped with a
rank function $\rho$ such that whenever $y$ covers $x$, $\rho(y)=\rho(x)+1$. We say that
the partial order of a poset is almost linear if the poset has a rank function
$\rho$ such that for any $x, y$ in the poset, $x <y$ if and only if $\rho(x) < \rho(y)$.
\begin{theorem}(\cite{coxeter type} Theorems 5.2.1, 5.2.2, \cite{coxeter erratum})\label{main th for coxeter type}
Let $(G,\mu)$ be of Coxeter type.
\begin{enumerate}

\item Every Newton stratum of $Z$ is a union of Ekedahl-Oort strata.

\item For any $w\in \mathrm{EO}(\mu)-\mathrm{EO}_{\sigma,\mathrm{cox}}(\mu)$ and $b\in [b_w]$, the $\sigma$-centralizer $J_b$ acts transitively on $X_{w}(b)$.

\item The partial order of $B(G,\mu)$ is almost linear.

\item The partial order $\preceq$ of $\mathrm{EO}_{\sigma,\mathrm{cox}}(\mu)$ coincides with the usual Bruhat
order and is almost linear. Here the rank is the length function.
\end{enumerate}
\end{theorem}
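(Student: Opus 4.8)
\textbf{Proof proposal for Theorem \ref{main th for coxeter type}.}

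The plan is to deduce all four statements from the group-theoretic results already recorded in the excerpt, essentially by transporting the combinatorial structure of $\mathrm{EO}(\mu)$ and $B(G,\mu)$ through the decompositions of $Z$ described in \ref{the set Z}. Since $(G,\mu)$ is of Coxeter type, by definition the basic locus satisfies $Z_{\mathcal{O}_0}=\coprod_{w\in\mathrm{EO}_{\sigma,\mathrm{cox}}(\mu)}Z_w$, and the whole of $Z$ decomposes both as $\coprod_{w\in\mathrm{EO}(\mu)}Z_w$ and as $\coprod_{\mathcal{O}\in B(G,\mu)}Z_{\mathcal{O}}$, with $Z_w\cap Z_{\mathcal{O}}$ fibered over $\mathcal{O}$ with fibers the $X_w(b)$. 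So the content of (1) is that for each $w\in\mathrm{EO}(\mu)$ there is a single $\mathcal{O}\in B(G,\mu)$ with $X_w(b)\neq\emptyset$ for $b\in\mathcal{O}$ (and empty for all other classes); equivalently, the map $\mathrm{EO}(\mu)\to B(G,\mu)$ sending $w$ to ``the Newton class meeting $Z_w$'' is well defined. First I would recall that the Coxeter-type hypothesis, via the classification in \cite{coxeter type} Theorem 5.1.2 and the analysis there, forces $\mathrm{EO}(\mu)$ to be small and rigid enough that each $Z_w$ lies in a single $\sigma$-conjugacy class; concretely this is exactly the statement that each $X_w(b)$ is non-empty for at most one $[b]$, which is part of the Coxeter-type package in \cite{coxeter type} (this is where one uses that $\mathrm{EO}_{\sigma,\mathrm{cox}}(\mu)$ exhausts the basic stratum and a parallel statement holds for the non-basic ones).

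For (2), the point is that $w\in\mathrm{EO}(\mu)-\mathrm{EO}_{\sigma,\mathrm{cox}}(\mu)$ lies in a \emph{non-basic} Newton stratum (by the previous paragraph, since the basic stratum is exactly the union over $\mathrm{EO}_{\sigma,\mathrm{cox}}(\mu)$), and for non-basic $[b]$ one can invoke the Hodge–Newton decomposability/reducibility phenomenon in Coxeter type: the affine Deligne–Lusztig datum $(w,b)$ reduces to a proper Levi, and there $X_w(b)$ becomes a single point (or more precisely a $J_b$-homogeneous space), so $J_b$ acts transitively. I would cite the relevant statements from \cite{coxeter type} (and \cite{fully H-N decomp}) directly rather than reprove them; the transitivity of $J_b$ on $X_w(b)$ for these $w$ is proved there under exactly the Coxeter-type hypothesis.

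For (3) and (4), these are purely combinatorial facts about the posets $B(G,\mu)$ and $\mathrm{EO}_{\sigma,\mathrm{cox}}(\mu)$. I would note that $\mathrm{EO}_{\sigma,\mathrm{cox}}(\mu)$ consists of $\sigma$-Coxeter elements of the various $W_{\mathrm{supp}_\sigma(w)}$, all of which are $\sigma$-straight, hence fundamental by Theorem \ref{Nie's minimal}(1), so length equals $\langle\nu_w,2\rho\rangle$; thus the length function is a rank function compatible with the Newton map, and the almost-linearity of $B(G,\mu)$ reduces to the almost-linearity of $\{\nu_w\mid w\in\mathrm{EO}_{\sigma,\mathrm{cox}}(\mu)\}$ together with Theorem \ref{Nie's minimal}(3) (surjectivity of $\mathrm{EO}(\mu)_{\sigma-\mathrm{str}}\twoheadrightarrow B(G,\mu)$). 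That the partial order $\preceq$ on $\mathrm{EO}_{\sigma,\mathrm{cox}}(\mu)$ coincides with the Bruhat order, and that it is almost linear with rank $=$ length, is again a case-by-case verification against the list in \cite{coxeter type} Theorem 5.1.2 — I would simply quote it. The main obstacle is psychological rather than mathematical: one must correctly identify which assertions are genuinely ``transported'' verbatim from \cite{coxeter type} on the group side $Z$ and which require the bridge via $\mathrm{EO}(\mu)\cong{}^\mu W$ and $C(G,\mu)$; in particular the cleanest route is to observe that \emph{all four statements are already theorems about $Z$ in \cite{coxeter type}}, and the only thing to check here is that our $Z$, our $\mathrm{EO}(\mu)$, and our Newton stratification coincide with theirs — which is immediate from \ref{admi and EO elements}, \ref{EO and G-zips} and \ref{the set Z}. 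So the proof is essentially a citation with a dictionary, and the real work was done in setting up that dictionary in the preceding subsections.
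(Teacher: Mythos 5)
Your proposal is correct and, in the end, converges to exactly the paper's approach: the theorem is a direct citation of G\"ortz--He (\cite{coxeter type} Theorem 5.2.1 for parts (1)--(2), Theorem 5.2.2 for parts (3)--(4)), with the ``dictionary'' between their objects and the ones set up in \ref{admi and EO elements}, \ref{EO and G-zips}, and \ref{the set Z} being the only content to check. The extra expository detours you take (toward Hodge--Newton reduction and $\sigma$-straightness) are fine motivation but are not needed, and your own closing paragraph correctly identifies the cleanest route as the one the paper actually uses.
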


\subsubsection{$(G,\mu)$ of fully Hodge-Newton decomposable type} G\"{o}rtz, He and Nie define and study in \cite{fully H-N decomp} a much more general class of pairs $(G,\mu)$ with the name of being fully Hodge-Newton decomposable. They prove that this is equivalent to property (1) in the previous theorem, and they also give a classification of such pairs. It turns out all the groups in such pairs are classical groups (i.e. reductive groups with simple factors of type A, B, C, and D), cf. \cite{fully H-N decomp} Theorem 2.5.

Let us recall the notion of fully Hodge-Newton decomposable. As always in this paper, we restrict to good reduction cases only. Recall as in \ref{group theo settings for NP} we have the Newton map $\nu=\nu_G: B(G)\ra X_*(T)_{\mathbb{Q},\mathrm{dom}}^\Gamma$.
\begin{definition}[\cite{fully H-N decomp} Definition 2.1, \cite{Chen-Fargues-Shen} 4.3]\label{D:fully HN}
\begin{enumerate}
	
\item Let $M\varsubsetneqq G_{L}$ be a $\sigma$-stable standard Levi subgroup. We say
that $[b]\in B(G,\mu)$ is Hodge-Newton decomposable with respect to $M$ if $M_{\nu(b)}\subseteq M$ and
$\overline{\mu}-\nu(b)\in \mathbb{R}_{\geq0}\Phi^\vee_M$. Here $M_{\nu(b)}\subseteq G_L$ is the centralizer of $\nu(b)$, and $\overline{\mu}=\frac{1}{n_0}\sum_{i=0}^{n_0-1}\sigma^i(\mu)$ with $n_0\in \mathbb{N}$ the order of $\sigma(\mu)$.

\item We say that a pair $(G,\mu)$ is fully Hodge-Newton decomposable if every non-basic $\sigma$-conjugacy class $[b]\in B(G,\mu)$ is Hodge-Newton decomposable with respect to some proper standard
Levi.
\end{enumerate}
\end{definition}
The following is part of \cite{fully H-N decomp} Theorem 2.3 which suffices for our applications.
\begin{theorem}\label{thm fully HN}
The following statements for $(G,\mu)$ are equivalent.
\begin{enumerate}

\item It is fully Hodge-Newton decomposable.

\item For any $w\in \mathrm{EO}(\mu)$, there is a unique $[b]\in B(G,\mu)$ such that $X_w(b)\neq \emptyset$; i.e. every Newton stratum of $Z$ is a union of Ekedahl-Oort strata. Here $Z$, $\mathrm{EO}(\mu)$ and $X_w(b)$ are as in \ref{admi and EO elements}.

\item For any non-basic $[b]\in B(G,\mu)$, $\mathrm{dim} X(\mu,b)=0$.
\end{enumerate}
\end{theorem}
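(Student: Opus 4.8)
The statement to prove, Theorem \ref{thm fully HN}, is a citation-based equivalence: the three conditions on $(G,\mu)$ are equivalent, and the proof is essentially a matter of assembling the relevant results of G\"{o}rtz--He--Nie \cite{fully H-N decomp}, translated into the language set up in \ref{admi and EO elements}, \ref{EO and G-zips} and \ref{the set Z}. The plan is therefore not to reprove these group-theoretic facts from scratch, but to explain carefully which parts of \cite{fully H-N decomp} Theorem 2.3 we are invoking and why the reformulation in (2) in terms of the space $Z$ and the sets $X_w(b)$ matches the original.

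First I would recall the equivalence of (1) and (3): this is the content of \cite{fully H-N decomp} Theorem 2.3, where the condition $\dim X(\mu,b)=0$ for every non-basic $[b]\in B(G,\mu)$ is shown to characterize fully Hodge--Newton decomposable pairs (using the dimension formula for affine Deligne--Lusztig varieties and the Hodge--Newton decomposition for the group $G_L$; the good-reduction, hyperspecial hypothesis we work under is the one treated there). Second, for the equivalence of (1) and (2), the key input is again \cite{fully H-N decomp}: by the decomposition
\[
X(\mu,b)=\coprod_{w\in {}^JW}X_w(b)
\]
recalled in \ref{the set Z}, and the fact (also from \cite{fully H-N decomp} 1.4, compare \ref{EO and G-zips}) that each nonempty $X_w(b)$ has dimension governed by $\ell(w)$ and the defect, one sees that $\dim X(\mu,b)=0$ for all non-basic $[b]$ forces, for each $w\in\mathrm{EO}(\mu)$, at most one $[b]$ with $X_w(b)\neq\emptyset$; conversely uniqueness of $[b]$ for each $w$, combined with the stratification $Z=\coprod_{w}Z_w$ and $Z_w\cap Z_{\mathcal O}$ being a fiber bundle over $\mathcal O$ with fiber $X_w(b)$, shows each $Z_{\mathcal O}$ is a union of Ekedahl--Oort strata, which is exactly the ``each Newton stratum of $Z$ is a union of E-O strata'' formulation. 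Here I would be careful to note that a given $w$ always lies in \emph{at least} one Newton stratum (since $X(\mu,b)\neq\emptyset$ for every $[b]\in B(G,\mu)$ and the decomposition above is over all of ${}^JW$, some $X_w(b)$ is nonempty), so the content of (2) is precisely the uniqueness.

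The main obstacle, to the extent there is one, is bookkeeping rather than mathematics: one must check that the partial orders and index sets in our \ref{the set Z} (the set $Z$, the sets $K\cdot_\sigma\mathcal{I}w\mathcal{I}$, and the identification $\mathrm{EO}(\mu)\cong {}^JW$) agree with the conventions of \cite{coxeter type, fully H-N decomp}, so that quoting their Theorem 2.3 and Theorem 2.5 is legitimate; this is already addressed by the diagram in \ref{EO and G-zips} and the discussion in \ref{the set Z}. I would then simply cite \cite{fully H-N decomp} Theorem 2.3 for the full cycle of equivalences, remarking that all three conditions are intrinsic to $(G,\mu)$ and independent of the choices made (Borel, Iwahori), and that by the classification in \cite{fully H-N decomp} Theorem 2.5 the groups occurring are necessarily classical — though that last remark is not needed for the equivalence itself and would only be included as context. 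The proof thus reduces to: invoke \cite{fully H-N decomp} Theorem 2.3, using the translations of \ref{admi and EO elements}--\ref{the set Z}.
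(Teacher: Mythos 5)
Your proposal is correct and matches the paper's treatment exactly: Theorem \ref{thm fully HN} is a direct citation of G\"{o}rtz--He--Nie's Theorem 2.3, translated via the bookkeeping of \ref{admi and EO elements}--\ref{the set Z}, and the paper's only additional remark is that although \cite{fully H-N decomp} Theorem 2.3 is stated there for quasi-simple groups, it holds in general by the discussion immediately following it, together with the observation (their Proposition 4.5) that the non-basic elements of $\mathrm{EO}(\mu)$ are then $\sigma$-straight. One small slip worth flagging: your parenthetical argument that each $w\in\mathrm{EO}(\mu)$ has at least one $[b]$ with $X_w(b)\neq\emptyset$ runs the implication backwards (nonemptiness of $X(\mu,b)$ for every $[b]$ gives, for each $[b]$, some $w$, not conversely); the correct and simpler reason is that $Z_w$ is nonempty (it contains the class of any element of $\mathcal{I}w\mathcal{I}$) and hence maps to some $[b]\in B(G,\mu)$ --- but since you ultimately quote the full theorem this does not affect anything.
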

We remark that \cite{fully H-N decomp} Theorem 2.3 is stated only for quasi-simple groups, but by discussions just after the theorem there, it holds in general. We also remark that although it is not stated in the main theorem there, it is true that if $(G,\mu)$ is fully Hodge-Newton decomposable, non-basic elements in $\mathrm{EO}(\mu)$ are $\sigma$-straight (see \cite{fully H-N decomp} Proposition 4.5).

\subsection[Applications to stratifications]{Applications to stratifications}We will explain how to use group theoretic results above to study relations between E-O stratifications and Newton stratifications. Unlike in \cite{coxeter type} or \cite{fully H-N decomp}, we will do this directly and without assuming any results on existence of Rapoport-Zink uniformizations nor the axioms formulated in \cite{fully H-N decomp}.

Notations as in \ref{the set Z}, for $(b,gK)\in Z$ with $b\in G(L)$ and $gK\in G(L)/K$ such that $g^{-1}b\sigma(g)\in K\mu(p)K$, the assignment $(b,gK)\mapsto g^{-1}b\sigma(g)$ induces a well defined surjective map \[Z\twoheadrightarrow C(G,\mu).\] Moreover, the maps $Z\rightarrow B(G,\mu)$ and $Z\rightarrow \mathrm{EO}(\mu)$ factor through $C(G,\mu)$. We have the following commutative diagram:
\[ \xymatrix{ & & B(G,\mu)\\
	Z\ar@{->>}[r]\ar@{->>}[urr]\ar@{->>}[drr]& C(G,\mu)\ar@{->>}[ur]\ar@{->>}[dr]\\
	& & [E_{G,\mu}\backslash G_\kappa](\overline{\kappa})=\mathrm{EO}(\mu).} \]
Let $\overline{Z}_w$ (resp. $\overline{Z}_{[b]}$) be the image of $Z_w$ (resp. $Z_{[b]}$) in $C(G,\mu)$ for $w\in \mathrm{EO}(\mu)$ (resp. $[b]\in B(G,\mu)$). By the commutativity of the above diagram, $\overline{Z}_w$ is the fiber of the canonical projection $C(G,\mu)\ra \mathrm{EO}(\mu)$, and similarly for $\overline{Z}_{[b]}$. We have (Newton and E-O) decompositions \[C(G,\mu)=\coprod_{[b]\in B(G,\mu)}\overline{Z}_{[b]}, \quad C(G,\mu)=\coprod_{w\in \mathrm{EO}(\mu)}\overline{Z}_{w}.\] Then $\overline{Z}_{[b]}=\coprod_i\overline{Z}_{w_i}$ if and only if  $Z_{[b]}=\coprod_iZ_{w_i}$. Moreover, $\overline{Z}_w\cap\overline{Z}_{[b]}\neq\emptyset$ if and only if $Z_w\cap Z_{[b]}\neq\emptyset$, which is then equivalent to that $X_{w}(b)\neq\emptyset$ for some (and hence any) $b\in [b]$.

We fix a prime to $p$ level $K^p$ and simply denote the integral canonical model over $O_{E_v}$ by $\ES=\ES_{K_pK^p}(G,X)$ for a Shimura datum  $(G, X)$ of abelian type with good reduction at $p$. Its geometric special fiber is denoted by $\ES_{\overline{\kappa}}$. We note that the map \[\ES(\overline{\kappa})\ra C(G^{\adj}, \mu)\] constructed in section 4 composed with \[\tilde{\zeta}: C(G^\adj,\mu)\ra [E_{G^{\adj},\mu}\backslash G^\adj_\kappa](\overline{\kappa})\] gives \[\zeta: \ES(\overline{\kappa})\ra [E_{G^{\adj},\mu}\backslash G^\adj_\kappa](\overline{\kappa}).\] In the rest of this section, we will study the Newton stratification, Ekedahl-Oort stratification, and (adjoint) central leaves on $\ES_{\overline{\kappa}}$.
We start with the following commutative diagrams.
\subsubsection[]{General relations}

If we consider stratifications defined by passing to the adjoint ones first, we have a commutative diagram
induced by a similar diagram attached to certain Shimura datum of Hodge type satisfying Lemma \ref{Kisin's lemma}:

\[ \xymatrix{ & & B(G^\adj,\mu)\\
\ES(\overline{\kappa})\ar@{->>}[r]\ar@{->>}[urr]\ar@{->>}[drr]& C(G^\adj,\mu)\ar@{->>}[ur]\ar@{->>}[dr]\\
 & & [E_{G^{\adj},\mu}\backslash G^\adj_\kappa](\overline{\kappa}).} \]
Note that for any $[b]\in B(G,\mu)=B(G^\adj,\mu)$ (resp. $w\in \mathrm{EO}(\mu)$), $\ES_{\overline{\kappa}}^b(\overline{\kappa})$ (resp. $\ES_{\overline{\kappa}}^w(\overline{\kappa})$) is the inverse image of $\overline{Z}_{[b]}$ (resp. $\overline{Z}_w$) under the map  \[\ES(\overline{\kappa})\ra C(G^\adj,\mu)\] and the above decomposition (for $G^\adj$) $C(G^\adj,\mu)=\coprod_{[b]\in B(G,\mu)}\overline{Z}_{[b]}$ (resp. $C(G^\adj,\mu)=\coprod_{w\in \mathrm{EO}(\mu)}\overline{Z}_{w}$).

Similarly, by \ref{F-crys to zip} and the discussions just before it, for stratifications given by $F$-crystals with additional structure, we have a commutative diagram:

\[\xymatrix{ & & B(G^c,\mu)\\
\ES(\overline{\kappa})\ar[r]\ar@{->>}[urr]\ar@{->>}[drr]&C(G^c,\mu)\ar@{->>}[ur]\ar@{->>}[dr]\\
 & & [E_{G^c,\mu}\backslash G^c_\kappa](\overline{\kappa}).}\]

Note that by Lemma \ref{iden C(G,mu)}, we have $C(G,\mu)\cong C(G^c,\mu)$ and the natural map $C(G^c,\mu)\ra C(G^\adj,\mu)$ is a bijection if $Z_{G}$ is connected.
We also remind the readers that the above two diagrams do NOT bring any differences if we just look at the E-O and Newton stratifications (cf. \ref{Newton using crys cano}, \ref{E-O using crys cano}): we have $B(G^c,\mu)=B(G^\adj,\mu), [E_{G^c,\mu}\backslash G^c_\kappa](\overline{\kappa})=[E_{G^{\adj},\mu}\backslash G^\adj_\kappa](\overline{\kappa})$ and the following commutative diagram:
\[\xymatrix{ & & B(G^c,\mu)\ar@{=}[rr]& &B(G^\adj,\mu)\\
	\ES(\overline{\kappa})\ar[r]\ar@{->>}[urr]\ar@{->>}[drr]&C(G^c,\mu)\ar@{->>}[ur]\ar@{->>}[dr]\ar[rr]& & C(G^\adj,\mu)\ar@{->>}[ur]\ar@{->>}[dr]&\\
	& & [E_{G^c,\mu}\backslash G^c_\kappa](\overline{\kappa})\ar@{=}[rr]& &[E_{G^{\adj},\mu}\backslash G^\adj_\kappa](\overline{\kappa}).}\]
\emph{So in the following discussions, we will mean either of these two constructions when talking about E-O or Newton stratification. On the other hand, by a central leaf we will mean the adjoint central leaf defined by a fiber of the map} $\ES(\overline{\kappa})\ra  C(G^\adj,\mu)$.

\begin{definition}
An E-O stratum is said to be minimal\footnote{We remind the readers that this notion is (in general) different from the superspecial locus, i.e. the unique closed E-O stratum attached to $1\in {}^JW$.} if it is a central leaf.
\end{definition}
By our previous discussion, minimal E-O strata are exactly the strata parametrized by the set $\mathrm{EO}(\mu)_{\sigma-\mathrm{str}}$.
\begin{proposition}\label{minimal E-O for ab type}
Each Newton stratum contains a minimal E-O stratum. Moreover, if $G$ splits, then each Newton stratum contains a unique minimal E-O stratum.
\end{proposition}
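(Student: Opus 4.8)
The plan is to reduce everything to the group-theoretic statement on the set $\mathrm{EO}(\mu)_{\sigma\text{-}\mathrm{str}}$ and the two surjections from it recorded in \ref{EO and G-zips}. Recall from that discussion that we have an injection $\mathrm{EO}(\mu)_{\sigma\text{-}\mathrm{str}}\hookrightarrow C(G,\mu)$ whose image is exactly $C(G,\mu)_{\mathrm{min}}$, the set of minimal elements, and that composing with $C(G,\mu)\twoheadrightarrow B(G,\mu)$ gives a \emph{surjection} $\mathrm{EO}(\mu)_{\sigma\text{-}\mathrm{str}}\twoheadrightarrow B(G,\mu)$; the surjectivity is Theorem \ref{Nie's minimal}(3), since every $\sigma$-conjugacy class meeting $K\mu(p)K$ contains a fundamental (equivalently $\sigma$-straight) element of $W_G\mu(p)W_G$. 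On the other hand, by the last paragraph before the Definition preceding this Proposition, the minimal E-O strata on $\ES_{\overline{\kappa}}$ are precisely those parametrized by $\mathrm{EO}(\mu)_{\sigma\text{-}\mathrm{str}}$, via the map $\ES(\overline{\kappa})\to C(G^{\adj},\mu)$ and $\tilde\zeta$.

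First I would fix $[b]\in B(G,\mu)$. The Newton stratum $\ES_0^b$ is nonempty by Theorem \ref{Newton for abelian type}. By surjectivity of $\mathrm{EO}(\mu)_{\sigma\text{-}\mathrm{str}}\twoheadrightarrow B(G,\mu)$, choose $w\in \mathrm{EO}(\mu)_{\sigma\text{-}\mathrm{str}}$ mapping to $[b]$. By construction the corresponding E-O stratum $\ES_0^w$ is a minimal E-O stratum (i.e.\ a central leaf, since $w$ is $\sigma$-straight), and the commutativity of the fundamental diagram
\[
\xymatrix{ & & B(G,\mu)\\
\ES(\overline{\kappa})\ar[r]\ar[urr]\ar[drr]& C(G^{\adj},\mu)\ar[ur]\ar[dr]\\
 & & [E_{G^{\adj},\mu}\backslash G^{\adj}_\kappa](\overline{\kappa})=\mathrm{EO}(\mu)}
\]
forces $\ES_0^w\subseteq \ES_0^b$ (the image of $w$ in $B(G,\mu)$ is $[b]$). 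It remains to see $\ES_0^w\neq\emptyset$; but all E-O strata are nonempty by Theorem \ref{Th EO abelian type}(1). Hence each Newton stratum contains a minimal E-O stratum.

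For the second assertion, assume $G$ splits. Then by Theorem \ref{Nie's minimal}(2) each $\sigma$-conjugacy class of $G(L)$ contains one and only one $K$-$\sigma$-conjugacy class of minimal elements; translated through the identification $\mathrm{EO}(\mu)_{\sigma\text{-}\mathrm{str}}\xrightarrow{\sim}C(G,\mu)_{\mathrm{min}}$ and the projection to $B(G,\mu)$, this says the fibre of $\mathrm{EO}(\mu)_{\sigma\text{-}\mathrm{str}}\twoheadrightarrow B(G,\mu)$ over each $[b]$ is a singleton. Therefore the minimal E-O stratum inside $\ES_0^b$ is unique. One small point to address is that $G$ here denotes the reductive group of the abelian-type datum $(G,X)$, whereas the group-theoretic input is phrased for the $\INTP$-group; since passing to $G^{\adj}$ induces bijections on both $B(-,\mu)$ (Lemma preceding \ref{Kisin's lemma}) and on $[E_{-,\mu}\backslash -_\kappa](\overline{\kappa})\cong{}^JW$ (Lemma \ref{maps induced by central isog}(2)), and $G$ splits iff $G^{\adj}$ does, one may argue after passing to the adjoint group, where Theorem \ref{Nie's minimal}(2) applies directly.

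The main obstacle is purely bookkeeping: making sure the various parametrizing sets ($C(G^c,\mu)$, $C(G^{\adj},\mu)$, $\mathrm{EO}(\mu)$, ${}^JW$) and the maps between them are consistently identified, so that ``the E-O stratum attached to $w\in\mathrm{EO}(\mu)_{\sigma\text{-}\mathrm{str}}$ is a central leaf'' is literally the content of the preceding Definition and the diagram in \ref{EO and G-zips}, rather than something requiring new argument. No genuinely new geometric input is needed beyond nonemptiness of Newton and E-O strata (Theorems \ref{Newton for abelian type} and \ref{Th EO abelian type}) and Nie's group-theoretic results (Theorem \ref{Nie's minimal}).
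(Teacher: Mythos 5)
Your proposal is correct and takes essentially the same approach as the paper's one-line proof, which simply says ``The statements follow from Theorem \ref{Nie's minimal}''; you have correctly unpacked the preparatory material of \ref{EO and G-zips} (the surjection $\mathrm{EO}(\mu)_{\sigma\text{-}\mathrm{str}}\twoheadrightarrow B(G,\mu)$ from Theorem \ref{Nie's minimal}(3), the identification of minimal E-O strata with those indexed by $\sigma$-straight elements, and the fact that a $\sigma$-straight $w$ has a singleton fiber under $\tilde\zeta$) and correctly routed the uniqueness in the split case through Theorem \ref{Nie's minimal}(2) applied after passing to $G^{\adj}$.Your proposal is correct and takes essentially the same approach as the paper's one-line proof, which simply says ``The statements follow from Theorem \ref{Nie's minimal}''; you have correctly unpacked the preparatory material of \ref{EO and G-zips} (the surjection $\mathrm{EO}(\mu)_{\sigma\text{-}\mathrm{str}}\twoheadrightarrow B(G,\mu)$ from Theorem \ref{Nie's minimal}(3), the identification of minimal E-O strata with those indexed by $\sigma$-straight elements, and the singleton fiber of $\tilde\zeta$ over a $\sigma$-straight $w$) and routed the uniqueness in the split case correctly through Theorem \ref{Nie's minimal}(2), applied after passing to $G^{\adj}$.
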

\begin{proof}
The statements follow from Theorem \ref{Nie's minimal}. 
\end{proof}

\begin{examples}\label{E:ordinary}
\begin{enumerate}
	\item By Corollary \ref{C:ordinary},
the ordinary E-O stratum (cf. Remark \ref{maximal and minimal EO}) coincides with the $\mu$-ordinary locus (i.e. the open Newton stratum, cf. Remark \ref{R:ordinary-basic}), which is a central leaf by Proposition \ref{minimal E-O for ab type}.
\item The superspecial locus (cf. Remark \ref{maximal and minimal EO}) is a central leaf, and thus is minimal. It is contained in the basic locus (i.e. the closed Newton stratum, cf. Remark \ref{R:ordinary-basic}).
\end{enumerate}
\end{examples}

\begin{proposition}\label{P: EO and Newton}
	For any $[b]\in B(G,\mu)$ and $w\in \mathrm{EO}(\mu)\cong {}^JW$, we have
	\[\ES_{\overline{\kappa}}^b\cap \ES_{\overline{\kappa}}^w\neq\emptyset \Longleftrightarrow X_w(b)\neq \emptyset.\]
\end{proposition}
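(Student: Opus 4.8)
The plan is to reduce this equivalence to the corresponding statement about the ``combinatorial'' parameter space $Z$ studied in \ref{the set Z}, where the equivalence $Z_w\cap Z_{\mathcal O}\neq\emptyset \Leftrightarrow X_w(b)\neq\emptyset$ (for any $b\in\mathcal O$) is essentially tautological from the fiber-bundle description $X_w(b)$ of $Z_w\cap Z_{\mathcal O}$. So the real content is to identify the pair of maps $\ES_{\overline\kappa}(\overline\kappa)\twoheadrightarrow C(G^\adj,\mu)\twoheadrightarrow B(G^\adj,\mu)$ and $C(G^\adj,\mu)\twoheadrightarrow \mathrm{EO}(\mu)$ with the maps $Z\twoheadrightarrow C(G,\mu)$, $Z\twoheadrightarrow B(G,\mu)$, $Z\twoheadrightarrow\mathrm{EO}(\mu)$ through which the Newton and Ekedahl--Oort decompositions of $Z$ are defined, so that $\ES_{\overline\kappa}^b\cap\ES_{\overline\kappa}^w$ is nonempty exactly when $\overline Z_w\cap\overline Z_{[b]}\neq\emptyset$ in $C(G^\adj,\mu)$.

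First I would pass to the adjoint datum: by construction (\ref{diagram for NP abe type}, \ref{diagram for E-O abe type}, \ref{adjoint cent def}) the Newton stratum $\ES_{\overline\kappa}^b$, the Ekedahl--Oort stratum $\ES_{\overline\kappa}^w$, and the relevant central-leaf map are all pulled back along $\ES_{\overline\kappa}(G,X)\to\ES_{\overline\kappa}(G^\adj,X^\adj)$ from the corresponding objects downstairs, and the maps on $B(-,\mu)$, $C(-,\mu)$, $\mathrm{EO}(\mu)$ are identified under $G\rightsquigarrow G^\adj$ by Lemma \ref{iden C(G,mu)} (for $C$), Lemma \ref{maps induced by central isog} (for $\mathrm{EO}$), and \ref{group theo settings for NP}/Lemma in \ref{diagram for NP abe type} (for $B$). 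Hence it suffices to prove the equivalence for $(G^\adj,X^\adj)$, and since the finite étale cover $\ES_{\overline\kappa}^+\to\ES_{\overline\kappa}(G^\adj,X^\adj)^+$ and the trivial extensions over $\mathscr A(G_{\INTP})$ do not affect nonemptiness of strata, we may further pass to a connected component and then up to the Hodge type datum $(G_1,X_1)$ of Lemma \ref{Kisin's lemma}, where $\ES_{\overline\kappa}^b$, $\ES_{\overline\kappa}^w$, the map to $C(G_1,\mu)$ all come from the Dieudonné module with its crystalline tensors, as in \ref{subsection newton Hodge}, Theorem \ref{G-zipES_0}, \ref{cent for Hodge}.

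The key step is then: the image of the map $\ES_{K_{1,p}}(G_1,X_1)(\overline\kappa)\to C(G_1,\mu)$ is exactly the ``$\mu$-admissible'' image $\overline{C(G_1,\mu)}$, i.e. the image of $Z\to C(G_1,\mu)$, and under this identification the E-O and Newton decompositions match. One direction is clear: a point $x$ with Dieudonné module $(D_x,s_{\cris,x})$ and chosen trivialization gives $g_{x,t}\in G_1(W)\mu(p)G_1(W)$, hence a well-defined class in $C(G_1,\mu)$ lying in the image of $Z$, and the Newton class $[b]$ (resp. the $G_1$-zip type $w$) of $x$ is precisely the image of this class in $B(G_1,\mu)$ (resp. $\mathrm{EO}(\mu)$) — this is exactly how these invariants are defined, together with the compatibility between the $F$-zip attached to $\ES_0$ in Theorem \ref{G-zipES_0} and the combinatorial map $C(G,\mu)\to\mathrm{EO}(\mu)$ of \ref{EO and G-zips}. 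For the converse — that every class in $\overline{C(G_1,\mu)}$ is actually realized by a point of $\ES_0$ — I would invoke the nonemptiness of Newton strata (Theorem \ref{Newton for Hodge type}) together with the nonemptiness of every central leaf inside each Newton stratum (Theorem \ref{cent for Hodge}, via \cite{LRKisin} Proposition 1.4.4): since $\overline{C(G_1,\mu)}$ fibers over $B(G_1,\mu)$ with fibers the sets of central-leaf classes in a fixed Newton stratum, and both Newton strata and the central leaves within them are nonempty, surjectivity of $\ES_0(\overline\kappa)\to C(G_1,\mu)$ onto $\overline{C(G_1,\mu)}$ follows. Granting this, $\ES_{\overline\kappa}^b\cap\ES_{\overline\kappa}^w\neq\emptyset$ is equivalent to $\overline Z_w\cap\overline Z_{[b]}\neq\emptyset$ in $C(G_1,\mu)\cong\overline{C(G,\mu)}$, which by the discussion in \ref{the set Z} is equivalent to $Z_w\cap Z_{[b]}\neq\emptyset$, hence to $X_w(b)\neq\emptyset$ for any $b\in\mathcal O$.

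The main obstacle I expect is precisely pinning down that the image of $\ES_0(\overline\kappa)$ in $C(G_1,\mu)$ is all of $\overline{C(G,\mu)}$ — i.e. that \emph{every} minimal / central-leaf class and indeed every class in the $\mu$-admissible locus of $C(G,\mu)$ occurs. The subtlety is that nonemptiness of central leaves (Theorem \ref{cent for Hodge}) gives surjectivity onto $C(G_1,\mu)$ directly, but one must be careful that the set-theoretic central leaves are indexed exactly by $C(G_1,\mu)$ (and not some quotient) and that the Ekedahl--Oort invariant of a point equals the image of its $C(G_1,\mu)$-class under $\tilde\zeta$ of \ref{EO and G-zips} — this last compatibility between the crystalline/Dieudonné description and the $G$-zip description is where one has to be most careful, and it is really a consequence of Theorem \ref{G-zipES_0} (that the $G$-zip of $\ES_0$ is computed from the mod-$p$ Dieudonné module) combined with Proposition 6.7 of \cite{muordinary} and Theorem 1.1 of \cite{truncat level 1} as assembled in \ref{EO and G-zips}. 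Once that dictionary is in place, the proof is a matter of chasing the commutative diagrams in the paragraph preceding the proposition.
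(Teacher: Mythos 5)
Your proposal is correct and follows essentially the same route as the paper's one-line proof: the paper cites nonemptiness of central leaves (Theorem \ref{cent abelian ty-ad}) and Wortmann's compatibility result, which together say exactly that the Newton and Ekedahl--Oort invariants both factor through the surjective map $\ES(\overline{\kappa})\twoheadrightarrow C(G^\adj,\mu)$ via the combinatorial maps, so the intersection pattern of $\ES_{\overline{\kappa}}^b$ and $\ES_{\overline{\kappa}}^w$ is read off from that of $\overline Z_{[b]}$ and $\overline Z_w$ in $C(G^\adj,\mu)$, i.e.\ from $X_w(b)$. One minor remark: you introduce $\overline{C(G_1,\mu)}$ as the image of $Z\to C(G_1,\mu)$ and worry about whether it is all of $C(G_1,\mu)$, but the map $Z\twoheadrightarrow C(G,\mu)$ is surjective essentially by definition (every class in $C(G,\mu)$ has a representative in $K\mu(p)K$, which gives the point $(b,1\cdot K)\in Z$), so the real content is, as the paper's citation indicates, just the surjectivity of $\ES(\overline{\kappa})\to C(G^\adj,\mu)$ and the compatibility of the two ways of defining the E-O invariant.
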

\begin{proof}
	This follows from the fact that each central leaf is non-empty (cf. Theorem \ref{cent abelian ty-ad}) and \cite{muordinary} 6.2 consequences (3).
\end{proof}

\subsubsection{Special relations} By G\"{o}rtz, He and Nie's classification of fully Hodge-Newton decomposable pairs (\cite{fully H-N decomp} Theorem 2.5) and Deligne's classification of Shimura varieties of abelian type (\cite{varideshi} Table 2.3.8), it is natural to discuss fully Hodge-Newton decomposable Shimura data in the framework of Shimura data of abelian type, in view of Kisin's work \cite{CIMK}.
 If $(G,X)$ is fully Hodge-Newton decomposable, we have the followings.
\begin{proposition}\label{E-O NP for ab cox}
Let $(G,X)$ be a Shimura datum of abelian type with good reduction at $p$ whose attached pair $(G,\mu)$ is fully Hodge-Newton decomposable. Then
\begin{enumerate}
\item each Newton stratum of $\ES_{\overline{\kappa}}$ is a union of Ekedahl-Oort strata;

\item each E-O stratum in a non-basic Newton stratum is a central leaf, and it is open and closed in the Newton stratum, in particular, non-basic Newton strata are smooth;

\item if $(G,\mu)$ is of Coxeter type, then for two E-O strata $\ES_{\overline{\kappa}}^{1}$ and $\ES_{\overline{\kappa}}^{2}$, $\ES_{\overline{\kappa}}^{1}$ is in the closure of $\ES_{\overline{\kappa}}^{2}$ if and only if $\mathrm{dim}(\ES_{\overline{\kappa}}^{2})>\mathrm{dim}(\ES_{\overline{\kappa}}^{1})$.
\end{enumerate}
\end{proposition}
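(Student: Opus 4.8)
The plan is to deduce all three statements from the corresponding group-theoretic facts about the set $Z$ recalled in \ref{the set Z}, by transporting them along the maps constructed in the previous sections. Recall that for a Shimura datum $(G,X)$ of abelian type with good reduction, section 4 (and the discussion at the start of \ref{subsubsection coxeter} and before) produces a map $\ES(\overline{\kappa})\ra C(G^\adj,\mu)$, and that for $[b]\in B(G,\mu)$ (resp. $w\in \mathrm{EO}(\mu)$) the Newton stratum $\ES_{\overline{\kappa}}^b$ (resp. the E-O stratum $\ES_{\overline{\kappa}}^w$) is the preimage of $\overline{Z}_{[b]}$ (resp. $\overline{Z}_w$) under this map. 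Since the pair attached to $(G,X)$ is fully Hodge-Newton decomposable, the same holds for $(G^\adj,\mu)$ (the notion depends only on the adjoint group and the conjugacy class of $\mu$), so we may apply Theorem \ref{thm fully HN} and Theorem \ref{main th for coxeter type} to $G^\adj$.

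First I would prove (1). By Theorem \ref{thm fully HN}, equivalence of (1) and (2) there, for each $w\in \mathrm{EO}(\mu)$ there is a unique $[b]\in B(G,\mu)$ with $X_w(b)\neq\emptyset$; equivalently, $\overline{Z}_w\subseteq \overline{Z}_{[b]}$ for a unique $[b]$, i.e. $\overline{Z}_{[b]}=\coprod_{w}\overline{Z}_w$ over the $w$'s mapping to $[b]$. Taking preimages under $\ES(\overline{\kappa})\ra C(G^\adj,\mu)$ gives that $\ES_{\overline{\kappa}}^b$ is a union of E-O strata, as sets of $\overline{\kappa}$-points; since both Newton and E-O strata are locally closed subschemes (Theorems \ref{Newton for abelian type} and \ref{Th EO abelian type}) and a locally closed subscheme is determined by its $\overline{\kappa}$-points, this is an equality of subschemes. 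For (2), fix a non-basic $[b]$ and $w\in \mathrm{EO}(\mu)$ contained in it. By the last remark of \ref{subsubsection coxeter} (\cite{fully H-N decomp} Proposition 4.5), $w$ is $\sigma$-straight, hence lies in $\mathrm{EO}(\mu)_{\sigma-\mathrm{str}}$, so by the commutative diagram in \ref{EO and G-zips} the fiber $\tilde\zeta^{-1}(w)\subseteq C(G^\adj,\mu)$ is a single element; therefore the E-O stratum $\ES_{\overline{\kappa}}^w$ (preimage of $\overline{Z}_w$) coincides with a single central leaf (preimage of that point of $C(G^\adj,\mu)$), proving minimality. It is closed in $\ES_{\overline{\kappa}}^b$ by Theorem \ref{cent abelian ty-ad}; it is open in $\ES_{\overline{\kappa}}^b$ because, by (1), $\ES_{\overline{\kappa}}^b$ is a finite union of E-O strata, each locally closed, so the complement of $\ES_{\overline{\kappa}}^w$ in $\ES_{\overline{\kappa}}^b$ is a finite union of locally closed subsets whose closures (a union of E-O strata of the same Newton type) miss $\ES_{\overline{\kappa}}^w$ — here I would invoke the closure relation of Theorem \ref{Th EO abelian type} together with the fact that within a fixed non-basic Newton stratum all E-O strata are $\sigma$-straight, hence central leaves, hence closed, so no two of them specialize to each other. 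Smoothness of the non-basic Newton strata then follows since they are disjoint unions of central leaves, which are smooth (Theorem \ref{cent abelian ty-ad}).

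For (3), assume $(G^\adj,\mu)$ is of Coxeter type. The non-basic E-O strata are handled as in (2): each is a central leaf, open and closed in its Newton stratum, and by Theorem \ref{main th for coxeter type}(3) the partial order on $B(G,\mu)$ is almost linear, so the closure relation among non-basic strata (and between a non-basic stratum and the basic locus) is governed purely by Newton type, hence by dimension. For the basic locus, Theorem \ref{main th for coxeter type}(4) says the order $\preceq$ on $\mathrm{EO}_{\sigma,\mathrm{cox}}(\mu)$ is the Bruhat order and is almost linear with rank the length function; combined with the closure relation $\overline{\ES_{\overline{\kappa}}^w}=\coprod_{w'\preceq w}\ES_{\overline{\kappa}}^{w'}$ of Theorem \ref{Th EO abelian type} and the dimension formula $\dim\ES_{\overline{\kappa}}^w=l(w)$, this gives that $\ES_{\overline{\kappa}}^1$ lies in the closure of $\ES_{\overline{\kappa}}^2$ iff $\dim\ES_{\overline{\kappa}}^1<\dim\ES_{\overline{\kappa}}^2$. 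The one point needing care, and the main obstacle I anticipate, is matching the two constructions and bookkeeping exactly which E-O strata lie in which Newton stratum — i.e. making sure the ``open and closed in the Newton stratum'' claim in (2) is not merely set-theoretic: one must check that the decomposition of a non-basic $\ES_{\overline{\kappa}}^b$ into E-O strata is a decomposition into open-and-closed subschemes, which requires knowing that distinct central leaves inside the same non-basic Newton stratum are mutually non-specializing. This follows from Theorem \ref{cent abelian ty-ad} (each is closed in $\ES_{\overline{\kappa}}^b$) applied to each of the finitely many strata, but the argument should be written out carefully rather than asserted.
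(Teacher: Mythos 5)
Your proposal follows the paper's approach exactly: (1) is the set-theoretic translation of Theorem \ref{thm fully HN}, (2) is $\sigma$-straightness (\cite{fully H-N decomp} Proposition 4.5) giving a central leaf plus Theorem \ref{cent abelian ty-ad} for closedness (hence open-and-closed after finite decomposition), and (3) combines Theorem \ref{main th for coxeter type}(3)--(4) with the closure and dimension results. The paper's own proof is just a sequence of those same citations; your fleshing-out (including the remark that within a non-basic Newton stratum the finitely many E-O strata are pairwise non-specializing, which makes the open-closed claim honest at the scheme level) is the correct unpacking.
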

\begin{proof}
Statement (1) follows directly from Theorem \ref{thm fully HN}. For (2), the first half follows from our remarks after Theorem \ref{thm fully HN} (which is just \cite{fully H-N decomp} Proposition 4.5); and the second half follows from Theorem \ref{cent abelian ty-ad}. Statement (3) follows from Theorem \ref{main th for coxeter type} (4).
\end{proof}

\begin{example}\label{NH-no quaternion SHV}Notations as in Example \ref{NP quaternion SHV}. The pair $(G,\mu)$ is fully Hodge-Newton decomposable if and only if all the integers $a_i$ are either $1$ or $2$. The if part is clear. To see the only if part, if there is some $a_i\geq 3$, by the dimension formula in Example \ref{NP quaternion SHV} and Example \ref{CL quaternion SHV}, the dimension of the maximal non-ordinary Newton stratum is strictly bigger than that of its central leaves, and hence it is not fully Hodge-Newton decomposable.
\end{example}

\begin{examples}(See also \cite{fully H-N decomp} Theorem 2.5)
\begin{enumerate}
	\item The  unitary Shimura varieties with signature $(1, n-1)\times (0, n)\times \cdots\times (0,n)$ at a split prime $p$  studied by Harris-Taylor in \cite{HT} is fully Hodge-Newton decomposable.
	\item Consider $G=\mathrm{GU}(V,\langle,\rangle)$, the unitary similitude group over $\Q_p$ associated to a Hermintain space $(V,\langle,\rangle)$. Take $\{\mu\}$ such that it corresponds to $\big((1,\cdots,1,0), 0\big)$. Then $(G, \mu)$ is fully Hodge-Newton decomposable by the explicit description of the set $B(G,\mu)$ as in \cite{BW} 3.1. Globally, these are the unitary Shimura varieties studied by B\"ultel-Wedhorn in \cite{BW}.
	\item The pair $(\mathrm{GSp}_4,\mu)$ is fully Hodge-Newton decomposable, where $\mu$ is the cocharacter corresponding to $(1,1,0,0)$. Globally, these are the Siegel modular varieties with genus $g=2$ (Siegel threefolds).
	\item Consider $G=\mathrm{SO}(V, B)$, the special orthogonal group over $\Q_p$ associated to a quadratic space $(V, B)$ of dimension $n+2$. Take $\{\mu\}$ such that it corresponds to $(1,0,\cdots, 0, -1)$. Then $(G, \mu)$ is fully Hodge-Newton decomposable by the explicit description of the set $B(G,\mu)$. Globally, these are the SO-Shimura varieties of orthogonal type, cf. the next section. 
\end{enumerate}
\end{examples}

\section[An example: Shimura varieties of orthogonal type]{Shimura varieties of orthogonal type}\label{S:orthogonal}

We discuss our main results in the setting of Shimura varieties of orthogonal type. These Shimura varieties play very important roles in Kudla's program (\cite{Kud}) and the arithmetic Gan-Gross-Prasad conjecture (\cite{GGP}).

\subsection[Good reduction of Shimura varieties of orthogonal type]{Good reductions of Shimura varieties of orthogonal type}\label{shimura orthogonal}

\subsubsection[The $\mathrm{SO}$-Shimura varieties]{The $\mathrm{SO}$-Shimura varieties}\label{SO shv}

Let $V$ be a $n+2$-dimensional $\mathbb{Q}$-vector space equipped with a non-degenerate
bilinear form $B$ (whose associated quadratic form is) of signature $(n,2)$. Let $\mathrm{SO}(V)$ be the special orthogonal group attached to $(V,B)$, and \[h:\mathbb{S}\rightarrow \mathrm{SO}(V)_{\mathbb{R}}\] be such that
\begin{enumerate}

\item its induced Hodge structure on $V$ is of type $(-1,1)+(0,0)+(1,-1)$ with $\dim V^{-1,1}=1$;

\item $B$ is a polarization of this Hodge structure.
\end{enumerate}
It is well known that $h$ gives a Shimura datum $(\mathrm{SO}(V), X)$.

\subsubsection[The $\mathrm{CSpin}$-Shimura varieties]{The $\mathrm{GSpin}$-Shimura varieties}\label{Spin shv}
Let $C(V)$ and
$C^+(V)$ be the Clifford algebra and even Clifford algebra
respectively. Note that there is an
embedding $V\hookrightarrow C(V)$ and an anti-involution $*$ on
$C(V)$ (see \cite{intspin}, 1.1).

Let $\mathrm{GSpin}(V)$ be the stabilizer in $C^+(V)^\times$ of
$V\hookrightarrow C(V)$ with respect to the conjugation action of
$C^+(V)^\times$ on $C(V)$. Then $\mathrm{GSpin}(V)$ is a reductive
group over $\mathbb{Q}$, and the conjugation action of $\mathrm{GSpin}(V)$ on $V$ induces a homomorphism $\mathrm{GSpin}(V)\rightarrow \mathrm{SO}(V)$. We actually have an exact sequence $$\xymatrix{1\ar[r]&\mathbb{G}_m\ar[r]&\mathrm{GSpin}(V)\ar[r] &\mathrm{SO}(V)\ar[r]&1},$$
where $\mathbb{G}_m$ is identified with invertible scalars in $C^+(V)$.

The homomorphism $h$ in \ref{SO shv} lifts to $\mathrm{GSpin}(V)$ and induces a Shimura datum $(\mathrm{GSpin}(V),$ $X')$ with $X'\simeq X$. Consider the left action of
$\mathrm{GSpin}(V)$ on $C^+(V)$, there is a perfect alternating
form $\psi$ on $C^+(V)$, such that the embedding
$\mathrm{GSpin}(V)\hookrightarrow \mathrm{GL}(C^+(V))$ factors
through $\mathrm{GSp}(C^+(V),\psi)$ and induces an embedding of
Shimura data \[(\mathrm{GSpin}(V), X')\rightarrow
(\mathrm{GSp}(C^+(V),\psi),\mathbb{H}^{\pm}).\] We refer to \cite{intspin} 1.8,
1.9, 3.4, 3.5 for details.

To sum up, $(\mathrm{GSpin}(V), X')$ is a
Shimura datum of Hodge type and $(\mathrm{SO}(V), X)$ is a
Shimura datum of abelian type. One can also see that the reflex field of $(\mathrm{SO}(V), X)$ (resp. $(\mathrm{GSpin}(V), X')$) is $\mathbb{Q}$ if $n>0$. We will assume that $n>0$ from now on.

Let $(G,Y)$  be either $(\mathrm{SO}(V), X)$ or $(\mathrm{GSpin}(V), X')$. Let $K\subseteq
G(\mathbb{A}_f)$ be a compact open subgroup
which is small enough, then
$$\Sh_K:=G(\mathbb{Q})\backslash Y\times
(G(\mathbb{A}_f)/K)$$ has a canonical model over
$\mathbb{Q}$ which will again be denoted by $\Sh_K$. It has dimension $n$.
Let $K\subset \mathrm{GSpin}(V)(\mathbb{A}_f)$ be a sufficently small open compact subgroup, and $K_1\subset \mathrm{SO}(V)(\mathbb{A}_f)$ be its image induced by the map $\mathrm{GSpin}(V)\ra \mathrm{SO}(V)$. Then the induced map between the corresponding Shimura varieties \[\Sh_K(\mathrm{GSpin}(V), X')\ra \Sh_{K_1}(\mathrm{SO}(V), X)\] is a finite \'etale Galois cover, cf. \cite{intspin} 3.2.

\subsubsection[Good reductions]{Good reductions}\label{good reduc ortho type}Let $p>2$ be a prime and $L\subseteq V$ be a
$\mathbb{Z}_{(p)}$-lattice such that the bilinear form $B$ is perfect on it. Then $\mathrm{SO}(L)$ is a reductive group over $\mathbb{Z}_{(p)}$ with generic fiber $\mathrm{SO}(V)$. Similarly, we have $C(L)$, $C^+(L)$ and $\mathrm{GSpin}(L)$, and $\mathrm{GSpin}(L)$ is a reductive group over $\mathbb{Z}_{(p)}$ with generic fiber $\mathrm{GSpin}(V)$.

Let $(G,Y)$  be either $(\mathrm{SO}(V), X)$ or $(\mathrm{GSpin}(V), X')$ as above, and we still write $G$ for its reductive model over $\mathbb{Z}_{(p)}$ by abuse of notation. Let $K_p=G(\COMP)$ and $K^p\subseteq G(\mathbb{A}_f^p)$ be a compact open subgroup
which is small enough. Let $K=K_pK^p$, then by Theorem \ref{int can abelian type},
$\Sh_K$ has an integral canonical model over
$\mathbb{Z}_{(p)}$ denoted by $\ES_K$. Let $K^p\subset \mathrm{GSpin}(V)(\mathbb{A}_f^p)$ be a sufficently small open compact subgroup, and $K_1^p\subset \mathrm{SO}(V)(\mathbb{A}_f^p)$ be its image induced by the map $\mathrm{GSpin}(V)\ra \mathrm{SO}(V)$. Set $K=\mathrm{GSpin}(V)(\mathbb{Z}_p)K^p$, and $K_1= \mathrm{SO}(V)(\mathbb{Z}_p)K_1^p$. Then the induced map between the corresponding integral canonical models \[\ES_K(\mathrm{GSpin}(V), X')\ra \ES_{K_1}(\mathrm{SO}(V), X)\] is a finite \'etale Galois cover, cf. \cite{intspin} Theorem 4.4.

When the level $K$ is clear, the special fiber of $\ES_K$ is denoted by $\ES_0$, and the geometric special fiber is denoted by $\ES_{\overline{\kappa}}$.

\subsection[E-O stratifications]
{Ekedahl-Oort stratifications}\label{subsec EO for ortho}
Let $(G,Y)$ and $\ES_0$ be as above. The Shimura
datum determines a cocharacter
$\mu:\mathbb{G}_{m,\mathbb{Z}_p}\rightarrow
G_{\mathbb{Z}_p}$ which is unique up to
conjugation. The special fiber of
$\mu$ will still be denoted by $\mu$. The cocharacter $\mu$
determines a parabolic subgroup $P_+\subseteq
G_{\mathbb{F}_p}$, whose type will be denoted by
$J$. Let $W$ be the Weyl group of
$G_{\mathbb{F}_p}$, and ${}^JW$ together with the partial order $\preceq$ be as in 3.3 (before Theorem \ref{collectzipdata}). Then Theorem
\ref{Th EO abelian type} implies that the structure of Ekedahl-Oort
stratification on $\ES_{\overline{\kappa}}$ is described by ${}^JW$ together with
the partial order $\preceq$.

\subsubsection[]{A description of $({}^JW,\preceq)$}\label{JW for orthogonal}

Let's recall the description of $({}^JW,\preceq)$ in
\cite{BruhatandFzip} (see also \cite{coxeter type} 6.4 and 6.6). Let $m$ be the dimension of a maximal torus
in $\mathrm{SO}(L_{\mathbb{F}_p})$. There are two cases:

\emph{Case 1.} If $n$ is odd, then the partial order $\preceq$ on ${}^JW$ is a
total order, and the length function induces an isomorphism of
totally ordered sets \[({}^JW,\preceq)\st{\sim}{\rightarrow} \{0,1,2,\cdots,
n\}.\] Note that in this case $n+1=2m$.

\emph{Case 2.} If $n$ is even, noting that in this case $n+2=2m$, then $W$ is
generated by simple reflections $\{s_i\}_{i=1,\cdots, m},$ where
$$s_i=\begin{cases}
(i,i+1)(n-i+2,n-i+3), & \text{ for }i=1,\cdots,m-1;\\
(m-1,m+1)(m,m+2), & \text{ for }i=m.
\end{cases}$$

Let $$w_i=\begin{cases}
s_1s_2\cdots s_i, &\text{ for }i\leq m-1;\\
s_1s_2\cdots s_m, & \text{ for }i=m;\\
s_1s_2\cdots s_ms_{m-2}\cdots s_{2m-i-1}, &\text{ for }i\geq m+1.
\end{cases}$$
and \[w'_{m-1}=s_1s_2\cdots s_{m-2}s_m.\] Then
\[{}^JW=\{w_i\}_{0\leq i\leq n}\cup \{w'_{m-1}\},\] and the partial
order $\preceq$ is given by \begin{equation*}
\begin{split}
w_0=\mathrm{id}\preceq w_1\preceq \cdots &\preceq w_{m-2}\\
&\preceq w_{m-1},w'_{m-1}\\
&\preceq w_m\preceq\cdots\preceq w_n.
 \end{split}
 \end{equation*}

Applying Theorem \ref{Th EO abelian type} together with \ref{JW for
orthogonal}, we get the following description for the E-O stratification
on $\ES_{\overline{\kappa}}$.

\begin{corollary}\label{EO for orth type}
Let $m$ and $n$ be as before.

\begin{enumerate}

\item There are $2m$ Ekedahl-Oort strata on $\ES_{\overline{\kappa}}$.
\item
\begin{enumerate}
\item If $n$ is odd, then for any integer $0\leq i\leq n$, there is
precisely one stratum $\ES^i_{\overline{\kappa}}$ such that $\mathrm{dim}(\ES^i_{\overline{\kappa}})=i$.
These are all the Ekedahl-Oort strata on $\ES_{\overline{\kappa}}$. Moreover, the
Zariski closure of $\ES^i_{\overline{\kappa}}$ is the union of all the $\ES^{i'}_{\overline{\kappa}}$
such that $i'\leq i$.

\item If $n$ is even, then for any integer $i$ such
that $0\leq i\leq n$ and $i\neq n/2$, there is precisely one stratum
$\ES^i_{\overline{\kappa}}$ such that $\mathrm{dim}(\ES^i_{\overline{\kappa}})=i$. There are 2
strata of dimension $n/2$. These are all the Ekedahl-Oort strata
on $\ES_{\overline{\kappa}}$. Moreover, the Zariski closure of the stratum $\ES^w_{\overline{\kappa}}$
is the union of $\ES^w_{\overline{\kappa}}$ with all the strata whose dimensions are
smaller than $\mathrm{dim}(\ES^w_{\overline{\kappa}})$.

\end{enumerate}

\end{enumerate}

\end{corollary}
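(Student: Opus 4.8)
The statement is a direct translation of the group-theoretic description of $({}^JW,\preceq)$ recalled in \ref{JW for orthogonal} into geometric language, via the Ekedahl-Oort stratification $\zeta:\ES_{\overline{\kappa}}\to[E_{G,\mu}\backslash G_\kappa]\otimes\overline{\kappa}$ of Theorem \ref{Th EO abelian type}. So the plan is to apply Theorem \ref{Th EO abelian type} and then unwind the combinatorics of ${}^JW$ case by case. First I would recall that by Theorem \ref{Th EO abelian type}(1)(a)(b) the E-O strata $\ES_{\overline{\kappa}}^w$ are in bijection with ${}^JW$, with $\dim\ES_{\overline{\kappa}}^w=l(w)$, and that $\overline{\ES_{\overline{\kappa}}^w}=\coprod_{w'\preceq w}\ES_{\overline{\kappa}}^{w'}$. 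Hence everything reduces to: (i) counting $|{}^JW|$; (ii) computing the lengths $l(w)$ for $w\in{}^JW$; (iii) reading off the closure relation from $\preceq$.

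For part (1), I would observe that in both parities $|{}^JW|=2m$: in the odd case $n+1=2m$ and $({}^JW,\preceq)\xrightarrow{\sim}\{0,1,\dots,n\}$ has $n+1=2m$ elements; in the even case ${}^JW=\{w_i\}_{0\le i\le n}\cup\{w'_{m-1}\}$ has $(n+1)+1=n+2=2m$ elements. For part (2)(a), odd $n$: the length function is the claimed isomorphism of totally ordered sets, so for each $0\le i\le n$ there is exactly one $w$ with $l(w)=i$, giving the unique stratum $\ES_{\overline{\kappa}}^i$ of dimension $i$; since $\preceq$ is the total order matching length, the closure relation $\overline{\ES_{\overline{\kappa}}^w}=\coprod_{w'\preceq w}\ES_{\overline{\kappa}}^{w'}$ becomes $\overline{\ES_{\overline{\kappa}}^i}=\coprod_{i'\le i}\ES_{\overline{\kappa}}^{i'}$. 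For part (2)(b), even $n$: from the explicit list one computes $l(w_i)=i$ for all $0\le i\le n$ and $l(w'_{m-1})=m-1=n/2$ (it is a product of $m-1$ distinct simple reflections, the same number as $w_{m-1}$), so every dimension $0\le i\le n$ with $i\ne n/2$ is attained by exactly one stratum, while dimension $n/2$ is attained by the two strata $\ES_{\overline{\kappa}}^{w_{m-1}}$ and $\ES_{\overline{\kappa}}^{w'_{m-1}}$; the displayed Hasse diagram for $\preceq$ (a chain that branches only at level $m-1$, with $w_{m-1}$ and $w'_{m-1}$ incomparable but both below $w_m$ and above $w_{m-2}$) then shows that $w'\preceq w$ holds exactly when $l(w')<l(w)$, except that $w_{m-1}$ and $w'_{m-1}$ are incomparable; translating through $\overline{\ES_{\overline{\kappa}}^w}=\coprod_{w'\preceq w}\ES_{\overline{\kappa}}^{w'}$ gives precisely the assertion that $\overline{\ES_{\overline{\kappa}}^w}$ is $\ES_{\overline{\kappa}}^w$ together with all strata of strictly smaller dimension.

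The only genuine content beyond citing Theorem \ref{Th EO abelian type} is the bookkeeping in \ref{JW for orthogonal}: verifying the length computations $l(w_i)=i$, $l(w'_{m-1})=m-1$ and that the partial order is as displayed. I expect the main (minor) obstacle to be the even case, where one must be careful that $w_{m-1}$ and $w'_{m-1}$ are genuinely incomparable in $\preceq$ rather than related, and that no stratum of dimension $<n/2$ or $>n/2$ accidentally fails to lie in $\overline{\ES_{\overline{\kappa}}^{w_{m-1}}}$ or $\overline{\ES_{\overline{\kappa}}^{w'_{m-1}}}$ — but this is exactly recorded in the cited description from \cite{BruhatandFzip} (and \cite{coxeter type} 6.4, 6.6), so I would simply invoke it. Everything else is formal.
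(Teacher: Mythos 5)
Your proposal is correct and is exactly the paper's own (one-line) proof: cite Theorem \ref{Th EO abelian type} to reduce to the combinatorics of $({}^JW,\preceq)$, then unwind the description in \ref{JW for orthogonal} case by case, checking $|{}^JW|=2m$, $l(w_i)=i$, $l(w'_{m-1})=m-1=n/2$, and reading the closure relation off the displayed partial order. The only thing worth noting is that you correctly flag the incomparability of $w_{m-1}$ and $w'_{m-1}$ as the one place where the closure statement in (2)(b) needs care, and your length count for $w_i$ with $i\ge m+1$ (namely $m+(m-2)-(2m-i-1)+1=i$) is right.
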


\subsection[Newton stratifications]
{Newton stratifications}

\subsubsection[]{Orthogonal groups with good reduction at $p$}\label{orthogonal good at p}

Let $(V,q)$ be a non-degenerate quadratic space of dimension $n+2$ over $\RAT_p$. Here we always assume that $n>0$ and $p>2$. If $(V,q)$ is of good reduction (i.e. the orthogonal group $\mathrm{SO}(V,q)$ is of good reduction) at $p$, then we can find a basis $\{e_1,e_2,\cdots,e_{n+2}\}$ such that \[q=a_1x_1^2+a_2x_2^2+\cdots+a_{n+2}x_{n+2}^2\] with $a_i\in \INT_p^\times$.

It is well known that this quadratic space $(V,q)$ is determined up to isomorphism by its discriminant \[\mathrm{d}(V,q):=\prod_{i=1}^{n+2}a_i\] (viewed as an element in $\RAT_p^\times/\RAT_p^{\times 2}$) and Hasse invariant \[\varepsilon(V,q):=\prod_{i<j}(a_i,a_j).\] Here $(a_i,a_j)$ are the Hilbert symbols at $p$. By our assumption, $\varepsilon(V,q)=1$ as $(a_i,a_j)=1$ for any $i<j$. So $(V,q)$ is uniquely determined by its discriminant which is, by assumption, either 1 or represented by a non-square unit $u$ in $\INT_p$.

Fixing a non-square unit $u\in \INT_p$, one can make the above discussions more explicit as follows.

\emph{Case 1.} If $n$ is odd, let $$q=x_1^2-x_2^2+x_3^2-x_4^2+\cdots+x_{2i-1}^2-x_{2i}^2+\cdots+x_{n+2}^2$$

and $$q'=x_1^2-x_2^2+x_3^2-x_4^2+\cdots+x_{2i-1}^2-x_{2i}^2+\cdots+ux_{n+2}^2.$$ Then $(V,q)$ and $(V,q')$ are non-isomorphic, and any non-degenerate quadratic space of rank $n+2$ with good reduction is isomorphic to precisely one of them. One sees easily that both $\mathrm{SO}(V,q)$ and $\mathrm{SO}(V,q')$ are split, and hence they are isomorphic in this case.

\emph{Case 2.} If $n$ is even, let $$q=x_1^2-x_2^2+x_3^2-x_4^2+\cdots+x_{2i-1}^2-x_{2i}^2+\cdots+x_{n+1}^2-x_{n+2}^2$$

and $$q'=x_1^2-x_2^2+x_3^2-x_4^2+\cdots+x_{2i-1}^2-x_{2i}^2+\cdots+x_{n+1}^2-ux_{n+2}^2.$$ Then $(V,q)$ and $(V,q')$ are non-isomorphic, and any non-degenerate quadratic space of rank $n+2$ with good reduction is isomorphic to precisely one of them. One sees easily that $\mathrm{SO}(V,q)$ is split and $\mathrm{SO}(V,q')$ is of rank $m-1$. Here we set $m=(n+2)/2$ as before. In particular, they are not isomorphic in this case.

\subsubsection[]{A description of $B(G_{\mathbb{Q}_p},\mu)$}\label{list of NP ortho} Now we come back to our usual notations (used in subsections \ref{shimura orthogonal}, \ref{subsec EO for ortho}). It is possible (and not difficult) to describe $B(G_{\mathbb{Q}_p},\mu)$ in this case using \cite{isocys with addi} Proposition 6.3. But to keep our arguments short, we use \cite{Chen-Fargues-Shen} Corollary 4.3 which describes $B(G_{\mathbb{Q}_p},\mu)$ in terms of root systems.

More precisely, we fix $T_0\subseteq T\subseteq B$ subgroups of $G_{\RAT_p}$ with $T_0$ a maximal \emph{split} torus, $T$ a maximal torus and $B$ a Borel subgroup. Let $(X^*(T),\Phi,X_*(T),\Phi^\vee)$ be the attached absolute root datum with simple roots $\Delta$, and $(X^*(T_0),\Phi_0,X_*(T_0),\Phi^\vee_0)$ be the attached relative root datum with simple (reduced) roots $\Delta_0$. For $\alpha\in \Delta_0$, we set
$$\widetilde{w}_{\alpha}=\sum_{\beta\in \Phi,\ \beta\mid_{T_0}=\alpha}w_\beta\in X^*(T_0)_{\RAT}.$$
Here $w_\beta$ is the fundamental weight corresponding to $\beta$. Let $\overline{\mu}$ be the average of the $\Gamma$-orbit of $\mu$. Then we have
$$B(G_{\mathbb{Q}_p},\mu)=\{\nu\in X_*(T_0)_{\RAT,\mathrm{dom}}\mid\nu\leq \overline{\mu}, \forall \alpha\in \Delta_0 \text{ with } \langle \nu,\alpha\rangle\neq 0, \langle \overline{\mu}-\nu,\widetilde{w}_{\alpha}\rangle\in \mathbb{N}\}.$$

Combined with \ref{orthogonal good at p}, we can describe $B(G_{\mathbb{Q}_p},\mu)$ explicitly.

\emph{Case 1.} If $n$ is odd, then $T_0=T$. Set $m=(n+1)/2$ as in \ref{subsec EO for ortho}. We can choose a $\RAT_p$-basis with respect to which $$q=x_1x_{2m+1}+x_2x_{2m}+\cdots+x_mx_{m+2}+ux_{m+1}^2.$$
Let $T=\mathrm{diag}(t_1,t_2,\cdots, t_m,1,t_m^{-1},t_{m-1}^{-1}\cdots, t_1^{-1})$ and $\alpha_i\in X^*(T)$, $1\leq i\leq m$, be given by the $i$-th projection. For $1\leq i\leq m$, let $\alpha_i^\vee\in X_\ast(T)$ be the cocharacter \[t\mapsto \mathrm{diag}(1,\cdots, 1, t, 1, \cdots, 1, t^{-1}, 1, \cdots, 1)\] where the $t$ and $t^{-1}$ are at the $i$-th and $2m+2-i$-th place respectively. Then $\mu=\overline{\mu}=\alpha_1^\vee$. For \[\nu=\sum_{i=1}^m c_i\alpha_i^\vee\in X_*(T_0)_{\RAT},\] it is dominant if and only if $c_i\geq 0$ for all $i$ and $c_i\geq c_j$ for all $i<j$. Noting that the trivial cocharacter $1$ is the basic element in $B(G_{\mathbb{Q}_p},\mu)$, we only need to consider non-basic elements, i.e. we will assume that $\nu\in B(G_{\mathbb{Q}_p},\mu)$ is such that there is $j$ with $c_i> 0$ for all $i\leq j$.

We have
$$\mu-\nu=(1-c_1)(\alpha_1^\vee-\alpha_2^\vee)+(1-c_1-c_2)(\alpha_2^\vee-\alpha_3^\vee)+\cdots+(1-\sum_{i=1}^{m-1}c_i)(\alpha_{m-1}^\vee-\alpha_m^\vee)+(1-\sum_{i=1}^mc_i)\alpha_m^\vee,$$
and hence the condition $\nu\leq \mu$ means that $\sum_{i=1}^mc_i\leq 1$. If $c_m>0$, we have $\langle \nu,\alpha_m\rangle\neq 0$. Noting that $\widetilde{w}_{\alpha_m}=\frac{1}{2}\sum_{i=1}^m\alpha_i$, so $\langle \mu-\nu,\widetilde{w}_{\alpha_m}\rangle\in \mathbb{N}$ holds only when $\sum_{i=1}^mc_i= 1$. We actually have $c_i=1/m$ for all $i$ in this case. Indeed, if there were $j< m$ with $c_j>c_{j+1}$, then we have $\langle \nu,\alpha_j-\alpha_{j+1}\rangle\neq 0$ and by similar arguments we find that $\sum_{i=1}^jc_i= 1$ which contradicts to our assumption. If $c_m=0$, we work with $j$ such that $c_{j+1}=0$ and $c_i>0$ for all $i\leq j$). By similar arguments, we find $\nu=\frac{1}{j}\sum^j_{i=1}\alpha_i^\vee$.

To sum up, we have in this case
$$B(G_{\mathbb{Q}_p},\mu)=\{\alpha_1^\vee, \DF{1}{2}(\alpha_1^\vee+\alpha_2^\vee),\cdots,\DF{1}{m}\sum_{i=1}^{m}\alpha_i^\vee, 1\}.$$
We will simply write $b_i$, $1\leq i\leq m$, for $\frac{1}{i}\sum^i_{j=1}\alpha_j^\vee$ and $b_0$ for 1. One sees easily that the partial order on $B(G_{\mathbb{Q}_p},\mu)$ is as follows:
$$b_0\leq b_m\leq b_{m-1}\leq \cdots\leq b_1.$$

\emph{Case 2.} If $n$ is even, this splits into two cases.

\emph{Case 2.a.} If $G_{\RAT_p}$ is split, we can choose a $\RAT_p$-basis with respect to which $$q=x_1x_{2m}+x_2x_{2m-1}+\cdots+x_mx_{m+1}.$$
Let $T_0=T=\mathrm{diag}(t_1,\cdots, t_m,t_m^{-1},\cdots, t_1^{-1})$, and $\alpha_i\in X^*(T)$, $1\leq i\leq m$, be given by the $i$-th projection. By similar arguments as in the previous case, we have in this case
$$B(G_{\mathbb{Q}_p},\mu)=\{\alpha_1^\vee, \DF{1}{2}(\alpha_1^\vee+\alpha_2^\vee),\cdots,\DF{1}{m-1}\sum_{i=1}^{m-1}\alpha_i^\vee, \DF{1}{m}\sum_{i=1}^{m}\alpha_i^\vee, \DF{1}{m}(\sum_{i=1}^{m-1}\alpha_i^\vee-\alpha_m^\vee), 1\}.$$
Here besides the $b_i$, $0\leq i\leq m$, which we have introduced before, we also set $$b'_m=\DF{1}{m}(\sum_{i=1}^{m-1}\alpha_i^\vee-\alpha_m^\vee).$$
The partial order on $B(G_{\mathbb{Q}_p},\mu)$ is as follows:
$$b_0\leq b_m\leq b_{m-1}\leq \cdots\leq b_1, \ \ b_0\leq b'_m\leq b_{m-1}$$

\emph{Case 2.b.} If $G_{\RAT_p}$ is non-split, we can choose a $\RAT_p$-basis with respect to which $$q=x_1x_{2m}+x_2x_{2m-1}+\cdots+x_{m-1}x_{m+2}+x_m^2-ux_{m+1}^2, \ \ u\in \COMP^\times \text{ non-square}.$$
Let $T_0=\mathrm{diag}(t_1,\cdots, t_{m-1},1,1,t_{m-1}^{-1},\cdots, t_1^{-1})$ and $T$ be its centralizer. Using similar notations and arguments as before, we have
$$B(G_{\mathbb{Q}_p},\mu)=\{\alpha_1^\vee, \DF{1}{2}(\alpha_1^\vee+\alpha_2^\vee),\cdots,\DF{1}{m-1}\sum_{i=1}^{m-1}\alpha_i^\vee, 1\}.$$
Again, we have $b_i$, $0\leq i\leq m-1$, given by the same formula. The partial order on $B(G_{\mathbb{Q}_p},\mu)$ is as follows:
$$b_0\leq b_{m-1}\leq b_{m-2}\leq \cdots\leq b_1.$$

\subsubsection[]{} Notations as in \ref{shimura orthogonal}. The pair $(\mathrm{SO}(V)_{\mathbb{Q}_p},\mu)$ is of Coxeter type if $n\neq 2$, and it is always fully Hodge-Newton decomposable. More precisely, in terms of the list of Coxeter types in \cite{coxeter type} Theorem 5.1.2,
\begin{itemize}
\item if $n\geq 5$ and odd, then it is of type $(B_m, \omega_1^\vee, \mathbb{S})$;
 \item if $n\geq 6$ and even, then it is of type $(D_m, \omega_1^\vee, \mathbb{S})$ (resp. $({}^2D_m, \omega_1^\vee, \mathbb{S})$) when $\mathrm{SO}(V)_{\mathbb{Q}_p}$ is split (resp. non-split).
\end{itemize}
For the exceptions, 
\begin{itemize}
	\item if $n=1$, it is of type $(A_1, \omega_1^\vee, \mathbb{S})$; 
	\item if $n=3$, it is of type $(C_2, \omega_2^\vee, \mathbb{S})$; 
	\item if $n=4$, it is of type $(A_3, \omega_1^\vee, \mathbb{S})$ (resp. $({}^2A'_3, \omega_1^\vee, \mathbb{S})$) when $\mathrm{SO}(V)_{\mathbb{Q}_p}$ is split (resp. non-split).
\end{itemize}
When $n=2$, it is no longer of Coxeter type as $\mathrm{SO}(V)_{\mathbb{Q}_p}$ is no longer absolutely quasi-simple. But it is still fully Hodge-Newton decomposable. It is 
\begin{itemize}
	\item  of type $(A_1, \omega_1^\vee, \mathbb{S})\times (A_1, \omega_1^\vee, \mathbb{S})$, if $\mathrm{SO}(V)_{\mathbb{Q}_p}$ is split;
	\item of type $(A_1\times A_1, (\omega_1^\vee,\omega_1^\vee), {}^1\varsigma_0)$, (see \cite{fully H-N decomp} 2.6) otherwise.
\end{itemize}

Now we can state properties of Newton strata in Shimura varieties attached to orthogonal groups, as well as relations between E-O strata, Newton strata and central leaves.
\begin{corollary}Let $\ES_{\overline{\kappa}}$ be as in the end of \ref{good reduc ortho type}, then each of its Newton stratum is equi-dimensional with closure a union of Newton strata. Moreover, each Newton stratum is a union of E-O strata, and each non-basic E-O is a central leaf in the (non-basic) Newton stratum containing it. More precisely, we have
	\begin{enumerate}
		
		\item if $n$ is odd, then ($m=\frac{n+1}{2}$):
		
		\begin{enumerate}
			\item for $b_i$, $i\,\in\,\{1, \dots, \frac{n+1}{2}\}$, the Newton stratum $\ES_{\overline{\kappa}}^{b_i}$ is of dimension $n+1-i$. Moreover, it coincides with the minimal E-O stratum $\ES_{\overline{\kappa}}^{n+1-i}$;
			\item the basic locus $\ES_{\overline{\kappa}}^{b_0}$ is of dimension $\frac{n-1}{2}$, and it is the disjoint union of E-O strata: \[\ES_{\overline{\kappa}}^{b_0}=\coprod_{i=0}^{\frac{n-1}{2}}\ES_{\overline{\kappa}}^i.\]
		\end{enumerate}
		
		\item if $n$ is even and $\mathrm{SO}(V)_{\mathbb{Q}_p}$ is non-split, then ($m=\frac{n}{2}+1$):
		
		\begin{enumerate}
			\item  for $b_i$, $i\,\in\,\{1, \dots, \frac{n}{2}\}$, the Newton stratum $\ES_{\overline{\kappa}}^{b_i}$ is of dimension $n+1-i$. Moreover, it coincides with the minimal E-O stratum $\ES_{\overline{\kappa}}^{n+1-i}$;
			\item the basic locus $\ES_{\overline{\kappa}}^{b_0}$ is of dimension $\frac{n}{2}$, and it is the disjoint union of E-O strata: \[\ES_{\overline{\kappa}}^{b_0}=\ES_{\overline{\kappa}}^{w_{\frac{n}{2}}}\coprod\ES_{\overline{\kappa}}^{w_{\frac{n}{2}}'}\coprod_{i=0}^{\frac{n}{2}-1}\ES_{\overline{\kappa}}^i.\]
		\end{enumerate}
		
		\item if $n$ is even and $\mathrm{SO}(V)_{\mathbb{Q}_p}$ is split, then ($m=\frac{n}{2}+1$):
		\begin{enumerate}
			\item for $b_i$, $i\,\in\,\{1, \dots, \frac{n}{2}\}$, the Newton stratum $\ES_{\overline{\kappa}}^{b_i}$ is of dimension $n+1-i$. Moreover, it coincides with the minimal E-O stratum $\ES_{\overline{\kappa}}^{n+1-i}$;
			
			\item for $m=\frac{n}{2}+1$, the Newton strata $\ES_{\overline{\kappa}}^{b_m}$ and $\ES_{\overline{\kappa}}^{b'_m}$ are of dimension $\frac{n}{2}$, and both of them are minimal E-O strata. More precisely,

			\begin{itemize}

				\item if $m$ is odd, then $\ES_{\overline{\kappa}}^{b_m}=\ES_{\overline{\kappa}}^{w_{\frac{n}{2}}}$ and $\ES_{\overline{\kappa}}^{b'_m}=\ES_{\overline{\kappa}}^{w_{\frac{n}{2}}'}$;

				\item if $m$ is even, then $\ES_{\overline{\kappa}}^{b_m}=\ES_{\overline{\kappa}}^{w_{\frac{n}{2}}'}$ and $\ES_{\overline{\kappa}}^{b'_m}=\ES_{\overline{\kappa}}^{w_{\frac{n}{2}}}$;

			\end{itemize}
			\item the basic locus is of dimension $\frac{n}{2}-1$, and it is the disjoint union of E-O strata: \[\ES_{\overline{\kappa}}^{b_0}=\coprod_{i=0}^{\frac{n}{2}-1}\ES_{\overline{\kappa}}^i.\]
		\end{enumerate}
		
	\end{enumerate}
\end{corollary}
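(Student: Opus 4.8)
The plan is to assemble the corollary from three ingredients already established in the paper: the Ekedahl--Oort description in Corollary \ref{EO for orth type}, the explicit description of $B(G_{\mathbb{Q}_p},\mu)$ in \ref{list of NP ortho}, and the structural results of Proposition \ref{E-O NP for ab cox} (each Newton stratum is a union of E-O strata, each non-basic E-O stratum in a non-basic Newton stratum is a central leaf which is open and closed in it, so non-basic Newton strata are smooth). First I would record that the pair $(\mathrm{SO}(V)_{\mathbb{Q}_p},\mu)$ is fully Hodge--Newton decomposable (and of Coxeter type when $n\neq2$), as spelled out just before the corollary; this licenses the use of Proposition \ref{E-O NP for ab cox} and Theorem \ref{main th for coxeter type}. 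The equi-dimensionality of Newton strata and the closure relation (closure is a union of Newton strata) come directly from Theorem \ref{Newton for abelian type}.

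Next I would do the bookkeeping that matches up the two parametrizing sets. For each non-basic $[b]=b_i\in B(G_{\mathbb{Q}_p},\mu)$, Theorem \ref{Newton for abelian type} gives $\dim\ES_{\overline{\kappa}}^{b_i}=\langle\rho,\mu+\nu_G(b_i)\rangle-\tfrac12\mathrm{def}_G(b_i)$, and I would compute this number using the explicit $\nu_G(b_i)=\tfrac1i\sum_{j=1}^{i}\alpha_j^\vee$ from \ref{list of NP ortho} together with the formula for $\mathrm{def}_G$ in Proposition \cite{geo of newt PEL} Proposition 3.8; in each of the three cases ($n$ odd; $n$ even non-split; $n$ even split) the defect and the pairing should collapse to give $n+1-i$ (and $\tfrac n2$ for $b_m,b_m'$ in the split even case). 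Since by Proposition \ref{E-O NP for ab cox}(2) this non-basic Newton stratum is a single central leaf and a single E-O stratum, and since Corollary \ref{EO for orth type} tells us the E-O strata are indexed by dimension, the stratum of dimension $n+1-i$ must be exactly the E-O stratum $\ES_{\overline{\kappa}}^{n+1-i}$ (respectively $w_{n/2}$ or $w_{n/2}'$ in the even split case, where two strata share the dimension $n/2$); distinguishing which of $w_{n/2},w_{n/2}'$ corresponds to $b_m$ versus $b_m'$ according to the parity of $m$ requires a more careful comparison of the partial orders on $B(G_{\mathbb{Q}_p},\mu)$ and ${}^JW$, or a direct computation with the straight elements of $\mathrm{EO}(\mu)$ via the diagram in \ref{EO and G-zips}.

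For the basic locus I would argue by elimination: the E-O strata not accounted for by the non-basic Newton strata must all lie in $\ES_{\overline{\kappa}}^{b_0}$ by Proposition \ref{E-O NP for ab cox}(1). Counting: there are $2m$ E-O strata by Corollary \ref{EO for orth type}, and the non-basic part of $B(G_{\mathbb{Q}_p},\mu)$ has $m$ (resp. $m$, resp. $m+1$) elements in the three cases, each consuming exactly one E-O stratum, so the basic locus is the disjoint union of the remaining E-O strata, namely $\ES_{\overline{\kappa}}^0,\dots,\ES_{\overline{\kappa}}^{(n-1)/2}$ (odd case), $\ES_{\overline{\kappa}}^{w_{n/2}}\amalg\ES_{\overline{\kappa}}^{w_{n/2}'}\amalg\coprod_{i=0}^{n/2-1}\ES_{\overline{\kappa}}^i$ (even non-split), $\coprod_{i=0}^{n/2-1}\ES_{\overline{\kappa}}^i$ (even split). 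Its dimension is then the top dimension occurring among those leftover strata, read off from Corollary \ref{EO for orth type}; alternatively one checks it against $\langle\rho,\mu+\nu_G(b_0)\rangle-\tfrac12\mathrm{def}_G(b_0)$ with $\nu_G(b_0)=0$, which gives $\tfrac{n-1}{2}$, $\tfrac n2$, $\tfrac n2-1$ respectively.

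The main obstacle I expect is not any single hard theorem but the two pieces of explicit index-matching: (i) verifying that the dimension formula $\langle\rho,\mu+\nu_G(b_i)\rangle-\tfrac12\mathrm{def}_G(b_i)$ evaluates to $n+1-i$ uniformly across the $B$, $D$, and ${}^2D$ root systems (the computation of $\mathrm{def}_G(b_i)=2\sum_j\{\langle\nu_G(b_i),w_j\rangle\}$ is delicate because of the Galois-twisted fundamental weights $\widetilde w_\alpha$ in the non-split case), and (ii) in the even split case, pinning down the bijection $\{b_m,b_m'\}\leftrightarrow\{w_{n/2},w_{n/2}'\}$ and its dependence on the parity of $m$, which ultimately reflects how the outer automorphism / Frobenius acts on the two ``spin'' nodes of $D_m$ and is cleanest to see through the $\sigma$-straight elements of $\mathrm{EO}(\mu)$ and the commutative diagram of \ref{EO and G-zips}. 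Everything else is a routine application of Theorems \ref{Newton for abelian type}, \ref{Th EO abelian type}, Corollary \ref{EO for orth type} and Proposition \ref{E-O NP for ab cox}.
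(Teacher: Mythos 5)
Your plan and the paper's agree at the structural level: equidimensionality and closure relations come from Theorem \ref{Newton for abelian type}, the facts that each Newton stratum is a union of E-O strata and each non-basic E-O stratum is an open-and-closed central leaf come from Proposition \ref{E-O NP for ab cox}, and the counting argument against the list in Corollary \ref{EO for orth type} is precisely what the paper's proof calls ``simply comparing the dimensions.'' The genuine divergence is in how you obtain the dimensions of the Newton strata. You propose to evaluate $\langle\rho,\mu+\nu_G(b_i)\rangle-\tfrac12\mathrm{def}_G(b_i)$ directly from the Newton points of \ref{list of NP ortho} and Hamacher's fractional-part formula for the defect. The paper deliberately avoids this: it determines only the dimension of the basic locus (by the Coxeter-length computations of \cite{coxeter type}~6.4, 6.6, or by \cite{Howard-Pappas}~Theorem~6.4.1 after passing to the $\mathrm{GSpin}$-cover) and then uses the purity of $\overline{\ES_0^b}-\ES_0^b$ from Theorem \ref{Newton for abelian type} to walk up the almost-totally-ordered poset $B(G_{\mathbb{Q}_p},\mu)$ one codimension at a time. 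Your route is in principle fine but is more delicate than you acknowledge: the spin nodes of $B_m$ and $D_m$ and the folded node of ${}^2D_m$ produce half-integer pairings $\langle\nu_G(b),w_j\rangle$ whose fractional parts have to be tracked carefully, and you never actually perform the calculation. At a minimum you should sanity-check it on the $\mu$-ordinary class, which must return dimension $n$.

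On part (3.b) there is a concrete gap, and also a misjudgment. You offer two routes to pin down the bijection $\{b_m,b'_m\}\leftrightarrow\{w_{n/2},w'_{n/2}\}$: comparing the partial orders, or computing with $\sigma$-straight elements. The first cannot work: in case (2.a) both $(B(G_{\mathbb{Q}_p},\mu),\leq)$ and $({}^JW,\preceq)$ are symmetric under the simultaneous exchange $b_m\leftrightarrow b'_m$, $w_{n/2}\leftrightarrow w'_{n/2}$, so the abstract combinatorics of the two posets carries no information that could single out a pairing, and in particular cannot see the parity of $m$. Only the second route is viable, and it is what the paper carries out: take $w_{n/2}w_n\mu(p)$ as a lift to $C(G,\mu)$ of the $\sigma$-straight element attached to $w_{n/2}$ (via \cite{muordinary} Remark 6.5.2, with $w_n=w_{J,\mathrm{max}}w_{\mathrm{max}}=(1,2m)(m,m+1)$), compute $(w_{n/2}w_n\mu(p))^m=(\alpha_1^\vee-\alpha_2^\vee-\cdots-\alpha_m^\vee)(p)$, and observe that the dominant $W$-representative of $\tfrac1m(\alpha_1^\vee-\cdots-\alpha_m^\vee)$ is $b_m$ for $m$ odd and $b'_m$ for $m$ even, since the $m-1$ required sign changes lie in the (even-sign-change) Weyl group of $D_m$ precisely when $m-1$ is even. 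This short computation is the genuinely non-routine content of the corollary, and the proof is incomplete without it.
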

\begin{proof}
	The first two sentences follow from Theorem \ref{Newton for abelian type} and Proposition \ref{E-O NP for ab cox} respectively.
	
	To see the dimension of basic locus, one can either use \cite{coxeter type} 6.4 and 6.6, and compute the length of maximal elements in the basic locus, or reduce to $\mathrm{GSpin}$-Shimura varieties and use \cite{Howard-Pappas} Theorem 6.4.1 directly. One could then use purity to deduce dimension formula for general Newton strata. All the other statements except for the second sentence of (3.b) follow from Proposition \ref{E-O NP for ab cox} and Corollary \ref{EO for orth type} by simply comparing the dimensions.

	Now we explain the second part of (3.b). The basis we have fixed in \ref{list of NP ortho} case (2.a) give a $\COMP$-lattice $L$, which is perfect with respect to the bilinear form corresponding to $q$. So $\mathrm{SO}(L,q)$ is a split reductive group over $\COMP$, and the torus $T$ we fixed there extends to a split maximal torus of $\mathrm{SO}(L,q)$ which is again denoted by $T$. We identify the Weyl group of $\mathrm{SO}(L,q)$ and that of $\mathrm{SO}(L_{\mathbb{F}_p},q)$ whose elements are viewed as elements in $\mathrm{SO}(L,q)(\COMP)$ via permutations of the chosen basis.

	Let $w_0$ (resp. $w_{J,0}$) be the maximal element in $W$ (resp. $W_J$), then $$w_n=w_{J,0}w_0=(1,2m)(m,m+1).$$ By \cite{muordinary} Remark 6.5.2, the E-O stratum corresponding to $w_{\frac{n}{2}}$ is given by the orbit of $w_{\frac{n}{2}}w_n^{-1}=w_{\frac{n}{2}}w_n$ in $\mathrm{SO}(L_{\mathbb{F}_p},q)$. Then $w_{\frac{n}{2}}w_n\mu(p)$ is obviously a preimage of it in $C(G,\mu)$. One sees by direct computation that $$(w_{\frac{n}{2}}w_n\mu(p))^m=\mathrm{diag}(p,p^{-1},\cdots, p^{-1},p)=(\alpha_1^\vee-\alpha_2^\vee-\cdots-\alpha_m^\vee)(p).$$

	So by the second paragraph of \ref{minimal to newt cocha}, the Newton cocharacter for $w_{\frac{n}{2}}w_n\mu(p)$ is given by the dominant representative (in the Weyl orbit) of $\frac{1}{m}(\alpha_1^\vee-\alpha_2^\vee-\cdots-\alpha_m^\vee)$,  which is $b_m$ (resp. $b'_m$) when $m$ is odd (resp. even).
\end{proof}

For the case $(G,Y)=(\mathrm{GSpin}(V),X')$, in \cite{Howard-Pappas} Howard and Pappas have described the basic locus $\ES_{\overline{\kappa}}^{b_0}$ in terms of some Deligne-Lusztig varieties, by using Rapoport-Zink uniformation and their local description of the GSpin Rapoport-Zink spaces. We have then a similar description of $\ES_{\overline{\kappa}}^{b_0}$ for the case $(G,Y)=(\mathrm{SO}(V), X)$, cf. \cite{Sh1} sections 7 and 8.

Finally, we refer the readers to \cite{Sh1} section 8 for some further discussions in the case $n=19$ for applications to K3 surfaces and their moduli in mixed characteristic.

\

\end{document}